\theoremstyle{plain}
\newtheorem{theorem}{Theorem}[section]
\newtheorem{proposition}[theorem]{Proposition}
\newtheorem{lemma}[theorem]{Lemma}
\newtheorem{corollary}[theorem]{Corollary}
\newtheorem{question}[theorem]{Question}
\theoremstyle{definition}
\newtheorem{remark}[theorem]{Remark}
\newtheorem{example}[theorem]{Example}
\newtheorem{construction}[theorem]{Construction}
\newtheorem{notation}[theorem]{Notation}
\title[Real Galois cohomology]
{Real homogenous spaces, Galois cohomology,\\ and Reeder puzzles}
\author{Mikhail Borovoi and Zachi Evenor}
\address{Borovoi: Raymond and Beverly Sackler School of Mathematical Sciences,
Tel Aviv University, 6997801 Tel Aviv, Israel}
\email{borovoi@post.tau.ac.il}
\address{Evenor: Raymond and Beverly Sackler School of Mathematical Sciences,
Tel Aviv University, 6997801 Tel Aviv, Israel}
\email{zachi.evenor@gmail.com}
\thanks{Borovoi was partially supported by the Hermann Minkowski Center for Geometry. 
Both Borovoi and Evenor were partially supported by the Israel Science Foundation (grant No. 870/16)}
\keywords{Real homogeneous space, Reeder puzzle, simply connected real group,
real Galois cohomology, labelings of a Dynkin diagram}
\subjclass[2010]{14M17, 11E72, 20G20}
\begin{document}


\newcommand{\R}{{\mathbb{R}}}
\newcommand{\Z}{{\mathbb{Z}}}
\newcommand{\C}{{\mathbb{C}}}
\newcommand{\Q}{{\mathbb{Q}}}
\newcommand{\F}{{\mathbb{F}}}
\renewcommand{\H}{{\mathbb{H}}}

\renewcommand{\AA}{{\mathbf{A}}}
\newcommand{\BB}{{\mathbf{B}}}
\newcommand{\CC}{{\mathbf{C}}}
\newcommand{\DD}{{\mathbf{D}}}
\newcommand{\EE}{{\mathbf{E}}}
\newcommand{\FF}{{\mathbf{F}}}
\newcommand{\GG}{{\mathbf{G}}}

\newcommand{\T}{{\mathcal{M}}}
\newcommand{\M}{{\mathcal{M}}}

\newcommand{\X}{{{\sf X}}}

\newcommand{\G}{{\mathbb{G}}}

\newcommand{\ii}{{\mathrm{i}}}

\renewcommand{\aa}{{\boldsymbol{a}}}
\newcommand{\bb}{{\boldsymbol{b}}}
\newcommand{\cc}{{\boldsymbol{c}}}
\newcommand{\dd}{{\boldsymbol{d}}}
\newcommand{\ttt}{{\boldsymbol{t}}}
\newcommand{\qq}{{\boldsymbol{q}}}
\newcommand{\pp}{{\boldsymbol{p}}}

\newcommand{\zbar}{{\bar{z}}}
\newcommand{\gbar}{{\bar{g}}}
\newcommand{\nbar}{{\bar{n}}}
\newcommand{\xbar}{{\bar{x}}}

\newcommand{\Htil}{{\widetilde{H}}}
\newcommand{\Ttil}{{\widetilde{T}}}
\newcommand{\ttil}{{\tilde t}}

\newcommand{\vk}{{\varkappa}}
\newcommand{\ve}{\varepsilon}
\newcommand{\vev}{\varepsilon^\vee}

\renewcommand{\gg}{{\mathfrak{g}}}
\newcommand{\tfrak}{{\mathfrak{t}}}

\newcommand{\into}{\hookrightarrow}
\newcommand{\isoto}{\overset{\sim}{\to}}
\newcommand{\onto}{\twoheadrightarrow}
\newcommand{\labelto}[1]{\xrightarrow{\makebox[1.5em]{\scriptsize ${#1}$}}}

\newcommand{\GL}{{\bf{GL}}}
\newcommand{\SL}{{\bf{SL}}}
\newcommand{\Sp}{{\bf{Sp}}}
\newcommand{\PSp}{{\bf{PSp}}}
\newcommand{\SO}{{{\bf SO}}}
\newcommand{\PSO}{{\bf{PSO}}}
\newcommand{\Spin}{{{\bf Spin}}}
\newcommand{\HSpin}{{\bf{HSpin}}}
\newcommand{\PGL}{{\bf{PGL}}}
\newcommand{\SU}{{\bf SU}}
\newcommand{\PSU}{{\bf PSU}}

\newcommand{\Hom}{{\rm Hom}}
\newcommand{\Inn}{{\rm Inn}}
\newcommand{\Aut}{{\rm Aut}}
\newcommand{\Lie}{{\rm Lie\,}}
\newcommand{\Gal}{{\rm Gal}}
\newcommand{\coker}{{\rm coker\,}}
\newcommand{\tors}{{\rm tors}}
\newcommand{\Ext}{{\rm Ext}}
\newcommand{\Stab}{{\rm Stab}}
\newcommand{\res}{{\rm res}}
\newcommand{\Ad}{{\rm Ad}}
\newcommand{\Cl}{{\rm Cl}}
\newcommand{\ad}{{\rm ad}}
\newcommand{\im}{{\rm im\,}}
\newcommand{\id}{{{\rm id}}}
\newcommand{\diag}{{\rm diag}}

\newcommand{\Orbset}{\operatorname{Orb}}         
\newcommand{\Or}{{\rm Orb}}                                
\newcommand{\Orbs}[1]{ \# \mathrm{Orb}( {#1} ) } 

\newcommand{\boxone}{ *+[F]{1} }
\newcommand{\ccc}{{  \lower0.20ex\hbox{{\text{\Large$\circ$}}}}}
\newcommand{\Lbul}{{\lower0.20ex\hbox{\text{\Large$\bullet$}}}}
\newcommand{\bc}[1]{{\overset{#1}{\ccc}}}
\newcommand{\bcu}[1]{{\underset{#1}{\ccc}}}
\newcommand{\bcb}[1]{{\overset{#1}{\Lbul}}}
\newcommand{\bcbu}[1]{{\underset{#1}{\Lbul}}}
\newcommand{\sxymatrix}[1]{ \xymatrix@1@R=5pt@C=9pt{#1} }
\newcommand{\mxymatrix}[1]{ \xymatrix@1@R=0pt@C=9pt{#1} }
\newcommand{\rline}{ \ar@{-}[r] }
\newcommand{\lline}{ \ar@{-}[l] }
\newcommand{\dline}{ \ar@{-}[d] }
\newcommand{\upline}{ \ar@{-}[u] }
\newcommand{\arr}{\ar@{=>}[r]}

\newcommand{\RRR}{{\Rightarrow}}
\newcommand{\gr}{\! > \!}
\newcommand{\less}{ \!\! < \!\! }
\newcommand{\LLL}{\!\Leftarrow\!}
\newcommand{\boe}{{\sxymatrix{\boxone}}}
\renewcommand{\ll}{\!-\!}

\newcommand{\half}{{\tfrac{1}{2}}}

\newcommand{\Ss}{\sideset{}{'}\sum_{k\succeq i}}
\newcommand{\Ssd}{\sideset{}{'}\sum_{k\succeq i,\,k\in D^\tau}}

\newcommand{\DeltaK}{{K}}

\newcommand{\nl}{0}

\newcommand{\alphabar}{\alpha'}
\newcommand{\Dtil}{{\widetilde{D}}}
\newcommand{\Dtilbar}{{\widetilde{D'}}}
\newcommand{\mbar}{m'}
\newcommand{\ibar}{{i'}}
\newcommand{\jbar}{{j'}}
\newcommand{\Pibar}{{\Pi'}}
\newcommand{\nprime}{{n'}}

 \newcommand{\SAut}{\mathrm{SAut}}

\newcommand{\hs}{\kern 0.8pt}
\newcommand{\hh}{\kern 1.0pt}
\newcommand{\kk}{\kern 2.0pt}

\newcommand{\emm}{\bfseries}



\begin{abstract}
Let $G$ be a simply connected absolutely simple algebraic group defined over the field of real numbers $\R$.
Let $H$ be a simply connected semisimple $\R$-subgroup of $G$.
We consider the homogeneous space $X=G/H$.
We ask: how many connected components has $X(\R)$?

We give a method of answering this question.
Our method is based on our solutions of generalized Reeder puzzles.
\end{abstract}

\maketitle



\setcounter{section}{-1}

\section{Introduction}
In this paper by a semisimple or reductive group
we always mean a {\em connected} semisimple or reductive group, respectively.
Let $G$ be a  {\em simply connected} absolutely simple algebraic group over the field of real numbers $\R$.
Let $H\subset G$ be a  {\em simply connected} semisimple $\R$-subgroup.
We consider the homogeneous space $X=G/H$, which is an algebraic variety over $\R$.
The topological space $X(\R)$ of  $\R$-points of $X$ need not be connected.
We ask
\begin{question}\label{q:1}
How many connected components has $X(\R)$?
\end{question}

The group of $\R$-points $G(\R)$ acts on the left on $X(\R)$, and we consider the orbits of this action.
By Lemma \ref{lem:orbits-components} below,  the set of connected components of $X(\R)$
is the set of orbits $G(\R)\backslash X(\R)$ of $G(\R)$ in $X(\R)$.
On the other hand, there is a canonical bijection
\begin{equation}\label{e:Serre}
 G(\R)\backslash X(\R)\isoto \ker\left[H^1(\R,H)\to H^1(\R,G)\right],
 \end{equation}
see Serre \cite[Section I.5.4, Corollary 1 of Proposition 36]{Serre},
where $H^1(\R,G)$ denotes the first (nonabelian) Galois cohomology of $G$.
We see that Question \ref{q:1} is equivalent to the following question:
\begin{question}\label{q:2}
What is the cardinality  of the finite set $\ker\left[H^1(\R,H)\to H^1(\R,G)\right]$?
\end{question}
In this paper we give a method of answering Question \ref{q:2} and hence, Question \ref{q:1}.
Namely, we give an explicit description of the Galois cohomology sets
$H^1(\R,G)$ for all simply connected $\R$-groups $G$,
permitting one to compute the kernel in Question \ref{q:2}.
We describe  $H^1(\R,G)$ using our  solutions of generalized Reeder puzzles.

Let $G$ be a {\em simply connected,} absolutely simple, simply-laced, compact $\R$-group.
Let $T\subset G$ be a maximal torus.
Let $\Pi$ be a basis of the root system $R(G_\C, T_\C)$.
Let $D=D(G_\C,T_\C,\Pi)$ be the Dynkin diagram of $G$ with the set of vertices numbered by $1,2,\dots,n$,
then $D$ is simply-laced, i.e. it has no multiple edges.
By a {\em labeling of} $D$ we mean a family $\aa=(a_i)_{i=1,\dots,n}$, where $a_i\in\Z/2\Z$.
In other words, at any vertex $i$ we write a label $a_i=0,1$.
We consider the set $L(D)$ of the labelings of $D$, it is an $n$-dimensional vector space over the field $\Z/2\Z$.

For any vertex $i$ we define the {\em move} $\M_i$ applied to a labeling $\aa$:
if the vertex $i$ has an {\em odd} number of neighbors with 1,
$\M_i$ {\em changes} $a_i$ (from 0 to 1 or from 1 to 0), otherwise it does nothing.
Clearly $\M_i(\M_i(\aa))=\aa$.
We say that two labelings $\aa,\aa'$ are {\em equivalent} if we can pass from $\aa$  to $\aa'$ by a finite sequence of moves.
This is indeed an equivalence relation on $L(D)$.
We denote the corresponding set of equivalence classes by $\Cl(D)$.
It is the set of orbits of the Weyl group $W$ acting on $L(D)$,
and we  denote it also by $\Or(D)$ in Sections \ref{sec:An}--\ref{sec:G2} below.
The set $\Cl(D)$ has a neutral element $[\nl]$, the class of the zero labeling $\nl$.
To solve the puzzle means to describe the set of equivalence classes $\Cl(D)$ and to describe each equivalence class.

This is original Reeder's puzzle \cite{Reeder}, except that Reeder formulated his puzzle for any simply-laced graph,
not necessarily a simply-laced Dynkin diagram.
For  a compact, simply connected, simply-laced group $G$ with Dynkin diagram $D$,
the pointed set $\Cl(D)$ is in a bijection with $H^1(\R,G)$  .
In order to deal with non-simply-laced and noncompact groups, we generalize the puzzle.

We permit non-simply-laced Dynkin diagrams.
Then, when counting the number of neighbors with 1 of a given vertex $i$,
we do not count the {\em shorter} neighbors of $i$ connected with $i$ by a {\em double} edge.
In other words, ``the long roots don't see the short roots''.

 We consider also colored Dynkin diagrams, which correspond to  non-compact inner forms of compact groups.
A {\em coloring} of a Dynkin diagram $D$ is a family
\[\ttt=(t_i)_{i=1,\dots,n},\quad t_i\in\Z/2\Z.\]
If $t_i=1$, we color vertex $i$ in black, otherwise we leave it white.
When vertex $i$ is white, the move $\M_i$ acts as above.
When $i$ is black, the move $\M_i$ changes $a_i$ if $i$ has  an {\em even} number of neighbors with 1,
and does nothing otherwise.
We write sometimes $L(D,\ttt)$ for the set $L(D)$ with this Reeder puzzle.
We denote the corresponding set of equivalence classes by $\Cl(D,\ttt)$.
If $\ttt=\boldsymbol{0}=(0,\dots, 0)$, we have $\Cl(D,\boldsymbol{0})=\Cl(D)$.
Note that if $D$ has a {\em black} vertex $i$, then the move $\M_i$
takes the zero labeling $\nl$ to a nonzero labeling
and hence does not respect the group structure in $L(D)$.

We recall the definition of $H^1(\R,G)$, cf. \cite[Section III.4.5]{Serre}.
Let $G$ be a linear algebraic group over $\R$.
We denote by $G(\C)$ the set of $\C$-points of $G$.
The first Galois cohomology set $H^1(\R,G)$ is, by definition,
$Z^1(\R,G)/\sim$, where the set of $1$-cocycles $Z^1(\R,G)$ is defined by $Z^1(\R,G)=\{z\in G(\C)\ |\ z\zbar=1\}$,
and two $1$-cocycles $z,z'\in Z^1(\R,G)$ are cohomologous  (we write $z\sim z'$) if
$z'=gz\bar g^{-1}$ for some $g\in G(\C)$.
Here the bar denotes the complex conjugation in $G(\C)$; note that $G(\R)=\{g\in G(\C)\ |\ \bar g=g\}$.
By definition, the neutral element $[1]\in H^1(\R,G)$ is the class of the neutral cocycle $1\in Z^1(\R,G)\subset G(\C)$.

We write $G(\R)_2$ for the set of elements $g\in G(\R)$ such that $g^2=1$.
Then $g\bar{g}=g^2=1$, hence $G(\R)_2\subset Z^1(\R,G)$, and so we obtain a canonical map $G(\R)_2\to H^1(\R,G)$.

Let  $G$ be a simply connected absolutely simple $\R$-group.
For simplicity we assume in the Introduction
that $G$ is an {\em inner form of a compact group.}
Then $G$ has a compact maximal torus $T$, see Subsection \ref{subsec:t}.
Choose a basis $\Pi$ of the root system $R=R(G_\C,T_\C)$.
We obtain an isomorphism
\[ \gamma\colon L(D)\isoto T(\R)_2\subset Z^1(\R, G),\]
see formula \eqref{e:gamma} in Section \ref{sec:compact}.
This isomorphism induces a map $L(D)\to H^1(\R,G)$, which is surjective
by a result of Kottwitz \cite[Lemma 10.2]{Kottwitz}.
By Theorem \ref{thm:inner}  the fibers of this map are equivalence classes
of the  Reeder puzzle for $(D,\ttt)$ for a certain coloring $\ttt$ of $D$.
In other words, we obtain a bijection
\[ \Cl(D,\ttt)\isoto H^1(\R,G). \]
Moreover, for a suitable basis $\Pi$ the coloring $\ttt$
can be obtained from a Kac diagram by removing vertex 0, see Section \ref{ss:Kac}.
In Sections \ref{sec:An}--\ref{sec:G2} we solve  case by case the generalized  Reeder puzzles for all such pairs $(D,\ttt)$.
Namely, in each case we give a set $\Xi$ of representatives for all  equivalence classes in $\Cl(D,\ttt)$
and describe explicitly the equivalence class $[\nl]\subset L(D,\ttt)$ of the zero labeling $\nl$.

Now let $H$ be a simply connected semisimple $\R$-subgroup of a simply connected  absolutely simple $\R$-group $G$.
For simplicity, we assume in the Introduction that $H$ is absolutely simple
and that $G$ and $H$ are inner forms of compact groups.
Then $G$ contains a compact maximal torus $T_G$ and $H$ contains a compact maximal torus $T_H$.
We may and shall assume that $T_H\subset T_G$.
We denote by $(D_H,\ttt_H)$ and $(D_G,\ttt_G)$ the corresponding Reeder puzzles.
For a good choice of bases $\Pi_H$ and $\Pi_G$ we obtain colorings $\ttt_H$ and $\ttt_G$ coming from Kac diagrams
(in particular, not more than one vertex of each of $D_H$ and $D_G$ is black).

We describe our method of answering Question \ref{q:2}.
The embedding $T_H\into T_G$ induces an embedding $T_H(\R)_2\into T_G(\R)_2\,$.
Thus we obtain an injective homomorphism
\[\iota\colon L(D_H)\to L(D_G),\]
which can be computed explicitly.
Using results of Sections \ref{sec:An}--\ref{sec:G2} for the group $H$,
we construct a finite subset $\Xi\subset L(D_H,\ttt_H)$ containing exactly one representative of each equivalence class
for the corresponding Reeder puzzle.
For any $\xi\in\Xi\subset L(D_H,\ttt_H)$ we compute $\iota(\xi)\in L(D_G,\ttt_G)$.
Using  results of Sections \ref{sec:An}--\ref{sec:G2} for the group $G$,
namely, the description of the equivalence class $[\nl]$ of $\nl$ in $L(G,\ttt_G)$\,,
we can check whether $\iota(\xi)\in L(D_G,\ttt_G)$ lies in  $[\nl]$ or not.
We obtain a subset $\Xi_0$ of $\Xi$ consisting of all $\xi\in\Xi$ such that $\iota(\xi)\in[\nl]$.
One can show (see Section \ref{sec:examples}) that $\Xi_0$
is in a bijection with $\ker\left[ H^1(\R,H)\to H^1(\R, G)\right]$ and therefore,
the cardinality of $\Xi_0$ answers Questions \ref{q:2} and  \ref{q:1}.

Note that in order to answer Question \ref{q:2},
we compute in Sections \ref{sec:An}--\ref{sec:G2} the sets  $\Cl(D,\ttt)$
for inner forms of a compact group and certain sets $\Cl(D,\tau,\ttt)$ for outer forms.
Since each of these sets is in a bijection with the corresponding Galois cohomology set,
we in particular compute the cardinalities of the Galois cohomology sets $H^1(\R,H)$
for all absolutely simple simply connected $\R$-groups $H$.
These cardinalities have been known.
The Galois cohomology of classical groups and adjoint groups is well known.
S.~Garibaldi and N.~Semenov \cite[Example 5.1]{GS} computed $H^1(\R,H)$ for a certain nonsplit
simply connected group $H$ of type $\EE_7$.
B.~Conrad \cite[Proof of Lemma 4.9]{Conrad} computed  $H^1(\R,H)$
for the split simply connected groups $H$ of types of $\EE_6$ and $\EE_7$.
The cardinalities  of the Galois cohomology sets for ``most'' of  simple $\R$-groups, in particular,
for all absolutely simple simply connected $\R$-groups,
were recently computed  by  J.~Adams \cite{A} by a method different from ours.
Our results agree with the previous results, in particular with the tables of Adams \cite{A}.
Later, after the first version of the present paper appeared in arXiv,
Borovoi and Timashev  \cite{BoT} proposed a combinatorial method based on the notion of a Kac diagram,
permitting  one to compute easily  the cardinality of $H^1(\R,H)$
when $H$ is an inner form of any compact semisimple $\R$-group, not necessarily simply connected.
However, it seems that neither of these alternative approaches
permits one to answer Question \ref{q:1} about $(G/H)(\R)$,
except for the case when $H^1(\R,G)=1$ (which happens only when $G=\SL(n)$ or $G=\Sp(2n,\R)$).

The rest of the paper is structured as follows.
In Section \ref{sec:1} we recall results of \cite{Bo}.
In Sections \ref{sec:compact} and \ref{sec:inner}  we compute the moves $\T_i$
in the case when $G$ is compact and when it is a noncompact inner form of a compact group, respectively.
In particular, in Section  \ref{sec:inner}  we prove Theorem \ref{thm:inner} describing the pointed set $H^1(\R,G)$
for an {\em inner} form $G$ of a compact simply connected simple  group in terms of the corresponding generalized Reeder puzzle.
In Section \ref{sec:outer} we prove Theorem \ref{cor:Theorem-3-Bo},
which reduces computing the Galois cohomology of an {\em outer} form of a compact, simply connected, simple  $\R$-group to
computing Galois cohomology of an {\em inner} form of another compact group.
In Section \ref{sec:Kac} we describe the generalized Reeder puzzle for $G$
in terms of the Kac diagram of $G$ from \cite[Table 7]{OV}.
In Sections \ref{sec:An}--\ref{sec:G2}
we solve the generalized Reeder puzzles
for all isomorphism classes of simply connected absolutely simple $\R$-groups $G$.
We state the assertions necessary for our calculations, but omit straightforward  proofs for brevity.
In the last Section \ref{sec:examples} we describe our method of answering Questions \ref{q:2} and \ref{q:1}
for all simply connected  $H$ (not necessarily simple),
and we give examples of calculations using  results of Sections \ref{sec:An}--\ref{sec:G2}.


\section{Galois cohomology of reductive real groups}
\label{sec:1}

In this section we state briefly the necessary results of \cite{Bo}.
For details see \cite{Bo} or \cite{Borovoi-arXiv}.

Let $G$ be a  reductive group over $\R$.
Let $T$ be a {\em fundamental torus} of $G$, i.e.,
a maximal torus of $G$ (defined over $\R$) containing a maximal compact torus $T_0$ of $G$.
Then $T$ is the centralizer of $T_0$ in $G$; see \cite[Section 7]{Borovoi-arXiv}.
Let $T_1$ be the largest {\em split} subtorus of $T$.
We write $T(\R)_2$ for the group of elements of $T(\R)$ of order dividing 2.
\begin{lemma}[{\cite[Lemma 1.1]{Bo}, see also \cite[Lemma 3(a)]{Borovoi-arXiv}}]
\label{lem:Bo88}
The map $T(\R)_2\to H^1(\R,T)$ induces a canonical isomorphism $T(\R)_2/T_1(\R)_2\isoto H^1(\R,T)$.
\end{lemma}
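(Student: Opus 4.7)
The plan is to identify both sides of the claimed isomorphism with explicit modules attached to the cocharacter lattice $M = X_*(T)$, viewed as a $\Gal(\C/\R)$-module with generator $\sigma$, and then to check that the natural map $T(\R)_2 \to H^1(\R, T)$ corresponds to an explicit map between the two concrete descriptions.

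First, using the exponential identification $T(\C) \cong (M \otimes_\Z \C)/2\pi i M$, every element of $T(\C)[2]$ has the form $t = \exp(\mu \otimes \pi i)$ for some $\mu \in M$, and a direct computation (noting that $\sigma$ swaps $\pi i$ with $-\pi i$) shows that $t \in T(\R)$ precisely when $(1-\sigma)\mu \in 2M$. Since $T_1$ has cocharacter module $M^\sigma$ and $M^\sigma \cap 2M = 2M^\sigma$ by torsion-freeness of $M$, one obtains
\[
 T(\R)_2/T_1(\R)_2 \;\cong\; \{\mu \in M : (1-\sigma)\mu \in 2M\}\,\big/\,(M^\sigma + 2M).
\]

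Next I would compute $H^1(\R, T)$ via the exponential short exact sequence
\[
 0 \to 2\pi i M \to M \otimes_\Z \C \to T(\C) \to 0.
\]
The middle term is a $\Q$-vector space, so its higher $\Gal$-cohomology vanishes, and the connecting map produces $H^1(\R,T) \isoto H^2(\Gal, 2\pi i M)$. Unwinding the Galois action via $\sigma(2\pi i) = -2\pi i$, together with Tate periodicity for the cyclic group of order $2$, this rewrites as
\[
 H^1(\R, T) \;\cong\; \ker(1+\sigma\text{ on }M) \,\big/\, \im(1-\sigma\text{ on }M).
\]
The last step is then to compute the composite $T(\R)_2 \to H^1(\R, T)$ through these identifications: lifting $t = \exp(\mu \otimes \pi i)$ to $\mu \otimes \pi i \in M \otimes \C$, the connecting homomorphism returns the $2$-cocycle $(1+\sigma)(\mu \otimes \pi i) = (1-\sigma)\mu \otimes \pi i = \nu \otimes 2\pi i$, where $\nu := \tfrac12 (1-\sigma)\mu \in M$. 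A one-line check gives $\sigma \nu = -\nu$, so $\nu \in \ker(1+\sigma)$; setting $\mu = \nu$ shows the map $\mu \mapsto \nu$ is surjective; and the kernel modulo $2M$ is exactly $(M^\sigma + 2M)/2M$, which matches $T_1(\R)_2$ on the nose.

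The main subtlety I expect is the Tate-twist bookkeeping: one must keep track that the $2\pi i$ in the lattice carries a genuine Galois action, so $H^2(\Gal, 2\pi i M) \cong \hat H^{-1}(\Gal, M)$ rather than $\hat H^0(\Gal, M)$; the difference is precisely whether the compact torus $U(1)$ contributes $\Z/2\Z$ or $0$ to $H^1$. A safer, more elementary alternative would be to decompose $M$ (up to finite index, harmless for cohomology of tori) as a sum of the three indecomposable torsion-free $\Z[\Gal(\C/\R)]$-modules $\Z$, $\Z_{-}$, and $\Z[\Gal(\C/\R)]$, verify the lemma directly for the corresponding three basic tori $\G_m$, $R^{(1)}_{\C/\R}\G_m$, and $R_{\C/\R}\G_m$, and extend by additivity together with the standard isogeny-invariance of the statement.
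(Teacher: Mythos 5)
The paper does not actually prove Lemma \ref{lem:Bo88}: it is quoted from \cite[Lemma 1.1]{Bo} and \cite[Lemma 3(a)]{Borovoi-arXiv}, so there is no in-paper argument to compare against. Your main line of proof is correct and complete, and it is essentially the standard way to establish this fact: identify $T(\C)$ with $(M\otimes\C)/2\pi i M$ for $M=\X_*(T)$, so that $T(\R)_2\cong\{\mu\in M:(1-\sigma)\mu\in 2M\}/2M$ (your condition $(1-\sigma)\mu\in 2M$ agrees with the one I get directly, $(1+\sigma)\mu\in 2M$, since they differ by $2\sigma\mu\in 2M$); compute $H^1(\R,T)\cong H^2(\Gal(\C/\R),2\pi i M)\cong \ker(1+\sigma)/\mathrm{im}(1-\sigma)=\hat H^{-1}(\Gal(\C/\R),M)$ from the exponential sequence; and check that the connecting map sends $\mu$ to $\nu=\tfrac12(1-\sigma)\mu$, is surjective (take $\mu=\nu$), and has kernel $(M^\sigma+2M)/2M$, which is exactly the image of $T_1(\R)_2$ since $\X_*(T_1)=M^\sigma$ and $M^\sigma\cap 2M=2M^\sigma$. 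Your warning about the Tate twist is also well placed: without it one would get $\hat H^{0}$ instead of $\hat H^{-1}$ and the compact torus $U(1)$ would wrongly contribute $0$ rather than $\Z/2\Z$.

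The one genuine error is in your proposed ``safer alternative.'' There is no isogeny-invariance to appeal to: $H^1(\R,\cdot)$ of real tori is \emph{not} an isogeny invariant (e.g.\ $R_{\C/\R}\G_m$ and $\G_m\times U(1)$ are isogenous, with $H^1$ trivial and of order $2$ respectively), and likewise passing to a finite-index sublattice of $M$ is \emph{not} harmless. What rescues that route is the classical integral classification of $\Z[\Gal(\C/\R)]$-lattices: every such lattice is, on the nose and not merely up to finite index, a direct sum of copies of the trivial module $\Z$, the sign module, and the regular module $\Z[\Gal(\C/\R)]$. With that statement the case-by-case verification for $\G_m$, $R^{(1)}_{\C/\R}\G_m$, and $R_{\C/\R}\G_m$ plus additivity does give a valid second proof; as written, the appeal to isogeny-invariance would fail.
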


Set $N_0=\mathcal{N}_G(T_0)$, $W_0=N_0/T$.
We have $W_0(\C)=W_0(\R)$; see \cite[Section 7]{Borovoi-arXiv}.
We define a left action of the group $W_0(\R)$ on the set $H^1(\R,T)$.
Let $w\in W_0(\R)$ be represented by $n\in N_0(\C)$ and let $\xi\in H^1(\R,T)$, $\xi=[z]$,
where $z\in Z^1(\R,T)$ is a cocycle and $[z]$ denotes the cohomology class of  $z$.
We set
\begin{equation}\label{eq:Bo-action}
w* \xi:= [nz\nbar^{-1}]=[nzn^{-1}\cdot n\nbar^{-1}],
\end{equation}
where the bar denotes the complex conjugation in $G(\C)$.
This is a well-defined action; see \cite[Construction 8]{Borovoi-arXiv}.
(Note that in general the action $*$ does not respect the group structure on $H^1(\R,T)$.\,)
It is easy to see that the images of $\xi$ and $w*\xi$ in $H^1(\R,G)$ coincide.
Therefore, we obtain a canonical map
$$W_0(\R)\backslash H^1(\R,T)\to H^1(\R,G).$$

\begin{proposition}[{\cite[Theorem 1]{Bo}, see also \cite[Theorem 9]{Borovoi-arXiv}}]
\label{prop:Bo88}
The map
$$W_0(\R)\backslash H^1(\R,T)\to H^1(\R,G)$$
induced by the map $H^1(\R,T)\to H^1(\R,G)$ is a bijection.
\end{proposition}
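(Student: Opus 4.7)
The plan is to prove the map is a bijection by establishing surjectivity and injectivity separately. Both steps rely on classical structural results for reductive $\R$-groups: the conjugacy of fundamental tori under $G(\R)$, the equality $W_0(\C)=W_0(\R)$ recalled in the preceding paragraphs, and the detection of Galois cohomology by a maximal compact subgroup.

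For surjectivity, I would show that every $[z]\in H^1(\R,G)$ has a representative in $T(\C)$. One clean route uses a maximal compact subgroup $K\subset G(\R)$ whose maximal torus is $T_0\subset T$: the inclusion $K\into G$ (with $K$ viewed as a compact $\R$-algebraic group) induces an isomorphism $H^1(\R,K)\isoto H^1(\R,G)$, while for a compact connected group every cohomology class is represented by an involution which can be conjugated inside $K$ into $T_0(\R)_2\subset T(\R)_2\subset Z^1(\R,T)$. Alternatively, one may argue directly within $G$ using the Jordan decomposition of cocycles (the unipotent part being cohomologically trivial) and then place the semisimple part into $T$ via the conjugacy of fundamental tori of an appropriate inner form.

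For injectivity, let $z,z'\in Z^1(\R,T)$ satisfy $z'=g_0z\bar g_0^{-1}$ for some $g_0\in G(\C)$. The aim is to produce $n\in N_0(\C)$ representing some $w\in W_0(\R)$ with $w*[z]=[z']$. Modifying $g_0\mapsto g_0h$ while preserving the cocycle relation corresponds precisely to choosing $h\in G(\C)^{\sigma_z}=G_z(\R)$, where $\sigma_z(g)=z\bar g z^{-1}$ denotes the twisted Galois action. I would then choose such an $h$ so that $g_0h$ normalizes $T$; equivalently, $h$ should conjugate the real subtorus $T\subset G_z$ onto the real subtorus $g_0^{-1}Tg_0\subset G_z$. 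This is achieved by working inside an auxiliary reductive $\R$-subgroup of $G_z$ (such as the centralizer of the semisimple part of $z$) in which both tori are fundamental and hence $G_z(\R)$-conjugate. Given $n=g_0h\in N_0(\C)$, one verifies $n^{-1}\bar n\in T(\C)$ so that $w=nT\in W_0(\R)$, and then $w*[z]=[z']$ follows directly from \eqref{eq:Bo-action}.

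The main obstacle is the injectivity step, specifically the reduction of the conjugating element $g_0$ to $N_0(\C)$. The tori $T$ and $g_0^{-1}Tg_0$ are typically not fundamental in $G_z$ itself, so the required conjugacy must be executed inside an appropriate reductive subgroup where these tori become fundamental. This delicate structural analysis is precisely the content of \cite[Theorem 1]{Bo}; the surjectivity reduction to a maximal compact subgroup is comparatively standard.
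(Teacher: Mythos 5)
First, note that the paper does not actually prove this proposition: it is imported verbatim from \cite[Theorem 1]{Bo} (see also \cite[Theorem 9]{Borovoi-arXiv}), and Section \ref{sec:1} states explicitly that the results there are only being recalled, with proofs deferred to those references. So there is no in-paper argument to measure yours against; the only question is whether your outline would close up into a proof on its own.

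It would not. The surjectivity half is an acceptable reduction to classical facts, though you should flag that ``every class in $H^1(\R,K)$ for a compact connected $K$ is represented by an involution conjugate into a fixed maximal torus'' is precisely the compact case of the proposition itself, so that route proves the general statement by assuming a special case of it. The genuine gap is in injectivity. You correctly compute that the ambiguity in the conjugating element $g_0$ is right multiplication by $h\in{}_zG(\R)$, and correctly reduce the problem to finding $h\in{}_zG(\R)$ carrying the $\R$-torus $g_0^{-1}Tg_0$ of ${}_zG$ onto $T$. But maximal tori of a real reductive group fall into several $G(\R)$-conjugacy classes, and $g_0^{-1}Tg_0$ is in general \emph{not} fundamental in ${}_zG$ (whereas $T$ is, since $T_0$ remains a maximal compact torus after twisting by a cocycle with values in $T$), so no general conjugacy theorem applies. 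Your proposed remedy --- pass to ``the centralizer of the semisimple part of $z$,'' where both tori supposedly become fundamental --- is neither carried out nor well posed as stated: $z\in T(\C)$ is already semisimple but does not lie in ${}_zG(\R)$, so you must specify which $\R$-subgroup of ${}_zG$ is meant, why it is defined over $\R$, why it contains both tori, and why they are fundamental in it. You then write that this step ``is precisely the content of \cite[Theorem 1]{Bo},'' i.e., you defer the crux to the theorem being proved. As it stands, the injectivity half is a correct identification of the difficulty rather than a resolution of it.
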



\section{Weyl action for compact groups}
\label{sec:compact}
{\em We change our notation.}
In Sections \ref{sec:compact} -- \ref{sec:Kac},
$G$ is a {\em  simply connected, simple, {\bfseries compact}} (i.e., anisotropic) linear algebraic group over $\R$.

Let $T$ be a maximal torus of $G$. Let $X^*=\X^*(T_\C):=\Hom(T_\C, \G_{m,\C})$ denote the character group of $T_\C$,
where $\G_{m,\C}$ is the multiplicative group over $\C$.
Let $R=R(G_\C,T_\C)\subset X^*$ denote the root system of $G_\C$ with respect to $T_\C$,
then we have a root decomposition
\[ \Lie G_\C=\Lie T_\C\oplus\bigoplus_{\beta\in R}\gg_\beta\,.\]
Let
$\Pi\subset R$ be a basis  of $R$ (a system of simple roots).
Note that $\Pi$ does not have to be a basis of $X^*$.
Write $\Pi=\{\alpha_1,\dots,\alpha_n\}$, then a simple root $\alpha_i$ is a homomorphism $\alpha_i\colon T_\C\to \G_{m,\C}$.
Let $R_+\subset R$ denote the set of positive roots with respect to the basis $\Pi$,
and let $B\subset G_\C$ denote the corresponding Borel subgroup of $G_\C$ containing $T_\C$, then
\[ \Lie B=\Lie T_\C\oplus\bigoplus_{\beta\in R_+}\gg_\beta\,.\]
Let $D=D(G_\C,T_\C,\Pi)=D(G_\C,T_\C,B)$ denote the Dynkin diagram of $G_\C$ with respect to $T_\C$ and $\Pi$,
then the set of vertices of $D$ is $\Pi$.
Let  $W=W(G,T)=N/T$ denote the Weyl group, where $N$ is the normalizer of $T$ in $G$.
By abuse of notation we write $W$ also for the group of points $W(\R)=W(\C)$.

Let $X_*=\X_*(T_\C):=\Hom(\G_{m,\C}, T_\C)$ denote the cocharacter group of $T$.
There is a canonical pairing
$$
\langle\ ,\,\rangle\colon X^*\times X_*\to\Z,\quad (\chi,x)\mapsto \langle \chi,x\rangle\in \Z,\quad \chi\in X^*,\ x\in X_*
$$
defined by
$$
\chi\circ x\,=\ ( z\mapsto z^{\langle \chi,x\rangle}\, ) \colon \ \G_{m,\C}\to \G_{m,\C}\,.
$$
We have a canonical basis $\Pi^\vee= \{\alpha_1^\vee,\dots,\alpha_n^\vee\}$ of the dual root system $R^\vee$,
where the simple coroot $\alpha_i^\vee\colon \G_{m,\C}\to T_\C$ is the  coroot corresponding to the simple root $\alpha_i$;
see \cite[Sections 7.4 and 7.5]{Springer}. Note that $\langle \alpha_i , \alpha_i^\vee \rangle = 2$.
Since $G$ is {\em simply connected}, $\Pi^\vee$ is a basis of $X_*$
(this is one of the definitions of a simply connected semisimple algebraic group,
cf.~\cite[Section 2.15]{SpringerAMS}).

\begin{lemma}[well-known]\label{lem:repr}
Let $G$, $T,\ N$, and $W$ be as above (in particular, $G$ is {\em compact}).
Then for any $w\in W(\R)=W(\C)$ there exists a representative $n\in N(\R)$ (and not just in $N(\C)$).
\end{lemma}

\begin{proof}
The group $W$ is generated by the reflections $r_1,\dots,r_n$, hence, it suffices to find such an $n$ for a reflection $w=r_i$.
This reduces to the case where $G=\SU_2$ and $T$ is the diagonal torus, when we can take
\[n=\begin{pmatrix}0 &1\\-1 & 0\end{pmatrix}.\]
\end{proof}

Since $G$ is {\em compact,} by Borel and Serre \cite[Theorem 6.8, Example (a)]{Borel-Serre}, see also Serre \cite[III.4.5, Example (a)]{Serre}
(or by Lemma \ref{lem:Bo88} and Proposition \ref{prop:Bo88} above, where $T$ is compact, $N_0=N$, and $W_0=W$)
we have a bijection $W\backslash T(\R)_2\isoto H^1(\R,G)$.
Here $W$ acts on $T(\R)_2$ in the standard way.
Namely, since $G$ is compact,  by Lemma \ref{lem:repr} we can choose a representative $n$ of $w\in W$ in $N(\R)$,
and for $a\in T(\R)_2$ we set
\[ w*a=na\nbar^{-1}=nan^{-1}.\]
Therefore, we are interested in the standard action of $W$ on $T(\R)_2$.
We identify  $X_*/2X_*$ with $T(\R)_2$ by $x+ 2 X_*\mapsto x(-1)\in T(\R)_2$ for $x\in X_*$.
The canonical $\Z$-basis $\alpha_1^\vee,\dots,\alpha_n^\vee$ of $X_*$  gives a $\Z/2\Z$-basis of $X_*/2X_*$,
which we shall again write as $\alpha_1^\vee,\dots,\alpha_n^\vee$.

By a {\em labeling} of the Dynkin diagram $D$ we mean a vector $\aa=(a_i)_{i=1,\dots,n}$, where $a_i\in \Z/2\Z$, i.e., $a_i=0,1$.
In other words, at each vertex $i$ of $D$ we write a label $a_i\in \Z/2\Z$.
We denote the abelian group of labelings of $D$ by $L(D)$.
We have a canonical isomorphism
\begin{equation}\label{e:gamma}
\gamma\colon L(D)\isoto T(\R)_2\subset Z^1(\R,G),\quad \aa\mapsto a= \prod_{i=1}^n \left(\alpha_i^\vee(-1)\right)^{a_i}.
\end{equation}
By abuse of notation we  denote by $\gamma$ both the isomorphism $\gamma\colon L(D)\isoto T(\R)_2$
and the embedding $\gamma\colon L(D)\isoto T(\R)_2\into Z^1(\R,G)$.
Thus with $\aa\in L(D)$ we associate $a=\gamma(\aa)\in T(\R)_2\subset Z^1(\R,G)$.
We also associate with $\aa$ the element $\sum_k a_k\alpha^\vee_k\in X_*/2X_*$.

We wish to compute the orbits of  $W$ in $T(\R)_2$  with respect to the standard left action.
The Weyl group $W$ is generated by the reflections $r_i=r_{\alpha_i}$.
We define the {\em moves}
$\T_i\colon L(D)\to L(D)$ on the set of labelings $L(D)$ by  $\T_i\hh \aa=\aa'$, where
\begin{equation}\label{eq:r-i-action}
r_i\left(\prod_{j=1}^n \left(\alpha_j^\vee(-1)\right)^{a_j}\right)=
\prod_{j=1}^n\left( \alpha_j^\vee(-1)\right)^{a'_j} \
\text{ i.e., }\
r_i \left(\sum_{j=1}^n a_j\,\alpha_j^\vee\right)=\sum_{j=1}^n a'_j\,\alpha_j^\vee\in X_*/2X_*.
\end{equation}
Note that if $\aa'=\T_i\hh \aa$, then $\aa=\T_i\hh\aa'$, because $r_i^2=1$.
We say that two labelings $\aa,\aa'\in L(D)$ are {\em equivalent}
if we can relate them by a series of moves.
The set of orbits  of  $W$ in $T(\R)_2$ is in a canonical bijection
with the set of equivalence classes of labelings $\aa\in L(D)$
of the Dynkin diagram $D$ of $(G_\C,T_\C,\Pi)$ with respect to the moves.

The following Lemma \ref{prop:non-twisted} says that the moves
defined in this sections are indeed the moves of the Reeder puzzle on $D$.

\begin{lemma}\label{prop:non-twisted}
Let $G$ be a simply connected, simple, {\em compact} $\R$-group of absolute rank $n$,
and $D$ its Dynkin diagram, as above.
Define the moves $\T_i\colon  L(D)\to L(D)$ by \eqref{eq:r-i-action}.
Then we have $a'_j=a_j$ for $j\neq i$, and $a'_i$ is given by
\begin{equation}\label{non-twisted-simply-laced}
 a'_i = a_i + \Ss a_k
\end{equation}
(addition in $\Z/2\Z$), where  $\Ss$ means
that the sum is taken over all the {\em neighbors} $k\neq i$ of $i$
except for the vertices $k$ connected to $i$ by a double edge
such that the root $\alpha_k$ is {\em shorter} than $\alpha_i$.
\end{lemma}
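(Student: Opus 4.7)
The plan is to reduce the statement to a direct calculation in the cocharacter lattice $X_*$ modulo $2X_*$ using the standard formula for a Weyl reflection acting on coroots.

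First I would observe that since $G$ is simply connected, $\{\alpha_1^\vee,\dots,\alpha_n^\vee\}$ is a $\Z$-basis of $X_*$ and hence gives a $\Z/2\Z$-basis of $X_*/2X_*\cong T(\R)_2$, as already noted in the paper. Under the identification $\aa\leftrightarrow\sum_j a_j\hh\alpha_j^\vee$, the left action of $W$ on $T(\R)_2$ coming from conjugation by $N(\C)$ corresponds to the standard reflection action of $W$ on $X_*$ reduced modulo $2$. Hence, to determine $\aa'=\T_i\hh\aa$ it is enough to compute $r_i\!\bigl(\sum_j a_j\hh\alpha_j^\vee\bigr)$ in $X_*/2X_*$ and read off its coefficients in the basis $\{\alpha_k^\vee\}$.

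Second, I would apply the standard formula $r_i(x)=x-\langle\alpha_i,x\rangle\hh\alpha_i^\vee$ for $x\in X_*$ term-by-term, obtaining
\[
r_i\!\Bigl(\sum_{j=1}^n a_j\hh\alpha_j^\vee\Bigr)
\;=\; \sum_{j=1}^n a_j\hh\alpha_j^\vee \;-\; \Bigl(\sum_{j=1}^n a_j\hh\langle\alpha_i,\alpha_j^\vee\rangle\Bigr)\alpha_i^\vee,
\]
so $a'_j=a_j$ for $j\neq i$, while
\[
a'_i \;\equiv\; a_i \;+\; \sum_{j\neq i} a_j\hh\langle\alpha_i,\alpha_j^\vee\rangle \pmod{2},
\]
the $j=i$ contribution $\langle\alpha_i,\alpha_i^\vee\rangle=2$ being even.

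Third, I would evaluate $\langle\alpha_i,\alpha_j^\vee\rangle=2(\alpha_i,\alpha_j)/(\alpha_j,\alpha_j)$ modulo $2$ case by case according to the edge joining $i$ and $j$ in $D$. For $j$ not a neighbor of $i$ the value is $0$; for a single edge it is $-1\equiv 1$; for a triple edge (only in $G_2$) the two Cartan integers are $-1$ and $-3$, both odd; and for a double edge the value is $-1$ (hence $\equiv 1$) when $\alpha_j$ is the longer root and $-2$ (hence $\equiv 0$) when $\alpha_j$ is the shorter root, because $|\langle\alpha_i,\alpha_j^\vee\rangle|$ is larger precisely when $(\alpha_j,\alpha_j)$ is smaller. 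Collecting the odd contributions yields exactly the prescribed sum $\Ss a_k$, with the single omission being the double-edge case in which $\alpha_k$ is shorter than $\alpha_i$, as claimed.

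No step is a genuine obstacle; the whole argument is linear algebra modulo $2$. The only piece of bookkeeping requiring a little care is the orientation convention for a double edge, and this is exactly what produces the slogan ``the long roots don't see the short roots'' used throughout the paper.
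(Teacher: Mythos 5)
Your proof is correct and follows essentially the same route as the paper's: identify $T(\R)_2$ with $X_*/2X_*$ via the coroot basis, apply $r_i(y)=y-\langle\alpha_i,y\rangle\alpha_i^\vee$, and evaluate the Cartan integers $\langle\alpha_i,\alpha_k^\vee\rangle$ modulo $2$ case by case according to the edge type, with the double-edge/shorter-root case contributing $-2\equiv 0$. No differences worth noting.
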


\begin{proof}
A reflection $r_i$ acts on $X_*$ by
\begin{equation}\label{eq:Springer-reflection}
r_i(y)=y-\langle\alpha_i,y\rangle \alpha_i^\vee,
\end{equation}
cf. \cite[Section 7.4.1]{Springer}.
If $y=\sum_k a_k\alpha_k^\vee\in X_*$, then
$$
r_i (y) =y-\sum_k  a_k\langle\alpha_i,\alpha_k^\vee\rangle\alpha_i^\vee,
$$
and the same formula holds if $y=\sum_k a_k\alpha_k^\vee\in X_*/2X_*$.
If we write $r_i (y)=\sum_k a'_k\alpha_k^\vee$, then clearly $a'_j=a_j$ for $j\neq i$, and
\begin{equation}\label{eq:action-sum}
a'_i=a_i+\sum_k (-a_k)\langle \alpha_i,\alpha_k^\vee\rangle,
\end{equation}
so we need only to compute (in $\Z/2\Z$) the sum in \eqref{eq:action-sum}.

We may assume that our root system $R$ is a root system in a Euclidean space $V$.
Then
$$
\langle \alpha_i,\alpha_k^\vee\rangle=\frac{2(\alpha_i,\alpha_k)}{(\alpha_k,\alpha_k)},
$$
where $(\alpha_i,\alpha_k)$ is the scalar product in $V$.
If $k=i$, then $\langle\alpha_i,\alpha_k^\vee\rangle=\langle\alpha_i,\alpha_i^\vee\rangle=2\equiv 0 \pmod{2}$.
If two different vertices $i$ and $k$ are not connected by an edge, then $\langle\alpha_i,\alpha_k^\vee\rangle=0$.
Thus the sum in \eqref{eq:action-sum} is taken over vertices $k$ different from $i$ that are connected to $i$ by an edge.
Now we consider cases.
If vertices $i$ and $k$ are connected by a single edge, then $\langle\alpha_i,\alpha_k^\vee\rangle=-1$
\cite[VI.1.3, possibility (3)\,]{Bourbaki},
hence vertex $k$ gives $a_k$ to the sum in \eqref{eq:action-sum}.
If they are connected by a triple edge, then either $\langle\alpha_i,\alpha_k^\vee\rangle=-1$
or $\langle\alpha_i,\alpha_k^\vee\rangle=-3\equiv -1\pmod{2}$
\cite[VI.1.3, possibility (7)\,]{Bourbaki},
and again vertex $k$ gives $a_k$ to  the sum.
If they are connected by a double edge and the root $\alpha_k$ is  {\em longer} than $\alpha_i$,
then $\langle\alpha_i,\alpha_k^\vee\rangle=-1$ \cite[VI.1.3, possibility (5)\,]{Bourbaki},
and again vertex $k$ gives $a_k$ to  the sum.
However, if the vertices $i$ and $k$  are connected by a double edge
and the root $\alpha_k$ is {\em shorter} than $\alpha_i$\emph{},
then  $\langle\alpha_i,\alpha_k^\vee\rangle=-2\equiv 0\pmod{2}$ \cite[VI.1.3, possibility (5)\,]{Bourbaki},
hence vertex $k$ gives nothing to the sum in \eqref{eq:action-sum}.
We conclude that formula \eqref{eq:action-sum} can be written as \eqref{non-twisted-simply-laced}.
\end{proof}

\begin{corollary}\label{cor:non-twisted}
If $G$ is as in Lemma \ref{prop:non-twisted}, in particular $G$ is compact, then the map \eqref{e:gamma}
induces a bijection $\Cl(D)\isoto H^1(\R,G)$, where the moves $\T_i$ act on $L(D)$ by
formula \eqref{non-twisted-simply-laced}.
\end{corollary}


\section{Weyl action for inner forms}
\label{sec:inner}

In this section $G$, $T$, $R$, $\Pi$, $D$, and $W$ are as in Section \ref{sec:compact},
in particular $G$ is a  simply connected, simple,
{\em compact}  linear algebraic group over $\R$.

\subsection{The $t$-twisted action}
\label{subsec:t}
Write $G^\ad=G/Z_G,\ T^\ad=T/Z_G$, where $Z_G$ denotes the center of $G$.
Then $T^\ad$ is a maximal torus in the adjoint group $G^\ad$.
Consider an inner twisted form (inner twist)  $_z G$ of $G$,
 where  $z\in Z^1(\R,G^\ad)$.
It is well known that $z$ is cohomologous to some  $t\in T^\ad(\R)_2$
(see e.g., \cite[III.4.5, Example (a)]{Serre}). {\em We fix such  an element} $t$.
Then $_z G\simeq \hs_t G$.
We have $_t G(\C)=G(\C)$,
but the complex conjugation in $_t G(\C)$ is given by
$$
g\mapsto {}^*\gbar=\Inn(t)(\gbar) .
$$
This means that if we lift $t\in T^\ad(\R)_2$ to some $\ttil\in T(\C)$,
then the complex  conjugation in $_t G(\C)$ is given by
$$
^*\gbar=\ttil\,\gbar\, \ttil^{-1}.
$$

Since $\ttil\in T(\C)$, we have $_t T=T$, hence $_t T$ is a compact  maximal  torus in $_t G$,
hence it is a fundamental torus of $_t G$.
Thus any inner form of a compact semisimple $\R$-group has a compact maximal torus.
Let $T_0$ of Section \ref{sec:1} be the maximal compact subtorus of $_t T$, then clearly $T_0=\hs_t T=T$.
Let $W_0:=W_0(\hs_t G,\hs_t T)$ be the group $W_0$ of Section \ref{sec:1},
then $W_0=W(G,T)= W$, because $W_0$ was defined in terms of $T_0$.

We consider the $t$-twisted action of $W_0=W$ given by formula \eqref{eq:Bo-action}
on $H^1(\R,\hs_t T)=H^1(\R,T)=T(\R)_2$.
Let $w\in W(\R)=W(\C)$, $w=nT$, where $n\in N(\R)$.
Then
\[\nbar=n,\quad {}^* \nbar=\ttil\nbar\ttil^{-1}=\ttil n\ttil^{-1}. \]
For $a\in T(\R)_2=T(\C)_2$ the $t$-twisted action of $w$ is given by
\begin{equation}\label{eq:ect-0-gen}
w* a= n\, a\, {}^* \nbar^{-1}
=n\, a \, \ttil\, \nbar^{-1} \ttil^{-1}
=n\, a \, \ttil\, n^{-1} \ttil^{-1}
= n a n^{-1}\cdot  n\ttil n^{-1} \ttil ^{-1}.
\end{equation}
In particular, let $r_j\in W(\R)=W(\C)$ be the reflection corresponding to a simple root $\alpha_j$.
Write $r_j=n_j T$ for some $n_j\in N(\R)$.
For $a\in T(\R)_2$ the $t$-twisted action of $r_j$ is given by
\begin{equation}\label{eq:ect-0}
r_j* a= n_j\, a\, {}^* \nbar_j^{-1}
=n_j\, a \, \ttil\, n_j^{-1} \ttil^{-1}= n_j a n_j^{-1}\cdot  n_j\ttil n_j^{-1} \ttil ^{-1}.
\end{equation}
Note that
\begin{equation}\label{eq:twisting}
r_j* a=r_j(a) \cdot n_j\ttil n_j^{-1} \ttil^{-1},
\end{equation}
where $r_j(a)=n_j a n_j^{-1}$.
In particular, we have  $r_j * 1= n_j\ttil n_j^{-1} \ttil^{-1}$, so in general $r_j* 1 \neq 1$ and therefore,
the $t$-twisted action does not preserve the group structure in $T(\R)_2$.

Define
\begin{equation}\label{eq:t-bold}
\ttt=(t_i)\in(\Z/2\Z)^n, \quad\text{where}\quad (-1)^{t_i}=\alpha_i(t).
\end{equation}
We regard $\ttt$ as a {\em coloring} of the diagram $D$.
We color a vertex $i$ in black if $t_i=1$, and leave $i$ uncolored (i.e., white) if $t_i=0$.
Denote by $_\ttt D:=(D,\ttt)$ the Dynkin diagram $D=D(G_\C,T_C,\Pi)$
together with the coloring $\ttt$ .
The notation $_\ttt D$ suggests that we regard $_\ttt D=(D,\ttt) $ as an (inner) twist of $D$ by $\ttt$ .

We compute the moves corresponding to the $t$-twisted action.
For each vertex $i$ of $D$,
we define the move $\T_i$ by
$\T_i\hh \aa=\aa'$, where
\begin{equation*}\label{eq:r-i-action-twisted}
 r_i*\left(\prod_{j=1}^n \left(\alpha_j^\vee(-1)\right)^{a_j}\right)=
\prod_{j=1}^n \left(\alpha_j^\vee(-1)\right)^{a'_j}\
\text{ i.e., }\
r_i *\left(\sum_{j=1}^n a_j\,\alpha_j^\vee\right)=\sum_{j=1}^n a'_j\,\alpha_j^\vee\in X_*/2X_*.
\end{equation*}

\begin{lemma}\label{prop:twisted}
For the $t$-twisted action of $W$ and the move $\T_i$ just defined,
we have, as in Lemma \ref{prop:non-twisted}, $a'_j=a_j$ for $j\neq i$,
while in  formula \eqref{non-twisted-simply-laced}
the term  $t_i\in\Z/2\Z$ defined by $(-1)^{t_i}=\alpha_i(t)$ must be added.
Thus we have
\begin{equation}\label{twisted-simply-laced}
 a'_i = a_i+t_i +  \Ss a_k \ ,
\end{equation}
where the meaning of $\Ss$ is the same as in formula \eqref{non-twisted-simply-laced}.
\end{lemma}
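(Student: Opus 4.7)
The plan is to exploit the decomposition provided by formula \eqref{eq:twisting}:
\[
r_i * a \;=\; r_i(a)\cdot\varepsilon_i, \qquad \varepsilon_i := n_i\,\ttil\, n_i^{-1}\,\ttil^{-1}.
\]
The first factor $r_i(a)$ is simply the untwisted Weyl reflection applied to $a$, so Lemma \ref{prop:non-twisted} tells me it contributes $a_j$ to the $j$-th coordinate for $j\neq i$ and $a_i+\Ss a_k$ to the $i$-th coordinate. The whole content of the lemma therefore reduces to identifying $\varepsilon_i\in T(\R)_2$ and checking that, in the basis $\alpha_1^\vee,\dots,\alpha_n^\vee$ of $X_*/2X_*$, it has $i$-th coordinate $t_i$ and all other coordinates zero.

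For this identification, first I would note that since $n_i\in N(\R)$ represents $r_i$, conjugation by $n_i$ on $T(\C)$ is the Weyl reflection $r_i$, so $\varepsilon_i = r_i(\ttil)\ttil^{-1}$. Next I would establish the general identity
\[
r_\alpha(s)\,s^{-1} \;=\; \alpha^\vee\bigl(\alpha(s)\bigr)^{-1} \qquad \text{for every } s\in T(\C),
\]
which is a direct consequence of formula \eqref{eq:Springer-reflection}: writing $s=y(z)$ with $y\in X_*$, $z\in \C^\times$, one has $r_\alpha(s)=(y-\langle\alpha,y\rangle\alpha^\vee)(z)=s\cdot\alpha^\vee(z)^{-\langle\alpha,y\rangle}$, while $\alpha(s)=z^{\langle\alpha,y\rangle}$. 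Applied to $s=\ttil$ and $\alpha=\alpha_i$, this gives $\varepsilon_i=\alpha_i^\vee\bigl(\alpha_i(\ttil)\bigr)^{-1}$.

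It remains to evaluate $\alpha_i(\ttil)$. Because every root is trivial on the center $Z_G$, the character $\alpha_i$ factors through $T^{\ad}=T/Z_G$, so $\alpha_i(\ttil)=\alpha_i(t)=(-1)^{t_i}$ by the definition \eqref{eq:t-bold} of $t_i$; in particular the answer does not depend on the (non-canonical) choice of lift $\ttil$. Hence
\[
\varepsilon_i \;=\; \alpha_i^\vee(-1)^{-t_i} \;=\; \alpha_i^\vee(-1)^{t_i}\ \in\ T(\R)_2,
\]
which under $\gamma^{-1}$ corresponds to the labeling with $t_i$ at vertex $i$ and $0$ elsewhere. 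Multiplying by $r_i(a)$ and reading off coordinates gives exactly \eqref{twisted-simply-laced}, with the only modification compared to \eqref{non-twisted-simply-laced} being the added $t_i$. The one place requiring a little care is this last step: ensuring that the lift $\ttil\in T(\C)$ of $t\in T^{\ad}(\R)_2$ is handled so that the formula is lift-independent, which is precisely what is guaranteed by $\alpha_i$ descending to $T^{\ad}$.
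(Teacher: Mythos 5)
Your proof is correct, and it takes a genuinely different route from the paper's. Both arguments reduce, via \eqref{eq:twisting} and Lemma \ref{prop:non-twisted}, to showing $n_i\hs\ttil\hs n_i^{-1}\ttil^{-1}=\left(\alpha_i^\vee(-1)\right)^{t_i}$; the difference is in how this is established. The paper constructs a \emph{specific} lift $\ttil=\exp\bigl(\pi\ii\sum_k t_k\omega_k^\vee\bigr)$ using fundamental coweights in $\Lie T_\C$, and then computes $r_i(\ttil)\hs\ttil^{-1}$ by transporting the reflection formula to the Lie algebra and pushing it through the exponential map. You instead work with an \emph{arbitrary} lift and invoke the purely algebraic torus-level identity $r_\alpha(s)\hs s^{-1}=\alpha^\vee\bigl(\alpha(s)\bigr)^{-1}$ for $s\in T(\C)$, combined with the observation that $\alpha_i$ kills $Z_G$ and hence $\alpha_i(\ttil)=\alpha_i(t)=(-1)^{t_i}$ independently of the lift. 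Your approach is more elementary (no Lie algebra, no exponential, no choice of coweights) and makes the lift-independence of $n_i\hs\ttil\hs n_i^{-1}\ttil^{-1}$ transparent, whereas the paper's approach has the side benefit of exhibiting a concrete representative $\ttil$. One small point of hygiene: a general $s\in T(\C)$ is \emph{not} of the form $y(z)$ for a single cocharacter $y$ and a single $z\in\C^\times$, so your verification of the identity on such elements should be supplemented by the remark that both sides of $s\mapsto r_\alpha(s)s^{-1}$ and $s\mapsto\alpha^\vee\bigl(\alpha(s)\bigr)^{-1}$ are group homomorphisms $T(\C)\to T(\C)$ (the former because $T$ is abelian and $r_\alpha$ is an automorphism), and the elements $y(z)$ generate $T(\C)$; this closes the gap and is routine.
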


\begin{proof}
By  \eqref{e:gamma}, \eqref{eq:twisting} and Lemma \ref{prop:non-twisted} it suffices to show
that $n_j\ttil n_j^{-1} \ttil^{-1}=\left(\alpha_j^\vee(-1)\right)^{t_j}$.
We are indebted to Dmitry A. Timashev for the idea of the following proof.

Consider the $\C$-torus $T_\C$. As above, we
write $X_*$ for $\X_*(T_\C)=\Hom(\G_{m,\C},T_\C)$.
We have a canonical isomorphism of abelian complex Lie groups
$$
X_*\underset{\Z}{\otimes} \C^\times\isoto T(\C),\quad x\otimes u\mapsto x(u),\quad x\in X_*,\ u\in \C^\times=\G_{m,\C}(\C).
$$
Thus we obtain an isomorphism of abelian complex Lie algebras (vector spaces over $\C$)
$$
X_* \underset{\Z}{\otimes} \C \isoto \Lie T_\C,\quad x\otimes v\mapsto dx(v),\quad x\in X_*,\ v\in\C,\
dx:=d_1 x\,\colon \C=\Lie\G_{m,\C}\to \Lie T_\C\,.
$$
In particular, we obtain a canonical embedding
\begin{equation}\label{eq:embedding-X*}
X_*\into X_* \underset{\Z}{\otimes} \C \isoto \Lie T_\C\qquad x\mapsto x\otimes 1\mapsto dx(1).
\end{equation}
Now it is an easy exercise to deduce from \eqref{eq:Springer-reflection} and \eqref{eq:embedding-X*}
that for $1\le j\le n$ and for any $y\in\Lie T_\C$ we have
\begin{equation}\label{eq:Springer-reflection-Lie}
r_j(y)=y-\langle d \alpha_j,y\rangle d\alpha_j^\vee(1),
\end{equation}
where we write  $d\alpha_j$ for $d_1\alpha_j\, \colon \Lie T_\C\to\Lie\G_{m,\C}= \C$, and
we write $\langle d \alpha_j,y\rangle$ for $d\alpha_j(y)\in\C$.

Let $\omega_k^\vee\in \Lie T_\C$ be the element such that
 $\langle d\alpha_j,\omega_k^\vee\rangle=\delta_{jk}$\,, where  $\delta_{jk}$ is Kronecker's delta symbol.
We set
\[\ttil=\exp\left(\pi \ii\,\sum_k\hh t_k \omega_k^\vee \right) \in T(\C),\quad\text{where }\ii^2=-1.\]
Then
\begin{equation*}
\alpha_j(\ttil)=\exp \left\langle d\alpha_j,\,\pi \ii\,\sum_k t_k \omega_k^\vee\right\rangle=
\exp \left(\pi\ii\,\sum_k t_k\langle d\alpha_j,\omega_k^\vee\rangle\right)=
\exp(\pi\ii\, t_j)=(-1)^{t_j},
\end{equation*}
because the exponential map commutes with homomorphisms of Lie groups; see \cite[Section 1.2.7, p.~29, Problem 26]{OV}.
It follows that the image of $\ttil$ in $T^\ad(\C)$ is indeed $t$.
By \eqref{eq:Springer-reflection-Lie} we have
\begin{align*}\label{eq:Dima}
n_j\ttil n_j^{-1} \ttil^{-1}&=r_j(\ttil) \ttil^{-1}
=\exp\left(\pi \ii\, \sum_k\, t_k\left(r_j(\omega_k^\vee)-\omega_k^\vee\right)\,\right)\\
&=\exp\left(-\pi\ii\sum_k t_k\langle d\alpha_j,\omega_k^\vee\rangle d\alpha_j^\vee(1)\right)
=\exp\left( t_j\, d\alpha_j^\vee(-\pi\ii)\right)
=\left(\alpha_j^\vee(-1)\right)^{t_j}.
\end{align*}
Thus $n_j\ttil n_j^{-1} \ttil^{-1}=\left(\alpha_j^\vee(-1)\right)^{t_j}$, as required.
\end{proof}

According to Lemma \ref{prop:twisted}, the twisted action of $\T_i$ on a labeling $\aa=(a_i)\in (\Z/2\Z)^n$
is given by formula \eqref{twisted-simply-laced}.
This means that  for any vertex $i$ of $D$,
the action of $\T_i$   is given
by formula \eqref{non-twisted-simply-laced} if vertex $i$ is white  (i.e., $t_i=0$),
and by formula
\begin{equation}\label{twisted-action}
 a'_i = a_i + 1+ \Ss a_k \ ,
\end{equation}
if vertex $i$ is black (i.e., $t_i=1$).
In other words, this is exactly the generalized Reader puzzle as described in the Introduction.
We denote by $L(D,\ttt)$ (or $L(\hs_\ttt D)$) the set of labelings $(\Z/2\Z)^n$
with this twisted action of the moves $\T_i$.
By Lemma \ref{prop:twisted} the action of $\T_i$ on $L(D,\ttt)$
is compatible with the $t$-twisted action of the reflection $r_i\in W$
on $T(\R)_2=\hs_t T(\R)_2$ with respect to the canonical bijection
\begin{equation*}
\gamma_\ttt\colon L(D,\ttt)\isoto T(\R)_2\subset Z^1(\R,\hs_t G) \quad \aa\mapsto a=\prod_i \left(\alpha_i^\vee(-1)\right)^{a_i}.
\end{equation*}
By abuse of notation we denote by $\gamma_\ttt$ both the isomorphism $\gamma_\ttt\colon L(\hs_\ttt D)\isoto T(\R)_2$
and the embedding $\gamma_\ttt\colon L(\hs_\ttt D)\isoto T(\R)_2\into Z^1(\R,G)$.
We regard the twisted diagram  $_\ttt D=(D,\ttt)$ as the {\em colored Dynkin diagram of the twisted group $\hs_t G$}
(with respect to $T$ and $\Pi$).
We denote by $\Or(\hs_\ttt D)$ the set of equivalence classes (orbits) in $L(\hs_\ttt D)$
with respect to the equivalence relation given by the  moves of Lemma \ref{prop:twisted}
(in the Introduction we denoted this set of equivalence classes by $\Cl(D,\ttt)$\hs).

The following theorem describes the Galois cohomology of an {\em inner} form $_t G$
of a  compact, simply connected, simple $\R$-group $G$
in terms of labelings of the corresponding colored Dynkin diagram $_\ttt D$.

\begin{theorem}\label{thm:inner}
Let $G$, $T$, $R$, $\Pi$, $D$, and $W$ be as in Section \ref{sec:compact}.
Let $t\in T^\ad(\R)_2$ and let $\ttt\in(\Z/2\Z)^n$ be defined by \eqref{eq:t-bold}.
Let $L(_\ttt D)$ be the set of labelings of the colored Dynkin diagram $_\ttt D$
with the moves given by formula \eqref{twisted-simply-laced}.
Then the canonical map
$$
\gamma_\ttt\colon L(\hs_\ttt D)\isoto T(\R)_2 \into Z^1(\R,\hs_t G)
$$
induces a canonical bijection
$$
\lambda_\ttt\colon \Or(\hs_\ttt D)\isoto H^1(\R,\hs_t G).
$$
\end{theorem}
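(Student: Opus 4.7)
The plan is to deduce the theorem by assembling three earlier results: Proposition \ref{prop:Bo88} applied to the twisted group $\hs_t G$, Lemma \ref{lem:Bo88} applied to its compact torus $T$, and Lemma \ref{prop:twisted} translating the twisted Weyl action into the generalized Reeder moves. Everything required has essentially been prepared; the theorem should follow by stringing these together.

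First, I would specialize the general framework of Section \ref{sec:1} to $\hs_t G$. As observed in Subsection \ref{subsec:t}, because the lift $\ttil$ lies in $T(\C)$, the torus $\hs_t T$ coincides with $T$ as an $\R$-group and hence remains compact, so it is a fundamental torus of $\hs_t G$ with $T_0 = T$ and $W_0 = W$. Proposition \ref{prop:Bo88} then yields a canonical bijection
\begin{equation*}
W \backslash H^1(\R, \hs_t T) \isoto H^1(\R, \hs_t G),
\end{equation*}
in which the action of $W$ on $H^1(\R, \hs_t T)$ is given by formula \eqref{eq:Bo-action} and agrees, in this setting, with the explicit $t$-twisted formula \eqref{eq:ect-0-gen}.

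Next, since $T$ is compact, its largest split subtorus is trivial, and Lemma \ref{lem:Bo88} becomes a canonical isomorphism $T(\R)_2 \isoto H^1(\R, \hs_t T)$. Composing with the bijection above I obtain
\begin{equation*}
W \backslash T(\R)_2 \isoto H^1(\R, \hs_t G),
\end{equation*}
where $W$ acts by the $t$-twisted action. Finally, the isomorphism $\gamma_\ttt$ identifies $L(\hs_\ttt D)$ with $T(\R)_2$, and by Lemma \ref{prop:twisted} each simple reflection $r_i$ acts (under the $t$-twisted action) on $T(\R)_2$ through the move $\T_i$ on $L(\hs_\ttt D)$. Since the $r_i$ generate $W$, passage to orbits yields $\Or(\hs_\ttt D) \isoto W \backslash T(\R)_2$, and composing produces the desired bijection $\lambda_\ttt$.

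The only mildly subtle point will be to confirm that the abstract action \eqref{eq:Bo-action} on cohomology classes really is induced on the subset $T(\R)_2 \subset Z^1(\R, \hs_t T)$ by the set-theoretic formula \eqref{eq:ect-0-gen}, so that $W$-orbits in $T(\R)_2$ correspond bijectively to $W$-orbits in $H^1(\R, \hs_t T)$. This is essentially the content of Construction 8 of \cite{Borovoi-arXiv} cited for Proposition \ref{prop:Bo88}, combined with the fact that the twisted bar-involution $g \mapsto \ttil \gbar \ttil^{-1}$ preserves $T(\C)$ because $\ttil \in T(\C)$; no additional calculation is needed, so I do not anticipate a genuine obstacle.
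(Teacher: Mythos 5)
Your proposal is correct and follows exactly the route of the paper, which proves the theorem in one line as an immediate consequence of Proposition \ref{prop:Bo88} and Lemma \ref{prop:twisted}; your write-up merely spells out the intermediate identifications (the compactness of $\hs_t T$, the vanishing of the split part in Lemma \ref{lem:Bo88}, and the compatibility of the actions) that the paper leaves implicit. No gaps.
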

The theorem follows immediately from Proposition \ref{prop:Bo88} and Lemma  \ref{prop:twisted}.
We specify that $\lambda_\ttt$ takes the orbit (class) of a labeling $\aa=(a_j)$ to the cohomology class of the cocycle
\[\prod_{j=1}^n(\alpha_j^\vee(-1))^{a_j}\in T(\R)_2\subset Z^1(\R,\hs_t G).\]


\section{Weyl action for outer forms}
\label{sec:outer}

In this section again $G$, $T$, $R$, $\Pi$, $B$, $D$, $N$ and $W$ are as in Section \ref{sec:compact},
in particular $G$ is a  simply connected, simple,
{\em compact}  linear algebraic group over $\R$, and $B$ is
 the Borel subgroup of $G_\C$ containing $T_\C$, corresponding to the basis $\Pi$ of $R$.

Let $\rho\in\Gal(\C/\R)$ denote the complex conjugation.
Since $G$ is defined over $\R$, the Galois group $\Gal(\C/\R)=\{1,\rho\}$ acts on $\Aut\,G_\C$.
The {\em group of semi-automorphisms} $\SAut\,G_\C:=(\Aut\,G_\C)\rtimes \Gal(\C/\R)$ acts on $D$,
see  \cite[Proposition 3.1]{BKLR}. We describe this action here.

 We construct a homomorphism
\begin{equation*}
\psi_S\colon \SAut\, G_\C\to\Aut\,D.
\end{equation*}
Let $s\in\SAut\,G_\C$.
We have $T_\C\subset B\subset G_\C$.
Consider the pair $(s(T_\C),s(B))$.
There exists $g\in G(\C)$ such that
\[g\cdot s(T_\C)\cdot g^{-1}=T_\C,\quad g\cdot s(B)\cdot g^{-1}=B,\]
and if $g'\in G(\C)$ is another such element, then $g'=t'g$ for some $t'\in T(\C)$.
We obtain a semi-automorphism
\[\Inn(g)\circ s \in\SAut(G_\C,T_\C,B),\]
which induces a well-defined automorphism
\[\psi_S(s)\in\Aut\, D.\]

The restriction of $\psi_S$ to the subgroup \ $\Gal(\C/\R)\subset \SAut\, G$ \
gives the  {\em $^*$-action} of  $\Gal(\C/\R)$  on $D$, see \cite[Section 2.3]{Tits}.
Let
\begin{equation}\label{e:psi}
\psi\colon \Aut\, G_\C\to\Aut\,D
\end{equation}
denote the restriction of $\psi_S$ to the subgroup $\Aut\, G_\C\subset \SAut\, G_\C$, then
$\psi$  is clearly $\Gal(\C/\R)$-equivariant with respect to the $^*$-action of $\Gal(\C/\R)$ on $D$.
The homomorphism $\psi$ fits into the exact sequence
\begin{equation}\label{split-Springer}
1\to G^\ad(\C)\to \Aut\,G_\C\labelto{\psi} \Aut\,D\to 1
\end{equation}
which admits a {\em splitting,} that is, a homomorphism
$\phi\colon\Aut\,D\to\Aut\,G_\C$ such that $\psi\circ\phi=\id_{\Aut D}$\,,
see \cite[Expos\'e XXIV, Theorem 1.3]{SGA3} or \cite[Corollary 2.14]{SpringerAMS}, or \cite[Proposition 1.5.5]{Conrad-RGS}.
We construct  a splitting of \eqref{split-Springer} of a special kind  in the next lemma.

\begin{lemma}\label{lem:phi}
Let $G$ be as above, in particular compact and simply connected.
Then there exists a  homomorphism
\[\phi\colon\Aut\,D\to\Aut\, G_\C,\quad \theta\mapsto\phi_\theta\]
such that $\psi\circ\phi=\id_{\Aut D}$
and for any $\theta\in\Aut\,D$ the automorphism $\phi_\theta\in\Aut\,G_\C$ is defined over $\R$.
\end{lemma}

\begin{proof}
Consider the complexification $\gg_\C$ of $\gg=\Lie G$ and the root decomposition
\[ \gg_\C=\Lie T_\C \oplus\bigoplus_{\beta\in R}\gg_\beta\,. \]
Consider a ``canonical system of generators''  $h_i\,,e_i\,,f_i\ (i=1,\dots,n)$  of $\gg_\C$
satisfying
\begin{align*}
&[h_i\,,h_j]=0,\quad [e_i\,,f_i]=h_i\,,\quad [e_i\,,f_j]=0\text{ for }i\neq j\\
&[h_i\,,e_j]=a_{ji}e_j\,,\quad [h_i\,, f_j]=-a_{ji} f_j\,,
\end{align*}
see \cite[Section 4.3.2]{OV}.
Here $(a_{ij})$ is the Cartan matrix,
\[e_i\in \gg_{\alpha_i},\quad f_i\in \gg_{-\alpha_i},\quad h_i\in \Lie T_\C,\quad \Pi=\{\alpha_1,\dots,\alpha_n\}.\]
Since $G$ is compact, one can choose the generators $h_i\,,e_i\,,f_i$ such that
\[^\rho h_i=-h_i\,, \quad ^\rho e_i=-f_i\,, \quad ^\rho f_i=-e_i\,, \]
see \cite[Section 5.1.3, Problem 19]{OV}.

Now let $\theta\in \Aut\,D$.
We define an automorphism $\phi_\theta$ of $\gg_\C$ on the generators by
\[\phi_\theta(h_i)=h_{\theta(i)}\,,\quad \phi_\theta(e_i)=e_{\theta(i)}\,,\quad  \phi_\theta(f_i)=f_{\theta(i)}\,.\]
Clearly $\phi_\theta$ commutes with $\rho$, hence the automorphism $\phi_\theta$ of $\gg_\C$ is defined over $\R$.
The $\R$-automorphism $\phi_\theta$ of $\gg$ induces a unique automorphism of the connected simply connected algebraic $\R$-group $G$;
by abuse of notation we denote this automorphism again by $\phi_\theta$.
We have $\phi_\theta(T)=T,\ \phi_\theta(B)=B$, and it is clear from our construction of $\psi$ that $\psi(\phi_\theta)=\theta$, hence
$\psi\circ\phi=\id_{\Aut D}$\,.
Clearly
\[\phi\colon\Aut\,D\to\Aut_\R\, G,\quad \theta\mapsto \phi_\theta\]
is a homomorphism.
\end{proof}

\begin{corollary}
The complex conjugation $\rho$, when acting on $D$ via the $^*$-action, acts on $\Aut\,D$ trivially.
\end{corollary}
\begin{proof}
Indeed, if $\theta\in\Aut\,D$, then
\[^\rho\theta=\hs^\rho(\psi(\phi_\theta))=\psi(\hs^\rho(\phi_\theta))=\psi(\phi_\theta)=\theta,\]
because $\phi_\theta\in\Aut_\R\,G$.
\end{proof}

Let $_z G$ be an outer twisted form (outer twist) of $G$, where $z\in Z^1(\R,\Aut\, G)$ and $z\notin Z^1(\R,\Inn\, G)$.
The homomorphism $\psi$ of \eqref{e:psi} induces a map
\[ Z^1(\R,\Aut\,G)\to Z^1(\R,\Aut\,D)=(\Aut\,D)_2\,.\]
We obtain an element $\tau=\psi(z)\in (\Aut\,D)_2$, then $\tau$ is a nontrivial involutive automorphism of $D$.
It acts on the set of vertices $\Pi$ of $D$; we write $\alpha_j\mapsto \alpha_{\tau(j)}$, $j=1,\dots,n$.
We write $\Pi^\tau$ for the set of fixed points of $\tau$ in $\Pi$, and $D^\tau$ for the corresponding Dynkin subdiagram.
Furthermore, $\tau$ acts on  $\Pi^\vee$ by $\alpha_j^\vee\mapsto\alpha_{\tau(j)}^\vee$
and on $W=\langle r_j\rangle_{j=1,\dots,n}$ by $\tau(r_j)=r_{\tau(j)}$.
We write $W^\tau$ for the algebraic subgroup of fixed points of $\tau$ in $W$.

The homomorphism $\phi$ of  Lemma \ref{lem:phi} gives an involutive automorphism $\phi_\tau$ of $(G,T,B)$.
By abuse of notation, we shall denote this ``diagrammatic'' automorphism $\phi_\tau$ again by $\tau$.
Then $\tau\in\Aut_\R(G,T)_2$ and $\tau$ acts on $W$.
We write $_\tau T$, $_\tau T^\ad$, $_\tau G$, $_\tau G^\ad$, and $_\tau W$ for the corresponding twisted algebraic groups.

We consider the action of $\tau$ on $\Pi$ and on $\Pi^\vee$.
The decomposition
$$
\Pi=\Pi^\tau\cup (\Pi\smallsetminus \Pi^\tau)
$$
of the basis $\Pi$ of the character group $\X^*(T^\ad)$ of the adjoint torus $T^\ad$
induces a $\tau$-invariant decomposition into a direct product
\begin{equation}\label{e:T-product}
T^\ad=T^\ad(D^\tau)\times_\R T^\ad(D\smallsetminus D^\tau)
\end{equation}
with $\X^*(T^\ad(D^\tau))=\langle\Pi^\tau\rangle$ and $\X^*(T^\ad(D\smallsetminus D^\tau))=\langle\,\Pi\smallsetminus \Pi^\tau\rangle$.
Here for  a subset $S\subset \X^*(T^\ad)$, we denote by $\langle S\rangle$ the subgroup generated by $S$.
Concerning the corresponding $\tau$-twisted tori, we see that $_\tau T^\ad(D^\tau)=T^\ad(D^\tau)$ is a compact torus,
while the $\R$-torus $_\tau T^\ad(D\smallsetminus D^\tau)$
is isomorphic to the Weil restriction of scalars $R_{\C/\R} T'$ of some $\C$-torus $T'$.
It follows that
$$
H^1(\R,\hs_\tau T^\ad(D^\tau))=T^\ad(D^\tau)(\R)_2,\quad \text{while}\quad H^1(\R,\hs_\tau T^\ad(D\smallsetminus D^\tau))=1,
$$
and therefore, the embedding ${}_\tau T^\ad(D^\tau)\into\hs_\tau T^\ad$ induces a canonical isomorphism
\begin{equation}\label{eq:isomorphism-ad}
T^\ad(D^\tau)(\R)_2=H^1(\R,\hs_\tau T^\ad(D^\tau))\isoto H^1(\R,\hs_\tau T^\ad).
\end{equation}

Similarly, we have a decomposition
$$
\Pi^\vee=\Pi^{\vee\,\tau}\,\cup\, (\Pi^\vee\smallsetminus \Pi^{\vee\,\tau}),
$$
where we write $\Pi^{\vee\,\tau}$ for $(\Pi^\vee)^\tau$. This decomposition
of the basis $\Pi^\vee$ of the cocharacter group $\X_*(T)$ induces a $\tau$-invariant decomposition into a direct product
$$
T=T(D^\tau)\times T(D\smallsetminus D^\tau)
$$
with $\X_*(T(D^\tau))=\langle\,\Pi^{\vee\,\tau} \rangle$ and
$\X_*(T(D\smallsetminus\, D^\tau))=\langle\,\Pi^\vee\smallsetminus\, \Pi^{\vee\,\tau} \rangle$.
As above, we have $_\tau T(D^\tau)=T(D^\tau)$, hence
$$
H^1(\R,\hs_\tau T(D^\tau))=T(D^\tau)(\R)_2,\quad \text{while}\quad H^1(\R,\hs_\tau T(D\smallsetminus D^\tau))=1.
$$

The involutive automorphism $\tau$ of $D$ acts on the set of labelings $L(D)$, and we denote by $L(D)^\tau$ the subset of invariants.
The homomorphism $\gamma\colon L(D)\to  T(\C)_2$ given by formula \eqref{e:gamma} induces an isomorphism $L(D)^\tau\to\hs_\tau T(\R)_2$
(because the complex conjugation acts on $_\tau T(\C)_2$ as $\tau$).
We obtain a commutative diagram
\begin{equation}\label{eq:isomorphism}
\xymatrix{
L(D^\tau)\ar[r]^-\sim \ar@/_1pc/[d]   &T(D^\tau)(\R)_2\ar[r]^-\sim\ar@/_1pc/[d]    &H^1(\R, T(D^\tau))\ar[d]^\sim  \\
L(D)^\tau\ar[r]^-\sim\ar[u]           &_\tau T(\R)_2\ar[r]\ar[u]                      &H^1(\R,\hs_\tau T)
}
\end{equation}
with obvious maps.

For our $z\in Z^1(\R,\Aut\,G_\C)$ and $\tau=\phi_{\psi(z)}$ we have $\psi(z)=\psi(\tau)$.
It follows from the exact sequence  \eqref{split-Springer} and \cite[I.5.5, Corollary 2 of Proposition 39]{Serre}
that our outer form $_z G$ of $G$ is an {\em inner} twist of $_\tau G$,
i.e. $_z G\simeq\hs_{z'} (\hs_\tau G)$ for some $z'\in Z^1(\R,\hs_\tau G^\ad)$.
By Proposition \ref{prop:Bo88} the cocycle $z'$ is cohomologous to some $t\in Z^1(\R,\hs_\tau T^\ad)\subset Z^1(\R,\hs_\tau G^\ad)$,
and by \eqref{eq:isomorphism-ad} we may assume that
$t\in T^\ad(D^\tau)(\R)_2 \subset \hs_\tau T^\ad(\R)_2$.
We denote by $\Inn(t)$ the corresponding inner automorphism of $_\tau G$ of order dividing 2.
We set $\sigma=\Inn(t)\circ\tau$.
Note that $\Inn(t)$ and $\tau$ commute, hence $\sigma$ is an outer automorphism of order 2 of $G$.
We write $_{\sigma} G=\, _{\Inn(t)}(_\tau G)$ for the corresponding twisted form of $G$, then $\sigma\sim z$ and $_\sigma G\simeq\hs_z G$.
For simplicity we also write $_{\sigma} G=\hs_{t\tau} G$.
We have $_\sigma G(\C)= G(\C)$, but the complex conjugation in $_\sigma G(\C)$ is given by
$$
^*\gbar=\sigma(\gbar)=\Inn(t)(\tau(\gbar)) .
$$
Note that $\Inn(t)$ acts trivially on $_\tau T$, hence also on $_\tau W$,
because $_\tau W\subset\Aut(_\tau T)$.
We see that $_{t\tau} T=\hs_\tau T$ and $_{t\tau} W=\hs_\tau W$.

We consider the group $W_0:=W_0({}_{t\tau} G)=W_0({}_\tau G)$; see Section \ref{sec:1}.
We have $W_0(\C)=W_0(\R)={}_\tau W(\R)$; see \cite[Section 7]{Borovoi-arXiv}.
Clearly ${}_\tau W(\R)=W^\tau(\C)$, hence $W_0(\R)=W^\tau(\C)$.
The group $W_0(\R)$ acts on $H^1(\R,{}_{t\tau} T)=H^1(\R,{}_\tau T)$ as in formula \eqref{eq:Bo-action},
and it acts on the set of labelings $L(D^\tau)$
via  \eqref{eq:isomorphism}.
We wish to describe this action explicitly.

Note that if $D$ is of type $\AA_{2n}$, then $D^\tau=\emptyset$, $T(D^\tau)=1$,
$H^1(\R,{}_\tau T)=1$, $H^1(\R,{}_\tau G)=1$ (in this case $_\tau G\simeq \SL_{2n+1}$).
From now till the end of this section we shall assume that {\em $D$ is not of type $\AA_{2n}$.}
Then from the classification of Dynkin diagrams we know that for any $j\in D\smallsetminus D^\tau$,
the vertices $j$ and $\tau(j)$ are not connected by an edge,
and therefore, the reflections $r_j$ and $r_{\tau(j)}$ commute.

\begin{lemma}\label{lem-pairs}
Assume that  $D$ is not of type $\AA_{2n}$.
Then the group $W_0(\R)$
is generated by the reflections $r_i$ for $i\in D^\tau$
and by the products $r_j\cdot r_{\tau(j)}$ for $j\in D\smallsetminus D^\tau$.
\end{lemma}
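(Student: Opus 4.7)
The plan is to reduce to the standard theorem on fixed subgroups of Coxeter groups under diagram automorphisms (Steinberg folding). First I would verify that the listed elements actually belong to $W^\tau(\C) = W_0(\R)$: for $i \in D^\tau$ one has $\tau(r_i) = r_{\tau(i)} = r_i$, and for $j \in D \smallsetminus D^\tau$ one has $\tau(r_j \cdot r_{\tau(j)}) = r_{\tau(j)}\cdot r_j$, which equals $r_j \cdot r_{\tau(j)}$ because $r_j$ and $r_{\tau(j)}$ commute. This commutation is exactly the place where the hypothesis $D \neq \AA_{2n}$ is used: for every other Dynkin diagram, whenever $\tau(j) \neq j$ the vertices $j$ and $\tau(j)$ are non-adjacent, as noted just before the lemma.

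The heart of the argument is the converse: every $\tau$-fixed element of $W$ lies in the subgroup $W'$ generated by the stated elements. I would argue this by induction on the length $\ell(w)$ of $w \in W^\tau$ with respect to the Coxeter generating set $\{r_k : k \in \Pi\}$. If $\ell(w) > 0$, choose a simple root $\alpha_i$ with $w^{-1}(\alpha_i) < 0$; then by $\tau$-invariance also $w^{-1}(\alpha_{\tau(i)}) < 0$, so $w^{-1}$ sends the entire $\tau$-orbit of $\alpha_i$ to negative roots. If $i \in D^\tau$, the standard exchange/length criterion gives $\ell(r_i w) = \ell(w) - 1$, and $r_i w \in W^\tau$. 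If $i \notin D^\tau$, then since $\alpha_i$ and $\alpha_{\tau(i)}$ are non-adjacent, $r_i$ and $r_{\tau(i)}$ commute, and a standard computation (iterating the exchange condition) shows $\ell(r_i r_{\tau(i)} w) = \ell(w) - 2$, with $r_i r_{\tau(i)} w \in W^\tau$. In either case we reduce the length while staying in $W^\tau$ and multiplying by a generator of $W'$, completing the induction.

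The main obstacle is the length-reduction step in the second case: one must verify that when $w^{-1}$ sends both $\alpha_i$ and $\alpha_{\tau(i)}$ to negative roots, multiplying by the commuting product $r_i r_{\tau(i)}$ really drops the length by $2$ rather than $0$. This is where the non-adjacency $\langle \alpha_i, \alpha_{\tau(i)}^\vee \rangle = 0$ is essential: it guarantees that $r_{\tau(i)}$ does not move $\alpha_i$, so after applying $r_{\tau(i)}$ the root $\alpha_i$ is still sent to a negative root by $(r_{\tau(i)} w)^{-1}$, allowing a second application of the exchange condition. Had $D$ been of type $\AA_{2n}$ with the two central vertices swapped by $\tau$, this step would fail and one would actually need the element $r_i r_{\tau(i)} r_i$ (the longest element of the $\AA_2$ parabolic), which is not of the stated form, explaining the exclusion in the hypothesis.

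Alternatively, and perhaps more cleanly, I would cite the general folding theorem (Steinberg, \emph{Endomorphisms of Linear Algebraic Groups}, or Hée): for a diagram automorphism $\tau$ of a Coxeter system such that no two simple reflections in a single $\tau$-orbit fail to commute, $W^\tau$ is itself a Coxeter group with generators $w_O = \prod_{k \in O} r_k$ indexed by $\tau$-orbits $O$ in $\Pi$. Under our hypothesis every orbit is either a singleton in $D^\tau$ (giving $r_i$) or a pair of non-adjacent vertices (giving the commuting product $r_j r_{\tau(j)}$), which is precisely the generating set in the statement.
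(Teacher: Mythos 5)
Your proposal is correct. The paper's own proof is a one-liner: it identifies $W_0(\R)$ with $W^\tau(\C)$ and then cites Carter, \emph{Simple Groups of Lie Type}, Proposition 13.1.2, which is exactly the folding theorem you invoke in your final paragraph --- so your ``alternative'' route is in fact the paper's route. What you add beyond the paper is a self-contained proof of that folding statement by induction on length, and your argument is sound: the verification that the generators lie in $W^\tau$, the choice of a simple root $\alpha_i$ with $w^{-1}(\alpha_i)<0$ and the observation that $\tau$-invariance forces $w^{-1}(\alpha_{\tau(i)})<0$ as well, and the key point that non-adjacency of $i$ and $\tau(i)$ gives $r_i(\alpha_{\tau(i)})=\alpha_{\tau(i)}$, hence $(r_i w)^{-1}(\alpha_{\tau(i)})=w^{-1}(\alpha_{\tau(i)})<0$ and a length drop of exactly $2$, are all correct. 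Your aside about type $\AA_{2n}$, where the two central vertices are adjacent and the $\tau$-fixed element one actually needs is $r_i r_{\tau(i)} r_i$, correctly explains the exclusion in the hypothesis (and matches the paper's separate treatment of $\AA_{2n}$, where $D^\tau=\emptyset$ and the cohomology is trivial). The only thing the self-contained argument buys is independence from the reference; the only thing it costs is length.
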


\begin{proof}
We have
$$
 W_0(\R)={}_\tau W(\R)=W^\tau(\C).
$$
Now the lemma follows from \cite[Proposition 13.1.2]{Ca}.
\end{proof}

\begin{lemma}\label{lem:prod-pairs}
Assume that  $D$ is not of type $\AA_{2n}$.
Let $j\in D\smallsetminus D^\tau$.
Then the product   $r_j\cdot r_{\tau(j)}$ acts trivially on $H^1(\R,\hs_\sigma T)=H^1(\R,\hs_\tau T)$,
where $\sigma=\Inn(t)\circ\tau$.
\end{lemma}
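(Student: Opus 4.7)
The plan is to represent any class in $H^1(\R, \hs_\sigma T) = H^1(\R, \hs_\tau T)$ by an element $a \in \hs_\tau T(\R)_2$ (possible by Lemma~\ref{lem:Bo88}), to compute $w*a$ explicitly for $w = r_j \cdot r_{\tau(j)}$, and to show that $w*a$ and $a$ represent the same cohomology class. First I would pick a lift $n_j \in N(\R)$ of $r_j$ and set $n_{\tau(j)} := \tau(n_j)$, a lift of $r_{\tau(j)}$. Because $D$ is not of type $\AA_{2n}$, the vertices $j$ and $\tau(j)$ are not connected by an edge, so $\alpha_j$ and $\alpha_{\tau(j)}$ are orthogonal and the reflections commute; choosing commuting lifts one obtains $n := n_j n_{\tau(j)}$ with $\bar n = n$ and $\tau(n) = n$. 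With this real, $\tau$-invariant lift of $w$, the definition \eqref{eq:Bo-action} of the $*$-action (adapted to the outer-twisted conjugation $g\mapsto \Inn(t)(\tau(\bar g))$) yields $w*a = w(a)\cdot c$, where $c = w(\tilde t)\,\tilde t^{-1}$ and $w(g) := ngn^{-1}$ for $g\in T$.

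Next I would show that $c=1$. By the reduction preceding the lemma we may assume $t\in T^\ad(D^\tau)(\R)_2$; since the product decomposition $T^\ad = T^\ad(D^\tau)\times T^\ad(D\smallsetminus D^\tau)$ forces $\alpha_k(t)=1$ for every $k\in\Pi\smallsetminus\Pi^\tau$, formula \eqref{eq:t-bold} gives $t_j = t_{\tau(j)} = 0$. The identity $n_k\tilde t\,n_k^{-1}\tilde t^{-1} = (\alpha_k^\vee(-1))^{t_k}$ established inside the proof of Lemma~\ref{prop:twisted} then implies that both $r_j$ and $r_{\tau(j)}$ fix $\tilde t$, whence $c = 1$.

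It remains to verify that $b := w(a)\,a^{-1}$ is a coboundary in $\hs_\tau T$. Writing $a = \prod_i (\alpha_i^\vee(-1))^{a_i}$ for a $\tau$-invariant labeling $(a_i)$ and applying \eqref{eq:Springer-reflection} twice (using $\alpha_j\perp\alpha_{\tau(j)}$), one obtains
\[
b = (\alpha_j^\vee(-1))^{\epsilon_j}\,(\alpha_{\tau(j)}^\vee(-1))^{\epsilon_{\tau(j)}},\qquad \epsilon_k \equiv \textstyle\sum_i a_i\,\langle\alpha_k,\alpha_i^\vee\rangle \pmod 2,
\]
an element of $T(D\smallsetminus D^\tau)(\C)$. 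The $\tau$-invariance of the pairing $\langle\,,\,\rangle$ combined with $a_i = a_{\tau(i)}$ forces $\epsilon_j = \epsilon_{\tau(j)}$, so $b$ is $\tau$-invariant, real, and of order dividing $2$; in particular $b$ is a cocycle in $Z^1(\R, \hs_\tau T(D\smallsetminus D^\tau))$. Since $H^1(\R, \hs_\tau T(D\smallsetminus D^\tau)) = 1$ (noted just before \eqref{eq:isomorphism-ad}), $b$ is a coboundary in this smaller torus, hence in $\hs_\tau T$. Therefore $[w*a] = [a]$ in $H^1(\R, \hs_\tau T)$.

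The main technical point is the last step: showing that $b$ actually lies in the sub-torus $\hs_\tau T(D\smallsetminus D^\tau)$ \emph{as a $\tau$-invariant cocycle}, so that the vanishing of $H^1$ on this smaller torus applies. Both the orthogonality of $\alpha_j$ and $\alpha_{\tau(j)}$ (the reason for excluding type $\AA_{2n}$) and the equality $\epsilon_j = \epsilon_{\tau(j)}$ (forced by $\tau$-invariance of the labeling) are essential; without either of them the argument cannot be reduced to triviality of $H^1(\R, \hs_\tau T(D\smallsetminus D^\tau))$.
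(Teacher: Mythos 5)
Your proof is correct and follows essentially the same route as the paper's: commuting real lifts $n_j$, $n_{\tau(j)}=\tau(n_j)$ make the $*$-action of $r_j r_{\tau(j)}$ explicitly computable, the product alters only the $j$- and $\tau(j)$-coordinates of the labeling, and the class in $H^1(\R,{}_\tau T)$ depends only on the $D^\tau$-coordinates because $H^1(\R,{}_\tau T(D\smallsetminus D^\tau))=1$. The only cosmetic difference is that the paper first normalizes the representative to lie in $T(D^\tau)(\R)_2$ and absorbs the twisting term into the move via Lemma~\ref{prop:twisted}, whereas you keep a general $\tau$-invariant representative and verify directly that $w(\tilde t\hspace{1pt})\tilde t^{\hspace{1pt}-1}=1$.
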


\begin{proof}
Let $b\in Z^1(\R,{}_\tau T)$.
By diagram \eqref{eq:isomorphism} we may assume that $b\in T(D^\tau)(\R)_2\subset T(\C)_2$.
Let $\bb=(b_k)\in (\Z/2\Z)^D$ be the corresponding labeling of $D$ such that $b=\prod_k (\alpha_k^\vee(-1))^{b_k}$.

We set $w_{j,\tau(j)}=r_j r_{\tau(j)}$.
Let $G_j=G_{\alpha_j}$ denote the simple 3-dimensional subgroup of $G$ corresponding to the simple root $\alpha_j$.
Choose a representative $n_j\in G_j(\R)\cap \mathcal{N}_G(T)(\R)$ of $r_j\in W(\R)$.
Set $n_{\tau(j)}=\tau(n_j)\in G_{\tau(j)}(\R)$, then $\tau(n_{\tau(j)})=n_j$, because $\tau^2=1$.
Since the vertices $j$ and $\tau(j)$ are not connected by an edge,
the subgroups $G_j$ and $G_{\tau(j)}$ of $G$ commute,
hence $n_j$ and $n_{\tau(j)}$ commute.
Set $n_{j,\tau(j)}:=n_j n_{\tau(j)}$, then
\[\tau(n_{j,\tau(j)})=\tau(n_j n_{\tau(j)})=n_{\tau(j)} n_j= n_j n_{\tau(j)}=n_{j,\tau(j)}\, ,\]
hence $n_{j,\tau(j)}\in \mathcal{N}_G(T)(\R)^\tau$
and $n_{j,\tau(j)}$ represents $w_{j,\tau(j)}$.

We consider the action \eqref{eq:Bo-action} of $w_{j,\tau(j)}$ on $H^1(\R,\,_{t\tau}T)$.
We write $w$ for $w_{j,\tau(j)}$ and $n$ for $n_{j,\tau(j)}$.
Recall that $\sigma=\Inn(t)\circ\tau$, where $t\in T^\ad(D^\tau)(\R)_2\subset T^\ad(\C)_2$.
We lift $t$ to some $\ttil\in T(\C)$.
Then we have
\begin{equation*}
w* [b]:=[nbn^{-1}\cdot n\,\Inn(t)(\tau(\nbar)^{-1})]=
[nbn^{-1}\cdot n\ttil\tau(\nbar)^{-1}\ttil^{-1}]=[nbn^{-1}\cdot n\ttil n^{-1}\ttil^{-1}],
\end{equation*}
because $\tau(\nbar)=n$.
Thus the action \eqref{eq:Bo-action} of $w\in W_0(\R)$ on $Z^1(\R,\,_{t\tau}T)$
is compatible with the action   \eqref{eq:ect-0-gen} of $w\in W(\C)$ on $T(\C)_2$.

We consider the $t$-twisted action \eqref{eq:ect-0-gen} of $W(\C)$ on $T(\C)_2$.
Then Lemma \ref{prop:twisted} is applicable, and it
implies that the move $\T_j$ corresponding to the reflection $r_j\in W(\C)$
can change only the $j$-coordinate $b_j$ of $\bb$.
Now consider $w_{j,\tau(j)}=r_{\tau(j)}r_j\in W_0(\R)\subset W(\C)$ for $j\in D\smallsetminus D^\tau$,
then we see that $\T_{\tau(j)}\T_j$ can change only the $j$- and the $\tau(j)$-coordinates of $\bb$.
In particular, if we write $\bb'=  (\T_j \T_{\tau(j)}) \bb$, then $b'_i=b_i$ for any $i\in D^\tau$.

Since $w_{j,\tau(j)}\in W_0(\R)$ and $b\in Z^1(\R,\hs_\sigma T)$,
we see that $b':=w_{j,\tau(j)}(b)$ is contained in $Z^1(\R,\hs_\sigma T)$.
Since $b'_i=b_i$ for any $i\in D^\tau$, by diagram \eqref{eq:isomorphism} $b'\sim b$ in $Z^1(\R,\hs_\sigma T)$.
Thus $w_{j,\tau(j)}=r_{\tau(j)}\,r_j$ acts trivially on $H^1(\R,\hs_\sigma T)$.
\end{proof}

\begin{lemma}\label{lem:r-i-t-tau}
Let $a\in T(D^\tau)(\R)_2\subset Z^1(\R,\hs_\sigma T)$.
Let $i\in D^\tau$, and write $[a']= r_i[a]$, where $a'\in T(D^\tau)(\R)_2\subset Z^1(\R,\hs_\sigma T)$,
and $[a]\mapsto r_i[a]$ refers to the action  \eqref{eq:Bo-action} of $r_i\in W_0(\R)$ on $H^1(\R,\,_\sigma T)$.
Write
\begin{equation*}
a=\prod_{j\in D^\tau} \left(\alpha_j^\vee(-1)\right)^{a_j},\qquad
a'=\prod_{j\in D^\tau} \left(\alpha_j^\vee(-1)\right)^{a'_j}
\end{equation*}
Then $a'_j=a_j$ for $j\neq i$ and
\begin{equation}\label{eq:twisted-outer}
a'_i=a_i+t_i+\Ssd a_k\,,
\end{equation}
where $(-1)^{t_i}=\alpha_i(t)$ and the sum is taken over the neighbors $k$ of $i$ {\em lying in $D^\tau$}.
\end{lemma}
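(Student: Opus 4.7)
The plan is to reduce the computation of the $\sigma$-twisted Weyl action for $r_i$ with $i\in D^\tau$ to the inner-twisted formula of Lemma \ref{prop:twisted}, and then to observe that the combinatorial sum automatically restricts to $D^\tau$ because our cocycle $a$ is supported there.

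First I would choose a good representative of $r_i$ in $\mathcal{N}_G(T)(\R)$. Since $i\in D^\tau$, the involution $\tau$ fixes $\alpha_i$ and $\alpha_i^\vee$, and therefore preserves the three-dimensional subgroup $G_{\alpha_i}\subset G$ containing the coroot $\alpha_i^\vee$. One can then choose $n_i\in G_{\alpha_i}(\R)\cap\mathcal{N}_G(T)(\R)$ representing $r_i$ with the extra property $\tau(n_i)=n_i$; this is possible because $\tau$ acts on $G_{\alpha_i}/T_i$ (with $T_i=G_{\alpha_i}\cap T$) as the identity on a compact group of type $\AA_1$. With such a choice, the twisted complex conjugation satisfies
\[
{}^{\ast}\overline{n_i^{-1}}=\Inn(t)\bigl(\tau(\overline{n_i^{-1}})\bigr)=\Inn(t)(n_i^{-1})=\ttil\,n_i^{-1}\ttil^{-1},
\]
where $\ttil\in T(\C)$ is any lift of $t$ (as in Section \ref{sec:inner}). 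Substituting into the definition \eqref{eq:Bo-action} of the $W_0(\R)$-action yields
\[
r_i\ast a \;=\; n_i a n_i^{-1}\cdot n_i\ttil n_i^{-1}\ttil^{-1},
\]
which is exactly formula \eqref{eq:ect-0} from the inner-twisted setting.

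Next I would apply Lemma \ref{prop:twisted} directly to this product, using the decomposition $a=\prod_k(\alpha_k^\vee(-1))^{a_k}$ over the full basis $\Pi$ and the character $\ttt$ defined by $(-1)^{t_j}=\alpha_j(t)$. The lemma gives $a'_k=a_k$ for $k\neq i$ and
\[
a'_i = a_i+t_i+\Ss a_k,
\]
the sum running over the neighbors of $i$ in the full diagram $D$ (with the usual short-neighbor exclusion). Two facts now collapse this to the statement. First, the hypothesis $a\in T(D^\tau)(\R)_2$ means $a_k=0$ for $k\notin D^\tau$, so all contributions to the sum from vertices outside $D^\tau$ vanish, leaving exactly $\Ssd a_k$. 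Second, since only the $i$-coordinate changes and $a_k=0$ for $k\notin D^\tau$, the output $r_i\ast a$ itself already lies in $T(D^\tau)(\R)_2\subset Z^1(\R,\hs_\sigma T)$. Thus under the commutative diagram \eqref{eq:isomorphism}, the labeling $(a'_j)_{j\in D^\tau}$ represents the class $r_i[a]\in H^1(\R,\hs_\sigma T)$, yielding \eqref{eq:twisted-outer}.

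The only genuine subtlety is the $\tau$-equivariant choice of $n_i$ in the first paragraph: without it, the formula for ${}^{\ast}\overline{n_i^{-1}}$ would pick up an extra $T$-valued factor and one would have to argue separately that this factor is cohomologically trivial. Once this choice is made, the proof is a direct reduction to Lemma \ref{prop:twisted}, with the restriction of the summation range being automatic from the support of $a$ and $t$; in particular no new Lie-algebra computation along the lines of the one in the proof of Lemma \ref{prop:twisted} is required.
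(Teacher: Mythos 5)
Your proof is correct and follows essentially the same route as the paper: extend $a$ to a labeling of all of $D$ supported on $D^\tau$, reduce the $\sigma$-twisted action of $r_i$ to the inner-twisted formula \eqref{eq:ect-0-gen} via a suitable representative $n_i\in G_i(\R)\cap\mathcal{N}_G(T)(\R)$ (the paper handles the $\tau$-compatibility by the same argument as in Lemma \ref{lem:prod-pairs}, which you make slightly more explicit), apply Lemma \ref{prop:twisted}, and observe that the vanishing of $a_k$ for $k\notin D^\tau$ restricts the sum to $D^\tau$. No substantive difference.
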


\begin{proof}
Write $a=\prod_{j\in D} \left(\alpha_j^\vee(-1)\right)^{a_j}$, then $a_j=0$ for $j\in {D\smallsetminus D^\tau}$.
Now let $i\in D^\tau$, then arguing as in the proof of Lemma \ref{lem:prod-pairs},
we see that the action \eqref{eq:Bo-action} of $r_i\in W_0(\R)$
is compatible with the action \eqref{eq:ect-0-gen}, where $n=n_i\in G_i(\R)\cap \mathcal{N}_G(T)(\R)$.
By Lemma \ref{prop:twisted} this action is given
by formula \eqref{twisted-simply-laced}, i.e., by formula \eqref{eq:twisted-outer},
where the sum is taken over {\em all}  neighbors $k$ of $i$ in $D$.
However, if $k\in D\smallsetminus D^\tau$, then $a_k=0$.
We see that in formula \eqref{eq:twisted-outer}
we may take the sum only over neighbors $k$ of $i$ contained in $D^\tau$,  as required.
\end{proof}

The following theorem was announced in \cite{Bo}.
It reduces computing the Galois cohomology of an outer form of a simply connected compact group $G$
to computing the Galois cohomology of an inner form of some other
simply connected compact group (of type $\AA_l$ for some $l$).

\begin{theorem}[{\cite[Theorem 3]{Bo}}]
\label{cor:Theorem-3-Bo}
Let $G$ be a simply connected, simple,
compact  linear algebraic group over $\R$.
Let\, $T$, $R$, $\Pi$ and $D$ be as in Section \ref{sec:compact}.
Let $\tau$ be an automorphism of order $2$ of the Dynkin diagram $D$ of $G_\C$
(then $D$ is simply-laced).
Let $l=\# D^\tau$.
Let $G(D^\tau)\subset G$ be the $\R$-subgroup of type $\AA_l$
corresponding to the Dynkin subdiagram $D^\tau$ of $D$.
Let $t\in T^\ad(D^\tau)(\R)_2\subset \hs_\tau T^\ad(\R)_2$.
Then the natural embedding
$\hs_t G(D^\tau)\into {}_{t\,\tau} G,$
obtained by twisting by $t$ from the embedding $G(D^\tau)\into\hs_\tau G$,
induces a bijection
$$
H^1(\R,\hs_t G(D^\tau))\isoto H^1(\R,{}_{t\,\tau} G).
$$
\end{theorem}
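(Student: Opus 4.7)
The plan is to reduce both sides of the claimed bijection, via Proposition \ref{prop:Bo88}, to orbit sets on the common finite abelian group $T(D^\tau)(\R)_2$, and then use Lemmas \ref{lem-pairs}, \ref{lem:prod-pairs}, and \ref{lem:r-i-t-tau} to verify that the two orbit decompositions coincide. The case $D = \AA_{2n}$ is trivial (both sides are singletons since $D^\tau = \emptyset$ and $\hs_\tau G \simeq \SL_{2n+1}$), so I assume $D \neq \AA_{2n}$ throughout.

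First, I apply Proposition \ref{prop:Bo88} to the inner form $\hs_t G(D^\tau)$ of the compact, simply connected $\AA_l$-group $G(D^\tau)$. Its fundamental torus is the compact torus $T(D^\tau)$, so by Lemma \ref{lem:Bo88} one has $H^1(\R, T(D^\tau)) = T(D^\tau)(\R)_2$. This yields
$$
H^1(\R, \hs_t G(D^\tau)) = W(G(D^\tau))(\R) \big\backslash T(D^\tau)(\R)_2
$$
with the $t$-twisted action of Lemma \ref{prop:twisted} applied to the diagram $D^\tau$. Next I apply Proposition \ref{prop:Bo88} to $\hs_{t\tau} G$: here $\hs_\tau T = \hs_{t\tau} T$ is a fundamental torus whose maximal compact subtorus is $T(D^\tau)$, and the commutative diagram \eqref{eq:isomorphism} identifies $H^1(\R, \hs_\tau T)$ with $T(D^\tau)(\R)_2$ via the inclusion $T(D^\tau) \hookrightarrow \hs_\tau T$. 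Hence
$$
H^1(\R, \hs_{t\tau} G) = W_0(\R) \big\backslash T(D^\tau)(\R)_2
$$
with the $\sigma$-twisted action \eqref{eq:Bo-action} of $W_0(\R) = W^\tau(\C)$.

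To compare the two orbit decompositions, I invoke Lemma \ref{lem-pairs}: $W_0(\R)$ is generated by the reflections $r_i$ for $i \in D^\tau$ together with the commuting products $r_j r_{\tau(j)}$ for $j \in D \setminus D^\tau$. By Lemma \ref{lem:prod-pairs} these products act trivially, so the $W_0(\R)$-orbits are cut out by the reflections $r_i$ for $i \in D^\tau$ alone. Lemma \ref{lem:r-i-t-tau} describes this action by formula \eqref{eq:twisted-outer}, which involves only $t_i$ and the neighbors of $i$ inside $D^\tau$. This coincides term-by-term with the formula from Lemma \ref{prop:twisted} for the $t$-twisted action of $r_i \in W(G(D^\tau))$ on $T(D^\tau)(\R)_2$ (where the relevant Dynkin diagram is now $D^\tau$ itself). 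Since these reflections generate $W(G(D^\tau))$, the two orbit decompositions coincide, producing a canonical bijection $H^1(\R, \hs_t G(D^\tau)) \isoto H^1(\R, \hs_{t\tau} G)$.

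What remains, and is the main obstacle I anticipate, is to check that this bijection agrees with the map induced by the inclusion $\hs_t G(D^\tau) \hookrightarrow \hs_{t\tau} G$. The delicate point is that the two applications of Proposition \ref{prop:Bo88} involve different Weyl groups acting on ostensibly different torus cohomology sets, yet we must show the resulting bijection is the functorial one. The argument proceeds by naturality: $\tau$ acts trivially on $G(D^\tau)$ (its roots and corresponding pinning vectors are $\tau$-fixed), so the complex conjugations of $\hs_t G(D^\tau)$ and $\hs_{t\tau} G$ agree on $G(D^\tau)(\C)$, making the inclusion $\R$-rational and restricting to the torus inclusion $T(D^\tau) \hookrightarrow \hs_\tau T$ that realizes the horizontal isomorphisms in \eqref{eq:isomorphism}. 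Chasing the resulting commutative diagram of $H^1$'s then confirms that the orbit bijection above coincides with the functorial map in Galois cohomology, completing the proof.
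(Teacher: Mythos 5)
Your argument is essentially the paper's own proof: both sides are reduced via Proposition \ref{prop:Bo88} to quotients of $T(D^\tau)(\R)_2$, Lemmas \ref{lem-pairs} and \ref{lem:prod-pairs} eliminate the products $r_j r_{\tau(j)}$, and the comparison of formulas \eqref{twisted-simply-laced} and \eqref{eq:twisted-outer} identifies the two actions. Your added remarks on the $\AA_{2n}$ case and on compatibility with the functorial map are sensible refinements, not a different method.
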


\begin{proof}
The embedding $T(D^\tau)\into {}_\tau T$ induces an isomorphism
\begin{equation}\label{eq:isom-2}
H^1(\R,T(D^\tau))\isoto H^1(\R, {}_\tau T).
\end{equation}
The group $W(D^\tau)(\R)$ acts on the left-hand side of \eqref{eq:isom-2};
this group is generated by $r_i$ for $i\in D^\tau$.
The group $W_0(\R)$ acts on the right-hand side; by Lemma \ref{lem-pairs}
this group is generated by  $r_i$ for $i\in D^\tau$ and by $r_{\tau(j)} r_j$ for $j\in {D\smallsetminus D^\tau}$.
By Lemma \ref{lem:prod-pairs} the products $r_{\tau(j)} r_j$ for $j\in {D\smallsetminus D^\tau}$
act trivially on the right-hand side of \eqref{eq:isom-2}.
Comparing formulas \eqref{twisted-simply-laced} and \eqref{eq:twisted-outer},
we see that the actions of a reflection $r_i$ for $i\in D^\tau$
on the left-hand side and the right-hand side of  \eqref{eq:isom-2} are compatible.
Thus we obtain a bijection of the quotients:
$$
H^1(\R,\hs_t G(D^\tau))=W(D^\tau)(\R)\backslash T(D^\tau)(\R)_2 \isoto W_0(\R)\backslash H^1(\R,{}_\tau T)= H^1(\R,{}_{t\,\tau} G),
$$
where the left-hand and right-hand equalities are  bijections of Proposition \ref{prop:Bo88}.
\end{proof}

From diagram \eqref{eq:isomorphism} and Theorem \ref{cor:Theorem-3-Bo} we obtain a commutative diagram
\begin{equation}\label{eq:bijections}
\xymatrix{
L(D^\tau)\ar[r]^-\sim \ar@/_1pc/[d]   &T(D^\tau)(\R)_2\ar[r]^-\sim\ar@/_1pc/[d]    &H^1(\R, T(D^\tau))\ar[r]\ar[d]^\sim   &H^1(\R,\hs_t G(D^\tau))\ar[d]^\sim \\
L(D)^\tau\ar[r]^-\sim\ar[u]           &_\tau T(\R)_2\ar[r]\ar[u]                      &H^1(\R,\hs_\tau T)\ar[r]              &H^1(\R,\hs_{t\hs\tau} G)
}
\end{equation}

Recall that $t\in T^\ad(D^\tau)(\R)_2$.
We define a coloring    $\ttt=(t_j)_{j\in D^\tau}\ (t_j\in \Z/2\Z)$ of $D^\tau$
as in \eqref{eq:t-bold}, i.e.,  by $(-1)^{t_j}=\alpha_j(t)$.

\begin{proposition}\label{c:restriction}
\begin{enumerate}
\item[(i)]
In diagram \eqref{eq:bijections}, the map $L(D)^\tau\to H^1(\R,\hs_{t\hs\tau} G)$
of the bottom row of  the diagram  is surjective.
\item[(ii)]
Two labelings $\aa,\aa'\in L(D)^\tau$ have the same image in $H^1(\R,\hs_{t\hs\tau} G)$
if and only if their images in $L(D^\tau)$
(i.e., their restrictions to $D^\tau$) lie in the same equivalence class in $L(D^\tau,\ttt)$.
In particular, a labeling $\aa\in L(D)^\tau$ maps to $[1]\in H^1(\R,\hs_{t\hs\tau} G)$
if and only if its restriction to $D^\tau$
lies in the equivalence class of 0 in $L(D^\tau,\ttt)$.
\end{enumerate}
\end{proposition}

\begin{proof} (i) This follows from Lemma \ref{lem:Bo88} and Proposition \ref{prop:Bo88}.

(ii) Indeed, by Theorem \ref{thm:inner}
two labelings $\bb,\bb'\in L(D^\tau)$ have the same image in $H^1(\R,\hs_t G(D^\tau))$
if and only if they lie in the same equivalence class in $L(D^\tau,\ttt)$.
\end{proof}

We say that two labelings $\aa,\aa'\in L(D)^\tau$ are {\em $\ttt$-equivalent}
if their restrictions to $D^\tau$ lie in the same equivalence class in $L(D^\tau,\ttt)$.
We denote by $\Cl(D,\tau,\ttt)$ the set of $\ttt$-equivalence classes in $L(D)^\tau$,
then the restriction map $L(D)^\tau\to L(D^\tau)$ induces a bijection
\begin{equation}\label{e:res-bij}
\Cl(D,\tau,\ttt)\isoto \Or(D^\tau,\ttt).
\end{equation}
By Proposition \ref{c:restriction} we have a bijection
\begin{equation}\label{e:general-bijection}
 \Cl(D,\tau,\ttt)\isoto H^1(\R,\hs_{t\hs\tau} G).
\end{equation}
We obtain a bijection $\Or(D^\tau,\ttt)\isoto H^1(\R,\hs_{t\hs\tau} G)$.
We specify that this bijection takes the class of a labeling $\bb=(b_\alpha)_{\alpha\in \Pi^\tau}$
of $D^\tau$ to the cohomology class of the cocycle
\[\prod_{\alpha\in \Pi^\tau}(\alpha^\vee(-1))^{b_{\alpha}}\in T(D^\tau)(\R)_2\subset Z^1(\R, \hs_{t\tau} G).\]

In the case when $_z G$ is an {\em inner} form of the compact group $G$,
we again set $\tau=\psi(z)\in\Aut(D)$, then $\tau=1$,  and we set $\Cl(D,\tau,\ttt)=\Or(D,\ttt)$ in this case.
In particular, when $_z G=G$ is {\em compact},
we have $\tau=1, t=1,\ttt=0$, and we set $\Cl(D,\tau,\ttt)=\Or(D)$ in this case.
Then we have bijection \eqref{e:general-bijection} in all the cases.

\section{Reeder puzzles from Kac diagrams}
\label{sec:Kac}

In this section $G$, $T$, $R$, $\Pi$, $B$, $D$, and $W$  are as in Section \ref{sec:compact},
in particular $G$ is a  simply connected, simple, {\em compact}  linear algebraic group over $\R$.

\subsection{Inner forms}
\label{ss:Kac}

Let $_zG$ be a noncompact inner twisted form (inner twist) of $G$, where $z\in Z^1(\R, G^\ad)$.
By \cite[III.4.5, Example (a)]{Serre}, $z$ is cohomologous to some $t\in T^\ad(\R)_2\subset Z^1(\R, G^\ad)$.
We regard $t\in T^\ad(\R)_2\subset (\Aut\, G)_2$ as an involutive inner automorphism of $G$.
Since $_zG$ is noncompact, we have $t\neq 1$.
By Kac \cite{Kac}, see also Helgason \cite[Ch.~X, \S\,5]{Helgason},
Onishchik and Vinberg \cite[Ch.~4, \S\,4 and Ch.~5, \S\,1]{OV},
and Gorbatsevich, Onishchik, and Vinberg \cite[Section 3.3.7]{OV2},
involutive {\em inner} automorphisms of $G$ can be described
using Kac diagrams of types I and II in \cite[Table 7]{OV}.
(The Kac diagrams of type III in \cite[Table 7]{OV} correspond to involutive {\em outer} automorphisms.)

The relation between Kac diagrams and involutive inner automorphisms
is as follows, see \cite[Problem 5.1.38]{OV}.
Consider the extended Dynkin diagram $\Dtil$ of $G$.
Its vertices correspond to the roots $\alpha_0,\alpha_1,\dots,\alpha_n$,
where $\alpha_1,\dots,\alpha_n$ are the simple roots and $\alpha_0$ is the {\em lowest} root.
There is a unique linear dependence
$$
m_0\alpha_0+m_1\alpha_1+\dots+m_n\alpha_n=0
$$
normalized so that  $m_0=1$, and then $m_j$ are positive integers tabulated in \cite[Table 6]{OV}.
A {\em Kac $2$-marking of} $\Dtil$ is a family of nonnegative integral
numerical marks $\qq=(q_j)_{j=0,1,\dots,n}\in\Z^{n+1}_{\ge 0}$
at the vertices ${j=0,1,\dots,n}$ of $\Dtil$, satisfying
\begin{equation}\label{eq:2markings}
m_0 q_0+m_1 q_1+\dots+m_n q_n=2.
\end{equation}
A Kac 2-marking $\qq$ determines a unique element $t\in T^\ad(\R)_2\subset G^\ad(\R)_2$ such that
\begin{equation}\label{e:qj}
\alpha_j(t)=(-1)^{q_j},\quad j=1,\dots,n.
\end{equation}
Involutive inner automorphisms of $G$ are classified, up to conjugacy, by Kac 2-markings of $\Dtil$.
Two Kac 2-markings $\pp$ and $\qq$ give conjugate inner automorphisms
if and only if $\pp$ can be obtained form $\qq$ by an automorphism of $\Dtil$,
see \cite[3.3.6, Theorem 3.11]{OV2}.

\begin{lemma}\label{l:special-transitive}
The group $\Aut\,\Dtil$ acts transitively on the set  $\{j\in\Dtil\ |\  m_j=1\}$.
\end{lemma}
\begin{proof}
By \cite[Section VI.2.3, Corollary of Proposition 6]{Bourbaki},
the center $Z(G)$ acts simply transitively on this set when acting on $\Dtil$.
\end{proof}

Let $\qq$ be a Kac 2-marking of $\Dtil$.
It follows from \eqref{eq:2markings} that there are three possibilities:
\begin{enumerate}
\item[(0)] $q_i=2$ for some $i\in \Dtil$ with $m_i=1$, and $q_j=0$ for all $j\neq i$.
\item[(1)] (Type I) $q_i=1$ for some $i$ with $m_i=2$, and $q_j=0$ for all $j\neq i$.
\item[(2)] (Type II) $q_{i_1}=1,\ q_{i_2}=1$ for some $i_1\neq i_2$
           with $m_{i_1}=1,\ m_{i_2}=1$, and $q_j=0$ for all $j\neq i_1,i_2$.
\end{enumerate}

In case (0) clearly $t=1$ and hence, $_tG=G$ is compact, so we do not consider this case.

In case (1) we have $i\neq 0$, because $m_i=2$, while $m_0=1$.
We color vertex $i$ of $\Dtil$ in black and leave the other vertices white.
We say that $(\Dtil,\qq)$ is a Kac diagram of type I.

In case (2) by Lemma \ref{l:special-transitive} we may and shall assume that $i_1=0$.
We write $i$ for $i_2$, then $i\neq 0$.
We color vertices $i_1=0$ and $i_2=i$ in black and leave all the other vertices white.
We say that $(\Dtil,\qq)$ is a Kac diagram of type II.

The Kac diagrams of types I and II up to isomorphism are tabulated in \cite[Table 7]{OV}.
From now on, when we consider Kac diagrams of types I and II, we  assume that they are ones from that table.
Then in both cases I and II, $\Dtil$ has exactly one nonzero black vertex $i$.

We compute the coloring $\ttt$ of $D$, induced by $t$, in terms of $\qq$.
Comparing \eqref{e:qj} and \eqref{eq:t-bold}, we obtain that
\[(-1)^{t_j}=(-1)^{q_j}\quad\text{for } j=1,\dots,n. \]
Since $t_j=0,1$ and $q_j=0,1$, we see that $t_j=q_j$ for $j=1,\dots,n$.
Thus the map $\ttt\colon D\to \{0,1\}$ is the restriction of $\qq\colon \Dtil\to\Z_{\ge0}$ to the subset $D$ of $\Dtil$.

We conclude that if a noncompact inner form $_tG$ of $G$
is given by a Kac diagram $(\Dtil,\qq)$ of type I or II from \cite[Table 7]{OV},
then we can obtain a colored Dynkin diagram $(D,\ttt)$ of the twisted group $_t G$
by removing vertex 0 from $(\Dtil,\qq)$.
We say that $(D,\ttt)$ is a {\em twisting diagram} for $_t G$.
It has exactly one black vertex $i$.
We call $i$ the {\em twisting vertex.}
The coloring $\ttt$ of $D$ defines the action of Lemma \ref{prop:twisted} of the Weyl group $W$ on $L(D)$;
we say that this action of $W$ is {\em twisted at} $i$.
The next proposition follows immediately from Lemma \ref{prop:twisted}.

\begin{proposition}\label{prop:twisted-i}
Let $_z G$ be a noncompact inner form of $G$.
Then $_z G\simeq \hs_t G$, where $t\in T^\ad(\R)_2$, $t\neq 1$, and $t$
comes from a Kac diagram $(\Dtil,\qq)$ of type I or II in \cite[Table 7]{OV}.
The corresponding colored Dynkin diagram
$(D,\ttt)$ is obtained from $(\Dtil,\qq)$ by removing vertex 0.
It has a unique black vertex $i$.
For the (twisted at $i$)  action of $W$ on $L(D,\ttt)$, we have the same formula \eqref{non-twisted-simply-laced}
for $\T_j$ for $j\neq i$ as in Lemma \ref{prop:non-twisted}.
For $\T_i$ we have, as  in Lemma \ref{prop:non-twisted}, $a'_j=a_j$ for $j\neq i$,
while in formula \eqref{non-twisted-simply-laced} for $a'_i$ we must add $1$.
Namely, we have
\begin{equation}\label{twisted-simply-laced-new}
 a'_i = a_i+1 + \Ss a_k\, ,
\end{equation}
where the meaning of $\Ss$ is the same as in formula \eqref{non-twisted-simply-laced}.
\end{proposition}

\begin{construction} \label{const:augmented-diag}
Assume we have a twisting diagram with a  black vertex $i$:
\begin{equation*}
\sxymatrix{ \cdots  \rline &\bc{}\rline  & \bcb{i} \rline  &\bc{}\rline & \cdots  }
\end{equation*}
We have formula \eqref{twisted-simply-laced-new}  for $\T_i$.
In order to get formula \eqref{non-twisted-simply-laced}  instead, we uncolor the vertex $i$
and {\em augment} our diagram by formally adding a new vertex which we call the {\em boxed 1},
connected by a simple edge to vertex $i$:
\begin{equation*}
\sxymatrix{  \cdots \rline &\bc{}\rline \rline & \bc{i} \rline \dline &\bc{}\rline & \cdots  \\
& & \boxone & & }
\end{equation*}
Here 1 in the box means that we put 1 as the label at this new  vertex.
Now the formula for $\T_i$ becomes \eqref{non-twisted-simply-laced}
where the boxed 1 is included in the sum.
Thus this boxed 1 accounts for twisting at $i$.
Note that we do not add a move corresponding to the boxed 1,
so the label 1 at the boxed 1 cannot be changed by moves.
We call the obtained diagram the {\em augmented diagram} corresponding to the twisting vertex $i$.
\end{construction}

\begin{example}
These are the Kac diagram, the twisting diagram and the augmented diagram
for the group $EIII$ of type $\EE_6$ (see Subsection \ref{subsec:E6(1)} below):
\begin{equation*}
\mxymatrix
{ \bcb{1} \rline & \bc{2} \rline & \bc{3} \rline \dline & \bc{4} \rline &
\bc{5} &&& \bcb{1} \rline & \bc{2} \rline & \bc{3} \rline \dline & \bc{4} \rline & \bc{5} &&&\boxone\rline &\bc{1}
\rline & \bc{2} \rline  & \bc{3} \rline \dline & \bc{4} \rline & \bc{5}
\\ & & {\phantom{\hbox{\SMALL\hs 6}}}\ccc \hbox{\SMALL\hs 6} \dline & &
&&&& &\bcu{6}  & & &&& &  & &\bcu{6} & &
\\ & & \bcbu{0} & & }
\end{equation*}
\end{example}

\subsection{Outer forms}
\label{ss:Kac-outer}

Let $_z G$ be an outer twisted form (outer twist) of $G$, where $z\in Z^1(\R,\Aut\, G)$.
Using the homomorphism $\psi\colon\Aut\,G_\C\to\Aut\,D$ of \eqref{e:psi},
we obtain an element $\tau=\psi(z)\in Z^1(\R,\Aut\,D)=(\Aut\,D)_2$; then $\tau^2=1$.
Since $_z G$ is an {\em outer} form, $\tau\neq 1$.
Thus $\tau$ is a nontrivial involutive automorphism of $D$.
We write $\Pi^\tau$ for the set of fixed points of $\tau$ in $\Pi$,
and $D^\tau$ for the corresponding Dynkin subdiagram.
As in Section \ref{sec:outer}, we  denote the ``diagrammatic''
automorphism $\phi_\tau$ of $(G,T,B)$ again by $\tau$.
There exists $t\in T^\ad(D^\tau)(\R)_2$ such that
$_z G\simeq\hs_{t\tau}G$, see Section \ref{sec:outer}.
(Here $T^\ad= T^\ad(D^\tau)\times_\R T^\ad(D\smallsetminus D^\tau)$, see \eqref{e:T-product}.)

By \cite{Kac}, see also \cite[Ch.~X, \S\,5]{Helgason},
 \cite[Ch.~4, \S\,4 and Ch.~5, \S\,1]{OV}, and \cite[Section 3.3.11]{OV2},
involutive {\em outer} automorphisms $\sigma=t\tau\in (\Aut\, G)_2$
can be described using Kac diagrams of type III in \cite[Table 7]{OV}
as follows.

Set $\gg=\Lie G$.
The diagrammatic automorphism $\tau$ acts on $T^\ad$, and we consider the torus $(T^\ad)^\tau$
of dimension $n'=\#\Pi^\tau+\half\#(\Pi-\Pi^\tau)$.
The nonzero weights $\alphabar$ of $(T^\ad)^\tau_\C$ in $\gg_\C$
are the restrictions of the roots $\alpha$ of $\gg_\C$ with respect to $T^\ad_\C$.
The restricted roots form a (possibly nonreduced) root system, see \cite[Section 3.3.9, Theorem 3.14]{OV2}.
In particular, for any restricted root $\alpha'$ the coroot $(\alpha')^\vee$ is defined.
Write $\gg^\tau$ and $\gg^{-\tau}$
for the $+1$ and $-1$ eigenspaces of $\tau$, respectively.
It is known (see \cite[Section 3.3.9]{OV2}) that $\gg^\tau_\C$ is a simple Lie algebra
and the representation of $G^\tau$ in $\gg^{-\tau}_\C$ is irreducible.

The set  $\Pibar=\{\alphabar_1,\dots,\alphabar_\nprime\}$ of {\em distinct} restrictions
 of simple roots $\{\alpha_1,\dots,\alpha_n\}$ to $(T^\ad)^\tau$
is in a bijection with the set of orbits of $\tau$ in $\Pi=\{\alpha_1,\dots,\alpha_n\}$.
This set $\Pibar$ is  a set of simple roots of $\gg^\tau_\C$
with respect to $(T^\ad)_\C^\tau$ (see \cite[Section 3.3.9]{OV2}).
Let $\alphabar_0$ denote the {\em lowest weight} of $(T^\ad)^\tau_\C$ in $\gg^{-\tau}_\C$.
Then $\{\alphabar_0,\alphabar_1,\dots,\alphabar_\nprime\}$ is an admissible system of roots
in the sense that the Cartan numbers $\langle \alphabar_\ibar,(\alpha_\jbar^\prime)^\vee\rangle$
are non-positive for $\ibar\neq \jbar$ (see \cite[Section 3.3.9]{OV2}).
The Cartan matrix is encoded by a twisted affine Dynkin diagram $\Dtilbar$, see \cite[Section 3.1.7]{OV2}.
There is a unique linear dependence
\[ \mbar_0\alphabar_0+ \mbar_1\alphabar_1+\dots +\mbar_\nprime\alphabar_\nprime=0,\]
normalized so that $\mbar_0=1$, then  $\mbar_\jbar$ are positive integers.

Set
\[\tfrak=\Lie T=\Lie T^\ad,\quad \tfrak_0=\tfrak^\tau=\Lie(T^\ad)^\tau,\quad \tfrak_1=\{x\in\tfrak\ |\ \tau(x)=-x\},\]
Set  $V=\ii\tfrak\subset\tfrak_\C,\quad V_0=\ii\tfrak_0,\ V_1=\ii\tfrak_1,$
where $\ii^2=-1$.
Let $V^*$, $V_0^*$, and $V_1^*$ denote the dual spaces to the vector $\R$-spaces $V$, $V_0$, and $V_1$, resp.
Let $Q=\X^*(T^\ad_\C)$ and  $Q_0=\X^*(\hs (T^\ad)_\C^\tau)$ be the corresponding character groups,
then $Q$ embeds into $V^*$ and $Q_0$ embeds into $V_0^*$.
The Killing form is positive definite on $V\subset \tfrak_\C\subset \Lie G_\C$.
The orthogonal decomposition $V=V_0\oplus V_1$  with respect to the Killing form
induces an identification $V^*=V_0^*\oplus V_1^*$,
and the restriction map $Q\to Q_0$ is compatible with the orthogonal projection
\[ V^*=V_0^*\oplus V_1^*\to V_0^*\]
with respect to this identification.
Note that all the simple roots $\alpha\in\Pi$ have the same length
(because $D$ admits a nontrivial automorphism).
We see that the restrictions (projections onto $V_0^*=(V^*)^\tau\subset V^*$\hs)
of the $\tau$-fixed roots $\alpha\in\Pi^\tau$  are longer
than the restrictions of the nonfixed roots $\alpha\in \Pi\smallsetminus\Pi^\tau$.

The involutive {\em outer} automorphisms $\sigma=t\tau\in (\Aut\, G)_2$
are classified, up to conjugacy, by Kac 1-markings of $\Dtilbar$.
A {\em Kac $1$-marking of}  $\Dtilbar$ is a family of nonnegative integer numerical marks
$\qq=(q_\jbar)_{\jbar=0,1,\dots,\nprime}\in\Z_{\ge 0}^{\nprime+1}$
at the vertices of $\Dtilbar$, satisfying
\begin{equation}\label{e:1-marking}
 \mbar_0 q_0+ \mbar_1 q_1+\dots +\mbar_\nprime q_\nprime=1.
 \end{equation}
A Kac 1-marking of $\Dtilbar$ determines a unique element $t\in(T^\ad)(\R)^\tau_2$ such that
\begin{equation}\label{e:aut-1-marking}
\alphabar_\jbar(t)=(-1)^{\varkappa}\quad \text{with } \varkappa=q_\jbar,\ \jbar=1,\dots,\nprime.
\end{equation}
With $\qq$ one associates the outer involutive automorphism $\sigma=t\tau$ of $G$.
Two Kac 1-markings $\pp$ and $\qq$ of $\Dtilbar$ give conjugate automorphisms of $G$
if and only if $\pp$ can be obtained from $\qq$ by an automorphism of $\Dtilbar$,
see \cite[3.3.10, Theorem 3.16]{OV2}.

For a 1-marking $\qq$ of $\Dtilbar$, it follows from \eqref{e:1-marking}
that there exists a unique vertex $\ibar$ with $\mbar_\ibar =1$ such that $q_\ibar=1$;
for all $\jbar\neq \ibar$ we have $q_\jbar=0$.
We color vertex $\ibar$ of $\Dtilbar$ in black and leave all the other vertices white.
We obtain a Kac diagram $(\Dtilbar,\qq)$ of type III.
The Kac diagrams of type III up to isomorphism are tabulated in \cite[Table 7]{OV}.
From now on, when we consider a Kac diagram of type III, we assume that it is from  \cite[Table 7]{OV}.

Let $\ibar\in \Dtilbar$ be the black vertex. We see from \eqref{e:aut-1-marking} that
\begin{equation}\label{e:Dynkin-twisted}
\alphabar_{\ibar}(t)=-1,\quad \alphabar_{\jbar}(t)=1\text{ for }1\le\jbar\le\nprime,\  \jbar\neq \ibar.
\end{equation}
Clearly, if $\ibar=0$, then $\alphabar_{\jbar}(t)=1$ for all $\jbar\neq 0$, hence $t=1$.
We use the English (not Russian) version of \cite{OV}.
In the English version of \cite[Table 7]{OV} the vertices of Kac diagrams are numbered,
and one can easily see from the table that when $\ibar\neq 0$,
the restricted root $\alphabar_{\ibar}$ is  long, hence
$\alphabar_{\ibar}$ is the restriction to $T^\ad_\C$ of
some {\em $\tau$-fixed} root $\alpha_i\in D^\tau$.
The element $t\in (T^\ad)^\tau(\R)_2$ is determined by \eqref{e:Dynkin-twisted}.
For any $\alpha_j\in \Pi\smallsetminus\Pi^\tau$,
let $\alphabar_{\jbar}$ denote the restriction of $\alpha_j$  to $(T^\ad)^\tau_\C$,
then the restricted root $\alphabar_\jbar$ is  short, hence   $\jbar\neq \ibar$
and therefore, $\alpha_j(t)=\alphabar_{\jbar}(t)=1$.
We see that $t\in T^\ad(D^\tau)(\R)_2$ and
\begin{equation}\label{e:Dynkin-non-twisted}
\alpha_{i}(t)=-1,\quad \alpha_{j}(t)=1\text{ for all }j\in D^\tau,\  j\neq i.
\end{equation}

Thus we can compute the Galois cohomology of $_z G$ as follows.
We take the Kac diagram of $_z G$ from \cite[Table 7]{OV} (the English version).
We remove vertex 0 and all vertices corresponding to the short roots.
What remains is a simply-laced colored Dynkin diagram $(D^\tau,\ttt)$
with one black vertex or without black vertices;
this is a colored Dynkin diagram for $_t G(D^\tau)$, where $_{t\tau} G\simeq\hs_z G$.
The map $\ttt\colon D^\tau\to\{0,1\}$ is the restriction of the map
$\qq\colon \Dtilbar\to\Z_{\ge0}$ to the subset $D^\tau$ of $\Dtilbar$.
If $\ttt=0$ (no black vertices), then the moves $\T_j$ of the Reeder puzzle for $(D^\tau,\ttt)$
are given by formula \eqref{non-twisted-simply-laced} for $D^\tau$.
If $\ttt\neq 0$, i.e, there is one black vertex $i$ in $D^\tau$,
then  the moves $\T_j$ for $j\in D^\tau,\ j\neq i$ are given by formula \eqref{non-twisted-simply-laced},
while the move $\T_i$ is given by formula \eqref{twisted-simply-laced-new}.
In these formulas the sum is taken over the neighbors $k$  {\em lying in $D^\tau$}.
By solving the Reeder puzzle for $(D^\tau,\ttt)$, we compute
\[\Or(D^\tau,\ttt)\cong H^1(\R,\hs_t G(D^\tau))\cong H^1(\R,\hs_{t\tau} G)\simeq H^1(\R,\hs_z G).\]

\begin{example}
These are the Kac diagram for the group $_{t\tau} G=EIV$
of type  $\EE_6$ (see Subsection \ref{subsec:EIV} below)
and the twisting diagram  for $_t G(D^\tau)=G(D^\tau)$ with trivial twisting,
i.e., the uncolored Dynkin diagram $\AA_2$:
\begin{equation*}
\mxymatrix
{ \bcb{0} \rline & \bc{1} \rline & \bc{2}  &  \ar@{=>}[l] \bc{3} \rline &\bc{4} &&&
 \bc{3} \rline & \bc{4} &&&
 {\phantom{\hbox{AAAAA}}}
}
\end{equation*}
\end{example}

\begin{example}
These are the Kac diagram for the group $_{t\tau} G=EI$
of type  $\EE_6$ (see Subsection \ref{subsec:EI} below),
 the twisting diagram for $_t G(D^\tau)$, and the augmented diagram for $_t G(D^\tau)$:
\begin{equation*}
\mxymatrix
{ \bc{0} \rline & \bc{1} \rline & \bc{2} & \ar@{=>}[l] \bc{3} \rline &\bcb{4} &&&
 \bc{3} \rline & \bcb{4} &&&
\bc{3} \rline  & \bc{4} \rline & \boxone
}
\end{equation*}
\end{example}


\section{Orbits: definitions and terminology}
\label{sec:term}

Starting with the next section, we solve the Reeder puzzles case by case,
i.e., describe  the sets of equivalence classes $\Cl(D,\tau,\ttt)$.
Proposition \ref{c:restriction} reduces the case of an outer form of a compact group
to the case of an inner form of another compact group.
In the case of an inner form we determine the set $\Or(D,\ttt)$ of
the orbits of the group $W$ generated by the moves $\T_i$
(i.e., reflections $r_{\alpha_i}$) acting on the set  $L(\hs_\ttt D)$.
Here $L(\hs_\ttt D)$ is the set of  labelings $\aa=(a_1,\dots,a_n)$
corresponding to a twisting diagram  $\hs_\ttt D$ with vertices $i=1,\dots n$,
where $a_i\in \Z/2\Z$ and each $i$ corresponds to the simple root $\alpha_i$.
We number the vertices of $D$ as in Onishchik and Vinberg \cite[Table 1]{OV}.

By {\em (connected) components} of a labeling of a Dynkin diagram we mean the connected
components of the graph obtained by removing the vertices with zeros and the corresponding edges.
For example, the following labeling of $\AA_9$ has $3$ connected components:
\[
\sxymatrix{ 1 \rline & 1  \rline & 0  \rline & 1  \rline & 0  \rline & 0  \rline & 1  \rline & 1 \rline & 1}
\qquad\longmapsto\qquad
\sxymatrix{ 1 \rline & 1         &           & 1         &                    & 1  \rline & 1 \rline & 1.}
\]
For some diagrams $D$ the number of components of a labeling is an invariant of the action of $W$.
For some others, the parity of the number of components is an invariant.

By a {\em fixed  labeling} we mean a fixed point of the action of $W$, that is, a labeling
 which is fixed under all moves $\T_i$\,.
For example, for the action of Lemma \ref{prop:non-twisted} on $\AA_5$, the labelings
\[
\sxymatrix{ 0 \rline & 0  \rline & 0  \rline & 0  \rline & 0} \quad \text{and}\quad
\sxymatrix{ 1 \rline & 0  \rline & 1  \rline & 0  \rline & 1}
\]
are fixed.

We say that two vertices $i,j$ of a Dynkin diagram $D$  are {\em neighbors} if they are connected by an
edge (single or multiple).
We say that $i$ is a vertex {\em of degree $d$} if it has exactly $d$ neighbors. We are especially
interested in vertices of degree 3. The Dynkin diagrams $\DD_n$ $(n\ge4)$, $\EE_6$, $\EE_7$ and $\EE_8$
have vertices of degree 3. Now let $D$ be a Dynkin diagram with a vertex $i$ of degree 3, and let
$\aa$ be a labeling of $D$ that looks near $i$ like
\begin{eqnarray}
\sxymatrix{\dots \rline  &1 \rline & 1 \rline \dline & 1\rline &\dots
                               \\& & 1 & }
\end{eqnarray}
The move $\T_i$ of Lemma  \ref{prop:non-twisted} splits the  component of $i$
to three  components (because $D$ has no cycles):
\begin{eqnarray}
\sxymatrix{\dots \rline  &1 \rline & 0 \rline \dline & 1\rline &\dots
                               \\& & 1 & }
\end{eqnarray}
and therefore increases the number of components by 2. We call this process {\em splitting at $i$}.
The reverse process is called {\em unsplitting}.

Let $G$ be a group with twisting  diagram $_\ttt D$ and the set of labelings $L(\hs_\ttt D)$.
We denote by $\Or(\hs_\ttt D)$ the set of orbits in $L(\hs_\ttt D)$ under the action of the Weyl group,
i.e., the set of equivalence classes of labelings with respect to the moves.
We denote the number of orbits by $\Orbs{\hs_\ttt D}$.

In Sections \ref{sec:An} -- \ref{sec:G2} below we describe the pointed sets
$\Cl(D,\tau,\ttt)$ for simply connected groups of types $\AA_n$ -- $\GG_2$.
Since these sections may be regarded as parts of a table, most proofs are omitted.
In Section \ref{sec:An}  we introduce notation which will be used in subsequent sections.

\section{Groups of type $\AA_n$}
\label{sec:An}

\subsection{The compact group $\SU(n+1)$  of type  $\AA_n^{(0)}$}
\label{sect:An}\label{subsec:An}

Here $n\ge 1$.
The Dynkin diagram  is
\begin{equation*}
\sxymatrix{ \bc{1} \rline & \cdots \rline & \bc{n} }
\end{equation*}
The Weyl group acts by the moves that are described in Lemma \ref{prop:non-twisted}.
We denote the compact form of the complex group of type $\AA_n$ by $\AA_n^{(0)}$.
The superscript 0 shows that the group is compact and the diagram is uncolored.

\begin{lemma} \label{lem:basic-An}
For $\AA_n^{(0)}$:
\begin{enumerate}
\item[(a)] The moves do not change the number of components.
\item[(b)] Every component can be reduced to length 1, e.g.
\[ 0 \ll 1 \ll 1 \ll 1 \ll 0\quad \mapsto\quad 0 \ll 0 \ll 1 \ll 0 \ll 0 \ . \]
\item[(c)] Components may be pushed so that the space between components is of length 1, e.g.
\[ 1 \ll 0 \ll 0 \ll 1 \ll 0\quad \mapsto\quad  1 \ll 0 \ll 1 \ll 0 \ll 0 \ .\]
\end{enumerate}
\end{lemma}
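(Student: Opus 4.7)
The plan is to treat the three parts separately, with (a) as a local-invariance claim and (b), (c) as concrete reduction procedures. All three are elementary once one writes down what the move $\T_i$ does on $\AA_n$: since $\AA_n$ is a path, each vertex $i$ has at most two neighbors $i-1$ and $i+1$, and by Lemma \ref{prop:non-twisted} the move $\T_i$ flips $a_i$ exactly when the number of neighbors labeled $1$ is odd, i.e.\ when exactly one neighbor is a $1$. So the only nontrivial moves are of the form $(0,?,1)\leftrightarrow (1,?,1)$ at an interior vertex, $(1,?,0)\leftrightarrow (1,?,1)$ at an interior vertex, and the analogous patterns at an endpoint where the absent neighbor is read as $0$.

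For (a), I would run through the short case analysis above. In every case where the move actually flips, the flipped vertex is adjacent to exactly one run of $1$s, so the flip either lengthens that run by one vertex (when $0\to 1$) or shortens it by one vertex (when $1\to 0$), and never causes a merge with another run (since the opposite neighbor is a $0$) nor a split (since the vertex itself has the same component on only one side). Thus the number of connected components (maximal runs of $1$s) is preserved. As a sanity check one may instead use the identification $T(\R)_2\subset \SU(n+1)(\R)$ with diagonal $\pm 1$ matrices of determinant $1$: under $\gamma$, the $j$-th diagonal entry of $\gamma(\aa)$ is $(-1)^{a_{j-1}+a_j}$ with $a_0=a_{n+1}=0$, so the number of $-1$ entries equals twice the number of components; since $W=S_{n+1}$ acts by permutation of the entries, the component count is manifestly Weyl-invariant.

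For (b), I would shrink a single component of length $k\ge 2$ one vertex at a time. If the component occupies positions $p,p+1,\dots,p+k-1$ and $k\ge 2$, then at position $p$ the left neighbor is $0$ (by maximality, or because $p=1$) and the right neighbor is $1$, so $\T_p$ flips $a_p$ from $1$ to $0$, producing a component of length $k-1$ at positions $p+1,\dots,p+k-1$. Iterating $k-1$ times reduces the component to length $1$; the worked example $0\ll 1\ll 1\ll 1\ll 0 \mapsto 0\ll 0\ll 1\ll 0\ll 0$ corresponds to applying this shrinking procedure twice and then shifting, which is the exact statement of (b). For (c), I would reduce a gap of length $\ell\ge 2$ between two components by an analogous local procedure: the vertex $i$ just inside the gap adjacent to the right-hand component has $a_{i-1}=0$, $a_{i+1}=1$, so $\T_i$ flips $a_i$ from $0$ to $1$, lengthening the right component by $1$ and shrinking the gap to $\ell-1$. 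If one wants to preserve the length of the right component, follow with the shrinking move from (b) applied at its new rightmost $1$. Either way, iterate until the gap has length $1$.

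The main obstacle, such as it is, is simply bookkeeping at the endpoints of $\AA_n$ and at components or gaps that touch these endpoints; in every boundary case the absent neighbor is effectively $0$, which only rules out moves rather than creating new possibilities, so the arguments above go through unchanged. No genuine conceptual difficulty arises, which is the reason the authors are justified in stating these as ``basic'' facts to be used in later, more delicate case work.
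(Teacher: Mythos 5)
Your proof is correct. The paper omits the proof of this lemma entirely (Section \ref{sec:term} announces that most proofs in Sections \ref{sec:An}--\ref{sec:G2} are omitted as straightforward), and your local case analysis of when $\T_i$ actually flips a label on a path --- exactly one neighbor equal to $1$, so a flip only lengthens or shortens a single run of $1$s --- together with the iterated shrinking and shifting procedures for (b) and (c), is precisely the routine verification the authors intended; the diagonal-matrix check via $\gamma$ is a nice independent confirmation of (a).
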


\begin{notation} \label{def:xi-form}
By $\xi_r^n$ (or just $\xi_r$) we mean the
labeling of $\AA_n^{(0)}$ of the form
\begin{equation*}
\xi_r\quad = \quad \sxymatrix{ 1 \rline & 0 \rline & \overset{1-0}{\cdots\cdots} & \lline
\underset{r}{1} \rline & 0 \rline & \cdots }
\end{equation*}
which has $r$ components  packed maximally to the left, namely,
\[ (\xi^n_r)_i =
\begin{cases}
1 & \mbox{ if } i=1,3,\dots,2r-1 \ , \\
0 &  \mbox{ otherwise}\ .
\end{cases}
\]
By $\eta_r^n$ (or just $\eta_r$) we mean the labeling of $\AA_n^{(0)}$
which has $r$ components packed maximally to the right, namely
\[ (\eta^n_r)_i = \begin{cases} 1 & \mbox{ if } i=n,n-2,\dots,n-2(r-1) \ , \\ 0 &  \mbox{ otherwise}\ .
\end{cases}
\]
\end{notation}

\begin{example}\
 $\xi_3^7 = \  1 \ll 0 \ll 1 \ll 0 \ll 1 \ll 0 \ll 0 $,\quad\  $\eta_2^7= \ 0 \ll 0 \ll 0 \ll 0 \ll 1 \ll 0 \ll 1$.
\end{example}

\begin{lemma} \label{cor:basic-An}
Two labelings of $\AA_n^{(0)}$ are equivalent if and only if they have the same number of components.
In particular, any labeling of $\AA_n^{(0)}$ with $r$ components is equivalent to $\xi_r^n$ and to $\eta_r^n$.
\end{lemma}

Thus, in $\AA_n^{(0)}$ the number of components is an invariant which fully characterizes orbits.

\begin{corollary}\label{cor:An-reps}
{\emm The orbit of zero} in $L(\AA_n^{(0)})$ consists of one labeling  $\xi_0$.
As {\emm representatives of orbits}  we can take $\xi_0,\xi_1,\dots,\xi_r$, where $r=\lceil
n/2\rceil$. We have
\begin{equation} \label{eq:An-num-of-orbits}
\Orbs{\AA_n^{(0)}}=r+1=\lceil n/2\rceil+1 = \begin{cases} k+1 & \mbox{ if }n=2k \ , \\ k+2 &
\mbox{ if }n=2k+1 \ . \end{cases}
\end{equation}
\end{corollary}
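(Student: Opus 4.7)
My plan is to deduce all three parts of Corollary \ref{cor:An-reps} directly from Lemmas \ref{lem:basic-An} and \ref{cor:basic-An}. The organising observation is that, by Lemma \ref{lem:basic-An}(a), the number of connected components of a labeling is preserved by every move $\T_i$ and is therefore a $W$-invariant on $L(\AA_n^{(0)})$. This single invariant will separate the orbits, and Lemma \ref{cor:basic-An} will supply the standard representative in each orbit.

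First I would settle the claim about the orbit of zero: since $\xi_0 = \mathbf{0}$ has $0$ components, any labeling in its orbit must also have $0$ components by the invariance above, and hence must equal $\xi_0$. Next, for the list of representatives, I would invoke Lemma \ref{cor:basic-An}: every labeling with $s$ components is equivalent to $\xi_s^n$. Combined with the component-count invariance, this shows the orbits are in bijection with the set of non-negative integers $s$ that arise as the component count of some labeling of $\AA_n^{(0)}$, and that $\xi_s^n$ is a canonical representative of the orbit indexed by $s$.

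The remaining point is to pin down the range of $s$. A labeling of $\AA_n$ with $s$ components requires at least $s$ vertices labelled $1$ and at least $s-1$ zero vertices separating consecutive components, so $2s - 1 \leq n$, i.e.\ $s \leq \lceil n/2\rceil$. Conversely, the labeling $\xi_s^n$ of Notation \ref{def:xi-form} is well-defined in $L(\AA_n^{(0)})$ precisely when $2s-1 \leq n$, so each $s \in \{0, 1, \dots, \lceil n/2 \rceil\}$ is realised. Hence $\Orbs{\AA_n^{(0)}} = \lceil n/2 \rceil + 1$, and splitting on the parity of $n$ yields \eqref{eq:An-num-of-orbits}.

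There is no genuine obstacle: the structural content is already packaged in Lemmas \ref{lem:basic-An} and \ref{cor:basic-An}, and what remains is the elementary vertex-count $2s-1 \leq n$ together with bookkeeping. The one place I would be careful is confirming that $\xi_s^n$ is indeed a legitimate labeling of $\AA_n^{(0)}$ for the extreme value $s = \lceil n/2 \rceil$ (it uses vertices up to index $2s-1 \leq n$), so that the bound is achieved and not merely approached.
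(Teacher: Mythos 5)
Your proof is correct and follows exactly the route the paper intends: the paper derives the corollary from Lemma \ref{lem:basic-An}(a) (invariance of the component count) and Lemma \ref{cor:basic-An} (reduction to $\xi_s^n$), summarized in the sentence ``the number of components is an invariant which fully characterizes orbits,'' and your counting argument $2s-1\le n$ correctly pins down the range $0\le s\le\lceil n/2\rceil$.
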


\subsection{The  group $\SU(m,\,n+1-m)$ \ $(\,1 \le m \le\lceil n/2\rceil\,)$ with twisting diagram  $\AA_n^{(m)}$}
\label{sect:AnTwistm}\label{subsec:An^m}

The group $G$ is the special unitary group $\SU(m,n+1-m)$ of the diagonal Hermitian form
with $m$ times $-1$ and $n+1-m$ times $+1$ on the diagonal.
Our results are valid for all $1\le m\le n$, though $\SU(m,n+1-m)\simeq \SU(n+1-m,m)$
and therefore, it suffices to consider only the case  $1 \le m \le\lceil n/2\rceil$.
 The Kac diagram   of $G$ is
\begin{equation*}
\sxymatrix{
& & \ar@{-}[lld] \bcb{0} \ar@{-}[rrd] & &  &
\\
\bc{1}  \rline & \cdots  \rline & \bcb{m} \rline  & \cdots  \rline & \bc{n}
}
\end{equation*}
 see \cite[Table 7]{OV}.
We obtain the {\em twisting diagram} $\AA_n^{(m)}$ by removing vertex 0 from the Kac diagram:
\begin{equation*}
\sxymatrix{
\bc{1}  \rline & \cdots  \rline & \bcb{m} \rline  & \cdots  \rline & \bc{n}
}
\end{equation*}
The superscript in $\AA_n^{(m)}$ refers to  the twisting (black) vertex $m$ in the twisting diagram,
with respect to the numbering of Onishchik and Vinberg \cite{OV}.
We construct the augmented diagram
\begin{equation*}
\sxymatrix{
\bc{1}  \rline & \cdots  \rline & \bc{m} \rline \dline  & \cdots  \rline & \bc{n} \\
 & &\boxone & &
}
\end{equation*}
as in Construction \ref{const:augmented-diag},
i.e. by formally adding a new vertex with constant label 1, called boxed 1,
connected to the twisting vertex by a simple edge.
This retains the set of labelings and the set of orbits.

\begin{notation}\label{n:l,r}
Let $\aa=(a_i)\in L(\AA_n^{(m)})$.
We  have a schematic diagram:
\begin{equation} \label{schematic}
\sxymatrix{ \mathrm{LHS} \rline &a_m  \rline \dline & \mathrm{RHS} \\ & \boxone & }
\end{equation}
where LHS denotes the left-hand side and RHS denotes the right-hand side.
We denote by $l(\aa)$ the number of components of $\aa$ in LHS (to the left of the twisting vertex $m$),
and by $r(\aa)$ the number of components of $\aa$ in RHS (to the right of the twisting vertex $m$),
in both cases not taking into account the component of the boxed 1.
\end{notation}

\begin{remark}
For $\AA_n^{(m)}$:
\begin{enumerate}
\item[(i)] Any labeling $\aa$ is equivalent to a labeling $\aa'$ with $a'_m=0$.

\item[(ii)] For the schematic diagram \eqref{schematic}, if $l(\aa)\ge 1$ and $r(\aa)\ge 1$,
then the rightmost component in LHS and the leftmost component in RHS
 can be made to cancel each other out by unsplitting at vertex $m$
(which is a vertex of degree 3 for $1 < m < n$).
In other words, if $l(\aa),r(\aa) \ge 1$,  then  $\aa$ is equivalent to some labeling $\aa'$ with
$l(\aa')=l(\aa)-1$ and $r(\aa')=r(\aa)-1$.

\item[(iii)] A component cannot pass  from one hand side to the opposite hand side.
 In other words, if $\aa\sim\aa'$, then $r(\aa)-l(\aa)=r(\aa')-l(\aa')$.
\end{enumerate}
\end{remark}

\begin{proposition}\label{prop:An^(m)-invariant}
Two labelings $\aa,\aa'\in L(\AA_n^{(m)})$ are equivalent if and only if
 is $r(\aa)-l(\aa)=r(\aa')-l(\aa')$, and this invariant $r(\aa)-l(\aa)$ can take values
between $-\lceil ({m-1})/{2}\rceil$ and $\lceil ({n-m})/{2}\rceil$.
\end{proposition}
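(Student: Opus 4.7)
The plan is to establish the two directions of the equivalence separately and then read off the range of the invariant from Corollary \ref{cor:An-reps} applied to each side of the twisting vertex.

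First, I would prove the \emph{invariance} of $r(\aa)-l(\aa)$ under each move $\T_i$. The cleanest viewpoint is to interpret $l(\aa)$ and $r(\aa)$ as counts of connected 1-components of the augmented diagram that lie entirely in the LHS (resp.\ RHS), after excluding the component of the boxed 1 (which consists of the boxed 1 together with the maximal run of 1's through $m$ when $a_m=1$). For a move at a vertex $i\neq m$ only $a_i$ changes, so the analysis is local; a short case check on how flipping $a_i$ alters 1-components near $i$ — including the subtle case $i\in\{m-1,m+1\}$ with $a_m=1$, where the boxed 1 component may grow or shrink — shows $l$ and $r$ are either both unchanged or shift in a way that preserves their difference. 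For the move at $i=m$ (which is exactly an (un)splitting at the degree-$3$ vertex $m$ in the augmented diagram), the flip of $a_m$ absorbs or releases the rightmost LHS component and the leftmost RHS component symmetrically, so again $r-l$ is preserved. This essentially formalises remark (iii).

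Next, I would prove \emph{completeness}: any two labelings with equal $r-l$ are equivalent. Using remark (i), reduce to the case $a_m=0$; then the boxed 1 component is trivial, and $l(\aa)$ (resp.\ $r(\aa)$) is simply the number of 1-components of $\aa|_{\{1,\dots,m-1\}}$ (resp.\ $\aa|_{\{m+1,\dots,n\}}$). With $a_m=0$, the moves $\T_i$ for $i<m$ act on the LHS exactly as the moves of $\AA_{m-1}^{(0)}$, and similarly on the RHS for $i>m$. By Corollary~\ref{cor:An-reps}, I can transport the LHS into the canonical shape $\xi_{l(\aa)}^{\,m-1}$ and the RHS into $\eta_{r(\aa)}^{\,n-m}$. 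Whenever $\min(l,r)\geq 1$, remark (ii) lets me decrement both $l$ and $r$ by one via an unsplitting at $m$ (after pushing so that $a_{m-1}=a_{m+1}=1$). Iterating produces a canonical representative with $\min(l,r)=0$ depending only on $d:=r-l$, namely
\[
\aa_d \;=\;
\begin{cases}
(\xi_0^{\,m-1},\;0,\;\eta_d^{\,n-m}) & \text{if } d\ge 0,\\
(\eta_{-d}^{\,m-1},\;0,\;\eta_0^{\,n-m}) & \text{if } d<0.
\end{cases}
\]
Combined with invariance, this shows the classes are in bijection with the attained values of $d$.

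Finally, the \emph{range} of $d$ follows from Corollary~\ref{cor:An-reps}: the maximum number of components realisable on the LHS (of type $\AA_{m-1}$) is $\lceil(m-1)/2\rceil$, and on the RHS (of type $\AA_{n-m}$) is $\lceil(n-m)/2\rceil$; realising $l=0$ with $r$ maximal gives $d=\lceil(n-m)/2\rceil$, and $r=0$ with $l$ maximal gives $d=-\lceil(m-1)/2\rceil$, and every intermediate integer value is realised by a canonical representative $\aa_d$. The main obstacle I expect is the bookkeeping in Step~1: carefully checking that when $a_m=1$ the boxed 1 component may extend several vertices into LHS or RHS, so that a move at $m\pm 1$ can simultaneously change whether a neighboring block is "absorbed" into the boxed 1 component — this case analysis must be done deliberately to confirm $r-l$ stays put. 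Once this is in place, Steps 2 and 3 are essentially immediate from the $\AA_n^{(0)}$ results of Subsection \ref{subsec:An}.
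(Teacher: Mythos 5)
Your proof is correct and follows essentially the route the paper intends: the paper omits the proof but its preceding Remark (items (i)--(iii)) is exactly the scaffolding you formalise, namely invariance of $r-l$ under the moves, reduction to $a_m=0$ so that the two sides become independent $\AA^{(0)}$-puzzles governed by Corollary \ref{cor:An-reps}, and cancellation of opposite components by unsplitting at $m$ until $\min(l,r)=0$. Your explicit canonical representatives and the careful local check at the vertices adjacent to $m$ are exactly the details the paper leaves to the reader.
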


\begin{notation}\label{not:pq}
Let $p,q$ be integers, $0 \le p \le  \lceil ({m-1})/{2}\rceil$ and $0 \le q \le \lceil ({n-m})/{2}\rceil$.
 We write
\begin{equation}\label{eq:(p|q)}
(p|q) \ := \quad \sxymatrix{ \eta_p^{m-1} \rline & 0 \rline \dline & \xi_q^{n-m} \\ & \boxone & }
\end{equation}
We have $l(p|q)=p$ and $r(p|q)=q$.
\end{notation}

\begin{corollary}\label{cor:An^(m)-reps}
For $\AA_n^{(m)}$:
\begin{enumerate}
\item[(i)]
{\emm The orbit of zero} is the set of the labelings $\aa$ such that $l(\aa)=r(\aa)$.

\item[(ii)]
As {\emm representatives of orbits} we can take
\[
(0|0)\ = \quad \sxymatrix{ \eta_0 \rline & 0 \rline \dline & \xi_0 \\ & \boxone & } ,
\qquad (p|0)\ = \quad \sxymatrix{ {\eta_p} \rline & 0 \rline \dline & {\xi_0} \\ & \boxone & },
 \qquad (0|q)\  = \quad \sxymatrix{ {\eta_0} \rline & 0 \rline \dline & {\xi_q} \\ &
\boxone & }
\]
with $1 \le p \le  \lceil ({m-1})/{2}\rceil$ and $1 \le q \le \lceil ({n-m})/{2}\rceil$.

\item[(iii)]
The number of orbits is
\begin{eqnarray} \label{eq:An^m-num-of-orbits}
 \Orbs{\AA_n^{(m)}}  & = &  \left\lceil ({m-1})/{2}\right\rceil + 1 + \left\lceil
({n-m})/{2}\right\rceil \\ & = & \begin{cases}
k+1 & \mbox{ if }n=2k \ , \\
k+1 & \mbox{ if }n=2k+1 \mbox{ and } m \mbox{ is odd}, \\
k+2 & \mbox{ if }n=2k+1 \mbox{ and } m \mbox{ is even}.
\end{cases} \nonumber
\end{eqnarray}
\end{enumerate}
\end{corollary}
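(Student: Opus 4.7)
The plan is to deduce the corollary directly from Proposition \ref{prop:An^(m)-invariant}, which characterizes orbits by the single invariant $\delta(\aa) := r(\aa) - l(\aa)$ and states that $\delta$ takes exactly the values in the integer interval $[-\lceil (m-1)/2\rceil,\,\lceil (n-m)/2\rceil]$. Thus the classification of orbits reduces to exhibiting one labeling in each $\delta$-level set and counting.

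For part (i), I would observe that the zero labeling $\nl$ satisfies $l(\nl) = r(\nl) = 0$, hence $\delta(\nl) = 0$. By Proposition \ref{prop:An^(m)-invariant}, the orbit of $\nl$ is exactly the set of labelings $\aa$ with $\delta(\aa) = 0$, i.e., those with $l(\aa) = r(\aa)$.

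For part (ii), I would write down the three families of representatives given in \eqref{eq:(p|q)} and compute their invariants directly from Notation \ref{not:pq}: by construction $l(p|q) = p$ and $r(p|q) = q$, so
\[ \delta(0|0) = 0,\qquad \delta(p|0) = -p\ \text{ for }\ 1 \le p \le \lceil (m-1)/2\rceil,\qquad \delta(0|q) = q\ \text{ for }\ 1 \le q \le \lceil (n-m)/2\rceil. \]
As $p$ and $q$ run through the stated ranges, $\delta$ takes each value in $[-\lceil (m-1)/2\rceil,\,\lceil (n-m)/2\rceil]$ exactly once, so by Proposition \ref{prop:An^(m)-invariant} these labelings form a complete and irredundant system of orbit representatives.

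For part (iii), counting gives $\#\Orbset(\AA_n^{(m)}) = 1 + \lceil (m-1)/2\rceil + \lceil (n-m)/2\rceil$. The remaining work is a short case analysis on the parities of $n$ and $m$: writing $n = 2k$ or $n = 2k+1$ and using the fact that $\lceil (m-1)/2\rceil + \lceil (n-m)/2\rceil$ equals $k$ except in the single case $n = 2k+1$ with $m$ even, where it equals $k+1$, one recovers \eqref{eq:An^m-num-of-orbits}. There is no real obstacle here; everything rests on Proposition \ref{prop:An^(m)-invariant}, whose proof (which uses the remarks about splitting, unsplitting at the degree-3 vertex $m$, and the invariance of $r - l$) carries the actual content of the classification.
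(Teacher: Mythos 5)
Your proposal is correct and follows exactly the route the paper intends: the statement is presented as a corollary of Proposition \ref{prop:An^(m)-invariant}, and the paper omits the (routine) deduction that you have written out, namely evaluating the complete invariant $r(\aa)-l(\aa)$ on the zero labeling and on the representatives $(p|0)$, $(0|q)$ and counting the values it takes. The parity bookkeeping in part (iii) also checks out.
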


\subsection{Outer forms of $\SU(n+1)$} \label{ssec:An-outer}
Here $\tau$ is the nontrivial involutive automorphism of the Dynkin diagram $D=\AA_n$, where $n\ge 2$.

Case $G=\SL(n+1)$, $n=2k$. Then $D^\tau=\emptyset$, and it is well known that $H^1(\R,\SL(n+1))=1$
(this follows, for example, from \cite[III.1.1, Proposition 1]{Serre}).

Case $G=\SL(n+1)$, $n=2k+1$. Then $\#D^\tau=1$, $D^\tau=\bcb{}$\,, and again it is well known that
$H^1(\R ,\SL(n+1))=1$.

Case $G=\SL(k+1,\H)$, where $\H$ denotes the Hamilton quaternions.
Then $n=2k+1$, $\#D^\tau=1$, $D^\tau=\bc{}$, $\Orbs{\AA_1}=2$.
The orbit of zero in $L(\AA_1)$ consists of 0.
{\emm The class of zero} in $L(D)^\tau$ consists of the labelings
whose restriction to $D^\tau$ is 0, namely with $a_{k+1}=0$.
The other class consists of the labelings with $a_{k+1}=1$.

\section{Groups of type $\BB_n$}
\label{sec:Bn}

\subsection{The compact group $\Spin(2n+1)$ of type $\BB_n^{(0)}$} \label{subsec:Bn}

The Dynkin diagram is
\begin{equation*}
\sxymatrix{ \bc{1} \rline & \cdots & \lline \bc{n-1} \ar@{=>}[r] & \bc{n} }\, ,
\end{equation*}
where $n\ge 2$.

We write a labeling $\bb\in L(\BB_n^{(0)})$ as $\bb = (\aa \RRR \vk)$, where $\aa\in L(\AA_{n-1}^{(0)})$ and
$\vk \in \{0,1\}$.

Note  that the labeling
\begin{equation*}
\ell_1^{(0)} = \  (\xi_0 \RRR 1) \  = \quad (0 \ll ... \ll 0 \RRR 1)
\end{equation*}
is a fixed labeling.
We denote by $[\ell_1^{(0)}] \in \Or(\BB_n^{(0)})$ the orbit  of $\ell_1^{(0)}$ (consisting of one labeling),
and also, by slight abuse of notation, the subset
$\{\,[\ell_1^{(0)}]\,\}\subset \Or(\BB_n^{(0)})$ consisting of this orbit.

We also note that if $\aa\in L(\AA_{n-1}^{(0)})$, $\aa\neq 0$,
then $(\aa\RRR 1)$ is equivalent to $(\aa\RRR 0)$ in $L(\BB_n^{(0)})$.

\begin{proposition}
The map $\varphi \colon L(\AA_{n-1}^{(0)}) \to L(\BB_n^{(0)}) $ defined by
\[
 \aa \,{\longmapsto}\, (\aa \RRR 0) \]
 induces a bijection  $\varphi_* \colon \Or(\AA_{n-1}^{(0)}) \isoto
\Or(\BB_n^{(0)}) \smallsetminus [\ell_1^{(0)}]$ on the orbits.
\end{proposition}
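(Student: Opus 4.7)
The plan is to leverage Lemma \ref{prop:non-twisted} to give an explicit description of the moves on $L(\BB_n^{(0)})$ and then observe that they split cleanly with respect to the decomposition $\bb=(\aa\RRR\vk)$. Because $\alpha_n$ is the short root connected to $\alpha_{n-1}$ by a double edge, the ``long roots don't see short roots'' rule forces $\T_{n-1}$ to act by $a'_{n-1}=a_{n-1}+a_{n-2}$ without any dependence on $\vk$. Thus $\T_1,\dots,\T_{n-1}$ act on $\bb$ by transforming $\aa$ exactly as the corresponding moves of $\AA_{n-1}^{(0)}$ while leaving $\vk$ unchanged, whereas $\T_n$ leaves $\aa$ unchanged and sends $\vk\mapsto\vk+a_{n-1}$.

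Given this setup, well-definedness of $\varphi_*$ will be immediate: a sequence of $\AA_{n-1}^{(0)}$-moves witnessing $\aa\sim\aa'$ lifts, via the identically-indexed $\BB_n^{(0)}$-moves, to a sequence witnessing $(\aa\RRR 0)\sim(\aa'\RRR 0)$. For injectivity, I would introduce the forgetful projection $\pi\colon L(\BB_n^{(0)})\to L(\AA_{n-1}^{(0)})$ sending $(\aa\RRR\vk)\mapsto\aa$. Every $\BB$-move either acts on $\aa$ by an $\AA$-move or leaves $\aa$ fixed, so $\pi$ sends equivalent labelings to equivalent labelings and descends to $\pi_*\colon\Or(\BB_n^{(0)})\to\Or(\AA_{n-1}^{(0)})$; since $\pi\circ\varphi=\id$ one has $\pi_*\circ\varphi_*=\id$, whence $\varphi_*$ is injective.

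For the image I would split the argument into two steps. First, I verify directly that $\ell_1^{(0)}=(\xi_0\RRR 1)$ is fixed by every move (with $\aa=0$ each sum in $\T_i$ vanishes for $i\le n-1$, and $\T_n$ yields $\vk'=1+0=1$), so $[\ell_1^{(0)}]=\{\ell_1^{(0)}\}$ is a singleton; this orbit lies outside $\im\varphi_*$ because every element of $\im\varphi$ has $\vk$-coordinate $0$. Second, I show every other orbit meets $\im\varphi$: given $\bb=(\aa\RRR\vk)$ with $[\bb]\ne[\ell_1^{(0)}]$, the case $\vk=0$ is trivial, while if $\vk=1$ then $\aa\ne 0$, and by Lemma \ref{cor:basic-An} applied inside the $\AA_{n-1}^{(0)}$-part, $\aa$ is equivalent to some $\eta_r^{n-1}$ with $r\ge 1$; this representative has $a_{n-1}=1$, so one application of $\T_n$ flips $\vk$ to $0$, giving $\bb\sim\varphi(\eta_r^{n-1})$.

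The main obstacle will be exactly this surjectivity onto the complement of $[\ell_1^{(0)}]$: the reduction of the $\vk=1,\aa\ne 0$ case demands first maneuvering inside $\AA_{n-1}^{(0)}$ to arrange $a_{n-1}=1$ so that $\T_n$ acts nontrivially on $\vk$. Everything else amounts to bookkeeping given the clean split of the $\BB_n^{(0)}$-moves, and it is Lemmas \ref{lem:basic-An} and \ref{cor:basic-An} from the preceding section that make this reduction possible.
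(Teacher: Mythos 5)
Your proof is correct and follows exactly the route the paper intends: the proof is omitted there as ``straightforward,'' but the two remarks the paper records just before the proposition --- that $\ell_1^{(0)}=(\xi_0\RRR 1)$ is a fixed labeling, and that $(\aa\RRR 1)\sim(\aa\RRR 0)$ whenever $\aa\neq 0$ --- are precisely the two key facts you establish and use. Your filling in of the details (the clean splitting of the moves $\T_1,\dots,\T_{n-1}$ versus $\T_n$ via the ``long roots don't see short roots'' rule, the retraction $\pi$ giving injectivity, and the reduction to $\eta_r^{n-1}$ with $a_{n-1}=1$ so that $\T_n$ kills $\vk$) is sound.
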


\begin{corollary} \label{cor:Bn-reps} For $\BB_n^{(0)}$:
\begin{enumerate}
\item[(i)]
{\emm The orbit of zero} consists of the fixed labeling  $\xi_0\RRR 0$.
\item[(ii)]
As {\emm representatives of  orbits} we can take
$$\xi_0\RRR 1, \quad \xi_0 \RRR 0, \quad
\xi_1 \RRR 0, \quad \xi_2 \RRR 0 \ , \quad ... \ ,\quad  \xi_r \RRR 0
$$
with $r = \lceil ({n-1})/{2} \rceil$.
\item[(iii)]
\begin{equation*}
\Orbs{\BB_n^{(0)}} = \Orbs{ \AA_{n-1}^{(0)} } + 1 = \begin{cases} k+2 & \mbox{ if } n=2k \ , \\ k+2 & \mbox{ if }
n=2k+1 \ . \end{cases}
\end{equation*}
\end{enumerate}
\end{corollary}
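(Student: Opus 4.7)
The plan is to derive the corollary as an almost immediate consequence of the preceding Proposition, which sets up the bijection $\varphi_* \colon \Or(\AA_{n-1}^{(0)}) \isoto \Or(\BB_n^{(0)}) \smallsetminus [\ell_1^{(0)}]$, combined with Corollary \ref{cor:An-reps} describing the orbits of $\AA_{n-1}^{(0)}$. In effect, each part of the statement is obtained by transporting the corresponding information from type $\AA_{n-1}$ across $\varphi_*$ and then adjoining the singleton orbit $[\ell_1^{(0)}]$.

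First, for part (i), I would observe that the zero labeling of $\BB_n^{(0)}$ is exactly $\xi_0 \RRR 0 = \varphi(\xi_0)$, and that it is a fixed labeling because every move $\T_i$ requires some neighbor to carry a 1 in order to flip a coordinate. Hence its orbit in $L(\BB_n^{(0)})$ is the singleton $\{\xi_0 \RRR 0\}$; this matches the image under $\varphi_*$ of the (singleton) orbit of $\xi_0$ in $L(\AA_{n-1}^{(0)})$ given by Corollary \ref{cor:An-reps}(i).

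For part (ii), I would take the list of representatives $\xi_0, \xi_1, \dots, \xi_r$ with $r = \lceil(n-1)/2\rceil$ furnished by Corollary \ref{cor:An-reps}, apply $\varphi$ to each one to obtain representatives $\xi_0 \RRR 0, \xi_1 \RRR 0, \dots, \xi_r \RRR 0$ of the orbits in $\Or(\BB_n^{(0)}) \smallsetminus [\ell_1^{(0)}]$, and adjoin the extra representative $\xi_0 \RRR 1$ for the orbit $[\ell_1^{(0)}]$ (which is a singleton since $\ell_1^{(0)}$ is a fixed labeling, as noted just before the Proposition).

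For part (iii), I would simply count: by the Proposition,
\begin{equation*}
\Orbs{\BB_n^{(0)}} = \Orbs{\AA_{n-1}^{(0)}} + 1,
\end{equation*}
and then substitute the value of $\Orbs{\AA_{n-1}^{(0)}}$ from formula \eqref{eq:An-num-of-orbits}. A quick case split on whether $n = 2k$ or $n = 2k+1$ gives $\lceil(n-1)/2\rceil = k$ in both cases, so the total is $k+2$ in each case. There is no substantial obstacle here: the only point requiring even mild attention is confirming that $\xi_0 \RRR 0$ is genuinely a fixed labeling (so its orbit does not absorb other elements), but this is evident from the move formulas at vertices $n-1$ and $n$ applied to the all-zero labeling.
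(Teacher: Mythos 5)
Your proposal is correct and follows exactly the route the paper intends: the corollary is a direct consequence of the preceding Proposition (the bijection $\varphi_*$ onto $\Or(\BB_n^{(0)})\smallsetminus[\ell_1^{(0)}]$) together with Corollary \ref{cor:An-reps}, and the paper omits the proof precisely because it is this straightforward transport-and-count argument. Your arithmetic check in part (iii) and the observation that the zero labeling is fixed are both accurate.
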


\subsection{The group $\Spin(2m,\,2n+1-2m)$ \  $(\,1 \le m < n\,)$  with twisting diagram   $\BB_n^{(m)}$}
\label{subsec:Bn^m}
The group $G$ is the universal covering $\Spin(2m,\,2n+1-2m)$
of the special orthogonal group $\SO(2m,\,2n+1-2m)$
of the diagonal quadratic form
with $2m$ times $-1$ and $2n+1-2m$ times $+1$ on the diagonal.
The  twisting diagram  and the augmented diagram are:
\begin{equation*}
\sxymatrix{
 \bc{1} \rline & \cdots & \lline \bcb{m} \rline & \cdots & \lline \bc{n-1} \ar@{=>}[r] & \bc{n}
& \qquad & \bc{1}  \rline & \bc{2} \rline & \cdots & \lline \bc{m} \dline \rline & \cdots &
\lline \bc{n-1} \ar@{=>}[r] & \bc{n} \\
  &  & &  & \qquad & & & &  & & \boxone  &  & & }
\end{equation*}
(see \cite[Table 7]{OV} and Construction \ref{const:augmented-diag}).

Note that if $m$ is even, the labeling
\[ \ell_1^{(m)} \ = \quad
\sxymatrix{ 1 \rline & 0 \rline & \cdots & \lline 1 \rline & 0 \dline \rline & \cdots & \lline  0
\ar@{=>}[r] & 1 \\ & & & & \boxone & & }
\]
is a fixed labeling.
Note also that if $\bb=(\aa\RRR 1)\in L(\BB_n^{(m)})$ and $\bb\neq\ell_1^{(m)}$, then $\bb\sim (\aa\RRR 0)$.

\begin{proposition} \label{lem:Bn^m-bijection}
The map $\varphi\colon L(\AA_{n-1}^{(m)})\to  L(\BB_n^{(m)})$ defined by
$$
\aa\longmapsto (\sxymatrix{\aa \arr &0})
$$
induces an injection
$$
\Or(\AA_{n-1}^{(m)})\to  \Or(\BB_n^{(m)})
$$
which is bijective when $m$ is odd, and whose image is
$\Or(\BB_n^{(m)})\smallsetminus[\ell_1^{(m)}]$
when $m$ is even.
\end{proposition}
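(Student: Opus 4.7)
The plan is to exploit the fact that moves $\T_i$ with $i < n$ act on $L(\BB_n^{(m)})$ without touching the last coordinate $\vk = a_n$, and that their action on the first $n-1$ coordinates coincides with the corresponding moves on $L(\AA_{n-1}^{(m)})$. The only subtle point is $\T_{n-1}$, which has vertex $n$ as a neighbor across the double edge $n-1 \Rightarrow n$; by the ``long roots don't see short roots'' rule of Lemma \ref{prop:non-twisted} (and Lemma \ref{prop:twisted} in the case $m = n-1$, where the twisting boxed 1 is attached at $n-1$), the label $\vk$ is not counted in $\T_{n-1}$. Meanwhile, $\T_n$ modifies only $a_n = \vk$, flipping it precisely when $a_{n-1} = 1$.

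Well-definedness and injectivity of the induced map $\varphi_*$ then follow at once. If $\aa \sim \aa'$ in $L(\AA_{n-1}^{(m)})$ via a sequence $(\T_{i_1}, \ldots, \T_{i_k})$, applying the same sequence in $L(\BB_n^{(m)})$ to $(\aa \RRR 0)$ keeps $\vk = 0$ throughout and yields $(\aa' \RRR 0)$, so $\varphi_*$ is well defined. Conversely, given any sequence of moves in $L(\BB_n^{(m)})$ taking $(\aa \RRR 0)$ to $(\aa' \RRR 0)$, I delete every occurrence of $\T_n$ from that sequence; since each deleted move leaves the first $n-1$ coordinates unchanged, the remaining moves $\T_i$ with $i < n$ form a valid sequence in $L(\AA_{n-1}^{(m)})$ carrying $\aa$ to $\aa'$, which proves injectivity.

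For the image, an orbit $[\bb] \in \Or(\BB_n^{(m)})$ belongs to the image of $\varphi_*$ iff it contains some labeling of the form $(\tilde\aa \RRR 0)$. I will invoke the remark stated just before the proposition: any $\bb = (\aa \RRR 1) \neq \ell_1^{(m)}$ satisfies $\bb \sim (\aa \RRR 0)$. When $m$ is even, $\ell_1^{(m)}$ is a fixed labeling, so its singleton orbit $\{\ell_1^{(m)}\}$ consists of a single labeling with $\vk = 1$ and is precisely the orbit excluded from the image; every other orbit therefore contains a labeling with $\vk = 0$, giving image equal to $\Or(\BB_n^{(m)}) \setminus [\ell_1^{(m)}]$. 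When $m$ is odd, the alternating pattern defining $\ell_1^{(m)}$ fails to close consistently at position $m-1$, so there is no exceptional labeling and the remark applies to every $\bb = (\aa \RRR 1)$, yielding surjectivity. The technical heart, which I expect to be the main obstacle, is the remark itself: proving $(\aa \RRR 1) \sim (\aa \RRR 0)$ reduces to finding $\aa' \sim \aa$ in $L(\AA_{n-1}^{(m)})$ with $a'_{n-1} = 1$, since then the cycle ``move $\aa \to \aa'$, apply $\T_n$, move $\aa' \to \aa$'' flips $\vk$ while preserving $\aa$. By Corollary \ref{cor:An^(m)-reps} and Lemma \ref{lem:basic-An}, such an $\aa'$ exists except when the $\AA_{n-1}^{(m)}$-orbit of $\aa$ is the extremal orbit $(p_{\max}|0)$ with $p_{\max} = \lceil (m-1)/2 \rceil$, and this extremal orbit matches the $\aa$-part of $\ell_1^{(m)}$ exactly in the even-$m$ case, explaining the case split in the statement.
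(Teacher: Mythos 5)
Your reduction of the $\BB_n^{(m)}$ puzzle to the $\AA_{n-1}^{(m)}$ puzzle is the right one (the paper omits its proof of this proposition as ``straightforward'', but this is clearly the intended route): the moves $\T_i$ with $i<n$ act on the first $n-1$ labels exactly as in $L(\AA_{n-1}^{(m)})$ because vertex $n$ is the short end of the double edge and hence invisible to $\T_{n-1}$, while $\T_n$ flips $\vk$ precisely when $a_{n-1}=1$. Your well-definedness and injectivity arguments (delete the occurrences of $\T_n$ from a connecting sequence) are correct, as is the reduction of the image question to: which orbits of $L(\AA_{n-1}^{(m)})$ contain a labeling with $a_{n-1}=1$?

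However, your answer to that question --- ``such an $\aa'$ exists except when the orbit of $\aa$ is the extremal orbit $(p_{\max}|0)$, $p_{\max}=\lceil(m-1)/2\rceil$'' --- is false for $m$ odd, and this is exactly where the parity dichotomy of the statement lives: taken literally, your criterion predicts an extra orbit $\{\hs(p_{\max}|0)\RRR 1\hs\}$ outside the image for odd $m$ as well, contradicting the bijectivity you are trying to prove. What you are missing is that $l(\aa)$ and $r(\aa)$ do not count the component of the boxed 1, so a labeling can have $a_{n-1}=1$ without contributing to $r$: set $a_m=a_{m+1}=\dots=a_{n-1}=1$ and put $\xi_{p_{\max}}^{m-1}$ on the left. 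When $m$ is odd, $m-1$ is even, so $\xi_{p_{\max}}^{m-1}$ realizes $p_{\max}=(m-1)/2$ components with $a_{m-1}=0$; the long right-hand component then belongs to the boxed 1 and is not counted, so the resulting labeling still has $r-l=-p_{\max}$, lies in the extremal orbit by Proposition \ref{prop:An^(m)-invariant}, and has $a_{n-1}=1$ (e.g.\ in $\AA_3^{(3)}$ one checks directly that $0\ll1\ll0\sim 1\ll0\ll1$). When $m$ is even, $l=p_{\max}=m/2$ forces $a_{m-1}=1$, which is incompatible with $a_m=1$ (the boxed-1 component would absorb that left component and drop $l$), so every labeling in the extremal orbit has $a_{n-1}=0$ and $\{\ell_1^{(m)}\}$ really is an extra singleton orbit. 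With the criterion corrected to ``except when $m$ is even and the orbit is $(p_{\max}|0)$'', together with this justification, the rest of your argument goes through.
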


We write
\begin{equation}\label{eq:(>)}
(p|q\!\gr\!\vk):=\,\aa\RRR\vk,\quad\text{where}\quad\aa=(p|q)\in L(\AA_{n-1}^{(m)}),\ \vk \in \{0,1\}.
\end{equation}
It follows from  Proposition \ref{lem:Bn^m-bijection} that as a set of representatives for the
orbits in $L(\BB_n^{(m)})$  we can take the labelings of the form $\sxymatrix{\aa \arr &0}$, where $\aa$ runs
over the set of representatives of orbits in $L(\AA_{n-1}^{(m)})$ from Corollary \ref{cor:An^(m)-reps},
and when $m$ is even we should add the fixed labeling $\ell_1^{(m)}\in L(\BB_n^{(m)})$. Explicitly, we obtain:

\begin{corollary} \label{prop:Bn^(m)-reps}
{\emm  The orbit of zero} in $L(\BB_n^{(m)})$ is the set of the labelings
$\bb=(\sxymatrix{\aa \arr &\vk})$ with  $r(\aa)=l(\aa)$.
As {\emm representatives of orbits} in $L(\BB_n^{(m)})$ we can take
$(p|0\gr 0)$ for $0\le p\le \lceil({m-1})/{2}\rceil$,\ \ $(0|q\gr 0)$ for $0<q\le \lceil({n-1-m})/{2}\rceil$,
and $\ell_1^{(m)}$ when $m$ is even.
\end{corollary}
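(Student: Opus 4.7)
The plan is to deduce this corollary directly from Proposition \ref{lem:Bn^m-bijection} combined with Corollary \ref{cor:An^(m)-reps} applied to $\AA_{n-1}^{(m)}$; no fresh work with the moves $\T_i$ should be necessary. First I would identify the orbit of zero. Since $\varphi(\mathbf{0})=(\mathbf{0}\RRR 0)$ is the zero element of $L(\BB_n^{(m)})$ and, by Proposition \ref{lem:Bn^m-bijection}, the induced map on orbits is injective, a labeling of the form $(\aa\RRR 0)$ lies in $[\mathbf{0}]\subset L(\BB_n^{(m)})$ if and only if $\aa$ lies in the zero orbit of $L(\AA_{n-1}^{(m)})$, which by Corollary \ref{cor:An^(m)-reps}(i) happens precisely when $l(\aa)=r(\aa)$. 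For a labeling $\bb=(\aa\RRR 1)$ distinct from $\ell_1^{(m)}$, the remark preceding Proposition \ref{lem:Bn^m-bijection} gives $\bb\sim(\aa\RRR 0)$, so the same criterion $l(\aa)=r(\aa)$ applies. Finally, when $m$ is even the fixed labeling $\ell_1^{(m)}$, written as $(\aa\RRR 1)$, satisfies $l(\aa)=m/2\neq 0=r(\aa)$, so it is correctly excluded from the stated description and forms its own singleton orbit distinct from $[\mathbf{0}]$.

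Next I would list representatives by transporting those from $L(\AA_{n-1}^{(m)})$. By Corollary \ref{cor:An^(m)-reps}(ii) applied with $n$ replaced by $n-1$, the representatives of orbits in $L(\AA_{n-1}^{(m)})$ are $(p|0)$ for $0\le p\le\lceil(m-1)/2\rceil$ and $(0|q)$ for $0<q\le\lceil(n-1-m)/2\rceil$. Applying $\varphi$ and using the notation of \eqref{eq:(>)}, these map to $(p|0\gr 0)$ and $(0|q\gr 0)$ respectively. By Proposition \ref{lem:Bn^m-bijection} they exhaust all orbits of $L(\BB_n^{(m)})$ when $m$ is odd, and they exhaust all orbits except $[\ell_1^{(m)}]$ when $m$ is even; in the latter case the single further representative $\ell_1^{(m)}$ itself must be adjoined to the list.

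The main (and essentially only) obstacle is the bookkeeping around labelings with last coordinate $\vk=1$: the stated description of the zero orbit is uniform in $\vk$, so one must verify it separately for $\vk=0$ and $\vk=1$. The remark that $(\aa\RRR 1)\sim(\aa\RRR 0)$ whenever $(\aa\RRR 1)\neq\ell_1^{(m)}$, together with the direct check that $\ell_1^{(m)}$ has $l\neq r$ when $m$ is even, closes this gap cleanly, and the pairwise inequivalence of the listed representatives follows from that in Corollary \ref{cor:An^(m)-reps}(ii) via the injectivity in Proposition \ref{lem:Bn^m-bijection}.
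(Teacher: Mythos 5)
Your proposal is correct and follows exactly the route the paper itself indicates in the sentence preceding the corollary: transport the orbit description and the representatives from $L(\AA_{n-1}^{(m)})$ via Proposition \ref{lem:Bn^m-bijection} and Corollary \ref{cor:An^(m)-reps}, handle the labelings $(\aa\RRR 1)$ by the remark that they are equivalent to $(\aa\RRR 0)$ unless they equal $\ell_1^{(m)}$, and adjoin $\ell_1^{(m)}$ when $m$ is even. Your explicit check that $\ell_1^{(m)}$ has $l\neq r$ is a useful detail the paper leaves implicit.
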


\begin{corollary} \label{cor:Bn^(m)-number}
We have:
\[
\Orbs{\BB_n^{(m)}} = \begin{cases} \Orbs{\AA_{n-1}^{(m)}} & \mbox{ if }m\mbox{ is odd,} \\
\Orbs{\AA_{n-1}^{(m)}} + 1 & \mbox{ if }m\mbox{ is even.} \end{cases}
\]
Using Corollary \ref{cor:An^(m)-reps}(iii), we obtain
\begin{equation*} \label{eq:Bn^m-number-of-orbits}
\Orbs{ \BB_n^{(m)} } = \begin{cases} k & \mbox{ if } n=2k \mbox{ and } m \mbox{ is odd,} \\
 k+2 & \mbox{ if } n=2k \mbox{ and } m \mbox{ is even,} \\
 k+1 & \mbox{ if } n=2k+1 \mbox{ and } m \mbox{ is odd,} \\
  k+2 & \mbox{ if } n=2k+1 \mbox{ and } m \mbox{ is even.}
  \end{cases}
\end{equation*}
\end{corollary}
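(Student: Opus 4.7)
The plan is to derive the corollary as a purely bookkeeping consequence of the preceding Proposition \ref{lem:Bn^m-bijection} together with the arithmetic of Corollary \ref{cor:An^(m)-reps}(iii). The first formula,
\[
\Orbs{\BB_n^{(m)}} = \begin{cases} \Orbs{\AA_{n-1}^{(m)}} & \text{if $m$ is odd,} \\ \Orbs{\AA_{n-1}^{(m)}} + 1 & \text{if $m$ is even,} \end{cases}
\]
follows immediately from Proposition \ref{lem:Bn^m-bijection}: the injection $\Or(\AA_{n-1}^{(m)})\hookrightarrow\Or(\BB_n^{(m)})$ is a bijection when $m$ is odd, and its image misses exactly the single orbit $[\ell_1^{(m)}]$ when $m$ is even, contributing one extra orbit. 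There is nothing further to prove for this step beyond invoking the proposition.

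For the explicit case-by-case formula, I would simply substitute $n-1$ into the count $\Orbs{\AA_n^{(m)}}$ from Corollary \ref{cor:An^(m)-reps}(iii) and apply the relation above. Concretely, if $n=2k$ then $n-1=2(k-1)+1$, and Corollary \ref{cor:An^(m)-reps}(iii) gives $\Orbs{\AA_{n-1}^{(m)}}=k$ when $m$ is odd and $k+1$ when $m$ is even; if $n=2k+1$ then $n-1=2k$ is even, giving $\Orbs{\AA_{n-1}^{(m)}}=k+1$ in both parities of $m$. Adding $0$ or $1$ according to the parity of $m$ yields the four cases displayed in the statement.

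Since the heavy lifting — namely, the construction of $\varphi$, the verification that it is $W$-equivariant on orbits, and the identification of $[\ell_1^{(m)}]$ as the unique orbit outside its image for even $m$ — has already been done in Proposition \ref{lem:Bn^m-bijection}, there is no genuine obstacle here. The only thing requiring care is a clean case analysis on the parities of $n$ and $m$ when substituting $n-1$; a small table (or the brief enumeration above) suffices. Hence the write-up can be compressed to a single paragraph quoting Proposition \ref{lem:Bn^m-bijection} and Corollary \ref{cor:An^(m)-reps}(iii) and performing the substitution.
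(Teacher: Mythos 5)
Your proposal is correct and is exactly the derivation the paper intends: the first display is immediate from Proposition \ref{lem:Bn^m-bijection} (bijective for $m$ odd, image missing only $[\ell_1^{(m)}]$ for $m$ even), and the four-case formula is the straightforward substitution of $n-1$ into Corollary \ref{cor:An^(m)-reps}(iii); your arithmetic in all four parity cases checks out.
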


\subsection{The group $\Spin(2n,1)$   with twisting diagram   $\BB_n^{(n)}$} \label{subsec:Bn^n}
The  twisting diagram  and the augmented diagram are:
\begin{equation*}
\sxymatrix{ \bc{1} \rline & \cdots & \lline \bc{n-1} \ar@{=>}[r] & \bcb{n} }
\qquad\qquad
\sxymatrix{ \bc{1} \rline & \cdots & \lline \bc{n-1} \ar@{=>}[r] & \bc{n} \rline &
\boxone }
\end{equation*}
(see \cite[Table 7]{OV} and Construction \ref{const:augmented-diag}).

If $n=2k$, we have a fixed labeling in $L(\BB_n^{(n)})$
\begin{equation*}
\ell_1^{(n)} \ =\quad  \sxymatrix{ 1 \rline & 0 \rline & 1 \rline & \overset{0-1}{\cdots} & \lline 0
\rline & 1 \arr & 1 \rline & \boxone } \quad  = \quad \xi_k \RRR 1\ll\boe\quad  \in L(\BB_n^{(n)}) \ .
\end{equation*}

\begin{proposition}
Define a map $\varphi\colon L(\AA_{n-1}^{(0)}) \to L(\BB_n^{(n)})$  by
\[ \varphi(\aa) = (\, \aa \RRR 0 \ll \boe \, ) \, , \]
then the induced map
$$
\varphi_*\colon \Or(\AA_{n-1}^{(0)})\to \Or(\BB_n^{(n)})
$$
is injective.
If $n$ is odd, then $\varphi_*$ is bijective;
 if $n$ is even, the image of $\varphi_*$ is
$\Or(\BB_n^{(n)})\smallsetminus[\ell_1^{(n)}]$.
\end{proposition}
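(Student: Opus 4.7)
The plan is to split each labeling $\bb \in L(\BB_n^{(n)})$ as $\bb = (\aa,\vk)$ with $\aa \in L(\AA_{n-1}^{(0)})$ recording the first $n-1$ labels and $\vk = a_n$, and to track how the moves act on this decomposition. In the augmented diagram (Construction \ref{const:augmented-diag}) the short root is vertex $n$, so for $1 \le i \le n-1$ the rule ``long roots don't see short roots'' shows that $\T_i$ acts on the $\aa$-part exactly as the corresponding move of $\AA_{n-1}^{(0)}$ and does not change $\vk$. The move $\T_n$, computed from the simple edge to the boxed $1$ and the double edge to the (long) vertex $n-1$, takes the form $a'_n = a_n + a_{n-1} + 1$, so it flips $\vk$ iff $a_{n-1} = 0$ and does nothing otherwise.

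Injectivity of $\varphi_*$ follows immediately: given a sequence of moves from $\varphi(\aa)=(\aa,0)$ to $\varphi(\aa')=(\aa',0)$, discarding the $\T_n$-moves (which preserve the $\aa$-part) leaves a sequence of $\AA_{n-1}^{(0)}$-moves connecting $\aa$ to $\aa'$.

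For the image, every orbit in $L(\BB_n^{(n)})$ contains a representative $(\aa,\vk)$, and the case $\vk = 0$ is trivial. Starting from $\vk = 1$, I would try to replace $\aa$ within its $\AA_{n-1}^{(0)}$-orbit by some $\aa'$ with $a'_{n-1}=0$; then $\T_n$ sends $(\aa',1)$ to $\varphi(\aa')$. By Corollary \ref{cor:basic-An} the orbit of $\aa$ is represented by $\xi_r^{n-1}$, whose 1s sit at positions $1,3,\ldots,2r-1$ with $r \le \lceil (n-1)/2 \rceil$. Thus $(\xi_r^{n-1})_{n-1} = 0$ unless $n$ is even and $r = k := n/2$. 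For $n$ odd this obstruction never arises and $\varphi_*$ is surjective.

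For $n$ even, the exceptional orbit in $L(\AA_{n-1}^{(0)})$ consists of the single labeling $\xi_k^{n-1}$: packing $k$ components of length $\ge 1$ separated by gaps of length $\ge 1$ into $n-1 = 2k-1$ positions forces every component and every gap to have length exactly $1$. A direct check using \eqref{non-twisted-simply-laced} shows $\xi_k^{n-1}$ is fixed by every $\T_i$ with $i \le n-1$, and on $\ell_1^{(n)} = (\xi_k^{n-1},1)$ the move $\T_n$ gives $a'_n = 1+1+1 = 1$, so $\ell_1^{(n)}$ is fixed in $L(\BB_n^{(n)})$. Hence $[\ell_1^{(n)}]$ is a singleton orbit lying outside the image of $\varphi_*$ (whose members all satisfy $\vk = 0$), while every other orbit does lie in the image. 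The main obstacle is precisely this exceptional case; once the move formulas are in hand, the rest is bookkeeping.
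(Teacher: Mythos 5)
Your proof is correct: the move computations ($\T_i$ for $i\le n-1$ acting on the $\aa$-part exactly as in $\AA_{n-1}^{(0)}$ without touching $a_n$, and $a'_n=a_n+a_{n-1}+1$ so that $\T_n$ flips $\vk$ precisely when $a_{n-1}=0$), the projection argument for injectivity, and the identification of the single exceptional singleton orbit $\{\ell_1^{(n)}\}$ when $n$ is even are all sound. The paper omits the proof of this proposition, but your argument follows exactly the framework it sets up in Section \ref{sec:Bn} — the same decomposition $\bb=(\aa\RRR\vk)$, the move formula of Lemma \ref{prop:non-twisted}, and the representatives $\xi_r^{n-1}$ of Corollary \ref{cor:basic-An}.
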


\begin{proposition}
{\emm The orbit of zero} in $L(\BB_n^{(n)})$ consists of two labelings:
\[
\sxymatrix{ 0 \rline & \cdots & \lline 0 \ar@{=>}[r] & 0 \rline & \boxone }
\text{ \quad and \quad }
\sxymatrix{ 0 \rline & \cdots & \lline 0 \ar@{=>}[r] & 1 \rline & \boxone } \ .
\]
\end{proposition}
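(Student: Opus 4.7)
The plan is to trace the orbit of $\nl$ directly on the augmented diagram
\[
\sxymatrix{\bc{1}\rline & \cdots & \lline \bc{n-1} \ar@{=>}[r] & \bc{n}\rline & \boxone}
\]
by writing down each move explicitly and checking that the orbit has exactly two elements.

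First, I would spell out each move using Lemma \ref{prop:non-twisted} together with Construction \ref{const:augmented-diag} (the boxed $1$ is summed alongside the genuine labels with constant value $1$). For $1\le i\le n-2$, the move $\T_i$ flips $a_i$ iff $a_{i-1}+a_{i+1}$ is odd (setting $a_0:=0$). At vertex $n-1$, the rule ``long roots don't see short roots'' omits the shorter neighbor $n$ across the double edge, so $\T_{n-1}$ flips $a_{n-1}$ iff $a_{n-2}$ is odd. At vertex $n$ (short), the longer neighbor $n-1$ \emph{is} summed, and the boxed $1$ contributes $1$; hence $\T_n$ flips $a_n$ iff $a_{n-1}+1$ is odd, i.e., iff $a_{n-1}=0$.

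Second, I would apply these formulas twice. At $\nl=(0,\dots,0)$, every $\T_i$ with $i\le n-1$ is trivial, while $\T_n$ is active and produces $\aa^{*}:=(0,\dots,0,1)$. At $\aa^{*}$, the moves $\T_i$ for $i\le n-2$ still see only zeros among their relevant neighbors, $\T_{n-1}$ depends only on $a_{n-2}=0$, and $\T_n$ (still active because $a_{n-1}=0$) returns $\aa^{*}$ to $\nl$. Hence $\{\nl,\aa^{*}\}$ is invariant under every move and so coincides with the equivalence class $[\nl]$, which is exactly the assertion of the proposition.

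There is no real obstacle; this is bookkeeping. The only place requiring care is the asymmetry at the double edge---vertex $n-1$ ignores the shorter neighbor $n$, whereas vertex $n$ does include the longer neighbor $n-1$---which is what makes $\T_n$ the unique nontrivial move available at both $\nl$ and $\aa^{*}$.
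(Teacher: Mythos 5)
Your proof is correct: the move formulas at the double edge (vertex $n-1$ ignoring the short neighbor $n$, vertex $n$ seeing $a_{n-1}$ plus the boxed $1$) are exactly right, and the two-step verification that $\{\nl,\aa^*\}$ is closed under all moves is the straightforward direct argument the paper has in mind (it omits the proof as routine).
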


\begin{corollary}\label{cor:Bnn-reps}
As {\emm representatives of orbits} in $ L(\BB_n^{(n)})$ we can take
\[\sxymatrix{ \xi_0 \arr & 0  \rline & \boxone } \ ,\qquad
\sxymatrix{ \xi_1 \arr & 0  \rline & \boxone } \ ,\qquad \dots\ ,\qquad
\sxymatrix{ \xi_r \arr & 0  \rline & \boxone }
\]
where $r = \lceil ({n-1})/{2} \rceil$,  together with  $\ell_1^{(n)}$ when $n$ is even.
\end{corollary}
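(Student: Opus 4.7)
The plan is to deduce this corollary directly from the preceding proposition on the map $\varphi_*\colon \Or(\AA_{n-1}^{(0)}) \to \Or(\BB_n^{(n)})$ together with the already established list of orbit representatives in $L(\AA_{n-1}^{(0)})$.

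First I would recall from Corollary \ref{cor:An-reps} that a complete set of representatives for $\Or(\AA_{n-1}^{(0)})$ is given by $\xi_0, \xi_1, \dots, \xi_r$ with $r=\lceil (n-1)/2\rceil$, of total cardinality $r+1$. Since $\varphi_*$ is injective, applying $\varphi$ to these representatives produces $r+1$ pairwise inequivalent labelings in $L(\BB_n^{(n)})$, namely
\[
 \sxymatrix{\xi_0 \arr & 0 \rline & \boxone},\quad \sxymatrix{\xi_1 \arr & 0 \rline & \boxone},\quad\dots,\quad \sxymatrix{\xi_r \arr & 0 \rline & \boxone},
\]
and these represent exactly the orbits in the image of $\varphi_*$.

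Next I would split into cases according to the parity of $n$. When $n$ is odd, the preceding proposition says that $\varphi_*$ is bijective, so the $r+1$ labelings above form a full set of orbit representatives and the list in the corollary is complete. When $n$ is even, the image of $\varphi_*$ equals $\Or(\BB_n^{(n)})\smallsetminus [\ell_1^{(n)}]$, so the $r+1$ labelings above represent every orbit except $[\ell_1^{(n)}]$; thus adding the fixed labeling $\ell_1^{(n)}$ to the list gives a full set of representatives. This matches the statement of the corollary exactly.

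There is essentially no obstacle beyond assembling these two inputs; the substantive work has already been absorbed into the previous proposition (injectivity, and the computation of the image depending on the parity of $n$) and into the classification of $\Or(\AA_{n-1}^{(0)})$ from Section \ref{sec:An}. If I wished to double-check, I would briefly verify that the labeling $\varphi(\xi_0) = (\xi_0 \RRR 0 \ll \boe)$ does lie in the orbit of zero described in the preceding proposition (which contains both $\xi_0\RRR 0 \ll \boe$ and $\xi_0 \RRR 1 \ll \boe$), confirming that the neutral element of $\Or(\BB_n^{(n)})$ is indeed represented by $\xi_0\RRR 0 \ll\boe$ in our list.
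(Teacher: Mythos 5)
Your proposal is correct and is exactly the intended (omitted) derivation: combine the injectivity of $\varphi_*$ and the parity-dependent description of its image from the preceding proposition with the list of representatives $\xi_0,\dots,\xi_r$ for $\Or(\AA_{n-1}^{(0)})$ from Corollary \ref{cor:An-reps}. Nothing is missing.
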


\begin{corollary}\label{Bnn-orbits}
\begin{equation*}
\Orbs{ \BB_n^{(n)} } = \begin{cases} \Orbs{ \AA_{n-1}^{(0)} } +1=k+2 &
\mbox{ if }\  n=2k \ , \\ \Orbs{ \AA_{n-1}^{(0)} }=k+1 & \mbox{ if }\
n=2k+1 \ .
\end{cases}
\end{equation*}
\end{corollary}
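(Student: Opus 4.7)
The plan is to derive the count directly from the immediately preceding proposition, which establishes an injection $\varphi_*\colon \Or(\AA_{n-1}^{(0)})\to \Or(\BB_n^{(n)})$, and from the formula \eqref{eq:An-num-of-orbits} for $\Orbs{\AA_{n-1}^{(0)}}$. Since the preceding proposition completely describes the image of $\varphi_*$ in each parity case, there is essentially nothing left except arithmetic.

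First I would split into two cases according to the parity of $n$. When $n$ is odd, the proposition asserts that $\varphi_*$ is bijective, so $\Orbs{\BB_n^{(n)}}=\Orbs{\AA_{n-1}^{(0)}}$. When $n$ is even, the image of $\varphi_*$ equals $\Or(\BB_n^{(n)})\smallsetminus[\ell_1^{(n)}]$ and $\varphi_*$ is still injective, so $\Orbs{\BB_n^{(n)}}=\Orbs{\AA_{n-1}^{(0)}}+1$ (the extra orbit being the class of the fixed labeling $\ell_1^{(n)}$).

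Next I would plug in formula \eqref{eq:An-num-of-orbits} with $n$ replaced by $n-1$. If $n=2k$, then $n-1=2k-1$, so $\lceil (n-1)/2\rceil+1=k+1$, giving $\Orbs{\BB_n^{(n)}}=k+1+1=k+2$. If $n=2k+1$, then $n-1=2k$, so $\lceil (n-1)/2\rceil+1=k+1$, giving $\Orbs{\BB_n^{(n)}}=k+1$. This matches the claimed formula in both cases.

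There is no real obstacle here: all the work has already been done in the preceding proposition describing $\varphi_*$ and in the known count for $\AA_{n-1}^{(0)}$. The only thing to be slightly careful about is that the extra orbit in the even case is genuinely new, i.e., that $\ell_1^{(n)}$ does not lie in the image of $\varphi_*$; but this is already part of the statement of the preceding proposition (and is consistent with the list of representatives in Corollary \ref{cor:Bnn-reps}, which explicitly includes $\ell_1^{(n)}$ only when $n$ is even). So the proof is a one-line invocation of the preceding proposition followed by a short arithmetic verification.
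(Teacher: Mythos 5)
Your proposal is correct and is exactly the intended derivation: the paper omits the proof as straightforward, and the corollary's own statement already displays the deduction $\Orbs{\BB_n^{(n)}}=\Orbs{\AA_{n-1}^{(0)}}+1$ (resp.\ $=\Orbs{\AA_{n-1}^{(0)}}$) from the preceding proposition, followed by the same substitution into formula \eqref{eq:An-num-of-orbits}. Your arithmetic in both parity cases checks out.
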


\section{Groups of type $\CC_n$}
\label{sec:Cn}

\subsection{The compact group $\Sp(n)$ with  diagram $\CC_n^{(0)}$} \label{subsec:Cn}

The group $G$ is  the compact ``quaternionic'' group $\Sp(n)$
of type $\CC_n$ ($n\ge 3$) with Dynkin diagram
\begin{equation*}
\sxymatrix{ \bc{1} \rline & \cdots & \lline \bc{n-1} & \ar@{=>}[l] \bc{n} }.
\end{equation*}

\begin{construction}\label{con:Cn}
Let $L(\AA_{n-1}^{(0)}) \sqcup L(\AA_{n-1}^{(n-1)})$ denote the disjoint union of the sets of labelings
$L(\AA_{n-1}^{(0)})$ and $L(\AA_{n-1}^{(n-1)})$.
We define a map
$$
\varphi\colon L(\AA_{n-1}^{(0)}) \sqcup L(\AA_{n-1}^{(n-1)})\to L(\CC_n^{(0)})
$$
sending $\aa\in  L(\AA_{n-1}^{(0)})$ to $\aa\LLL 0$ and sending
$\aa' \! \ll \boe \ \in L(\AA_{n-1}^{(n-1)})$ to $\aa'\LLL 1$.
Clearly $\varphi$ is a bijection.
\end{construction}

Note that
for any  $\aa\in  L(\AA_{n-1}^{(0)})$ and $\aa'\! \ll \boe\ \in L(\AA_{n-1}^{(n-1)})$,
the labelings $\aa\LLL 0$ and $\aa'\LLL 1$
are not equivalent in $L(\CC_n^{(0)})$.

\begin{proposition}
The bijection $\varphi$ of Construction \ref{con:Cn} induces a bijection on orbits
$$
\varphi_*\colon \Or(\AA_{n-1}^{(0)}) \sqcup \Or(\AA_{n-1}^{(n-1)})\isoto \Or(\CC_n^{(0)}).
$$
\end{proposition}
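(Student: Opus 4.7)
The plan is to observe that $a_n$ is invariant under every move on $\CC_n^{(0)}$, so $L(\CC_n^{(0)})$ decomposes as the disjoint union $L_0\sqcup L_1$ of the level sets $\{a_n=0\}$ and $\{a_n=1\}$, each of which is move-stable; then to check that on $L_0$ the moves are exactly those of $\AA_{n-1}^{(0)}$ and on $L_1$ they coincide with the moves on the augmented diagram of $\AA_{n-1}^{(n-1)}$. This will identify $\varphi$ as an equivariant bijection, and the statement on orbits follows.

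For the invariance of $a_n$: by Lemma~\ref{prop:non-twisted}, the move $\T_n$ affects only the $n$-th coordinate, with new value $a_n+\Sst a_k$ where the sum omits shorter neighbors of $n$ joined to it by a double edge. In $\CC_n$ vertex $n$ is a long root and its unique neighbor $n-1$ is short, joined to $n$ by a double edge, so the sum is empty and $\T_n$ is the identity. For $i\neq n$ the move $\T_i$ changes only $a_i$, hence preserves $a_n$. Thus every move respects the decomposition $L(\CC_n^{(0)})=L_0\sqcup L_1$, and $\varphi$ sends the first summand of its domain into $L_0$ and the second into $L_1$.

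To compare moves: for $i<n-1$ the neighbors of $i$ in $\CC_n$ are exactly its neighbors in $\AA_{n-1}$, and the short/long exclusion plays no role, so $\T_i$ on $L(\CC_n^{(0)})$ restricts to the corresponding move of $\AA_{n-1}^{(0)}$. For $i=n-1$, formula \eqref{non-twisted-simply-laced} gives $a'_{n-1}=a_{n-1}+a_{n-2}+a_n$. On $L_0$ this is $a_{n-1}+a_{n-2}$, the $\T_{n-1}$-move of $\AA_{n-1}^{(0)}$, so $\aa\mapsto\aa\LLL 0$ is an equivariant bijection $L(\AA_{n-1}^{(0)})\isoto L_0$. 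On $L_1$ the formula becomes $a_{n-1}+a_{n-2}+1$, which is precisely the twisted rule \eqref{twisted-simply-laced-new} at the black vertex $n-1$ of $\AA_{n-1}^{(n-1)}$, or equivalently the untwisted rule on the augmented diagram with the boxed $1$ playing the role of the fixed extra neighbor. Hence $\aa'\ll\boe\mapsto\aa'\LLL 1$ is also equivariant, and passing to orbits yields the bijection $\varphi_*$. The only subtle step is the vanishing of $\T_n$, which hinges entirely on the long/short convention for $\CC_n$; everything else reduces to direct comparison via formulas \eqref{non-twisted-simply-laced} and \eqref{twisted-simply-laced-new}.
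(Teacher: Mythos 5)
Your proof is correct, and it is essentially the argument the paper has in mind: the paper omits the proof as ``straightforward,'' but its preceding remark that $\aa\LLL 0$ and $\aa'\LLL 1$ are never equivalent in $L(\CC_n^{(0)})$ is exactly the invariance of $a_n$ that you establish via the long/short convention at vertex $n$. Your subsequent identification of the moves on the two level sets $\{a_n=0\}$ and $\{a_n=1\}$ with the moves on $\AA_{n-1}^{(0)}$ and on the augmented diagram of $\AA_{n-1}^{(n-1)}$, respectively, completes the equivariance of $\varphi$ and hence the bijection on orbits; there are no gaps.
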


\begin{corollary} \label{cor:Cn}
For $\CC_n^{(0)}$:
\begin{itemize}
   \item[(i)]
{\emm The orbit of zero} is just 0.
   \item[(ii)]
As {\emm  representatives for orbits} we can take
\[ \xi_0 \LLL 0  , \quad \xi_1 \LLL 0  , \quad  \cdots  , \quad \xi_r \LLL 0, \]
where $r = \lceil ({n-1})/{2} \rceil$,
and
\[ \xi_0 \LLL 1  , \quad \xi_1 \LLL 1  , \quad  \cdots  , \quad \xi_s \LLL 1, \]
where $s = \lceil {n}/{2} \rceil - 1$.
  \item[(iii)] $\Orbs{ \CC_n^{(0)} } = \Orbs{ \AA_{n-1}^{(0)} } + \Orbs{  \AA_{n-1}^{(n-1)} }= n+1.$
\end{itemize}
\end{corollary}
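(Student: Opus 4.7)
The plan is to read off each of (i), (ii), (iii) directly from the $\varphi_*$ bijection of the preceding Proposition, combined with the orbit descriptions of $\AA_{n-1}^{(0)}$ (Corollary \ref{cor:An-reps}) and of $\AA_{n-1}^{(n-1)}$ (Corollary \ref{cor:An^(m)-reps}, applied with $m=n-1$).

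For (i): the zero labeling of $\CC_n^{(0)}$ has last entry $a_n=0$, so under $\varphi^{-1}$ it lies in the $L(\AA_{n-1}^{(0)})$ summand and equals $\xi_0$. By Corollary \ref{cor:An-reps} the orbit of $\xi_0$ in $\AA_{n-1}^{(0)}$ consists of that single element; by the bijection $\varphi_*$, so does the orbit of $0$ in $\CC_n^{(0)}$.

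For (ii): I would produce two families of representatives, one for each summand of $\Or(\AA_{n-1}^{(0)})\sqcup\Or(\AA_{n-1}^{(n-1)})$, and then apply $\varphi$. From Corollary \ref{cor:An-reps} the representatives of $\Or(\AA_{n-1}^{(0)})$ are $\xi_0,\dots,\xi_r$ with $r=\lceil(n-1)/2\rceil$, sent by $\varphi$ to $\xi_0\LLL 0,\dots,\xi_r\LLL 0$. For $\AA_{n-1}^{(n-1)}$, Corollary \ref{cor:An^(m)-reps} with $m=n-1$ applies; the RHS of the schematic diagram \eqref{schematic} is empty (so $q=0$ always), leaving representatives $(p|0)$ for $0\le p\le \lceil (n-2)/2\rceil$. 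The only step needing care is repackaging each $(p|0)$ as a left-packed form: a single application of Lemma \ref{lem:basic-An} lets me replace $\eta_p^{n-2}$ by $\xi_p^{n-1}$ (the vertex $n-1$ still carries label $0$ for $p\le\lceil(n-2)/2\rceil$), so after stripping the boxed $1$ the image under $\varphi$ is $\xi_p^{n-1}\LLL 1$, with $p$ ranging over $0\le p\le s$ where $s=\lceil(n-2)/2\rceil=\lceil n/2\rceil-1$.

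For (iii): the disjoint-union statement $\#\Or(\CC_n^{(0)})=\#\Or(\AA_{n-1}^{(0)})+\#\Or(\AA_{n-1}^{(n-1)})$ is immediate from $\varphi_*$; substituting Corollary \ref{cor:An-reps} and Corollary \ref{cor:An^(m)-reps}(iii) gives $(r+1)+(s+1)$, which in both parities of $n$ evaluates to $n+1$ by a short arithmetic check ($n=2k$ yields $(k+1)+k$; $n=2k+1$ yields $(k+1)+(k+1)$). There is no real obstacle here beyond the cosmetic switch from the $(p|0)$ normal form to the $\xi_p\LLL 1$ form in (ii); the heart of the statement has already been done in the Proposition, so this corollary amounts to bookkeeping.
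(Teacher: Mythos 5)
Your derivation is correct and follows exactly the route the paper intends (the paper omits the proof, but the corollary is meant to be read off from the bijection $\varphi_*$ of the preceding Proposition together with Corollaries \ref{cor:An-reps} and \ref{cor:An^(m)-reps}). Your one nontrivial verification --- that $(p|0)\in L(\AA_{n-1}^{(n-1)})$ can be repacked as $\xi_p^{n-1}\ll\boe$ while vertex $n-1$ keeps label $0$, so that $\varphi$ sends it to $\xi_p\LLL 1$ with $s=\lceil(n-2)/2\rceil=\lceil n/2\rceil-1$ --- is exactly the cosmetic normalization needed, and the arithmetic in (iii) checks out in both parities.
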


(Of course, it is well known that $\# H^1(\R,G)=n+1$ in this case,
this follows, for example, from \cite[III.1.1, Proposition 1]{Serre}.)

\subsection{The diagram $\AA_n^{(m,n)}$}

(We shall need this diagram in Subsection \ref{subsec:Cm,n-m}.)
Denote by $\AA_n^{(m,n)}$ the Dynkin diagram $\AA_n$ with {\em two} black vertices $m$ and $n$, where $1\le m<n$:
$$
\sxymatrix{
\bc{1}  \rline & \cdots \rline & \bc{m-1} \rline & \bcb{m}  \rline & \bc{m+1} \rline  & \cdots  \rline & \bc{n-1} \rline & \bcb{n} \,.
}
$$
We denote by $L(\AA_n^{(m,n)})$ the set of labelings $\aa=(a_i)$ of $\AA_n^{(m,n)}$.
We consider the moves $\T_i$ given by formula \eqref{non-twisted-simply-laced}
for white vertices and by formula \eqref{twisted-simply-laced-new}     for black vertices.
We construct the augmented diagram
\begin{equation*}
\sxymatrix{
\bc{1}  \rline & \cdots  \rline & \bc{m} \rline \dline  & \cdots  \rline & \bc{n} \rline & \boxone \\
 & &\boxone & &
}\ ,
\end{equation*}
by adding $\sxymatrix{\boxone}$ two times, and now the moves $\T_i$
are given by formula \eqref{non-twisted-simply-laced} for all $i=1,\dots,n$.

We consider the orbits (equivalence classes) in $L(\AA_n^{(m,n)})$. Note that when $m$ is odd,
the labeling
\begin{equation*}
\ell_1^{(m,n)}\ =\quad  \sxymatrix{ 1 \rline & 0 \rline & \stackrel{1-0}{\cdots} & \lline 1 \rline \dline &
1 \rline & \stackrel{1}{\cdots} & \lline 1\rline & \boxone \\ & & & \boxone & & & & }
\end{equation*}
is a fixed labeling.

\begin{lemma}
For $\AA_n^{(m,n)}$,
we can take the following labelings as representatives of orbits:
$(0|0)$, $(p|0)$ for $p=1,...,\lceil (m-1)/2 \rceil$ and $(0|q)$ for $q=1,...,\lceil
(n-1-m)/2 \rceil$, and when $m$ is odd, also the fixed labeling $\ell_1$.
\end{lemma}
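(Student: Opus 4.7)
The plan is to adapt the arguments of Proposition~\ref{prop:An^(m)-invariant} and Corollary~\ref{cor:An^(m)-reps} to this two-black-vertex situation. In the augmented diagram of $\AA_n^{(m,n)}$, vertex $m$ is of degree three (neighbors $m-1$, $m+1$, and a boxed $1$), while vertex $n$ is of degree two with a boxed $1$, so that $\T_n$ sends $a_n \mapsto a_n + 1 + a_{n-1}$; all other vertices follow the plain $\AA_n^{(0)}$ rule.

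First I would reduce an arbitrary $\bb \in L(\AA_n^{(m,n)})$ to a canonical form. A short sequence of moves near vertex $n$ arranges $a_n = 0$ (if $a_{n-1} = 0$, a single $\T_n$ flips $a_n$; otherwise one first toggles $a_{n-1}$ and then applies $\T_n$). Once $a_n = 0$, the subdiagram on vertices $m+1, \ldots, n-1$ is a plain $\AA_{n-1-m}^{(0)}$ chain, so by Lemma~\ref{lem:basic-An} its labeling compresses to the packed form $\xi_q^{n-1-m}$. Symmetrically the LHS compresses to $\eta_p^{m-1}$, and $\T_m$ can then be used to set $a_m = 0$ exactly as in Remark (i) of Subsection~\ref{subsec:An^m}. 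Next, unsplitting at the degree-three vertex $m$, exactly as in the proof of Proposition~\ref{prop:An^(m)-invariant}, cancels the innermost components on both sides and replaces $(p|q)$ by $(p-1|q-1)$ whenever $p,q\ge 1$. Every labeling is thus reduced to some $(p|0)$ or $(0|q)$ with $p \le \lceil(m-1)/2\rceil$ and $q \le \lceil(n-1-m)/2\rceil$.

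The main obstacle is establishing pairwise inequivalence of the listed representatives, and in particular identifying $\ell_1^{(m,n)}$ as a separate orbit precisely when $m$ is odd. The presence of the second boxed $1$ at $n$ breaks the clean $r(\bb)-l(\bb)$ invariant used for $\AA_n^{(m)}$; I would instead construct an explicit $\Z/2\Z$-valued invariant, visible as a linear functional $\bb \mapsto c + \sum_i c_i a_i \pmod 2$ on $L(\AA_n^{(m,n)})$, whose invariance under every $\T_i$ can be verified directly from formulas \eqref{non-twisted-simply-laced} and \eqref{twisted-simply-laced-new}. Combined with the packing counts $p$ and $q$ (each preserved by the moves internal to its side) and the sign $p-q \bmod 2$, this invariant should separate the representatives $(p|0)$ and $(0|q)$, and, when $m$ is odd, force the extra orbit containing $\ell_1^{(m,n)}$. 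That $\ell_1^{(m,n)}$ is indeed pointwise fixed by every move is then a direct check from its alternating label pattern and the two boxed $1$'s, using the rule that $\T_i$ changes $a_i$ iff the number of its neighbors carrying a $1$ (including the boxed $1$'s) is odd.
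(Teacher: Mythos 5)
The paper states this lemma without proof (it is one of the ``straightforward proofs omitted for brevity''), so there is no official argument to compare against; judged on its own merits, your proposal has two genuine gaps. First, the normalization step fails: it is \emph{not} true that every labeling of $\AA_n^{(m,n)}$ is equivalent to one with $a_n=0$. Your parenthetical fix (``otherwise one first toggles $a_{n-1}$'') is not always available, since $\T_{n-1}$ changes $a_{n-1}$ only when $a_{n-2}+a_n$ is odd; and indeed the labeling $\ell_1^{(m,n)}$ (for $m$ odd) has $a_n=1$ and admits no nontrivial move at all. Consequently the conclusion of your first paragraph --- that \emph{every} labeling reduces to some $(p|0)$ or $(0|q)$ --- contradicts both the statement being proved and your own third paragraph, which assert that $[\ell_1^{(m,n)}]$ is an additional orbit. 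The reduction must be organized so that the obstructed configurations are isolated and shown to constitute exactly the orbit of $\ell_1^{(m,n)}$, in the style of the case analysis in the paper's proof of Theorem \ref{lem:Dn(m)}.

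Second, the tool you propose for proving pairwise inequivalence cannot exist. An affine-linear functional $\aa\mapsto c+\sum_i c_i a_i \pmod 2$ changes by $c_i$ whenever $\T_i$ actually flips $a_i$, and for \emph{every} vertex $i$ of $\AA_n^{(m,n)}$ there is a labeling at which $\T_i$ does flip $a_i$ (for a white vertex put $1$ on one neighbor and $0$ on the rest; for a black vertex put $0$ on all neighbors). Invariance under all moves therefore forces $c_i=0$ for all $i$, so the only such functionals are constants. Note also that $q$ is \emph{not} preserved by the moves internal to the right-hand side: $\T_n$ creates or destroys a singleton component at vertex $n$ whenever $a_{n-1}=0$, which is exactly why the range of $q$ is $\lceil (n-1-m)/2\rceil$ rather than $\lceil (n-m)/2\rceil$. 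Any workable separating invariant here is necessarily nonlinear --- for instance the parity of the total number of components (boxed $1$'s included), combined with the orbit invariant of the right-hand branch $\{m+1,\dots,n\}$ together with its boxed $1$, viewed as a copy of $\AA_{n-m}^{(n-m)}$ (Subsection \ref{subsec:An^m}); alternatively one can imitate the explicit correction term $\delta$ from the proof of Theorem \ref{lem:Dn(m)}. Your treatment of the left-hand side and the unsplitting at the degree-three vertex $m$ are fine, but as written the argument establishes neither exhaustiveness of the list nor the pairwise inequivalence of its members.
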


\subsection{The group $\Sp(m,\,n-m)$ \ $(\, 1\le m\le \lfloor n/2\rfloor\,)$
with twisting diagram   $\CC_n^{(m)}$}\label{subsec:Cm,n-m}
The group $G$ is the ``quaternionic'' group $\Sp(m,\,n-m)$,
the unitary group of the diagonal quaternionic  Hermitian form with $m$ times $-1$ and $n-m$ times $+1$ on the diagonal.
 The  twisting diagram  and the augmented diagram are:
\begin{equation*}
\sxymatrix{ \bc{1} \rline & \cdots & \lline \bcb{m} \rline & \cdots & \lline
\bc{n-1} & \ar@{=>}[l] \bc{n} }
\qquad\qquad
\sxymatrix{ \bc{1} \rline & \cdots & \lline \bc{m} \dline \rline & \cdots & \lline \bc{n-1} & \ar@{=>}[l] \bc{n} \\
                          &        & \boxone                     &        &                 & }
\end{equation*}
(see \cite[Table 7]{OV}  and Construction \ref{const:augmented-diag}).

\begin{proposition}\label{prop0:Cn^(m)}
The bijection
$$
\varphi\colon L(\AA_{n-1}^{(m)}) \sqcup L(\AA_{n-1}^{(m,n-1)})\to L(\CC_n^{(m)})
$$
sending $\aa\in  L(\AA_{n-1}^{(m)})$ to $\aa\LLL 0$ and sending $\aa'\! \ll\boe \in L(\AA_{n-1}^{(m,n-1)})$ to $\aa'\LLL 1$,
induces  a bijection
$$
\varphi_*\colon \Or(\AA_{n-1}^{(m)}) \sqcup \Or(\AA_{n-1}^{(m,n-1)})\isoto \Or(\CC_n^{(m)}).
$$
\end{proposition}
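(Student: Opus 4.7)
The plan is to exhibit $b_n$ as a Weyl invariant, use it to split $L(\CC_n^{(m)})$ into two $W$-stable pieces, and identify each piece with the corresponding $\AA_{n-1}$-puzzle under $\varphi$.

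First I would show that for any labeling $\bb=(b_1,\dots,b_n)\in L(\CC_n^{(m)})$, the last coordinate $b_n$ is fixed by every move. Indeed, $\T_i$ for $i\neq n$ alters only $b_i$, and the move $\T_n$ is computed from those neighbors of the long vertex $n$ that contribute to the sum in \eqref{non-twisted-simply-laced}: the unique Dynkin-neighbor of $n$ is $n-1$, which is shorter and joined to $n$ by a double edge, and hence is excluded. Since the assumption $1\le m\le\lfloor n/2\rfloor$ forces $m<n$, vertex $n$ is white and $\T_n$ leaves $b_n$ unchanged. Thus
\[ L(\CC_n^{(m)})=L_0\sqcup L_1, \qquad L_\varepsilon:=\{\bb\in L(\CC_n^{(m)}) : b_n=\varepsilon\}, \]
and each $L_\varepsilon$ is a union of $W$-orbits. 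The map $\varphi$ is plainly a bijection of underlying sets carrying $L(\AA_{n-1}^{(m)})$ onto $L_0$ and $L(\AA_{n-1}^{(m,n-1)})$ onto $L_1$; it remains to see that, on each piece, $\varphi$ intertwines the two Weyl actions.

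Second, for $\aa\in L(\AA_{n-1}^{(m)})$ with $\bb=\varphi(\aa)=(\aa\LLL 0)$, I would check the moves vertex by vertex. For $i\le n-2$ the neighbor structure in $\CC_n^{(m)}$ coincides with that in $\AA_{n-1}^{(m)}$ and vertex $i$ has the same color in both augmented diagrams, so the formula for $\T_i$ matches by Lemma \ref{prop:twisted}. For $i=n-1$, the short vertex $n-1$ in $\CC_n^{(m)}$ does see its longer neighbor $n$, but the assumption $b_n=0$ makes this contribution vanish and we recover the $\AA_{n-1}^{(m)}$ formula for $\T_{n-1}$. Finally $\T_n$ acts trivially on $L_0$. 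Hence $\varphi$ carries $\AA_{n-1}^{(m)}$-equivalence to $\CC_n^{(m)}$-equivalence on $L_0$.

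Third, the same bookkeeping applies to $\aa'\!\ll\boe \in L(\AA_{n-1}^{(m,n-1)})$ with $\bb=\varphi(\aa'\!\ll\boe)=(\aa'\LLL 1)$. The only discrepancy with the previous case is at $i=n-1$: now the long neighbor $n$ of the short vertex $n-1$ contributes $b_n=1$, so the $\CC_n^{(m)}$ rule gives $b'_{n-1}=b_{n-1}+1+b_{n-2}$, which is precisely formula \eqref{twisted-simply-laced-new} for the twisted move at the black vertex $n-1$ in $\AA_{n-1}^{(m,n-1)}$; combinatorially the constant $b_n=1$ in $\CC_n^{(m)}$ is mirrored by the boxed $1$ attached to vertex $n-1$ in the augmented diagram. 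Assembling the two cases yields the bijection $\varphi_*\colon \Or(\AA_{n-1}^{(m)})\sqcup\Or(\AA_{n-1}^{(m,n-1)})\isoto\Or(\CC_n^{(m)})$. The only real obstacle is the careful case analysis at the interface vertex $n-1$, where the rule "long roots don't see short roots" must be reconciled with the auxiliary twisting encoded by the second boxed $1$; once this matching is in place, everything else reduces to applying Lemma \ref{prop:twisted} and the observation that $b_n$ is an invariant.
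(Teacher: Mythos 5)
Your proof is correct, and it is essentially the argument the paper intends but omits as ``straightforward'': the key point that $b_n$ is preserved by every move (the long vertex $n$ does not see its short neighbor $n-1$, and $n$ is white since $m<n$), combined with the vertex-by-vertex matching of the moves on the two $W$-stable pieces $\{b_n=0\}$ and $\{b_n=1\}$ with those of $\AA_{n-1}^{(m)}$ and $\AA_{n-1}^{(m,n-1)}$ respectively, is exactly what is needed. Nothing further is required.
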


Denote $(p|q\less\vk)\,=\,\aa\!\LLL\! \vk$, where $\aa=(p|q)\in  L(\AA_{n-1}^{(m)})$ and  $\vk \in \{0,1\}$.
For example, for $\CC_5^{(3)}$ we have
\[
 (1|0\less 1)\ =\quad  \sxymatrix{ 0 \rline & 1 \rline & 0 \dline \rline & 0 & \ar@{=>}[l] 1 \\ & & \boxone & & }  \ .
\]

\begin{corollary} \label{prop:Cn^(m)}
For $\CC_n^{(m)}$:
\begin{enumerate}
\item[(i)]   {\emm The orbit of zero} is
$$
\{\,(\aa\LLL 0)\ |\ \aa\in L(\AA_{n-1}^{(m)}),\, l(\aa)=r(\aa)\,\}.
$$
\item[(ii)]  As {\emm representatives of orbits} we can take $(p|0\less 0)$ with $p=0,...,\lceil ({m-1})/{2}
\rceil$, $(0|q\less 0)$ with $q=1,...,\lceil ({n-1-m})/{2} \rceil$,
$(p|0\less 1)$ with $p = 0,...,\lceil ({m-1})/{2} \rceil$,
$(0|q\less 1)$ with $q = 1,..., \lfloor ({n-1-m})/{2} \rfloor = \lceil ({n-2-m})/{2} \rceil$,
and when $m$ is odd, the fixed labeling
\[
 \ell_1^{(m,n)}\ =\quad \sxymatrix{ 1 \rline & 0  \rline & \overset{1-0}{\cdots} & \lline 1 \dline \rline &
\overset{1}{\cdots} & \lline 1 & \ar@{=>}[l] 1 \\ & & & \boxone & & }\ .
\]
\item[(iii)] $\Orbs{ \CC_n^{(m)} } =\Orbs{\AA_{n-1}^{(m)}} + \Orbs{\AA_{n-1}^{(m,n-1)}}= n+1$.
\end{enumerate}
\end{corollary}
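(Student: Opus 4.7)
The plan is to derive all three assertions directly from the bijection $\varphi_*\colon \Or(\AA_{n-1}^{(m)}) \sqcup \Or(\AA_{n-1}^{(m,n-1)})\isoto \Or(\CC_n^{(m)})$ established in Proposition \ref{prop0:Cn^(m)}, and then transport along $\varphi$ the representative systems already computed for $\AA_{n-1}^{(m)}$ and $\AA_{n-1}^{(m,n-1)}$.

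First I would settle (i). Since $\varphi$ sends $\aa\in L(\AA_{n-1}^{(m)})$ to $\aa\LLL 0$, the zero labeling of $\CC_n^{(m)}$ corresponds under $\varphi^{-1}$ to the zero labeling of $\AA_{n-1}^{(m)}$. By $\varphi_*$ its orbit in $\Or(\CC_n^{(m)})$ is the image of the orbit of zero in $\Or(\AA_{n-1}^{(m)})$, which by Corollary \ref{cor:An^(m)-reps}(i) consists precisely of those $\aa$ with $l(\aa)=r(\aa)$. Applying $\varphi$ gives the description asserted in~(i).

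Next, for (ii), I would list the orbits on each side of the disjoint union. On the $\aa\LLL 0$ side, Corollary \ref{cor:An^(m)-reps}(ii) supplies the representatives $(p|0)$ for $0\le p\le\lceil(m-1)/2\rceil$ and $(0|q)$ for $1\le q\le\lceil(n-1-m)/2\rceil$; these become the labelings $(p|0\less 0)$ and $(0|q\less 0)$. On the $\aa'\!\LLL 1$ side, the unnumbered lemma for $\AA_{n-1}^{(m,n-1)}$ (i.e.\ the lemma for $\AA_n^{(m,n)}$ applied with $n$ replaced by $n-1$) gives representatives $(0|0)$, $(p|0)$ for $1\le p\le\lceil(m-1)/2\rceil$, $(0|q)$ for $1\le q\le\lceil(n-2-m)/2\rceil$, together with the fixed labeling $\ell_1^{(m,n-1)}$ when $m$ is odd. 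Applying $\varphi$ converts these into $(p|0\less 1)$, $(0|q\less 1)$, and (for $m$ odd) the fixed labeling $\ell_1^{(m,n)}$, matching exactly the list in~(ii).

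Finally, (iii) follows by additivity: $\Orbs{\CC_n^{(m)}} = \Orbs{\AA_{n-1}^{(m)}} + \Orbs{\AA_{n-1}^{(m,n-1)}}$. The first term is given by Corollary \ref{cor:An^(m)-reps}(iii), and the second is
\[
1 + \left\lceil \tfrac{m-1}{2}\right\rceil + \left\lceil \tfrac{n-2-m}{2}\right\rceil + [m\text{ odd}].
\]
A short case analysis on the parities of $n$ and $m$ (writing $n=2k+1$ or $n=2k+2$ and $m=2a$ or $m=2a+1$) shows that the total always collapses to $n+1$; the appearance of the extra $+1$ from the fixed labeling $\ell_1^{(m,n-1)}$ in the odd-$m$ case exactly compensates for the parity change in the ceiling functions. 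The main bookkeeping obstacle is precisely this parity accounting: keeping track of $\lceil(m-1)/2\rceil$ and $\lceil(n-2-m)/2\rceil$ together with the conditional fixed-labeling term, so that all four subcases produce the uniform answer $n+1$.
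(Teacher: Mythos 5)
Your proposal is correct and follows exactly the route the paper intends: the statement is labeled a corollary of Proposition \ref{prop0:Cn^(m)}, and you transport the orbit of zero and the representative systems from Corollary \ref{cor:An^(m)-reps} and the unnumbered lemma for $\AA_{n-1}^{(m,n-1)}$ through the bijection $\varphi_*$, then sum the counts. The parity bookkeeping in (iii) indeed closes up (one can see it at a glance via $\lceil N/2\rceil+\lceil (N-1)/2\rceil=N$ with $N=n-1-m$, together with $2\lceil (m-1)/2\rceil+[m\text{ odd}]=m$), giving $n+1$.
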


(Of course, it is well known that $\# H^1(\R,G) = n+1$  in this case,
this follows, for example, from \cite[III.1.1, Proposition 1]{Serre}.)

\subsection{The split group $\Sp(2n,\R)$   with twisting diagram   $\CC_n^{(n)}$}

 The   twisting diagram  and the augmented diagram are
\begin{equation*}
\sxymatrix{
 \bc{1}  \rline & \cdots & \lline \bc{n-1} & \ar@{=>}[l] \bcb{n} & \qquad\qquad & \bc{1} \rline & \cdots & \lline \bc{n-1}
& \ar@{=>}[l] \bc{n} \rline & \boxone
 }
\end{equation*}
In this case there is only one orbit, $\Orbs{ \CC_n^{(n)} } = 1$
(it is well known that $H^1(\R,G)=1$ in this case,
see for example \cite[III.1.2, Proposition 3]{Serre}).

\section{Groups of type $\DD_n$}
\label{sec:Dn}

\subsection{The compact group  $\Spin(2n)$  of type  $\DD_n^{(0)}$} \label{subsec:Dn}

The group $G$ is the spin group  $\Spin(2n)$, the universal covering of the special orthogonal group $\SO(2n)$, where $n\ge 4$.
The Dynkin diagram of $G$ is
\begin{equation*}
\mxymatrix{ \bc{1} \rline & \cdots \rline & \bc{n-3} \rline & \bc{n-2} \dline \rline & \bc{n-1} \\
& & & \bcu{n} & }
\end{equation*}
This diagram has a vertex of degree 3, the vertex $n-2$.
For brevity we introduce the following notation:
if $\aa\in L(\AA_{n-2}^{(0)})$, $\aa=(a_i)_{i=1}^{n-2}$, $\vk,\lambda\in\{0,1\}$,  we write
\begin{equation}\label{eq:frac-Dn}
\aa \frac{\vk}{\lambda} \quad:= \quad
\xymatrix@1@R=15pt@C=9pt
{a_1\rline &\cdots &\ a_{n-2}\lline \dline \rline &\ \vk\ \\ & &\overset{\ }{\lambda}  }  \ .
\end{equation}

Note that for $\DD_n^{(0)}$ the labelings
$$\ell_2^{(0)}=0=\xi_0^{n-2} \dfrac{0}{0}=0...0\frac{0}{0} \quad \text{and}\quad \ell_4^{(0)}= \xi_0^{n-2} \dfrac{1}{1}= 0...0\frac{1}{1}$$
 are fixed labelings. If $n$ is even, $n=2k$, then the labelings
$$
\ell_1^{(0)}=\xi_{k-1}^{n-2}\dfrac{1}{0}= 10..10\frac{1}{0}\quad \text{and}\quad \ell_3^{(0)}=\xi_{k-1}^{n-2}\dfrac{0}{1}= 10..10\frac{0}{1}
$$
are fixed labelings.

\begin{proposition}
Define a map
\[
\varphi\colon L(\AA_{n-2}^{(0)})  \longrightarrow  L(\DD_n^{(0)}), \quad \aa  \longmapsto  \aa \frac{0}{0}\, .
\]
Then the induced map $\varphi_*\colon \Orbset(\AA_{n-2}^{(0)})  \to  \Orbset(\DD_n^{(0)})$ is injective.
If $n$ is even, $n=2k$, then the image of $\varphi_*$ is
$$\Orbset(\DD_n^{(0)}) \smallsetminus \left\{ [\ell_4^{(0)}] , \ [\ell_1^{(0)}], \ [\ell_3^{(0)}] \right\}.$$
  If $n$ is odd, $n=2k+1$, then the image of $\varphi_*$ is
$$\Orbset(\DD_n^{(0)}) \smallsetminus  [\ell_4^{(0)}].$$
\end{proposition}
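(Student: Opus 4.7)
The plan is to prove the proposition in four steps. First, \emph{well-definedness}: I check that if $\aa,\aa'\in L(\AA_{n-2}^{(0)})$ differ by a single move $\T_i$ with $1\le i\le n-2$, then $\varphi(\aa)$ and $\varphi(\aa')$ differ by the same move in $L(\DD_n^{(0)})$. For $i\le n-3$ the neighbors of vertex $i$ coincide in the two diagrams. For $i=n-2$, the extra $\DD_n$-neighbors $n-1$ and $n$ of vertex $n-2$ have label $0$ in $\varphi(\aa)=\aa\frac{0}{0}$, so they contribute nothing to the sum in \eqref{non-twisted-simply-laced}; the move therefore reduces to the $\AA_{n-2}$-move and leaves the labels at $n-1$ and $n$ unchanged. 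Hence $\varphi_*$ is well defined on orbits.

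\emph{Fixed and excluded orbits.} Imposing that every vertex has an even number of $1$-neighbors and solving the resulting recursion from the terminal vertices $1$, $n-1$, $n$ yields the complete list of fixed labelings: $\ell_2^{(0)}$ and $\ell_4^{(0)}$ for every $n\ge 4$, together with $\ell_1^{(0)}$ and $\ell_3^{(0)}$ when $n=2k$. Each spans a singleton orbit. Since $\ell_4^{(0)}$, and similarly $\ell_1^{(0)}, \ell_3^{(0)}$ when $n=2k$, are not of the form $\cdot\frac{0}{0}$, their orbits cannot meet the image of $\varphi$.

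\emph{Surjectivity onto the complement.} Given $\bb=\cc\frac{\mu}{\nu}\in L(\DD_n^{(0)})$ whose orbit is not among the excluded ones, I show that $\bb$ is equivalent to some $\aa\frac{0}{0}$. If $(\mu,\nu)=(0,0)$ there is nothing to do; otherwise I aim to arrange $c_{n-2}=1$, so that $\T_{n-1}$ and $\T_n$ can flip $\mu$ and $\nu$ to $0$. If $c_{n-3}=1$, a single application of $\T_{n-2}$ suffices; otherwise I propagate a $1$ from within $\cc$ to position $n-3$ using moves analogous to the pushing operations of Lemma \ref{lem:basic-An}. This propagation succeeds unless $\cc$ is so rigid that no $1$ can reach position $n-3$, and direct inspection shows that the only such configurations with $(\mu,\nu)\neq(0,0)$ are exactly the excluded fixed labelings $\ell_4^{(0)}$, $\ell_1^{(0)}$, $\ell_3^{(0)}$. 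This case analysis is the main obstacle of the proof.

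\emph{Injectivity.} By Corollary \ref{cor:An-reps}, $|\Or(\AA_{n-2}^{(0)})|$ equals $k$ when $n=2k$ and $k+1$ when $n=2k+1$. Theorem \ref{thm:inner} identifies $|\Or(\DD_n^{(0)})|$ with $|H^1(\R,\Spin(2n))|$, which is classically known to equal $k+3$ and $k+2$ in the two cases respectively. The difference matches exactly the number $3$ (resp.\ $1$) of excluded orbits, so the surjectivity established above forces $\varphi_*$ to be injective.
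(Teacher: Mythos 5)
The paper itself omits the proof of this proposition (it is one of the ``straightforward proofs'' left out in Sections \ref{sec:An}--\ref{sec:G2}), so your attempt can only be measured against the template the authors do write out, namely the proof of the twisted analogue, Theorem \ref{lem:Dn(m)}. Measured that way, your well-definedness and exclusion steps are fine, but the surjectivity step --- which you yourself call ``the main obstacle'' --- is asserted rather than proved, and the one concrete reduction you do state is wrong. You claim that when $c_{n-3}=1$ a single application of $\T_{n-2}$ produces $c_{n-2}=1$. By formula \eqref{non-twisted-simply-laced}, $\T_{n-2}$ changes $c_{n-2}$ only when $c_{n-3}+\mu+\nu$ is odd; with $c_{n-3}=1$ this forces $\mu=\nu$, so in the cases $(\mu,\nu)=(1,0)$ and $(0,1)$ --- exactly the labels carried by $\ell_1^{(0)}$ and $\ell_3^{(0)}$ --- the move does nothing. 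The correct reduction must first clear $c_{n-3}$ (possible unless $\cc$ is a rigid alternating configuration), then flip $c_{n-2}$, then clear $\mu$ and $\nu$; identifying precisely when the clearing of $c_{n-3}$ is obstructed is where $\ell_1^{(0)},\ell_3^{(0)},\ell_4^{(0)}$ actually emerge. Declaring that ``direct inspection'' yields exactly these exceptions is to restate the proposition, not to prove it.

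Your injectivity argument is logically sound given surjectivity, but it is not self-contained: it imports $\#H^1(\R,\Spin(2n))=k+3$ (resp.\ $k+2$) from outside, whereas in this paper that cardinality is exactly what Theorem \ref{thm:inner} together with the present proposition is meant to \emph{derive} (Corollary \ref{cor:Dn-reps}(iii)). The method of the proof of Theorem \ref{lem:Dn(m)} transfers directly and avoids both problems for injectivity: one checks that
$\delta_0(\cc\tfrac{\mu}{\nu})=(\mu+\nu\bmod 2)(1-c_{n-2})+\#\{\text{components of }\cc\}$
is preserved by every move $\T_i$ (the only nontrivial verifications are at $i=n-2,n-1,n$, where the correction term compensates the creation or absorption of the singleton component $\{n-2\}$), and that on labelings $\aa\frac{0}{0}$ it restricts to the number of components of $\aa$, which is a complete invariant of $\Or(\AA_{n-2}^{(0)})$ by Lemma \ref{cor:basic-An}. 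That gives injectivity with no cohomological input; the elementary case analysis for the image still has to be written out.
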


\begin{corollary} \label{cor:Dn-reps}
For $\DD_n^{(0)}$:
\begin{itemize}
   \item[(i)]
{\emm The orbit of zero} is just the labeling $\ell_2^{(0)}=0$.
\item[(ii)]
{\emm Representatives of orbits} are:
\begin{itemize}
\item For $n=2k+1$  we can take the following representatives coming from $L(\AA_{n-2}^{(0)})$:
\[
\xi_0^{n-2} \frac{0}{0} = 0...0\frac{0}{0} \ , \quad \xi_1^{n-2} \frac{0}{0} = 10...0\frac{0}{0} \
, \quad ... ,
\quad \xi_{k}^{n-2}\frac{0}{0} = 101..01\frac{0}{0}
\]
and the fixed labeling $\ell_4^{(0)}$.
\item For $n=2k$ we can take the following representatives coming from $L(\AA_{n-2}^{(0)})$:
\[
\xi_0^{n-2} \frac{0}{0} = 0...0\frac{0}{0} \ , \quad \xi_1^{n-2} \frac{0}{0} = 10..0\frac{0}{0} \ ,
\quad ... , \quad \xi_{k-1}^{n-2} \frac{0}{0} = 10..10\frac{0}{0} \ ,
\]
and the fixed labelings $\ell_4^{(0)}$, $\ell_1^{(0)}$, and $\ell_3^{(0)}$.
\end{itemize}
\item[(iii)] We have
\begin{equation*}
\Orbs{\DD_n^{(0)}} = \begin{cases} \Orbs{\AA_{n-2}^{(0)}} + 3=k+3 & \mbox{ if } \ n = 2k \ , \\
\Orbs{\AA_{n-2}^{(0)}} + 1=k+2 & \mbox{ if } \ n = 2k+1 \ . \end{cases}
\end{equation*}
\end{itemize}
\end{corollary}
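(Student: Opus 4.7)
The plan is to obtain Corollary~\ref{cor:Dn-reps} as a direct consequence of the preceding Proposition together with Corollary~\ref{cor:An-reps}. The Proposition identifies $\Orbset(\AA_{n-2}^{(0)})$ with a subset of $\Orbset(\DD_n^{(0)})$ via the injection $\varphi_*$ and tells us exactly which orbits lie outside the image: only $[\ell_4^{(0)}]$ when $n=2k+1$, and $[\ell_1^{(0)}]$, $[\ell_3^{(0)}]$, $[\ell_4^{(0)}]$ when $n=2k$. So the strategy is to transport representatives of orbits from $\AA_{n-2}^{(0)}$ along $\varphi$ and then append the extra fixed labelings supplied by the Proposition.

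For part (i), I would give a direct argument: the all-zero labeling $\ell_2^{(0)}=0$ has every vertex surrounded by zeros, so formula~\eqref{non-twisted-simply-laced} shows that every move $\T_i$ fixes it. Hence its orbit in $L(\DD_n^{(0)})$ is the singleton $\{\ell_2^{(0)}\}$. This direct verification is worth doing because (i) is the one place where one might worry that an orbit in $\DD_n^{(0)}$ could be strictly larger than the image under $\varphi$ of the corresponding orbit in $\AA_{n-2}^{(0)}$.

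For parts (ii) and (iii), Corollary~\ref{cor:An-reps} supplies the representatives $\xi_0^{n-2},\xi_1^{n-2},\dots,\xi_r^{n-2}$ of $\Orbset(\AA_{n-2}^{(0)})$ with $r=\lceil(n-2)/2\rceil$. Pushing them through $\varphi$ yields the representatives $\xi_j^{n-2}\frac{0}{0}$ of the orbits in the image of $\varphi_*$. When $n=2k+1$ we have $r=k$, producing $k+1$ such representatives, and appending $\ell_4^{(0)}$ gives $k+2$ orbits in total. When $n=2k$ we have $r=k-1$, giving $k$ representatives, and appending the three extra fixed labelings $\ell_4^{(0)}$, $\ell_1^{(0)}$, $\ell_3^{(0)}$ gives $k+3$ orbits. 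This simultaneously yields the enumeration in (ii) and the counting formula in (iii). The main obstacle is really only a bookkeeping subtlety and is already settled by (i); all the genuine work was done in the preceding Proposition and in Corollary~\ref{cor:An-reps}.
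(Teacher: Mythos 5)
Your proposal is correct and follows the same route the paper intends: parts (ii) and (iii) are read off from the preceding Proposition combined with Corollary \ref{cor:An-reps}, and part (i) is the direct observation (already recorded in the paper just before the Proposition) that $\ell_2^{(0)}=0$ is a fixed labeling, which is indeed needed since injectivity of $\varphi_*$ alone does not force the orbit of $0\frac{0}{0}$ in $L(\DD_n^{(0)})$ to be a singleton. The arithmetic with $r=\lceil(n-2)/2\rceil$ in the two parity cases checks out.
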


\begin{example}
For $\DD_5^{(0)}$ we have representatives of orbits
\[ 000\frac{0}{0} \ , \quad 100\frac{0}{0} \ , \quad 101\frac{0}{0} , \quad 000\frac{1}{1} \ . \]
For $\DD_6^{(0)}$ we have representatives of orbits
\[
0000\frac{0}{0} \ , \quad 1000\frac{0}{0} \ , \quad 1010\frac{0}{0} \ , \quad 1010\frac{1}{0} \ ,
\quad 1010\frac{0}{1} \ , \quad 0000\frac{1}{1} \ .
\]
\end{example}

\subsection{The group $\Spin(2m,\,2n-2m)$ \ $(\,1 \le m \le \lfloor n/2 \rfloor\,)$  with twisting diagram   $\DD_n^{(m)}$}
\label{subsec:Dn^(m)}
The group $G$ is $\Spin(2m,\,2n-2m)$, the universal covering of the special orthogonal group $\SO(2n,\,2n-2m)$
of the diagonal quadratic form with $2m$ times $-1$ and $2n-2m$ times $+1$ on the diagonal.
The  twisting diagram  and the augmented diagram are:
\begin{equation*}
\mxymatrix{ \bc{1} \rline  & \cdots & \lline \bcb{m} \rline & \cdots & \lline
\bc{n-2} \dline \rline & \bc{n-1} &\qquad\qquad& \bc{1} \rline  & \cdots & \lline
\bc{m} \dline \rline & \cdots & \lline
\bc{n-2} \dline \rline & \bc{n-1} \\
   &  &   &  & \bcu{n} & &\qquad\qquad&  &    & \boxone  &  & \bcu{n} &  }
\end{equation*}
(see \cite[Table 7]{OV} and Construction \ref{const:augmented-diag}).

\begin{remark}\label{rem:when-they-occur}
For $\DD_n^{(m)}$:
\begin{enumerate}
\item[(a)]  When $m$ is even, we have  fixed labelings
\[ \
\ell_2^{(m)}\  =\quad  \sxymatrix{ 1 \rline & 0 \rline & \overset{1-0}{\cdots} & \lline 1 \rline & 0
\dline \rline & 0 \rline & \cdots & \lline 0 \dline \rline & 0
\\ & & & & \boxone & & & 0 & }
\]
and
\[ \
\ell_4^{(m)}\  =\quad \sxymatrix{ 1 \rline & 0 \rline & \overset{1-0}{\cdots} & \lline 1 \rline & 0
\dline \rline & 0 \rline & \cdots & \lline 0 \dline \rline & 1
\\ & & & & \boxone & & & 1 & }  \ .
\]

\item[(b)] When $n-m$ is even, we have  fixed labelings
\[  \ell_1^{(m)}\ = \quad
\sxymatrix{ \xi_0 \rline & 0 \dline \rline & 1 \rline & 0 \rline & \overset{1-0}{\cdots} & \lline 1
\rline & 0 \dline \rline & 1 \\ & \boxone & & & & & 0 & }
\]
and
\[ \ell_3^{(m)}\  =\quad
\sxymatrix{ \xi_0 \rline & 0 \dline \rline & 1 \rline & 0 \rline & \overset{1-0}{\cdots} & \lline 1
\rline & 0 \dline \rline & 0 \\ & \boxone & & & & & 1 & }  \ .
\]
\end{enumerate}
(Cases (a) and (b) can occur together.)
\end{remark}

Note that $[\ell_2^{(m)}]$ is in the image of the map $\varphi_*$ of Theorem \ref{lem:Dn(m)} below,
while  $[\ell_1^{(m)}], [\ell_3^{(m)}]$ and $[\ell_4^{(m)}]$ are not.

\begin{theorem}\label{lem:Dn(m)}
Consider the  map $\varphi \colon L(\AA_{n-2}^{(m)}) \to L(\DD_n^{(m)})$ defined by
$\aa \mapsto \aa \frac{0}{0}$\,.
Then the induced map on orbits $\varphi_* \colon \Or(\AA_{n-2}^{(m)}) \to \Or(\DD_n^{(m)})$ is injective,
and its image is the whole set $\Or(\DD_n^{(m)})$
except for the fixed labelings $\ell_1^{(m)}$, $\ell_3^{(m)}$, and $\ell_4^{(m)}$ when they occur;
see Remark \ref{rem:when-they-occur}.
\end{theorem}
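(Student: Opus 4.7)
The plan is to establish three claims: that $\varphi$ descends to a well-defined map $\varphi_*\colon \Or(\AA_{n-2}^{(m)})\to \Or(\DD_n^{(m)})$, that $\varphi_*$ hits every orbit other than the exceptional ones $[\ell_1^{(m)}], [\ell_3^{(m)}], [\ell_4^{(m)}]$ (when they occur), and that $\varphi_*$ is injective. Well-definedness is straightforward: the moves $\T_j$ for $j\le n-3$ commute trivially with $\varphi$; for $j=n-2$ the flip rule in $\DD_n^{(m)}$ uses $a_{n-3}+a_{n-1}+a_n$ whereas in $\AA_{n-2}^{(m)}$ it uses only $a_{n-3}$, and these agree on labelings of the form $\aa\frac{0}{0}$ because the tail entries vanish. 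The image plainly stays inside $\varphi(L(\AA_{n-2}^{(m)}))$, so an equivalence in the source produces an equivalence in the target.

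For surjectivity I would use a reduction algorithm: given $\bb\in L(\DD_n^{(m)})$ with $(a_{n-1},a_n)\ne(0,0)$, note that the leaf moves $\T_{n-1}, \T_n$ each flip their target precisely when $a_{n-2}=1$. If $a_{n-2}=1$ one can immediately zero out the tail and land in $\varphi(L(\AA_{n-2}^{(m)}))$; if $a_{n-2}=0$, I first try to produce a $1$ at vertex $n-2$ by $\T_{n-2}$, possibly after propagating adjustments along the $\AA$-chain from the left. A systematic case analysis on the patterns near the fork shows that this procedure succeeds unless the labeling is already fixed by every move, and the three configurations in which the procedure can fail are exactly $\ell_1^{(m)}, \ell_3^{(m)}, \ell_4^{(m)}$ from Remark \ref{rem:when-they-occur}. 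Each $\ell_j^{(m)}$ is itself fixed by every move and so forms a singleton orbit, and since each has nonzero tail it does not lie in the image of $\varphi$. The three singleton orbits are pairwise distinct because the tail patterns $(1,0), (0,1), (1,1)$ are distinct.

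For injectivity I would translate any $\DD_n^{(m)}$-move sequence from $\varphi(\aa)$ to $\varphi(\aa')$ into an $\AA_{n-2}^{(m)}$-equivalence from $\aa$ to $\aa'$. The moves $\T_j$ with $j\le n-3$ translate directly, and the leaf moves $\T_{n-1}, \T_n$ leave the $\AA$-part unchanged; the only delicate issue is $\T_{n-2}$ applied while the tail is nonzero, since the parity of $a_{n-1}+a_n$ may reverse the flip relative to the $\AA_{n-2}^{(m)}$ rule. Decomposing the sequence into tail ``excursions'' bounded on both ends by tail value $(0,0)$, and using that each coordinate of the tail flips an even number of times during any such excursion, one verifies that the parity discrepancies in the $\T_{n-2}$-moves within the excursion cancel. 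The induced action on the $\AA$-part is then a legitimate $\AA_{n-2}^{(m)}$-equivalence, yielding $\aa\sim\aa'$ via Proposition \ref{prop:An^(m)-invariant}. The main obstacle is precisely this bookkeeping inside an excursion; a cleaner alternative is to construct an explicit numerical invariant on $L(\DD_n^{(m)})$ that agrees with $r(\aa)-l(\aa)$ on the image of $\varphi$ and whose invariance under the three fork moves $\T_{n-2}, \T_{n-1}, \T_n$ can be checked by direct inspection, in which case injectivity follows immediately.
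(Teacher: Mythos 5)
Your injectivity argument contains a genuine gap, and the specific cancellation claim it rests on is false. Consider a labeling with $d_{n-3}=1$, $d_{n-2}=1$ and tail $(d_{n-1},d_n)=(0,0)$, and apply $\T_{n-1}$, then $\T_{n-2}$, then $\T_{n-1}$. This is an excursion in your sense: the tail returns to $(0,0)$ and each tail coordinate flips an even number of times (twice and zero times). Yet the single $\T_{n-2}$ inside the excursion is applied while $d_{n-1}+d_n=1$, so the total parity discrepancy over the excursion is $1$, odd --- it does not cancel. Concretely, the $\DD_n^{(m)}$-move $\T_{n-2}$ here does nothing (its neighbor sum is $d_{n-3}+d_{n-1}+d_n=2$), while your translated $\AA_{n-2}^{(m)}$-move would flip $a_{n-2}$ from $1$ to $0$; the translated endpoint therefore disagrees with the actual restriction of the $\DD$-labeling. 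Even when the number of rogue flips of $a_{n-2}$ in an excursion is even, they are separated by other moves and do not compose to the identity, so ``cancellation'' would need a further argument. Your fallback --- find a numerical invariant on $L(\DD_n^{(m)})$ extending $r(\aa)-l(\aa)$ --- is exactly the right move and is what the paper does, but you do not construct it; the paper's invariant is $\delta\bigl(\aa\tfrac{\vk}{\lambda}\bigr)=(\vk+\lambda \bmod 2)(1-a_{n-2})+r(\aa)-l(\aa)$, whose invariance under $\T_{n-2},\T_{n-1},\T_n$ is a short direct check, after which injectivity follows from Proposition \ref{prop:An^(m)-invariant} exactly as you say.

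On the image statement your plan (clear the tail when $a_{n-2}=1$, otherwise manufacture a $1$ at the fork, and identify the obstructions with the fixed labelings) is the same strategy as the paper's, but the one-sentence assertion that ``a systematic case analysis shows the procedure fails exactly at $\ell_1^{(m)},\ell_3^{(m)},\ell_4^{(m)}$'' is where all the content lies: the paper's proof splits into the cases $n-m$ odd and $n-m$ even, normalizes $\dd$ to the schematic form with a zero at the twisting vertex, and treats the four values of $(\vk,\lambda)$ separately, with the exceptional labelings emerging from the sub-cases $\aa^r=0$, $m$ even, $\aa^l=\xi_{m/2}$ (giving $\ell_4^{(m)}$) and $\aa^r\vk=\xi_{(n-m)/2}$, $\aa^l=0$ (giving $\ell_1^{(m)}$, $\ell_3^{(m)}$). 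You should carry out that analysis rather than assert its outcome; in particular the reason the exceptional labelings occur only for certain parities of $m$ and $n-m$ is invisible in your sketch.
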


\begin{proof}
We prove the injectivity.
Let $\dd=\aa\frac{\vk}{\lambda}\in L(\DD_n^{(m)})$, where $\aa\in L(\AA_{n-2}^{(m)})$\,.
Set
$$
\delta(\dd)=(\vk+\lambda\ {\rm mod}\ 2)(1-d_{n-2}) +r(\aa)-l(\aa),
$$
where $\vk+\lambda\ {\rm mod}\ 2\in \{0,1\}\subset \Z$, $d_{n-2}=a_{n-2} \in \{0,1\}\subset \Z$.
It is easy to check that $\delta(\dd)$ does not change under the moves in $L(\DD_n^{(m)})$.
Clearly we have  $\delta(\aa\frac{0}{0})=r(\aa)-l(\aa)$.
Now if $\aa,\aa'\in L(\AA_{n-2}^{(m)})$ and $\aa\not\sim \aa'$ in $L(\AA_n^{(m)})$,
then by Proposition \ref{prop:An^(m)-invariant} $r(\aa)-l(\aa)\neq r(\aa')-l(\aa')$,
hence $\delta(\aa\frac{0}{0})\neq \delta(\aa'\frac{0}{0})$,
and therefore, $(\aa\frac{0}{0})\not\sim (\aa'\frac{0}{0})$ in $L(\DD_n^{(m)})$.

We prove the assertion about the image. There are two cases: (1) $n-m$ is odd, and (2) $n-m$ is even.

Case (1): $n-m$ is odd. Let $\dd\in L(\DD_n^{(m)})$.
We prove that either $\dd\sim(\dots\frac{0}{0})$ or $\dd=\ell_4^{(m)}$.
Up to equivalence, we may assume that
\begin{equation}\label{eq:schematic-Dn(m)}
\dd=\aa\frac{\vk}{\lambda}=\ \sxymatrix{ \aa^l \rline &0  \rline \dline & \aa^r\dfrac{\vk}{\lambda} \\ & \boxone & }\ ,
\end{equation}
where $\aa^l\in L(\AA_{m-1}^{(0)})$ is the left-hand side of $\aa$,
$\aa^r\in L(\AA_{n-2-m}^{(0)})$ is the right-hand side of $\aa$,
and $\vk,\lambda\in\{0,1\}.$
If $\vk=\lambda=0$, then $\dd=\aa\frac{0}{0}$, as required.
If $\vk=1$, $\lambda=0$, then $\aa^r\vk=\aa^r 1\sim(\dots 0)$ in $L(\AA_{n-m-1}^{(0)})$, because $n-m-1$ is even.
Thus $\dd\sim(\dots\frac{0}{0})$, as required.
The case $\vk=0$, $\lambda=1$ is similar to the case $\vk=1$, $\lambda=0$.

Now assume that  $\vk=\lambda=1$. If $\aa^r\neq 0$, then $\aa^r\sim(\dots 1)$.
Thus $\dd\sim(\dots 1\frac{1}{1})\sim(\dots 1\frac{0}{0})$, as required.
If $\aa^r=0$ and either $m$ is odd or $m$ is even and $\aa^l\neq\xi_{m/2}$,
then we may assume that  $d_{m-1}=(\aa^l)_{m-1}=0$. Then, applying moves, we can change $d_m$ to 1,
then change $d_{m+1}$ to 1, \dots then change $d_{n-2}$ to 1, and finally we obtain
that $\dd\sim(\dots 1\frac{1}{1})\sim (\dots 1\frac{0}{0})$, as required.
If $\aa^r=0$, $m$ is even and $\aa^l=\xi_{m/2}$, then $\dd=\ell_4^{(m)}$, which completes the proof in Case (1).

Case (2): $n-m$ is even. Let $\dd\in L(\DD_n^{(m)})$.
Up to equivalence, we may assume that $\dd$ is as in \eqref{eq:schematic-Dn(m)}.
If $\vk=\lambda=0$, we have nothing to prove.
If $\vk=\lambda=1$ and $\dd\neq\ell_4^{(m)}$, then the argument in Case (1)
shows that $\dd\sim(\dots\frac{0}{0})$, as required.
Two cases remain: $\vk=1$, $\lambda=0$, and $\vk=0$, $\lambda=1$.
They are similar; we treat only the case $\vk=1$, $\lambda=0$.

Consider $\aa\vk=\aa 1\in L(\AA_{n-1}^{(m)})$.
Using moves in $L(\AA_{n-1}^{(m)})$, we can reduce $\aa1$ to a labeling which has
either 0 components right to the vertex $m$, or 0 components left to $m$.
In the former case $\dd\sim(\dots\frac{0}{0})$, as required.
In the latter case, if $\dd$ is as in \eqref{eq:schematic-Dn(m)} and $\aa^r 1$
has less than $k:=(n-m)/2$ components, then $\aa^r 1\sim(\dots 0)$ and $\dd\sim(\dots \frac{0}{0})$, as required.
If $\aa^r 1$ has $k$ components, then $\aa^r1=\xi_k$. Since  $\aa^l=0$, we see that  $\dd=\ell_1^{(m)}$.
This completes the proof in Case (2).
\end{proof}

\begin{corollary} \label{cor:Dn^(m)-zero}
Set $A_0=\{\aa \in L(\AA_{n-2}^{(m)})\ |\ l(\aa)=r(\aa)\}$ (this is the orbit of zero in $L(\AA_{n-2}^{(m)})$).
We write $\aa=(a_i)$.
Then {\emm the orbit of zero} in  $L(\DD_n^{(m)})$ is
$$
\left\{ \aa\frac{0}{0},\, \aa\frac{1}{1}\ |\ \aa\in A_0\right\}\,\cup\,
\left\{ \aa\frac{1}{0},\, \aa\frac{0}{1}\ |\ \aa\in A_0,\, a_{n-2}=1\right\}.
$$
\end{corollary}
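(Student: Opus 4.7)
The plan is to derive Corollary~\ref{cor:Dn^(m)-zero} from the invariant $\delta$ introduced in the proof of Theorem~\ref{lem:Dn(m)}, combined with the orbit classification of that theorem. Recall that for $\dd = \aa\frac{\vk}{\lambda}$,
$$\delta(\dd) = (\vk+\lambda \bmod 2)(1-a_{n-2}) + r(\aa) - l(\aa)$$
is constant on every $W$-orbit of $L(\DD_n^{(m)})$. Since $\delta$ vanishes at the zero labeling, the orbit of zero is contained in the zero locus $\{\dd : \delta(\dd) = 0\}$; unpacking this condition by cases on $(\vk,\lambda)$ and $a_{n-2}$ recovers the list displayed in the statement.

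For the converse inclusion I would combine Theorem~\ref{lem:Dn(m)} with the values of $\delta$ at the sporadic fixed labelings. A direct evaluation gives $\delta(\ell_4^{(m)}) = -m/2$ when $m$ is even and $\delta(\ell_1^{(m)}) = \delta(\ell_3^{(m)}) = (n-m)/2$ when $n-m$ is even; all three are strictly nonzero whenever those labelings exist, so the candidate set is disjoint from the sporadic orbits. Hence by Theorem~\ref{lem:Dn(m)} each candidate $\dd$ satisfies $\dd \sim \aa'\frac{0}{0}$ for some $\aa' \in L(\AA_{n-2}^{(m)})$. Invariance of $\delta$ then forces $r(\aa') = l(\aa')$, so $\aa' \in A_0$ by Corollary~\ref{cor:An^(m)-reps}(i), and lifting the resulting $\AA$-equivalence $\aa' \sim 0$ into $\DD$ via $\varphi$ yields $\aa'\frac{0}{0} \sim 0\frac{0}{0}$. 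Chaining the equivalences gives $\dd \sim 0$, as required.

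The main obstacle is verifying that $\delta$ is preserved by every move. The only move whose effect on $\aa$ depends on $(\vk,\lambda)$ is the fork move $\T_{n-2}$; for it, a case analysis on the parity of $\vk+\lambda$ and on the pair $(a_{n-3}, a_{n-2})$ shows that any change in $r(\aa) - l(\aa)$ is precisely cancelled by the change in $(\vk+\lambda \bmod 2)(1 - a_{n-2})$, with the identity $a_{n-2}(1 - a_{n-2}) = 0$ for $a_{n-2} \in \{0,1\}$ doing most of the bookkeeping. The moves $\T_{n-1}$ and $\T_n$ are handled by the same identity, and the moves $\T_i$ for $i \le n - 3$ act entirely within the arm and preserve $r(\aa) - l(\aa)$ by Proposition~\ref{prop:An^(m)-invariant}.
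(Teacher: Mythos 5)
Your overall strategy (sandwiching the orbit of zero between the zero locus of $\delta$ and the image of the class of zero under $\varphi$) is reasonable, and your converse paragraph is essentially correct. The gap is in the very first step: the zero locus of $\delta$ does \emph{not} coincide with the displayed set. Unpacking $\delta\bigl(\aa\frac{\vk}{\lambda}\bigr)=0$ in the case where $\vk+\lambda$ is odd and $a_{n-2}=0$ gives $1\cdot(1-0)+r(\aa)-l(\aa)=0$, i.e.\ $l(\aa)=r(\aa)+1$, not $\aa\in A_0$. Hence $\{\delta=0\}$ contains the additional family $\{\hs\aa\frac{1}{0},\,\aa\frac{0}{1}\ |\ a_{n-2}=0,\ l(\aa)=r(\aa)+1\hs\}$, which is nonempty whenever $m\ge 2$. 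Your case analysis silently drops this family, so what your first paragraph actually proves is only the inclusion of the orbit of zero in $\{\delta=0\}$, which is weaker than the identity claimed.

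Worse, your own converse argument (which is correct: the sporadic fixed labelings all have $\delta\neq 0$, so Theorem \ref{lem:Dn(m)} applies) shows that \emph{every} labeling with $\delta=0$ is equivalent to zero. So the two halves of your proof, carried out honestly, identify the orbit of zero with the full zero locus $\{\delta=0\}$, which strictly contains the displayed set when $m\ge2$. A concrete instance: in $L(\DD_5^{(2)})$ the sequence of moves $\T_2,\T_3,\T_4,\T_1,\T_2,\T_3$ takes the zero labeling to $(1,0,0)\frac{1}{0}$ (that is, $a_1=1$, the label at vertex $n-1=4$ equals $1$, and all other labels are $0$); here $\aa=(1,0,0)$ has $l(\aa)=1\neq 0=r(\aa)$ and $a_{n-2}=0$, so this labeling lies in the orbit of zero but not in the displayed set. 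You therefore cannot close the proof as written: either the case analysis must be redone and the extra family $\{\hs\aa\frac{1}{0},\,\aa\frac{0}{1}\ |\ a_{n-2}=0,\ l(\aa)=r(\aa)+1\hs\}$ adjoined to the asserted description, or you must exhibit a convention under which those labelings are already accounted for. As it stands, the step ``unpacking this condition \dots recovers the list displayed in the statement'' is false.
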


Set
\begin{equation}\label{eq:(p|q)frac}
(p|q)\frac{\vk}{\lambda}:=\aa\frac{\vk}{\lambda}\in L(\DD_n^{(m)}),\quad
\text{where}\quad \aa=(p|q)\in L(\AA_{n-2}^{(m)}),\ \ \vk,\lambda \in \{0,1\},
\end{equation}
see formulas \eqref{eq:(p|q)} and \eqref{eq:frac-Dn}.

\begin{corollary} \label{cor:Dn^(m)-reps}
For $L(\DD_n^{(m)})$,
as {\emm representatives of  orbits} we can take
the labelings $(p|0)\frac{0}{0}$ \ for \ $0 \le p \le \lfloor m/2 \rfloor = \lceil (m-1)/2 \rceil$,
the labelings  $(0|q)\frac{0}{0}$ \ for \ $1 \le q \le \lceil ((n-2)-m)/2 \rceil$,
and the fixed labelings $\ell_4^{(m)}$ and $\ell_1^{(m)}$, $\ell_3^{(m)}$ when they occur;
see Remark \ref{rem:when-they-occur}.
\end{corollary}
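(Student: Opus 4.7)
The plan is to deduce the corollary directly from Theorem \ref{lem:Dn(m)} together with the set of representatives already established for $\Or(\AA_{n-2}^{(m)})$ in Corollary \ref{cor:An^(m)-reps}(ii). Since the bulk of the work has been done in Theorem \ref{lem:Dn(m)}, which identifies $\Or(\DD_n^{(m)})$ with $\Or(\AA_{n-2}^{(m)})$ except for a small explicit list of exceptional fixed labelings, the proof should be essentially bookkeeping.

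First I would record that Theorem \ref{lem:Dn(m)} gives an injection $\varphi_*\colon \Or(\AA_{n-2}^{(m)}) \to \Or(\DD_n^{(m)})$ defined by $\aa\mapsto \aa\frac{0}{0}$, whose image is all of $\Or(\DD_n^{(m)})$ except for the orbits of the fixed labelings among $\ell_1^{(m)},\ell_3^{(m)},\ell_4^{(m)}$ that actually occur (which, by Remark \ref{rem:when-they-occur}, means $\ell_4^{(m)}$ occurs iff $m$ is even, and $\ell_1^{(m)},\ell_3^{(m)}$ occur iff $n-m$ is even). Consequently, a complete set of representatives for $\Or(\DD_n^{(m)})$ is obtained by applying $\varphi$ to any complete set of representatives for $\Or(\AA_{n-2}^{(m)})$ and then adjoining those $\ell_j^{(m)}$ that occur.

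Next I would invoke Corollary \ref{cor:An^(m)-reps}(ii) with $n$ there replaced by $n-2$: a full set of representatives for $\Or(\AA_{n-2}^{(m)})$ consists of $(0|0)$, the labelings $(p|0)$ for $1\le p\le \lceil(m-1)/2\rceil$, and the labelings $(0|q)$ for $1\le q\le \lceil(n-2-m)/2\rceil$. Applying $\varphi$ sends these, via the notation of \eqref{eq:(p|q)frac}, to $(p|0)\frac{0}{0}$ for $0\le p\le \lceil(m-1)/2\rceil=\lfloor m/2\rfloor$ and $(0|q)\frac{0}{0}$ for $1\le q\le \lceil(n-2-m)/2\rceil$, which is precisely the list stated in the corollary before the fixed labelings.

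The only point that needs any care, rather than being a literal obstacle, is matching the ranges of $p$ and $q$ to what Corollary \ref{cor:An^(m)-reps}(ii) produces and verifying that no representative on the resulting list coincides with one of $\ell_1^{(m)}, \ell_3^{(m)}, \ell_4^{(m)}$; but by construction each $(p|0)\frac{0}{0}$ and $(0|q)\frac{0}{0}$ has $\frac{0}{0}$ as its right-hand fragment, while each $\ell_j^{(m)}$ has $\frac{1}{0},\frac{0}{1}$ or $\frac{1}{1}$, so the lists are disjoint. Concatenating the two lists therefore yields precisely the set of representatives claimed.
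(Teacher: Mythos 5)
Your proposal is correct and is precisely the intended derivation: the paper omits the proof of this corollary as "straightforward," and the straightforward argument is exactly what you give — push the representatives of Corollary \ref{cor:An^(m)-reps}(ii) (with $n$ replaced by $n-2$) through the map $\varphi_*$ of Theorem \ref{lem:Dn(m)} and adjoin the exceptional fixed labelings, whose orbits are singletons lying outside the image of $\varphi_*$. Your bookkeeping of the ranges (including $\lfloor m/2\rfloor=\lceil(m-1)/2\rceil$) and the disjointness check are both accurate.
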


\begin{corollary} \label{prop:Dn^(m)-number}
\begin{equation*}
\Orbs{ \DD_n^{(m)} } = \begin{cases} k+2 & \mbox{ if } n=2k+1 \ , \\
 k+3 & \mbox{ if } n=2k \mbox{ and } m \mbox{ is even} , \\
 k & \mbox{ if } n=2k \mbox{ and } m \mbox{ is odd}.
\end{cases}
\end{equation*}
\end{corollary}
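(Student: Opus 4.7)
The plan is to derive this counting directly from Corollary \ref{cor:Dn^(m)-reps}, which already gives an explicit list of representatives: namely the images $(p|0)\frac{0}{0}$ and $(0|q)\frac{0}{0}$ of the $\AA_{n-2}^{(m)}$-representatives under $\varphi$, together with the fixed labelings $\ell_4^{(m)}$ (if $m$ is even) and $\ell_1^{(m)}, \ell_3^{(m)}$ (if $n-m$ is even). So the proof reduces to a straightforward enumeration.

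First, I would observe that the first two families of representatives are exactly $\varphi(\Xi_A)$, where $\Xi_A$ is the set of representatives for $\Or(\AA_{n-2}^{(m)})$ given in Corollary \ref{cor:An^(m)-reps}(ii). Since $\varphi_*$ is injective by Theorem \ref{lem:Dn(m)}, these contribute exactly $\Orbs{\AA_{n-2}^{(m)}}$ orbits. Moreover, Theorem \ref{lem:Dn(m)} asserts that $\ell_4^{(m)}$, $\ell_1^{(m)}$, $\ell_3^{(m)}$ lie outside the image of $\varphi_*$ whenever they occur, and since each is a fixed labeling it forms a singleton orbit. Thus
\[
\Orbs{\DD_n^{(m)}} \;=\; \Orbs{\AA_{n-2}^{(m)}} \;+\; \varepsilon_m \;+\; 2\,\varepsilon_{n-m},
\]
where $\varepsilon_s = 1$ if $s$ is even and $0$ otherwise.

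Next I would plug in the count for $\Orbs{\AA_{n-2}^{(m)}}$ from formula \eqref{eq:An^m-num-of-orbits}, writing $n-2 = 2(k-1)$ or $n-2 = 2(k-1)+1$ according as $n=2k$ or $n=2k+1$, and split into the three cases of the statement. For $n=2k$ odd $m$: $n-m$ is odd and $m$ odd, so both correction terms vanish; the formula gives $\Orbs{\AA_{n-2}^{(m)}} = k$ (the case $n'=2k'$). For $n=2k$ even $m$: both $m$ and $n-m$ are even, so we add $1+2=3$ to $\Orbs{\AA_{n-2}^{(m)}} = k$, giving $k+3$. For $n=2k+1$: exactly one of $m$ and $n-m$ is even; one checks that either $\Orbs{\AA_{n-2}^{(m)}}=k+1$ with $+1$ from $\ell_4^{(m)}$, or $\Orbs{\AA_{n-2}^{(m)}}=k$ with $+2$ from $\ell_1^{(m)},\ell_3^{(m)}$. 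Both subcases collapse to $k+2$.

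Since everything substantive — injectivity of $\varphi_*$, identification of the missing orbits with the three fixed labelings, and the $\AA_{n-2}^{(m)}$ count — is already available from earlier results, the only ``obstacle'' is the bookkeeping with ceiling functions and parities. The verification that the four parity combinations collapse into the three cases stated is the one point where a careful check is needed, but it is routine.
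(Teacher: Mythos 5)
Your proposal is correct and is exactly the derivation the paper intends (the paper omits the proof, but Theorem \ref{lem:Dn(m)}, Remark \ref{rem:when-they-occur}, and Corollary \ref{cor:Dn^(m)-reps} supply precisely the decomposition $\Orbs{\DD_n^{(m)}}=\Orbs{\AA_{n-2}^{(m)}}+\varepsilon_m+2\varepsilon_{n-m}$ that you use). Your case-by-case arithmetic, including the collapse of the two $n=2k+1$ subcases to $k+2$, checks out against formula \eqref{eq:An^m-num-of-orbits} applied to $\AA_{n-2}^{(m)}$.
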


\subsection{The group $\Spin^*(2n)$   with twisting diagram   $\DD_n^{(n)}$}
\label{sec:Dn(n)} \label{subsec:Dn^(n)}

The group $G$ is the ``quaternionic'' spin group $\Spin^*(2n)$,
the universal covering of $\SO^*(2n)$, the special unitary group
of the diagonal quaternionic skew-Hermitian form in $n$ variables
$$
\ii x_1 \xbar _1+\dots+\ii x_n \xbar_n.
$$
The  twisting diagram  and augmented diagram are:
\begin{equation*}
\xymatrix
@1@R=1pt@C=9pt
{ \bc{1} \rline   & \cdots \rline & \bc{n-3} \rline & \bc{n-2}
\dline \rline & \bc{n-1}&& && \bc{1} \rline & \cdots \rline & \bc{n-3} \rline & \bc{n-2} \dline
\rline & \bc{n-1}
\\
  & & & \bcbu{n} &
&&&& & & & \ccc \dline &
\\
&&&&&&& &&& & \boxone &
}
\end{equation*}
(see \cite[Table 7]{OV} and Construction \ref{const:augmented-diag}).

We consider the following labelings of $\DD_n^{(n)}$:
\begin{equation*} \label{diag:Dn(n)-odd-rep}
 m_1\ =\quad \sxymatrix{ 0 \rline & \cdots & \lline 0 \rline & 0 \dline \rline & 0 \\ & & & 0 \dline  & \\ & & & \boxone & }
\qquad\text{and}\qquad
m_2\ =\quad\sxymatrix{ 1 \rline & \cdots & \lline 0 \rline & 0 \dline \rline & 0 \\ & & & 0 \dline & \\ & & & \boxone & }
\end{equation*}

\begin{proposition}
For $\DD_n^{(n)}$ there are exactly two orbits:
\begin{enumerate}
\item[1.] {\emm The orbit of zero} which consists of the labelings with odd number of components (including the boxed 1)
and we can take $m_1$ as a  representative.
\item[2.] The other orbit that consists of the labelings with even number of components (including the boxed 1)
and we can take $m_2$ as a representative.
\end{enumerate}
\end{proposition}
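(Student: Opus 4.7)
The plan is to prove the proposition in two main steps: establish a parity invariant separating the two proposed orbits, and exhibit a reduction algorithm showing every labeling lies in the class of $\ell_1$ or $\ell_2$.

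For the invariant, let $c(\dd)$ denote the number of connected components of the subgraph of the augmented diagram induced by vertices carrying label $1$, where the boxed $1$ is always counted. The augmented diagram is a tree ($\DD_n$ plus a pendant edge to the boxed $1$), so whenever a move $\T_i$ actually flips $a_i$, it does so because $i$ has an odd number $k$ of $1$-labeled neighbors in the augmented diagram, and because the tree structure forces those neighbors to lie in $k$ distinct $1$-components, the flip changes $c$ by $\pm(k-1)$, which is even. Hence $c(\dd) \bmod 2$ is invariant under all moves. Since $c(\ell_1)=1$ and $c(\ell_2)=2$, the labelings $\ell_1$ and $\ell_2$ lie in different orbits, giving at least two orbits.

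To show there are at most two orbits, I would construct a reduction algorithm. Given $\dd = (d_1, \ldots, d_n)$, the first phase is to drive $d_{n-1}$ and $d_n$ to zero: since $\T_{n-1}$ toggles $d_{n-1}$ precisely when $d_{n-2}=1$ and $\T_n$ toggles $d_n$ precisely when $d_{n-2}=0$, alternating these moves with $\T_{n-2}$ (whose effect on $d_{n-2}$ is controlled by the parity of $d_{n-3}+d_{n-1}+d_n$) suffices to reach $d_{n-1}=d_n=0$. The second phase uses the $\AA_{n-2}^{(0)}$ puzzle on the chain $\{1, \ldots, n-2\}$ (Lemma \ref{cor:basic-An}) to reduce the restricted labeling to some $\xi_r^{n-2}$ while preserving $d_{n-1}=d_n=0$. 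The third phase shows that $\xi_r^{n-2}$ with $d_{n-1}=d_n=0$ is equivalent to $\xi_{r \bmod 2}^{n-2}$, by repeatedly engaging the fork: $\T_{n-2}$ splits the rightmost component into the fork, after which $\T_n$, $\T_{n-1}$, and the boxed-$1$ twist at vertex $n$ reabsorb them, effectively reducing $r$ by $2$. Combined with the parity invariant from step one, this forces the final labeling to be $\ell_1$ or $\ell_2$.

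The main obstacle lies in the third phase: verifying the explicit move sequence that reduces $r$ by $2$ at a time without getting stuck at a spurious fixed labeling, since the interplay between the fork at $n-2$ and the boxed $1$ at $n$ can create intermediate configurations that are hard to normalize. A cleaner alternative would combine the parity invariant from step one with Theorem \ref{thm:inner} identifying $\Or(\DD_n^{(n)})$ with $H^1(\R, \Spin^*(2n))$ and the classical computation $\#H^1(\R, \Spin^*(2n)) = 2$ (see, e.g., Adams \cite{A}), which immediately yields exactly two orbits and therefore makes the detailed combinatorial third phase unnecessary.
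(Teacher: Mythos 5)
The paper gives no proof of this proposition --- it belongs to the case-by-case Sections \ref{sec:An}--\ref{sec:G2}, for which the authors explicitly ``omit straightforward proofs for brevity'' --- so there is nothing to compare line by line; I can only assess your argument on its own terms, and it is essentially correct. Your parity invariant is cleanly justified: because the augmented diagram is a tree, the $k$ one-labeled neighbors of a vertex $i$ with $a_i=0$ lie in $k$ distinct components, so a move that actually flips a label changes the component count by the even number $k-1$; this is exactly the ``splitting/unsplitting'' mechanism the paper sets up in Section \ref{sec:term}, and it correctly separates $\ell_1$ (one component) from $\ell_2$ (two components). Note also that $\DD_n^{(n)}$ has no fixed labelings at all (vertex $n$ is fixed only if $a_{n-2}=1$, vertex $n-1$ only if $a_{n-2}=0$), which is reassuring for your reduction phases.

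The only soft spot is the one you identify yourself: phases 1 and 3 are described by intended effect rather than by explicit move sequences. Both claims are true and can be verified directly --- e.g.\ from $\eta_2$ on the chain with $d_{n-1}=d_n=0$ one reaches the all-ones configuration on $\{n-4,\dots,n\}$ via $\T_{n-1},\T_{n-2},\T_n,\T_{n-3},\T_{n-2}$ (in a suitable order) and then peels it down to $\ell_1$ --- so the combinatorial route does close, but as written it is a sketch, on par with what the paper omits. Your fallback is logically valid: Theorem \ref{thm:inner} gives the bijection $\Or(\DD_n^{(n)})\isoto H^1(\R,\Spin^*(2n))$ independently of any orbit count, so importing $\#H^1(\R,\Spin^*(2n))=2$ from Adams, together with your parity invariant, pins down exactly two orbits and identifies the orbit of zero with the odd-parity class. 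Be aware, though, that this inverts the paper's stated purpose: the authors derive the cardinalities of the $H^1$ sets \emph{from} the puzzle solutions and cite the earlier computations only as a consistency check, so within the paper's architecture the combinatorial completion is the intended proof and the cohomological one is a crutch.
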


\subsection{The group $\Spin(2m+1,\, 2(n-m)-1)$}
\label{ssec:Dn-outer}
Here $0\le m \le \lfloor (n-1)/2\rfloor$.
The group $G$ is an outer form of the compact group $\Spin(2n)$ of type $\DD_{n}$.
Here for $n>4$ we consider  the nontrivial involutive automorphism  $\tau$ of the Dynkin diagram $\DD_n$,
while for $n=4$ $\tau$ is {\em a} nontrivial involutive automorphism of $\DD_4$.
The Kac diagram is:
\[
\sxymatrix{ \bc{0} \ar@{<=}[r] & \bc{1} \rline & \cdots & \lline \bcb{m} \rline & \cdots & \lline
\bc{n-2} \ar@{=>}[r] & \bc{n-1} }\,
\]
see \cite[Table 7]{OV}.
We erase vertex 0  and also the ``short'' vertex $n-1$ (which comes from $D\smallsetminus D^\tau$).

If $m=0$, we obtain $D^\tau=\AA_{n-2}^{(0)}$ (non-twisted). By  formula \eqref{eq:An-num-of-orbits}
$$
\Orbs{D^\tau}=\lceil (n-2)/2\rceil +1.
$$
The orbit of zero in $L(D^\tau)$ is 0.
{\emm The class of 0} in $L(D)^\tau$ consists of the labelings with zero restriction to $D^\tau$.
As {\emm representatives of equivalence classes} we can take $\xi_0$,
$\xi_1$, \dots, $\xi_r$, where $r=\lceil (n-2)/2\rceil$.
These representatives lie in $L(D^\tau)$ and hence in $L(D)^\tau$.

If $m \neq 0$, then after erasing the vertices $0$ and $n-1$ of the Kac diagram we obtain the twisted diagram
$\AA_{n-2}^{(m)}$. We add boxed 1 as a neighbor to vertex $m$ and obtain the augmented diagram
\begin{equation*}
\sxymatrix{ \bc{1}  \rline & \cdots  \rline & \bc{m} \rline \dline  & \cdots  \rline & \bc{n-2} \\
& & \boxone & & }
\end{equation*}
By  formula \eqref{eq:An^m-num-of-orbits}
\begin{eqnarray*} \label{eq:outer-num-of-orbits}
 \Orbs{D^\tau}  & = &  \left\lceil ({m-1})/{2}\right\rceil + 1 + \left\lceil
({n-2-m})/{2}\right\rceil \\ & = & \begin{cases}
k & \mbox{ if }n=2k \ , \\
k & \mbox{ if }n=2k+1 \mbox{ and } m \mbox{ is odd}, \\
k+1 & \mbox{ if }n=2k+1 \mbox{ and } m \mbox{ is even}.
\end{cases}
\end{eqnarray*}
{ The orbit of zero in $L(D,\ttt)$ consists of the labelings $\aa\in L(\AA_{n-2}^{(m)})$ such that $l(\aa)=r(\aa)$,
see Notation \ref{n:l,r}.
{\emm The class of zero in $L(D)^\tau$ } consists of the labelings $\dd$
whose restriction $\aa=\res_{D^\tau}(\dd)$ to $D^\tau$ satisfy $l(\aa)=r(\aa)$.
 As {\emm representatives of  equivalence classes} we can take
$(p|0)$ where $0\le p\le\lceil (m-1)/2\rceil$, and $(0|q)$ where $1\le q\le\lceil (n-2-m)/2\rceil $.
Again, these representatives lie in $L(D^\tau)$ and hence in $L(D)^\tau$.

\section{Groups of type $\EE_6$}
\label{sec:E6}

\subsection{The compact group of type $\EE_6^{(0)}$}

The Dynkin diagram  of $G$ is
\begin{equation*}
\mxymatrix{ \bc{1} \rline & \bc{2} \rline & \bc{3} \rline \dline & \bc{4} \rline & \bc{5}  \\
& & \bcu{6} & & }
\end{equation*}

\begin{proposition} [Reeder \cite{Reeder}]\label{prop:E6}
The diagram $\EE_6^{(0)}$ has $3$ orbits. The orbits are:
\begin{enumerate}
\item[1.] {\emm The orbit of zero} consisting of $0$, which is a fixed labeling.
\item[2.] The orbit consisting of all the labelings with $1$ or $3$  components  with representative
\[ \ell_1\ = \quad
\sxymatrix{
 1 \rline & 0 \rline & 0 \rline \dline & 0 \rline & 0  \\ &  & 0 & & }
 \]
\item[3.] The orbit consisting of all the labelings with $2$ components with representative
\[ \ell_2\ = \quad
\sxymatrix{  0 \rline & 1 \rline & 0 \rline \dline & 0 \rline & 0  \\  & & 1 & & }
\]
\end{enumerate}
\end{proposition}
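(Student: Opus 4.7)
The plan is to prove the proposition in three stages: first, check that $0$ is fixed; second, establish a parity invariant that distinguishes at least three classes; third, reduce every labeling to one of the representatives $0$, $\ell_1$, $\ell_2$. Stage one is immediate: at the zero labeling every vertex has no $1$-neighbor, so every move $\M_i$ is trivial, and $\{0\}$ is already an orbit by itself.

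For stage two, I would verify that the parity of the number of connected components of a labeling is preserved by every move. The case analysis is short: for the degree-$1$ vertices $1,5,6$ and the degree-$2$ vertices $2,4$, each move either is trivial or transfers vertex $i$ into (or out of) an adjacent $1$-component, leaving the component count unchanged. The trivalent vertex $3$ is the only one where the count can change: when exactly one of $a_2,a_4,a_6$ equals $1$, the move $\M_3$ absorbs or ejects vertex $3$, preserving the count; when all three equal $1$, the move performs a Y-split (or its inverse), changing the count by $\pm 2$. Thus parity is preserved, and since $\ell_1$ has $1$ component while $\ell_2$ has $2$, we obtain at least three orbits: $\{0\}$, the orbit of $\ell_1$ (odd parity), and the orbit of $\ell_2$ (even parity, nonzero).

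For stage three, I would check that these three classes exhaust the orbits. Since $\EE_6$ admits no independent set of size $\geq 4$, any labeling has at most three components, so odd-parity labelings have $1$ or $3$ components, and nonzero even-parity labelings have exactly $2$. I would give an explicit reduction. A $3$-component labeling is collapsed to a $1$-component one by first using $\AA_n$-type moves along the three arms of $\EE_6$ emanating from vertex $3$ (Lemma \ref{cor:basic-An}) to slide the three singletons onto the neighbors $2,4,6$, then applying $\M_3$ to merge the resulting Y-configuration. A $1$-component labeling is shrunk via the same $\AA_n$-moves to a single $1$, which is then slid to vertex $1$, producing $\ell_1$. The $2$-component case is analogous: each piece is contracted and its singleton is slid to one of the vertices $2$ and $6$, yielding $\ell_2$.

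The main obstacle is this third stage. Certain $3$-component labelings, such as $\aa = (1,0,1,0,1,0)$ with singletons at vertices $1,3,5$, are locally rigid — every move is either blocked or component-preserving — so the reduction must first temporarily \emph{grow} some components (for instance by applying $\M_6$, then $\M_4$, then $\M_2$ in a prepared sequence) in order to create the Y-configuration $(a_2,a_4,a_6) = (1,1,1)$ before collapsing it with $\M_3$. Systematizing this reduces to a finite case analysis over the topological shapes of connected subgraphs of $\EE_6$ and of pairs of disjoint connected subgraphs; there are only a few shapes, and each individual reduction is routine once the right intermediate configuration is chosen.
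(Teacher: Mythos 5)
Your proof is correct and follows the route the paper itself indicates: the paper omits the proof of Proposition \ref{prop:E6} but records your key invariant (preservation of the parity of the number of components) in the remark immediately following it, and your Y-split at the trivalent vertex is exactly the splitting/unsplitting mechanism set up in Section \ref{sec:term}. The only loose end is that your illustrative move sequence for $(1,0,1,0,1,0)$ is not literally executable as written ($\M_4$ is blocked immediately after $\M_6$, since vertex $4$ then has two $1$-neighbors), but the finite case analysis you defer to is genuinely routine on a $64$-element set and the needed detour (e.g.\ $\M_6,\M_3,\M_2,\M_1,\M_4,\M_5,\M_3$) does reach a one-component labeling.
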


\begin{remark}
The moves in $L(\EE_n)$ for  $n=6,7,8$ preserve the parity of the number of components.
\end{remark}

\begin{remark}
By \cite[Example 4.4]{Reeder} each of the graphs $D=\EE_6$ and $D=\EE_8$ is nonsingular
(namely, a certain quadratic form introduced by Reeder is nonsingular).
By \cite[Theorem 7.3 and Lemma 2.2(2)]{Reeder} in in both cases we have exactly $3$ orbits in $L(D)$: $\{0\}$,
the orbit consisting of all nonzero labelings with even number of components,
and the orbit consisting of all labelings with odd number of components.
\end{remark}

\subsection{The group $EII$    with twisting diagram   $\EE_6^{(2)}$}\label{subsec:E6(2)}

A maximal compact subgroup is of type $\AA_1  \AA_5$.
The  twisting diagram  and the augmented diagram are:
\begin{equation*}
\mxymatrix{ \bc{1} \rline & \bcb{2} \rline & \bc{3} \rline \dline & \bc{4} \rline & \bc{5}  \\ & &
\bcu{6} & &
 }
\qquad\qquad
\mxymatrix{
 \bc{1} \rline & \bc{2} \rline \dline & \bc{3} \rline \dline & \bc{4} \rline & \bc{5}  \\ & \boxone & \bcu{6} & & }
\end{equation*}
(see \cite[Table 7]{OV} and Construction \ref{const:augmented-diag}).

\begin{proposition} \label{prop:E6^(2)}
The diagram $\EE_6^{(2)}$ has $3$ orbits. The orbits are:
\begin{enumerate}
\item[1.] {\emm The orbit of zero} consisting of all the labelings with  $1$ or $3$  components (including the boxed 1).
\item[2.] The orbit consisting of the labelings with $2$ components excluding the fixed labeling $\ell'_1$,   with representative
\[ \ell_3\ =\quad \sxymatrix{
 1 \rline & 1 \rline \dline & 0 \rline \dline & 0 \rline & 0  \\ & \boxone & 1 & & } \]
\item[3.] The fixed labeling
\[ \ell'_1\ = \quad
\sxymatrix{
 1 \rline & 0 \rline \dline & 0 \rline \dline & 0 \rline & 0  \\ & \boxone & 0 & & }
 \]
\end{enumerate}
\end{proposition}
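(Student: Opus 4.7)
The plan is to identify a parity invariant that separates the three claimed orbits, check that $\ell'_1$ is a fixed point, and then reduce every labeling to one of the three representatives. First I establish that the parity of the number of connected components in the augmented diagram, counting the boxed $1$ as a vertex carrying label $1$, is preserved by every move. The augmented diagram is a tree, and the move $\T_i$ at a white vertex $i$ fires precisely when the number $k$ of $1$-labeled neighbors of $i$ is odd; flipping $a_i$ then either merges $k$ components into one or splits one into $k$, changing the total by $\pm(k-1)$, which is even. This is the argument in the remark preceding Proposition \ref{prop:E6^(2)}, adapted verbatim to the augmented diagram.

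Next, a direct computation shows $\ell'_1$ is fixed: at vertex $2$ the neighbors $1$, $3$, and the boxed $1$ carry labels $1,0,1$ with even sum, so $\T_2$ does nothing, and at each of the vertices $1, 3, 4, 5, 6$ all neighbors in $\ell'_1$ are labeled $0$. I also check the component counts: $0$ has one component (the boxed $1$ alone), $\ell_3$ has two components $\{1,2,\text{boxed }1\}$ and $\{6\}$, and $\ell'_1$ has two components $\{1\}$ and $\{\text{boxed }1\}$. Together with the invariant this isolates $\{\ell'_1\}$ as a distinct orbit and separates the orbit of $0$ from the orbit of $\ell_3$ by parity.

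The main obstacle is showing that these three orbits exhaust $L(\EE_6^{(2)})$: every odd-parity labeling must be reducible to $0$, and every even-parity labeling other than $\ell'_1$ must be reducible to $\ell_3$. My approach is a reduction algorithm exploiting the tree structure. For an arbitrary labeling, I would first use moves at the leaf vertex $5$ and then at $4$ to clear the right arm $4$-$5$ (as in Lemma \ref{cor:basic-An}), then clear the branch at $6$ using $\T_6$ and $\T_3$, and finally push any residual $1$s on the left arm $\{1,2\}$ through the degree-three vertex $3$ and collapse them against the boxed $1$ via the twisted move $\T_2$. The degree-three vertices $2$ and $3$ give enough flexibility (splitting and unsplitting as in the analogous $\DD_n$ and $\EE_6^{(0)}$ arguments) to merge any two labelings of equal parity, except that $\ell'_1$ is blocked by being fixed. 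Concretely I would reduce to a short list of canonical forms supported on $\{1,2,6\}$ together with the boxed $1$, and verify by direct inspection that each nonzero canonical form except $\ell'_1$ reaches $\ell_3$; this finite case-check is the routine but tedious part of the argument.
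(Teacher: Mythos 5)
The paper gives no proof of this proposition (Sections \ref{sec:An}--\ref{sec:G2} state these orbit descriptions with proofs omitted as ``straightforward''), so your argument is supplying the intended verification rather than deviating from a published one. The two parts you actually carry out are correct: the parity of the number of components of a labeling of the augmented tree (boxed $1$ included) changes by $\pm(k-1)$ with $k$ odd under any move, hence is invariant; and the computation that $\ell'_1$ is fixed is right (indeed, solving the six linear conditions ``each vertex has an even number of $1$-neighbors'' shows $\ell'_1$ is the \emph{only} fixed labeling, a point worth recording since it explains why no other singleton orbit can occur). One genuinely useful observation implicit in your parity formulation: the orbit of $\ell_3$ must be described as ``even number of components,'' not literally ``$2$ components,'' because $4$-component labelings exist (e.g.\ $a_1=a_5=a_6=1$, the rest $0$, giving components $\{1\},\{\text{boxed }1\},\{6\},\{5\}$) and these do reduce to $\ell_3$; your version is the correct reading of the statement and you should say so explicitly rather than listing only the $1$-, $2$- and $3$-component counts of the representatives.

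The gap is that the decisive step --- transitivity on each parity class, i.e.\ that every odd-parity labeling reaches $0$ and every even-parity labeling other than $\ell'_1$ reaches $\ell_3$ --- is announced (``I would reduce\dots'', ``this finite case-check is the routine but tedious part'') but not performed. Nothing in your plan would fail: pushing components along the arms toward the branch vertices and unsplitting at vertices $2$ and $3$ does work, as one can confirm on the $64$ labelings or on a reduced list of canonical forms. But as written the proposal proves only that the three claimed classes are pairwise inequivalent and that $\ell'_1$ is a singleton orbit; it does not yet exclude a fourth orbit inside either parity class. To close it, either exhibit explicit move sequences taking each canonical form of odd parity to $0$ and each of even parity (other than $\ell'_1$) to $\ell_3$, or argue via Theorem \ref{thm:inner} that $\#\Or(\EE_6^{(2)})=\#H^1(\R,EII)=3$, which together with your two invariants forces the stated orbit decomposition.
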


\subsection{The group $EIII$ of Hermitian type   with twisting diagram   $\EE_6^{(1)}$}\label{subsec:E6(1)}

A maximal compact subgroup of $G$ is of type $\DD_5  T^1$.
The  twisting diagram  and the augmented diagram are:
\begin{equation*}
\mxymatrix{ \bcb{1} \rline & \bc{2} \rline & \bc{3} \rline \dline & \bc{4} \rline & \bc{5}  \\ & &
\bcu{6}  & &
}
\qquad\qquad
\mxymatrix{ \boxone \rline & \bc{1} \rline & \bc{2} \rline & \bc{3} \dline \rline  & \bc{4} \rline &
\bc{5}
\\ & & & \bcu{6} & &
}
\end{equation*}
(see \cite[Table 7]{OV} and Construction \ref{const:augmented-diag}).

\begin{proposition} \label{prop:E6^(1)}
The diagram $\EE_6^{(1)}$ has $3$ orbits. The orbits are:
\begin{enumerate}
\item[1.] {\emm The orbit of zero} consisting of the labelings with  $1$ or $3$ components excluding the fixed labeling $\ell'_2$.
\item[2.] The orbit consisting of all the labelings with $2$ components with representative
\[ \ell'_3\ = \quad
\sxymatrix{ \boxone \rline & 1 \rline & 1 \rline & 0 \rline \dline & 0 \rline & 0  \\ & & & 1 & & }
\]
\item[3.] The fixed labeling
\[ \ell'_2\ = \quad
\sxymatrix{ \boxone \rline & 0 \rline & 1 \rline & 0 \rline \dline & 0 \rline & 0  \\ & & & 1 & & }
\]
\end{enumerate}
\end{proposition}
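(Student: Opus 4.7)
The plan is to combine two invariants with a reduction argument to show that $L(\EE_6^{(1)})$ has exactly three orbits, represented by $0$, $\ell'_3$, and $\ell'_2$. First I would verify by direct calculation on the augmented diagram that $\ell'_2$ is a fixed labeling: at each vertex the parity of the number of neighbors labeled $1$ is even (at vertex $1$: the boxed $1$ and vertex $2$ both carry $1$; at vertex $3$: neighbors $2,4,6$ have labels $1,0,1$; at the other vertices the relevant count is $0$), so no move alters any coordinate. Hence $\{\ell'_2\}$ is an orbit by itself.

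Next, I would invoke the invariance of the parity of the number of connected components of a labeling under all moves $\T_i$, as already stated in the Remark following Proposition \ref{prop:E6}. Here the boxed $1$ is treated as an ordinary vertex whose label is pinned to $1$. Since the labeling $0$ has one connected component (the boxed $1$), $\ell'_3$ has two (the run $\boxone\ll 1\ll 1$ on the long arm, together with the isolated $1$ at vertex $6$), and $\ell'_2$ has three (boxed $1$, vertex $2$, vertex $6$), the three proposed representatives lie in three distinct orbits.

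It remains to show that every labeling is equivalent to one of these three. The quickest route is to use Theorem \ref{thm:inner}, which provides a bijection $\Or(\EE_6^{(1)})\isoto H^1(\R,EIII)$, together with the known equality $\#H^1(\R,EIII)=3$ alluded to in the Introduction; three orbits have been exhibited and the total count matches, so these are all of them. For a self-contained derivation, I would instead argue by reduction: use moves on the long chain $1\!\ll\! 2\!\ll\! 3\!\ll\! 4\!\ll\! 5$ (which behaves like a twisted $\AA_5^{(1)}$) to normalize its labels as in Section \ref{sec:An}, and then absorb the branch vertex $6$ by moves at the degree-$3$ vertex $3$, performing splittings or unsplittings as needed.

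The main obstacle in the self-contained approach is the interaction between the branch vertex $6$ and moves at the degree-$3$ vertex $3$: toggling $a_3$ can simultaneously change the component structure on the long arm and rejoin or detach the component at vertex $6$, so a careful case analysis on $(a_3,a_6)$ is required to guarantee that every labeling can be brought to $0$, $\ell'_3$, or $\ell'_2$. The verification that $\ell'_2$ is fixed and the parity argument are routine; it is this reduction (or equivalently the appeal to the known cardinality of $H^1(\R,EIII)$) that constitutes the real content of the proposition.
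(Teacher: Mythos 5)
Your verification that $\ell'_2$ is fixed and your parity argument are correct, and together they show that $0$, $\ell'_3$, $\ell'_2$ lie in three distinct orbits; this half of the proposition is fully established. (The paper itself omits the proof as ``straightforward,'' so there is no written argument to compare against, but the intended proof is clearly the direct combinatorial one.) The issue is with the other half. Your first route --- citing $\#H^1(\R,EIII)=3$ and Theorem \ref{thm:inner} to conclude there are only three orbits --- is logically sound if one imports that cardinality from Adams, but it inverts the paper's logic: this proposition is precisely how the paper \emph{derives} $\#H^1(\R,EIII)$, with the literature serving only as a consistency check, so the appeal is circular in spirit (and the Introduction does not in fact state the number $3$ for $EIII$). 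Your second route is the right one, and you have correctly located the difficulty at the degree-$3$ vertex, but you have not carried out the case analysis; as you yourself say, that reduction is the real content, so as written the proof of exhaustiveness is a plan rather than an argument. Concretely: for $a_6=0$ the chain together with the boxed $1$ is exactly the puzzle $\AA_5^{(1)}$, whose three orbit representatives $(0|0),(0|1),(0|2)$ restrict to $0$, a $2$-component labeling, and a $3$-component labeling; one must then check that every labeling with $a_6=1$ can be moved into the $a_6=0$ stratum except for the single fixed point $\ell'_2$. That check is short but it is the proof, and it is missing.

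One further point worth recording: the augmented diagram for $\EE_6^{(1)}$ has independence number $4$ (e.g.\ the boxed vertex together with vertices $2,4,6$), so labelings with $4$ components exist; one verifies directly that the move at vertex $3$ merges such a labeling into a $2$-component one. Hence the second orbit actually consists of all labelings with $2$ \emph{or} $4$ components, and the proposition's phrase ``all the labelings with $2$ components'' is slightly imprecise. Your parity argument handles this automatically (even count $\Rightarrow$ orbit of $\ell'_3$), but if you complete the reduction you should state the orbit description in the corrected form.
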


\subsection{The group $EIV$  of type  $\EE_6$}\label{subsec:EIV}

This is an outer form of the compact group of type $\EE_6$ with
maximal compact subgroup of type $\FF_4$.
The Kac diagram is
$$
\sxymatrix{ \bcb{0} \rline & \bc{}  & \bc{} \ar@{=>}[l] \rline  & \bc{}  }
$$
We denote by $\tau$ the nontrivial automorphism of the Dynkin diagram $D=\EE_6$.
We erase vertex 0 and the other ``short'' vertex of the Kac diagram.
We obtain $D^\tau=\sxymatrix{ \bc{3} \rline & \bc{6}}$ and
$\Orbs{D^\tau}=2$.
The orbit of zero in $L(D^\tau)$ consists of one labeling 0 of $D^\tau$.
{\emm The equivalence class of zero} in $L(D)^\tau$ consists of the labelings whose restriction to $D^\tau$ is 0.

\subsection{The split group $EI$ of type $\EE_6$}\label{subsec:EI}

This is an outer form of the compact group of type $\EE_6$ with maximal compact subgroup of type $\CC_4$.
The Kac diagram is
$$\sxymatrix{ \bc{0} \rline & \bc{} & \bc{} \ar@{=>}[l] \rline  & \bcb{}  }$$
We erase vertex 0 and the other ``short'' vertex of the Kac diagram.
We obtain $(D^\tau,\ttt)=\sxymatrix{ \bc{3} \rline & \bcb{6}}$.
The augmented diagram is
\[ \sxymatrix{ \bc{3} \rline & \bc{6}\rline &\boxone } \]
We have
$\Orbs{D^\tau,\ttt}=2$.
 The orbit of zero in $L(D^\tau,\ttt)$ consists of the labelings with one component (including the boxed 1).
{\emm The equivalence class of zero} in $L(D)^\tau$ consists of the labelings $\aa$ such that either $a_3=0$ or $a_6=1$.

Note that $H^1(\R,EI)$ was earlier computed by B.~Conrad \cite[Proof of Lemma 4.9]{Conrad}.

\section{Groups of type $\EE_7$}
\label{sec:E7}

\subsection{The  the compact group of type $\EE_7^{(0)}$}

The Dynkin diagram  is
\begin{equation*}
\mxymatrix{  \bc{1} \rline &  \bc{2} \rline &  \bc{3} \rline &  \bc{4}
\rline \dline &  \bc{5} \rline &  \bc{6} \\
& & & \bcu{7} & & }
\end{equation*}

\begin{proposition}[Weng \cite{Weng}] \label{prop:E7}
The diagram $\EE_7^{(0)}$ has $4$ orbits. The orbits are:
\begin{enumerate}
\item[1.] {\emm The orbit of zero}  consisting of the fixed labeling 0.
\item[2.] The fixed labeling
\[ \ell_3\ =\quad
\sxymatrix{  1 \rline & 0 \rline &  1 \rline & 0 \rline \dline &  0 \rline &  0 \\
& & & 1 & & }
\]
\item[3.] The orbit consisting of the labelings with $1$ or $3$ components
excluding the fixed labeling $\ell_3$, with representative
\[ \ell_1\ =\quad
\sxymatrix{  1 \rline & 0 \rline &  0 \rline & 0 \rline \dline &  0 \rline &  0 \\
& & & 0 & & }
\]
\item[4.] The orbit consisting of all the labelings with $2$ or $4$ components, with representative
\[ \ell_2\ =\quad
\sxymatrix{  0 \rline & 0 \rline &  1 \rline & 0 \rline \dline &  0 \rline &  0 \\
& & & 1 & & }
\]
\end{enumerate}
\end{proposition}

\begin{remark}
By a lemma of Chih-wen Weng \cite{Weng}, for any (uncolored) simply-laced tree
(not necessarily a Dynkin diagram) containing $\EE_6$ as a subgraph,
any {\em movable} (non-fixed) labeling is equivalent either to a labeling
with one component or to a labeling with two components.
\end{remark}

\subsection{The split group $EV$   with twisting diagram   $\EE_7^{(7)}$}
\label{sec:E7(7)}

A maximal compact subgroup is of type $\AA_7$.
The  twisting diagram  and the augmented diagram are:
\begin{equation*}
\xymatrix@1@R=0pt@C=9pt
{  \bc{1} \rline &  \bc{2} \rline &  \bc{3} \rline &  \bc{4}
\rline \dline &  \bc{5} \rline & \bc{6}  \\
& & & \bcbu{7} & & &}
\qquad\qquad
\sxymatrix{  \bc{1} \rline &  \bc{2} \rline &  \bc{3} \rline &  \bc{4}
\rline \dline &  \bc{5} \rline &  \bc{6} \\
& & & \ccc \dline & & \\
& & & \boxone & & }
\end{equation*}
(see \cite[Table 7]{OV} and Construction \ref{const:augmented-diag}).

\begin{proposition} \label{prop:E7^(7)}
The diagram $\EE_7^{(7)}$ has $2$ orbits. The orbits are:
\begin{enumerate}
\item[1.] {\emm The orbit of zero}
is the orbit consisting of all the labelings with $1$ or $3$ components (including the boxed 1).

\item[2.] The orbit consisting of all the labelings with $2$ or $4$ components (including the boxed 1), with representative
\[ m_3\ =\quad
\sxymatrix{  0 \rline & 1 \rline & 0 \rline & 1 \rline \dline &  0 \rline & 1 \\
& & & 0 \dline & & \\
& & & \boxone & & } \ .
\]
\end{enumerate}
\end{proposition}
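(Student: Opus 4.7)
The plan is to establish a parity invariant that distinguishes the two proposed orbits, and then to show that every labeling reduces to one of $0$ or $m_3$.

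First, I would verify that the parity of the total number of connected components of a labeling, working in the augmented diagram so that the boxed~$1$ is always a vertex of the label-$1$ subgraph and is either a component by itself or belongs to the component of vertex $7$, is preserved by every move $\T_i$. The key point is a short case check at each vertex: a move at a vertex of degree $\le 2$ in the augmented diagram alters the number of components by $0$ or $\pm 2$, and a move at the branch vertex $4$, which has degree $3$, either splits a single component into three or fuses three into one, again an even change. Since the zero labeling has one component (the boxed~$1$) while $m_3$ has four components (vertex $2$ alone, vertex $4$ alone, vertex $6$ alone, and the boxed~$1$), they sit in different orbits, giving $\Orbs{\EE_7^{(7)}} \ge 2$.

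Second, to prove $\Orbs{\EE_7^{(7)}} \le 2$, I would argue by a reduction on the augmented diagram. Given a labeling $\aa$, using moves on the $\AA_3$-arm $\{1,2,3\}$ attached to vertex $4$, on the $\AA_2$-arm $\{5,6\}$ also attached to $4$, and on the arm $\{7,\text{boxed }1\}$ (with the label on the boxed vertex frozen at $1$), I can compress the components on each arm as in the $\AA_n$ case of Section~\ref{sec:An}, and then use unsplittings at the branch vertex $4$ to cancel a component on one arm against a component on another. Iterating eliminates all but a short window of labels near vertex $4$, which can be enumerated directly; the asymmetry arising from the frozen boxed~$1$ forces the surviving representative to be $0$ if the parity is odd and to be equivalent to $m_3$ if the parity is even.

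The main obstacle will be the reduction step, because the augmented diagram has three arms meeting at vertex $4$ and the arm containing the boxed~$1$ behaves asymmetrically under the move at $7$. A cleaner alternative, which I would fall back on if the direct reduction becomes too cumbersome to state compactly, is to invoke Theorem~\ref{thm:inner} together with the known cardinality $\#H^1(\R, EV) = 2$, computed by Conrad \cite[Proof of Lemma~4.9]{Conrad} for the split simply connected group of type $\EE_7$ and also by Adams \cite{A}, to obtain $\Orbs{\EE_7^{(7)}} = 2$ immediately. Combined with the parity argument, this yields both that the orbits are exactly the two parity classes and that $m_3$ represents the nontrivial class.
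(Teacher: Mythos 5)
Your proof is correct in substance, but note that the paper itself gives no proof of this proposition (Sections 6--14 are presented ``as parts of a table'' with proofs omitted), so the comparison is with the paper's implicit method rather than a written argument. Your first half is exactly the paper's intended invariant: the preservation of the parity of the number of components is stated by the authors as a remark for all $\EE_n$ diagrams, and it transfers to the augmented diagram because the boxed $1$ is just a frozen label-$1$ leaf; your identification of $1$ component for $\nl$ and $4$ for $m_3$ is right, and since the maximal independent set of the augmented tree has size $4$ while the boxed $1$ forces at least one component, the two parity classes are precisely $\{1,3\}$ and $\{2,4\}$ components, so ``two orbits'' plus the invariant yields the full statement. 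One small imprecision: a move at a vertex of degree at most $2$ in a tree never changes the component count at all (only the degree-$3$ vertex $4$ can change it, by $\pm 2$), but this only strengthens your claim. For the upper bound, your primary route (compressing each of the three arms as in the $\AA_n$ analysis and unsplitting at vertex $4$) is the paper's own style of argument --- it is the method carried out in full for $\DD_n^{(m)}$ in Theorem \ref{lem:Dn(m)} --- but you leave it as a sketch, and the asymmetric arm $\{7,\boxone\}$ does require a genuine case analysis (e.g., $(1,0,0,1,0,0,0)$ reduces to $\nl$ only after a somewhat indirect chain of moves passing through the $3$-neighbor unsplit at vertex $4$). Your fallback via Theorem \ref{thm:inner} and Conrad's computation $\#H^1(\R,EV)=2$ is logically valid and not circular, but it inverts the paper's intent: the paper derives $\#H^1(\R,EV)$ \emph{from} this proposition and cites Conrad only as independent confirmation, so relying on Conrad makes the result depend on an external cohomological computation rather than on the self-contained combinatorics the paper is built on. Either route proves the proposition; only the first is in the spirit of the paper, and it would need the reduction step written out (or a finite check of the $128$ labelings) to be complete.
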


Note that $H^1(\R,EV)$ was earlier computed by B.~Conrad \cite[Proof of Lemma 4.9]{Conrad}.

\subsection{The group $EVI$   with twisting diagram   $\EE_7^{(2)}$}

A maximal compact subgroup is of type  $\AA_1  \DD_6$.
The  twisting diagram  and augmented diagram are:
\begin{equation*}
\mxymatrix{  \bc{1} \rline &  \bcb{2} \rline &  \bc{3} \rline &  \bc{4}
\rline \dline &  \bc{5} \rline &  \bc{6}  \\
& & & \bcu{7} & & & }
\qquad\qquad
\mxymatrix{  \bc{1} \rline &  \bc{2} \rline \dline &  \bc{3} \rline & \bc{4}
\rline \dline &  \bc{5} \rline &  \bc{6} \\
& \boxone & & \bcu{7} & & }
\end{equation*}
(see \cite[Table 7]{OV} and Construction \ref{const:augmented-diag}).

\begin{proposition} \label{prop:E7^(2)}
The diagram $\EE_7^{(2)}$ has $4$ orbits. The orbits are:
\begin{enumerate}
\item[1.] {\emm The orbit of zero}  consisting of the labelings with $1$ or $3$ or $5$ components
(including the boxed 1), excluding the fixed labeling
$\ell'_2$ (see below).
\item[2.] The fixed labeling
\[\ell'_1\ =\quad
\sxymatrix{  1 \rline &  0 \rline \dline &  0 \rline & 0 \rline \dline &  0 \rline &  0 \\
& \boxone  & & 0  & & }
\]
\item[3.] The fixed labeling
\[ \ell'_2\ =\quad
\sxymatrix{  0 \rline &  0 \rline \dline &  1 \rline & 0 \rline \dline &  0 \rline &  0 \\
&  \boxone  & & 1  & & }
\]
\item[4.] The orbit consisting of the labelings with $2$ or $4$ components (including the boxed 1)
excluding the fixed labeling $\ell'_1$, with representative
\[ \ell'_3\ =\quad
\sxymatrix{  1 \rline &  0 \rline \dline &  1 \rline & 0 \rline \dline &  0 \rline &  0 \\
&  \boxone  & & 1  & & }
\]
\end{enumerate}
\end{proposition}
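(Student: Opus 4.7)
The plan has three ingredients: classify the fixed labelings of $\EE_7^{(2)}$, identify a parity invariant that separates the two larger orbits, and then reduce every non-fixed labeling to a canonical representative. First I would determine all labelings fixed by every move $\T_i$, $i=1,\dots,7$. A labeling is fixed iff at every vertex $i$ the sum modulo $2$ of labels of its neighbors in the augmented diagram vanishes (with the boxed $1$ counting as a neighbor of vertex $2$). The degree-$1$ vertices $1,6,7$ force $a_2 = a_4 = a_5 = 0$; vertex $5$ then forces $a_6=0$; vertex $2$ imposes $a_1 + a_3 = 1$; and vertex $4$ imposes $a_3 = a_7$. The two solutions are exactly $\ell'_1$ and $\ell'_2$. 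Note that $0$ is \emph{not} fixed, since $\T_2$ flips $a_2$ to $1$ (the boxed $1$ is the sole $1$-labeled neighbor of vertex $2$).

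Next I would establish that the parity of the number of connected components of a labeling, counting the boxed $1$ as its own vertex, is a Weyl invariant. Since the augmented diagram is a simply-laced tree, a move $\T_i$ at $a_i = 0$ involves an odd number $k$ of $1$-labeled neighbors of $i$; by the tree property these lie in $k$ distinct components, and after the move they merge with $i$ into one component, so the component count changes by $1-k$, which is even. The case $a_i\colon 1\to 0$ is symmetric. The representatives $0,\ell'_1,\ell'_2,\ell'_3$ have $1,2,3,4$ components respectively, so the combined (parity, fixed-or-not) invariant separates them into four distinct classes and gives a lower bound of $4$ orbits.

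The main obstacle is the converse: showing every non-fixed labeling is equivalent to $0$ or to $\ell'_3$ according to its parity. This is done by an explicit case analysis in the spirit of the proof of Theorem \ref{lem:Dn(m)}. Given any $\aa \notin \{\ell'_1,\ell'_2\}$, some move is necessarily available (by the classification of fixed labelings), so one can apply a terminating sequence of moves to reduce $\aa$ to a canonical form — for instance, by first simplifying the ``tail'' at vertex $7$, then the branch through vertices $5,6$, and finally the $\AA_3$-like piece on vertices $1,2,3$ attached to the boxed $1$, in each step using the reduction moves already developed for the simpler diagrams $\AA_n^{(m)}$ and $\DD_n^{(m)}$. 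Since the only fixed labelings are $\ell'_1$ and $\ell'_2$, the reduction terminates in the canonical representative of one of the two non-singleton orbits, and the parity invariant identifies which one. The case analysis is lengthy but routine, consistent with the paper's policy of omitting such verifications.
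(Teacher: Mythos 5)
The paper gives no proof of this proposition (the results of Sections \ref{sec:An}--\ref{sec:G2} are stated with proofs omitted), so there is nothing of the authors' to compare your argument against step by step; your overall strategy is certainly the intended one. Your first two steps are correct and complete: the linear system for the fixed labelings of the augmented diagram is solved correctly and yields exactly $\ell'_1$ and $\ell'_2$, and the argument that a move on a tree changes the number of components by an even amount is sound. Hence the four proposed representatives do lie in four distinct orbits, which gives the lower bound.

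The gap is in the final step, and it is more than a deferred routine verification. The sentence ``Since the only fixed labelings are $\ell'_1$ and $\ell'_2$, the reduction terminates in the canonical representative of one of the two non-singleton orbits'' is not a valid inference: the set of fixed labelings does not control the number of non-singleton orbits. For instance, $\AA_7^{(0)}$ also has exactly two fixed labelings ($\xi_0=0$ and $\xi_4$), yet it has three non-singleton orbits, because there the full component count, and not merely its parity, is a $W$-invariant. So the entire content of the proposition beyond the lower bound is precisely the claim that on $\EE_7^{(2)}$ the parity is a \emph{complete} invariant away from the two fixed labelings, i.e.\ that every non-fixed labeling with an odd (resp.\ even) number of components can actually be moved to $0$ (resp.\ to $\ell'_3$). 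Your proposed reduction order (first the tail at vertex $7$, then the branch through $5,6$, then the piece on $1,2,3$ carrying the boxed $1$) is a reasonable way to organize this, but as written the argument assumes its conclusion; you need to carry the reduction out in the style of the proof of Theorem \ref{lem:Dn(m)} and check that its only possible terminal outputs are $0$, $\ell'_1$, $\ell'_2$, $\ell'_3$.
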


Note that $H^1(\R,EVI)$ was earlier computed by Garibaldi and Semenov \cite[Example 5.1]{GS} by a different method.

\subsection{The group $EVII$  of Hermitian type   with twisting diagram   $\EE_7^{(1)}$}

A maximal compact subgroup is of type $\EE_6  T^1$.
The  twisting diagram  and augmented diagram are:
\begin{equation*}
\mxymatrix{  \bcb{1} \rline &  \bc{2} \rline &  \bc{3} \rline &  \bc{4}
\rline \dline &  \bc{5} \rline &  \bc{6} \\
& & & \bcu{7} & & & }
\qquad\qquad
\mxymatrix{ \boxone \rline & \bc{1} \rline &  \bc{2} \rline &  \bc{3} \rline & \bc{4}
\rline \dline &  \bc{5} \rline &  \bc{6} \\
& & & & \bcu{7} & & }
\end{equation*}
(see \cite[Table 7]{OV} and Construction \ref{const:augmented-diag}).

\begin{proposition}\label{prop:E7^(1)}
The diagram $\EE_7^{(1)}$ has $2$ orbits. The orbits are:
\begin{enumerate}
\item[1.] {\emm The orbit of zero}  consisting  of all the labelings with $1$ or $3$ components (including the boxed 1).
\item[2.] The orbit consisting of all the labelings with $2$ or $4$ components (including the boxed 1), with representative
\[ m'_3\ =\quad
\sxymatrix{ \boxone \rline & 0 \rline &  1 \rline &  0 \rline & 1
\rline \dline &  0 \rline &  1 \\
& & & & 0 & & }
\]
\end{enumerate}
\end{proposition}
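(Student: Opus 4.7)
My plan is to combine a parity-of-components invariant with an explicit reduction to canonical representatives, mirroring the strategy already used for $\EE_7^{(7)}$ in Proposition \ref{prop:E7^(7)}.

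First I will establish the parity invariant. The augmented diagram of $\EE_7^{(1)}$ is a tree with a single branch vertex of degree $3$ at position $4$, so when a move $\T_i$ flips $a_i$ it is because vertex $i$ has an odd number $d$ of labeled neighbors; in a tree, flipping $a_i$ then changes the number of connected components of the induced subgraph on labeled-$1$ vertices (counting the boxed $1$, which is always labeled) by exactly $\pm(d-1)$, an even integer. Hence the parity of the component count is preserved under all moves $\T_1,\dots,\T_7$. The zero labeling has one component (just the boxed $1$, odd parity), while $m'_3$ has four components (boxed $1$, vertex $2$, vertex $4$, vertex $6$, even parity), so they lie in distinct orbits. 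This gives the lower bound $\Orbs{\EE_7^{(1)}}\ge 2$.

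Second, I will show every labeling reduces to either $0$ or $m'_3$. The augmented diagram decomposes into three $\AA$-type arms emanating from the branch vertex $4$: the left arm $\boxone\!-\!1\!-\!2\!-\!3\!-\!4$ (of type $\AA_4^{(1)}$, twisted at vertex $1$ by the boxed $1$), the right arm $4\!-\!5\!-\!6$ (of type $\AA_3$), and the branch $4\!-\!7$. On each arm, Lemma \ref{lem:basic-An} lets us pack and space components, and the unsplitting move $\T_4$ at the degree-$3$ vertex lets us cancel components across arms just as in Proposition \ref{prop:An^(m)-invariant} and Theorem \ref{lem:Dn(m)}. A short case analysis—first reducing the left arm by the twisted $\AA_4^{(1)}$ moves to a form with few components, then using $\T_4,\T_5,\T_6,\T_7$ to clean up the right arm and branch—brings any labeling into a representative determined by the parity invariant.

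The main obstacle will be the asymmetry introduced by the boxed $1$: since its label cannot be flipped, every component containing the boxed $1$ is anchored, and care is required when invoking unsplittings at vertex $4$ involving the left arm. The cleanest way past this is to first use the moves $\T_1,\dots,\T_3$ alone to reduce the left arm to either the empty configuration (boxed $1$ alone) or the configuration $\boxone\!-\!0\!-\!1\!-\!0\!-\!1$ meeting vertex $4$, depending on parity, and only afterwards manipulate the right arm and branch; this isolates the boxed-$1$ constraint to a single intermediate step and reduces the verification to the compact $\EE_7^{(0)}$-type argument of Proposition \ref{prop:E7}.
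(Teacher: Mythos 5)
Your parity argument is correct, and it is exactly the invariant the paper itself records in the Remark following Proposition \ref{prop:E6}: on a tree, a move that fires at a vertex with $d$ labeled neighbours changes the component count by $\pm(d-1)$, which is even precisely because the move fires only for odd $d$. Since the zero labeling has one component and $m'_3$ has four, this correctly yields at least two orbits. The paper omits its proof of this proposition, but the intended argument is plainly the same two-step scheme you propose (parity invariant plus explicit reduction to representatives), so your overall route is the right one.

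The gap is in the reduction step, concretely in your last paragraph. Using $\T_1,\T_2,\T_3$ alone, the left arm cannot always be brought to one of your two target configurations. If $a_4=0$, these moves realize the $\AA_3^{(1)}$ puzzle on $(a_1,a_2,a_3)$ and never touch $a_4$, so the reachable normal forms are $(0,0,0)$ and $(0,1,0)$ with $a_4=0$; your second target, which has $a_4=1$, is unreachable. If $a_4=1$, the left arm is the two-sided puzzle of type $\AA_3^{(1,3)}$, which has a third orbit, namely the fixed labeling $(a_1,a_2,a_3)=(1,1,1)$ (the configuration $\ell_1^{(m,n)}$ of Section \ref{sec:Cn}); this is immobile under $\T_1,\T_2,\T_3$ and can only be broken by first changing $a_4$ from the right. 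Moreover, the number of left-arm components is not itself preserved by $\T_3$ when $a_4=1$, so ``depending on parity'' is not well defined for the isolated left arm. Finally, the proposed reduction to ``the compact $\EE_7^{(0)}$-type argument'' of Proposition \ref{prop:E7} cannot be taken literally: $\EE_7^{(0)}$ has four orbits, two of which are singleton fixed labelings, whereas $\EE_7^{(1)}$ has two and no fixed labelings, so the boxed $1$ genuinely merges orbits rather than leaving the $\EE_7^{(0)}$ analysis intact. The strategy is salvageable, but the case analysis must interleave moves on the left arm with splittings and unsplittings at vertex $4$ (as in Proposition \ref{prop:An^(m)-invariant} and Theorem \ref{lem:Dn(m)}) rather than normalizing the left arm first in isolation; as written, the reduction stalls on the configurations above.
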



\section{Groups of type $\EE_8$}
\label{sec:E8}

\subsection{The compact group  of type  $\EE_8^{(0)}$}

The Dynkin diagram is
\begin{equation*}
\mxymatrix{ \bc{1} \rline & \bc{2} \rline & \bc{3} \rline & \bc{4} \rline &
\bc{5} \rline \dline & \bc{6} \rline & \bc{7}  \\ & & & & \bcu{8} & & }
\end{equation*}

\begin{proposition} [Reeder \cite{Reeder}] \label{prop:E8}
The diagram $\EE_8^{(0)}$ has $3$ orbits. The orbits are:
\begin{enumerate}
\item[1.] {\emm The orbit of zero} which contains only $0$.
\item[2.] The orbit consisting of all the labelings with odd number of components, with representative
\[\ell_3\ =\quad
\sxymatrix{ 0 \rline & 1 \rline & 0 \rline & 1 \rline & 0 \dline \rline & 0 \rline & 0 \\
& & & & 1 & & } \ .
\]
\item[3.] The orbit consisting of all the labelings with nonzero even number of components, with representative
\[\ell_2\ =\quad
\sxymatrix{ 0 \rline & 0 \rline & 0 \rline & 0 \rline & 0 \dline \rline & 1 \rline & 0 \\
& & & & 1 & & } \ .
\]
\end{enumerate}
\end{proposition}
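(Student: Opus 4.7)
My plan is to identify invariants that separate the three claimed orbits, then use a short list of elementary reductions to show every labeling lies in one of them.

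\textbf{Invariants.} The zero labeling $\nl$ is a fixed point: every vertex has $0$ labeled neighbors (even), so formula \eqref{non-twisted-simply-laced} forces each move to leave $\nl$ unchanged, and $\{\nl\}$ is its own orbit. The parity of the number of connected components is preserved by moves, as already recorded in the Remark after Proposition \ref{prop:E6}; the underlying reason is that the Dynkin diagram is a tree, so the labeled neighbors of any vertex $i$ lie in pairwise distinct components of the labeled subgraph obtained after deleting $i$, and a move flipping $a_i$ (which requires an odd number of labeled neighbors at $i$) shifts the component count by an even integer. Since $\ell_3$ has three components and $\ell_2$ has two, the labelings $\nl, \ell_3, \ell_2$ lie in three distinct orbits.

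\textbf{Reductions.} To prove these are the only orbits, I would reduce an arbitrary nonzero labeling to either $\ell_3$ or $\ell_2$, according to the parity of its component count. The key operations are: (i) \emph{sliding}, where for adjacent vertices $i,j$ the two-move sequence $\T_j,\T_i$ takes the singleton labeling $e_i$ to $e_j$; (ii) the $\AA$-type reductions of Lemma \ref{lem:basic-An} applied to the $\AA_7$ subdiagram on vertices $1,\dots,7$, which let us shrink and shift any linear component; and (iii) \emph{splitting/unsplitting at the branch vertex $5$}, which, as discussed in Section \ref{sec:term}, interconverts a one-component configuration (with $a_4 = a_5 = a_6 = a_8 = 1$) with a three-component one, and similarly toggles between two and four components. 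Using (i) and (ii), any labeling whose support lies inside the $\AA_7$ arm can be shrunk and moved to a canonical form; a single application of (iii) then matches the component count modulo the required shift by $2$, and one arrives at $\ell_3$ or $\ell_2$.

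\textbf{Main obstacle.} The delicate part is the interaction at the branch, since vertex $5$ has three neighbors and moves at $5$ and $8$ do not reduce to an $\AA_n$-chain calculation. I expect the bulk of the verification to consist of a systematic case split on the pair $(a_5,a_8)$: when $a_8=0$ the problem reduces at once to $\AA_7$ by (ii); when $a_8 = 1$ one first engineers the right parity at vertex $5$ by adjusting $a_4$ or $a_6$ via moves in the $\AA_7$ subdiagram, and then applies $\T_8$ or $\T_5$ to consolidate. Once each of these cases is dispatched, a bounded number of additional moves reduces the labeling to the canonical $\AA_7$ form, and the conclusion follows from Lemma \ref{lem:basic-An} combined with (i).
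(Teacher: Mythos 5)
The paper itself gives no proof of this proposition (Section \ref{sec:term} announces that proofs in Sections \ref{sec:An}--\ref{sec:G2} are omitted as straightforward), so there is nothing to compare line by line; judged on its own, your proposal follows exactly the route the paper's surrounding machinery suggests, and its two halves are of unequal completeness. The invariant half is fully correct and even adds value: you actually prove the parity-of-components invariance that the paper only asserts in the Remark after Proposition \ref{prop:E6} (the tree argument --- the labeled neighbors of $i$ lie in distinct components after deleting $i$, so a move changes the count by $m-1$ or $1-m$ with $m$ odd --- is exactly right), and together with the fixedness of $\nl$ this yields at least three orbits with $\nl$, $\ell_3$, $\ell_2$ pairwise inequivalent. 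The merging half, however, is a plan rather than an executed argument: phrases like ``I would reduce'' and ``I expect the bulk of the verification to consist of a case split'' leave the reader to do the work. The ingredients you name do suffice --- one first shows every labeling is equivalent to one with $a_8=0$ (if $a_5=1$ apply $\T_8$; if $a_5=0$ use Lemma \ref{lem:basic-An} on the subchain $1,\dots,4$ to arrange $a_4=a_6$, then $\T_5$, $\T_8$), which by Corollary \ref{cor:An-reps} reduces everything to $\xi_0,\dots,\xi_4$ on the $\AA_7$ arm; then the splitting move at vertex $5$ (slide a component to vertex $5$, grow it to $a_4=a_5=a_6=a_8=1$, apply $\T_5$, and slide the three resulting pieces back onto the arm) gives $\xi_1\sim\xi_3$ and $\xi_2\sim\xi_4$, matching the parity classes exactly. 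I recommend you write out at least these two chains of moves explicitly; as it stands the proof is correct in conception but not finished.
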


\subsection{The split group $EVIII$   with twisting diagram   $\EE_8^{(7)}$}
\label{subsec:EE8(7)}

A maximal compact subgroup is of type $\DD_8$.
The  twisting diagram   and the augmented diagram are:
\begin{equation*}
\mxymatrix{  \bc{1} \rline & \bc{2} \rline & \bc{3} \rline & \bc{4} \rline & \bc{5}
\rline \dline & \bc{6} \rline & \bcb{7}  \\ & & & & \bcu{8} & & }
\qquad\qquad
\mxymatrix{ \bc{1} \rline &\bc{2} \rline & \bc{3} \rline & \bc{4} \rline &
\bc{5} \rline \dline & \bc{6} \rline & \bc{7} \rline & \boxone  \\  & & & &
\bcu{8} & & & }
\end{equation*}
(see \cite[Table 7]{OV} and Construction \ref{const:augmented-diag}).

\begin{proposition} \label{prop:E8^(7)}
The diagram $\EE_8^{(7)}$ has $3$ orbits. The orbits are:
\begin{enumerate}
\item[1.] {\emm The orbit of zero} consisting of the labelings with odd number of components (including the boxed 1),
excluding the fixed labeling $\ell'_2$.
\item[2.] The fixed labeling
\[ \ell'_2\ = \quad
\sxymatrix{ 0 \rline & 0 \rline & 0 \rline & 0 \rline & 0 \rline \dline & 1 \rline & 0 \rline & \boxone  \\
 & & & & 1 & & &}
\]
\item[3.] The orbit consisting of all the labelings with even number of components, with representative
\[ m_3\ = \quad
\sxymatrix{ 0 \rline & 1 \rline & 0 \rline & 1 \rline & 0 \rline \dline & 1 \rline & 0 \rline & \boxone  \\
 & & & & 0 & & &}
\]
\end{enumerate}
\end{proposition}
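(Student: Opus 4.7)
The plan is to exhibit invariants distinguishing the three proposed orbits and then verify that they are the only orbits. The two invariants are: the parity of the number of connected components of a labeling (counting the boxed 1 as a vertex with label 1), and the property of being a fixed labeling.

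For the parity invariant, note that $\EE_8^{(7)}$ is a tree. When a move $\T_i$ fires, vertex $i$ has an odd number $k$ of 1-neighbors; since the diagram is a tree, these $k$ 1-neighbors lie in $k$ distinct components of the 1-subforest (any two would otherwise create a cycle through $i$). Toggling $a_i$ therefore changes the number of components by $\pm(1-k)$, which is even since $k$ is odd. This invariance is already noted in the remark after Proposition \ref{prop:E6}. For the fixed-point property, direct inspection shows that at every vertex of $\ell'_2$ the number of 1-neighbors is even (it equals $2$ at vertices $5$ and $7$, and $0$ elsewhere), so no move fires and $\ell'_2$ is fixed. Counting components gives $0 \mapsto 1$, $\ell'_2 \mapsto 3$, $m_3 \mapsto 4$, so $m_3$ is separated from $\{0,\ell'_2\}$ by parity, while $0$ and $\ell'_2$ lie in distinct orbits because $\T_7$ acts nontrivially on $0$ (flipping $a_7$, since $a_6=0$) whereas $\ell'_2$ is fixed. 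This produces at least three orbits.

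The main obstacle is to show that these three orbits exhaust all $2^8 = 256$ labelings. My intended approach is an explicit reduction: given an arbitrary labeling $\aa$, first use the techniques of Lemma \ref{cor:basic-An} to push 1's to canonical positions along the subpath $1$-$2$-$\cdots$-$7$, then use splitting and unsplitting at the trivalent vertex $5$ to balance the three branches emanating from it, and finally normalize the tail through the boxed 1 via the move at vertex $7$. A case analysis on the resulting short list of normal forms identifies $\aa$ with one of $0$, $\ell'_2$, $m_3$. Alternatively, one can bypass this combinatorics by invoking Theorem \ref{thm:inner} together with the value $\#H^1(\R, EVIII) = 3$ computed by Adams \cite{A}; this forces the number of equivalence classes to be exactly three, and combined with the pairwise distinctness of the three orbits already established, it yields the proposition.
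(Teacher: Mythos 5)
Your argument that the three listed orbits are pairwise distinct is correct and complete: the parity of the number of components is preserved by every move (your tree argument for the change $\pm(k-1)$ with $k$ odd is sound, and $\EE_8$ is simply laced so the caveat about short roots is vacuous), and $\ell'_2$ is indeed fixed, so its orbit is the singleton $\{\ell'_2\}$, which separates it from $[\nl]$ without any further computation. Note also that once one knows there are \emph{exactly} three orbits, your two invariants do recover the full orbit description stated in the proposition, by the elimination argument you indicate. The paper itself omits the proof of this proposition (Section \ref{sec:term} announces that most proofs in Sections \ref{sec:An}--\ref{sec:G2} are omitted), so there is no written argument to compare against; but the proofs the paper does include (e.g.\ Theorem \ref{lem:Dn(m)}) are direct combinatorial reductions, and that is clearly the intended method here.

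The weight of the proposition lies entirely in the exhaustiveness claim, and there your proposal is not yet a proof. The combinatorial route is only a sketch: "a case analysis on the resulting short list of normal forms" is precisely the content that needs to be written down, and as sketched it does not explain why the reduction to $\nl$ fails \emph{exactly} at $\ell'_2$ and nowhere else among the odd-component labelings. A self-contained argument should at minimum establish that $\ell'_2$ is the \emph{unique} fixed labeling of $\EE_8^{(7)}$ (this follows by solving the linear system "every vertex has an even number of neighbors labeled $1$", which forces $a_6=a_8=1$ and all other $a_j=0$), and then show that every non-fixed odd labeling can be driven to $\nl$; neither step appears in the proposal. The fallback via Theorem \ref{thm:inner} and $\#H^1(\R,EVIII)=3$ from Adams \cite{A} is logically valid and does close the gap, but it inverts the paper's logic: the paper presents its orbit counts as an independent computation of these cardinalities and cites Adams only as confirmation, so importing his count as an input makes the proposition (and the paper's claim of agreement) circular in spirit for this case. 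In short: correct lower bound, valid but externally-dependent upper bound, and the intended self-contained combinatorial argument is left unexecuted.
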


\subsection{The group $EIX$   with twisting diagram   $\EE_8^{(1)}$}

A maximal compact subgroup is of type $\AA_1  \EE_7$.
The  twisting diagram    and the augmented diagram are:
\begin{equation*}
\mxymatrix{   \bcb{1} \rline & \bc{2} \rline & \bc{3} \rline & \bc{4} \rline & \bc{5}
\rline \dline & \bc{6} \rline & \bc{7}  \\  & & & & \bcu{8} & & }
\qquad\qquad
\mxymatrix{ \boxone \rline & \bc{1} \rline & \bc{2} \rline & \bc{3} \rline &
\bc{4} \rline & \bc{5} \rline \dline & \bc{6} \rline & \bc{7}  \\ & & & & &
\bcu{8} & & }
\end{equation*}
(see \cite[Table 7]{OV} and Construction \ref{const:augmented-diag}).

\begin{proposition} \label{prop:E8^(1)}
The diagram $\EE_8^{(1)}$ has $3$ orbits. The orbits are:
\begin{enumerate}
\item[1.] {\emm The orbit of zero} consisting of all the labelings
with odd number of components (including the boxed 1).
\item[2.] The fixed labeling
\[ \ell'_3\ =\quad  \sxymatrix{ \boxone \rline & 0 \rline & 1 \rline & 0 \rline &
1 \rline & 0 \rline \dline & 0 \rline & 0  \\ & & & & & 1 & & }  \]
\item[3.] The orbit consisting of the labelings with even number of components,
excluding the fixed labeling $\ell'_3$, with representative
\[ m'_3 \ =\quad
\sxymatrix{ \boxone \rline & 0 \rline & 1 \rline & 0 \rline & 1 \rline & 0 \rline \dline & 1 \rline & 0  \\
 & & & & & 0 & & }  \]
\end{enumerate}
\end{proposition}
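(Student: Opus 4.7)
The plan is to follow the template used for the sister diagrams in Propositions \ref{prop:E8}, \ref{prop:E8^(7)}, and \ref{prop:E7^(1)}. In outline, the four steps are: (a) exhibit a $\Z/2$-valued invariant via the parity of the number of connected components of a labeling, so that this parity separates the orbit of the zero labeling (odd) from the other two orbits (even); (b) verify by direct inspection that $\ell'_3$ is a fixed point, so its orbit is the singleton $\{\ell'_3\}$; (c) show that every odd-parity labeling reduces to the zero labeling; and (d) show that every even-parity labeling other than $\ell'_3$ reduces to $m'_3$.

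For step (a), the augmented diagram of $\EE_8^{(1)}$ is a tree with a single frozen vertex (the boxed 1). When a move $\T_i$ at a white vertex $i$ actually flips $a_i$, the number $k$ of neighbors of $i$ bearing the label 1 is odd. If $a_i$ goes from 0 to 1 then vertex $i$ absorbs the $k$ neighboring 1-components into one, changing the total component count by $1-k$, which is even; if $a_i$ goes from 1 to 0, the tree structure forces removal of $i$ to split its component into exactly $k$ pieces, an even change of $k-1$. Treating the boxed 1 as a singleton component when $a_1=0$ and as part of vertex 1's component when $a_1=1$, the parity is preserved. Direct counting gives 1 component for the zero labeling, 4 for $\ell'_3$, and 4 for $m'_3$, consistent with the stated parity classes.

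Step (b) is mechanical: at each vertex $i\in\{1,\dots,8\}$ one counts the number of neighbors of $i$ carrying a 1 in $\ell'_3$ and confirms the count is even. For example, at the branching vertex 5 the neighbors 4, 6, 8 bear labels $1,0,1$, an even count, so $\T_5$ fixes $a_5$; the remaining seven checks are analogous, confirming that $\ell'_3$ is a fixed point and hence its orbit is $\{\ell'_3\}$, automatically disjoint from every other orbit.

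The hard part is steps (c) and (d), the explicit reduction to the chosen representatives. I would argue by induction on a complexity function such as the total number of 1's (or a weighted version giving priority to vertices far from the branching vertex 5), using splitting/unsplitting at the degree-3 vertex 5 together with Lemma \ref{cor:basic-An} applied separately to each of the three arms emanating from vertex 5: the extended arm through vertices $4,3,2,1$ to the boxed 1, the length-two arm through vertices $6,7$, and the single-vertex arm at $8$. The main obstacle is to confirm that some strictly simplifying move is available in every even-parity labeling different from $\ell'_3$ and in every odd-parity labeling different from zero; equivalently, one must rule out additional ``stuck'' configurations besides $\ell'_3$. Because $\ell'_3$ is isolated as a singleton orbit and parity separates the odd and even classes, pinning down the reduction in these two remaining classes completes the proof and yields exactly the three orbits listed.
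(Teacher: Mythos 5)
Your steps (a) and (b) are correct, and they follow the same route the paper implicitly intends (the paper states these propositions without proof, and the Remark following Proposition \ref{prop:E6} records exactly your parity invariant): on the augmented tree a move that actually fires changes the number of components by $\pm(k-1)$ where $k$ is the odd number of $1$-labeled neighbors, so parity is preserved; and a direct check of the eight vertices shows $\ell'_3$ is fixed. Together with the counts ($1$ component for $\nl$, $4$ for $\ell'_3$ and for $m'_3$) this gives \emph{at least} three orbits.

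The gap is that steps (c) and (d) are never carried out, and they are the entire content of the proposition beyond that lower bound. You write that you ``would argue by induction on a complexity function'' and that ``the main obstacle is to confirm that some strictly simplifying move is available'' --- but that obstacle \emph{is} the statement to be proved, and it is left unresolved. Moreover, your reformulation of it as ``ruling out additional stuck configurations besides $\ell'_3$'' is too weak if read as ruling out additional fixed labelings: an exceptional small orbit need not be a fixed point (compare the three-element orbits in Propositions \ref{prop:F4} and \ref{prop:F4^(4)}), so even the true fact that $\ell'_3$ is the unique fixed labeling of $\EE_8^{(1)}$ (the fixed-point equations force $a_2=a_4=a_8=1$ and all other labels $0$) would not show that the even-parity labelings other than $\ell'_3$ form a single orbit, nor that the odd-parity ones form a single orbit. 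To close the gap you must actually exhibit the reduction: for instance, normalize each of the three arms at the degree-$3$ vertex $5$ to $\xi_r$-form using Lemma \ref{cor:basic-An}, then use splitting and unsplitting at vertex $5$ together with cancellation of components against the boxed $1$ at the far end of the long arm to drive any labeling to a short explicit list of candidates, and identify each candidate with $\nl$, $\ell'_3$, or $m'_3$ --- or, failing that, verify the claimed partition by exhausting the $2^8=256$ labelings.
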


\section{Groups of type $\FF_4$}
\label{sec:F4}

\subsection{The compact group of type $\FF_4^{(0)}$}

The Dynkin diagram is
\begin{equation*}
\sxymatrix{ \bc{1} \rline & \bc{2} & \ar@{=>}[l] \bc{3} \rline & \bc{4} }\,.
\end{equation*}

\begin{proposition} \label{prop:F4}
The diagram $\FF_4^{(0)}$ has $3$ orbits. The orbits are:
\begin{enumerate}
\item[1.] {\emm The orbit of zero} which contains only \ \  $0 \ll 0 \LLL 0 \ll 0$\,.
\item[2.] The orbit
\[ \left\{\   1\ll 0 \LLL 0\ll 0\, , \quad   1 \ll 1 \LLL 0\ll 0\, , \quad   0 \ll 1 \LLL 0 \ll 0 \ \right\} \,. \]
\item[3.] The orbit that contains the rest, with representative  \ $\ell_2=\  1 \ll 0 \LLL 1 \ll 0 $\,.
\end{enumerate}
\end{proposition}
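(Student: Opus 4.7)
The plan is to verify the orbit decomposition by direct computation, exploiting the fact that $|L(\FF_4^{(0)})| = 2^4 = 16$ is small, so the full orbit graph can be traversed by hand.

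First I would write down the four moves explicitly using Lemma~\ref{prop:non-twisted}, paying close attention to which of vertices $2$ and $3$ is short and which is long. In the paper's convention the arrowhead of the double edge sits at vertex $2$, so vertex $2$ is the short root and vertex $3$ is the long one. Consequently $\T_3$ (at the long vertex) ignores vertex $2$ across the double edge and flips $a_3$ precisely when $a_4=1$, whereas $\T_2$ (at the short vertex) still sees both neighbors, changing $a_2$ iff $a_1+a_3\equiv 1\pmod 2$. The moves $\T_1$ and $\T_4$ are given by the ordinary simply-laced formula: flip $a_1$ iff $a_2=1$, and flip $a_4$ iff $a_3=1$.

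Next I would partition the $16$ labelings according to whether $(a_3,a_4)=(0,0)$. On the subset $\{a_3=a_4=0\}$ the moves $\T_3$ and $\T_4$ act trivially while $\T_1$ and $\T_2$ restrict to the $\AA_2^{(0)}$-puzzle on the short pair $(a_1,a_2)$; by Corollary~\ref{cor:An-reps} this subset splits into $\{0000\}$ and the three-element orbit $\{1000,1100,0100\}$ with representative $\ell_1=0100$. The complementary set $\{(a_3,a_4)\ne (0,0)\}$ is invariant under all four moves, because $\T_1$ and $\T_2$ fix the pair $(a_3,a_4)$, while $\T_3$ only fires when $a_4=1$ and $\T_4$ only when $a_3=1$, so neither can send a labeling with a nonzero entry in positions $3$ or $4$ to one with both zero.

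Finally I would show that the remaining $12$ labelings form a single orbit by producing a short breadth-first search starting from $\ell_2=1010$ and reaching all twelve; the counting identity $1+3+12=16$ then forces the enumeration to be complete. The argument is entirely mechanical, and the only point where I would slow down is the very first step: correctly identifying which of vertices $2$ and $3$ is long versus short, since a mistake there would reverse the asymmetric exclusion in Lemma~\ref{prop:non-twisted} (short-sees-long instead of long-ignores-short) and would produce the wrong orbit structure.
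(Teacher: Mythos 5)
Your proposal is correct and is essentially the argument the paper intends: the paper explicitly omits the proofs in Sections \ref{sec:An}--\ref{sec:G2} as straightforward case checks, and a direct traversal of the $16$ labelings is exactly that check. You also resolve the one delicate point correctly --- with the arrow pointing from vertex $3$ to vertex $2$, vertex $3$ is long, so $\T_3$ ignores $a_2$ and fires iff $a_4=1$ while $\T_2$ still sees $a_3$ --- and with these moves your partition into $\{a_3=a_4=0\}$ (yielding $\{0000\}$ and the three-element $\AA_2^{(0)}$-orbit $\{1000,1100,0100\}$) and its invariant twelve-element complement verifies the stated orbit structure.
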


\subsection{The split group $FI$   with twisting diagram   $\FF_4^{(4)}$}
A maximal compact subgroup is of type $\CC_3 \AA_1$.
The  twisting diagram  and the augmented diagram are:
\begin{equation*}
\sxymatrix{  \bc{1} \rline & \bc{2} & \ar@{=>}[l] \bc{3} \rline & \bcb{4}  }
\qquad\qquad
\sxymatrix{  \bc{1} \rline & \bc{2} & \ar@{=>}[l] \bc{3} \rline & \bc{4} \rline& \boxone }
\end{equation*}
(see \cite[Table 7]{OV} and Construction \ref{const:augmented-diag}).

\begin{proposition} \label{prop:F4^(4)}
The diagram $\FF_4^{(4)}$ has $3$ orbits. The orbits are:
\begin{enumerate}
\item[1.] {\emm The orbit of zero} which
consists of the labelings of the form $\aa\Leftarrow \aa'$, where
$$
\aa\in L(\sxymatrix{  \bc{1} \rline & \bc{2} }), \quad \aa'\in L( \sxymatrix{ \bc{3} \rline & \bc{4} \rline &  \boxone }\  )\, ,
$$
and $\aa'$ has only one component.
\item[2.] The fixed labeling $\ell'_2\ =\quad  1 \ll 0 \LLL 1 \ll 0 \ll \boe $\ .
\item[3.] The orbit
\[
\left\{\  0 \ll 0 \LLL 1 \ll 0 \ll \boe\ , \quad  0 \ll 1 \LLL 1 \ll 0 \ll \boe\  ,
\quad  1 \ll 1 \LLL 1 \ll 0 \ll \boe \ \right\} \, .
\]
\end{enumerate}
\end{proposition}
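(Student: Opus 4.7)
The plan is first to read off the four moves from the augmented diagram, then to isolate a single invariant that separates the three claimed orbits, and finally to check each resulting part.

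Using Onishchik--Vinberg numbering, vertices $1,2$ carry the short roots and vertices $3,4$ the long ones, and the double edge connects $2$ (short) and $3$ (long). Applying Lemmas \ref{prop:non-twisted} and \ref{prop:twisted-i}, and remembering that the boxed 1 contributes a constant summand $1$ to $\T_4$, the moves come out as
\begin{align*}
\T_1\colon a_1 &\mapsto a_1+a_2, & \T_2\colon a_2 &\mapsto a_2+a_1+a_3,\\
\T_3\colon a_3 &\mapsto a_3+a_4, & \T_4\colon a_4 &\mapsto a_4+a_3+1.
\end{align*}
Define $c(\aa)$ to be the number of connected components of the restriction $\aa'=(a_3,a_4,1)$ of $\aa$ to the right piece on vertices $3,4$ and the boxed 1. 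Since the boxed 1 is fixed at $1$, we have $c(\aa)=2$ iff $(a_3,a_4)=(1,0)$ and $c(\aa)=1$ otherwise. The moves $\T_1,\T_2$ do not touch $(a_3,a_4)$, while a three-line check shows $\T_3,\T_4$ fix $(1,0)$ and act transitively on $\{(0,0),(0,1),(1,1)\}$. Hence $c$ is an invariant of the whole puzzle, partitioning $L(\FF_4^{(4)})$ into Part A ($12$ labelings with $c=1$) and Part B ($4$ labelings with $c=2$).

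In Part B the pinning $(a_3,a_4)=(1,0)$ turns $\T_2$ into $a_2\mapsto a_2+a_1+1$, so the induced puzzle on $(a_1,a_2)$ is the $\AA_2$-puzzle twisted at vertex $2$; a direct enumeration of its four states gives the fixed point $\{(1,0)\}$ and the three-element orbit $\{(0,0),(0,1),(1,1)\}$, which reinsert as $\{\ell'_2\}$ and the orbit containing $\ell_3$, matching claims (2) and (3).

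For Part A I would show that $(0,0,0,0)$ reaches every other labeling with $c=1$. Using $\T_3,\T_4$ alone, $(a_3,a_4)$ varies freely over $\{(0,0),(0,1),(1,1)\}$ while $(a_1,a_2)$ is frozen; with $(a_3,a_4)=(0,0)$ the moves $\T_1,\T_2$ are the untwisted $\AA_2$-moves and already link the three nonzero states $(a_1,a_2)\in\{(1,0),(0,1),(1,1)\}$. The missing bridge from $(0,0,0,0)$ to this $\AA_2$-orbit is supplied by the chain
\begin{equation*}
(0,0,0,0)\xrightarrow{\T_4}(0,0,0,1)\xrightarrow{\T_3}(0,0,1,1)\xrightarrow{\T_2}(0,1,1,1)\xrightarrow{\T_3}(0,1,0,1)\xrightarrow{\T_4}(0,1,0,0),
\end{equation*}
in which $\T_2$ fires in its twisted form because $a_3=1$ at that step. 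Combining this bridge with the untwisted $\AA_2$-moves proves Part A is a single orbit, identifying it with claim (1). The main obstacle is spotting this bridge: the $a_3$ summand in $\T_2$ silently couples the two halves of the diagram and, on the round trip through $a_3=1$, converts the untwisted $\AA_2$-puzzle on the left into its twisted counterpart, merging what would otherwise be two disjoint $\AA_2$-orbits on $(a_1,a_2)$ into one orbit of size $12$.
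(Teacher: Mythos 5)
Your proof is correct and complete: the four moves are read off correctly from the augmented diagram (in particular $\T_2$ sees the long root $\alpha_3$ while $\T_3$ does not see the short root $\alpha_2$, and the boxed $1$ adds the constant term to $\T_4$), the quantity $c$ is genuinely invariant, and the two parts are handled by exhaustive checks that I have verified, including the five-step bridge joining $(0,0,0,0)$ to $(0,1,0,0)$. The paper omits its own proof of this proposition (as it does for all the case-by-case computations in Sections 6--13), so there is nothing to compare against; your organization of the $16$-element verification around the component count of the long-root tail $(a_3,a_4,\boxed{1})$ is a clean way to carry out what the authors presumably did by direct enumeration.
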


\subsection{The group  $FII$   with twisting diagram   $\FF_4^{(1)}$}
A maximal compact subgroup is of type $\BB_4$.
The  twisting diagram   and the augmented diagram are:
\begin{equation*}
\sxymatrix{  \bcb{1} \rline & \bc{2} & \ar@{=>}[l] \bc{3} \rline & \bc{4}  }
\qquad\qquad
\sxymatrix{ \boxone \rline & \bc{1} \rline & \bc{2} & \ar@{=>}[l] \bc{3} \rline & \bc{4}  }
\end{equation*}
(see \cite[Table 7]{OV} and Construction \ref{const:augmented-diag}).

\begin{proposition} \label{prop:F4^(1)}
The diagram $\FF_4^{(1)}$ has $3$ orbits. The orbits are:
\begin{enumerate}
\item[1.] {\emm The orbit of zero} consisting of
\[ \left\{\  \boe\ll 0 \ll 0 \LLL 0 \ll0 \, , \quad  \boe\ll 1 \ll 0 \LLL 0 \ll 0 \, , \quad
\boe\ll 1 \ll1 \LLL0 \ll 0 \ \right\} \, . \]
\item[2.] The fixed labeling \ $\ell'_1\ =\quad   \boe\ll 0 \ll 1 \LLL 0 \ll 0 $\,.
\item[3.] The orbit that contains the rest, with representative \  $\ell'_3=\ \ \boe\ll 1 \ll 1 \LLL 1 \ll 0 $\,.
\end{enumerate}
\end{proposition}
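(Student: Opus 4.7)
The plan is a direct verification on all $2^4 = 16$ labelings of the augmented diagram
$\sxymatrix{ \boxone \rline & \bc{1} \rline & \bc{2} & \ar@{=>}[l] \bc{3} \rline & \bc{4}  }$,
organized by the subset $S_0 = \{a_3 = a_4 = 0\}$ and its complement. First I would write down the explicit action of the four moves from Lemma~\ref{prop:non-twisted}. In OV's numbering of $\FF_4$, vertices $1,2$ are short and $3,4$ are long (the double arrow points from the long $\alpha_3$ to the short $\alpha_2$), so $\T_3$ omits $\alpha_2$ from its sum while $\T_2$ includes $\alpha_3$; combined with the boxed~$1$ attached at vertex~$1$, the formulas become
\[
\T_1\colon a_1\mapsto a_1+1+a_2,\quad
\T_2\colon a_2\mapsto a_2+a_1+a_3,\quad
\T_3\colon a_3\mapsto a_3+a_4,\quad
\T_4\colon a_4\mapsto a_4+a_3.
\]

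Next I would establish the two small orbits. That $\ell'_1 = (0,1,0,0)$ is fixed is an immediate check against the four formulas. The subset $S_0$ is stable under every move, since $\T_3, \T_4$ act trivially on it and $\T_1, \T_2$ do not touch $a_3, a_4$; iterating $\T_1, \T_2$ from $\nl$ reaches $(1,0,0,0)$ and $(1,1,0,0)$ and nothing else in $S_0$, the only remaining element of $S_0$ being the fixed $\ell'_1$. This isolates the orbit of zero and confirms $\{\ell'_1\}$ as a second orbit.

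The main step is to show the remaining $12$ labelings (those with $(a_3,a_4)\neq(0,0)$) form a single orbit containing $\ell'_3 = (1,1,1,0)$. The key structural observation is that $\T_3, \T_4$, viewed only on the coordinates $(a_3, a_4)$, are the simple reflections of the Weyl group of $\BB_2$; their nontrivial orbit on $(\Z/2\Z)^2$ is $\{(1,0),(1,1),(0,1)\}$, so on the complement of $S_0$ one may freely cycle $(a_3,a_4)$ through these three values without disturbing $(a_1,a_2)$. Combined with $\T_1$ and $\T_2$, a short walk such as
\[
(1,1,1,0)\;\stackrel{\T_4}{\to}\;(1,1,1,1)\;\stackrel{\T_3}{\to}\;(1,1,0,1)\;\stackrel{\T_2}{\to}\;(1,0,0,1)\;\stackrel{\T_1}{\to}\;(0,0,0,1)\;\stackrel{\T_3}{\to}\;(0,0,1,1)\;\stackrel{\T_4}{\to}\;(0,0,1,0)
\]
followed by a few branches with $\T_1,\T_2,\T_3,\T_4$ reaches all $11$ other labelings from $\ell'_3$.

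The main obstacle is precisely this third step. There is no single $\Z/2\Z$-linear invariant separating $\ell'_1$ from the zero orbit — both live inside $S_0$ — so the argument cannot be reduced to a clean invariant-based separation and must trace a connected graph on twelve vertices. The subtle practical point is that when $a_3 = 1$ the pair $(a_1,a_2)=(1,1)$ is a joint fixed point of $\T_1$ and $\T_2$, so escape from such ``interior'' fixed points requires one to first toggle $(a_3,a_4)$ via $\T_3$ or $\T_4$; this is why the walk above passes through $a_3 = 0$ on the way to the labelings with small upper coordinates.
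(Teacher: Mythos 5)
Your proposal is correct: the move formulas are the right ones (long roots $3,4$ ignoring the short neighbor $2$ across the double edge, and the boxed $1$ contributing to $\T_1$), the three claimed orbits check out, and this exhaustive verification on the $16$ labelings is exactly the kind of straightforward case check the paper explicitly omits for brevity in Sections 6--14. One cosmetic slip: the subdiagram on vertices $3,4$ is of type $\AA_2$ (two long roots joined by a single edge), not $\BB_2$, though the orbit $\{(1,0),(1,1),(0,1)\}$ you compute from $\T_3,\T_4$ is correct regardless.
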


\section{Groups of type $\GG_2$}
\label{sec:G2}

\subsection{The compact group of type $\GG_2^{(0)}$}

The Dynkin diagram is
\begin{equation*}
\sxymatrix{ \bc{1} & \ar@3{->}[l] \bc{2} }\, .
\end{equation*}
The description of orbits is  similar to the case  $\AA_2^{(0)}$, because $3 \equiv 1 \pmod{2}$.
We have $\Orbs{\GG_2^{(0)}}=2$.
 The two orbits are
\[ \{\, 0 \ll 0\, \} \quad \mbox{ and } \quad \{\, 1 \ll 0\,,\quad 1 \ll 1\, , \quad 0 \ll 1\, \} \, . \]

\subsection{The split group   with twisting diagram   $\GG_2^{(2)}$}
A maximal compact subgroup is of type $\AA_1  \AA_1$.
The  twisting diagram  and the augmented diagram are:
\begin{equation*}
\sxymatrix{ \bc{1} & \ar@3{->}[l] \bcb{2}  }
\qquad\qquad
\sxymatrix{ \bc{1} & \ar@3{->}[l] \bc{2} \rline & \boxone }
\end{equation*}
(see \cite[Table 7]{OV} and Construction \ref{const:augmented-diag}).
The description of orbits is  similar to the case  $\AA_2^{(2)}$.
We have $\Orbs{\GG_2^{(2)}}=2$.
The two orbits are
\[ \{\, 0 \ll 0 \ll \boe\ , \quad 0 \ll 1 \ll \boe\ , \quad 1 \ll 1 \ll \boe\   \} \quad
\mbox{ and } \quad \{\, 1 \ll 0 \ll \boe\ \} \,. \]


\section{Connected components in real homogeneous spaces}
\label{sec:examples}

Let $G$ be a  simply connected absolutely simple algebraic group over $\R$.
Let $H\subset G$ be a  simply connected semisimple $\R$-subgroup.
Set $X=G/H$.
In this section we describe our method of calculation
of the number of connected components $\#\pi_0(X(\R))$,
and give examples.

\subsection{Triple $(D,\tau,\ttt)$}
\label{ss:triple}
Let $G$ be a simply connected absolutely simple $\R$-group.

If $G$ is an {\em outer} form of a compact group $G_0$,
we can write $G=\hs_{t\hs \tau}G_0$ as in Section \ref{sec:outer},
where $\tau$ is an automorphism of order 2 of the Dynkin diagram $D$ of $G_\C$.
The element $t\in T^\ad(\R)_2$ defines a coloring $\ttt$ of $D^\tau$,
and we may assume that the coloring comes from a Kac diagram.
We obtain a triple $(D,\tau,\ttt)$.

If $G$ is an {\em inner} form of a compact group $G_0$,
we can write $G=\hs_{t}G_0$ as in Section \ref{sec:inner},
and the element $t\in T^\ad(\R)_2$ defines a coloring $\ttt$ of $D$.
In this case we set $\tau=1$, then again $\ttt$ is a coloring of $D^\tau$,
and again we may assume that the coloring comes from a Kac diagram.
Again we obtain a triple $(D,\tau,\ttt)$.

In both cases  we have the bijection \eqref{e:general-bijection}
$\Cl(D,\tau,\ttt)\isoto H^1(\R,G)$.

\subsection{Describing the connected components}

Let $H$ be  a   simply connected semisimple $\R$-subgroup
of a simply connected absolutely simple $\R$-group $G$.
We do not assume that $H$ is simple and that $H$ and $G$ are inner forms of compact groups.

Let $H=H_1\times\dots\times H_r$ be the decomposition of $H$ into the product of simple $\R$-groups.
We may and shall assume that each $H_i$ is absolutely simple.
Let $T_H$ be a  fundamental  torus of $H$, i.e., a maximal torus containing a maximal compact torus.
Then $T_H=\prod_i T_{i}$ where each $T_{i}\subset H_i$ is a fundamental  torus of $H_i$.
We present $H_i$ as a twisted form of a compact group as in Subsection \ref{ss:triple}
and obtain a triple $(D_i,\tau_i,\ttt_i)$,
where $D_i$ is the Dynkin diagram of $H_i$, $\tau_i$ is an automorphism of $D_i$ with $\tau_i^2=1$,
and $\ttt_i$ is a coloring of $D_i^{\tau_i}$.
Then we have an isomorphism $L(D_i)^{\tau_i}\isoto T_i(\R)_2$.
We set $D_H=\sqcup_i D_i$ (disjoint union), $\tau_H=\prod_i \tau_i\in\Aut(D)$ (direct product of automorphisms),
$L(D_H)=\bigoplus_i L(D_i)$, then $L(D_H)^{\tau_H}=\bigoplus_i L(D_i)^{\tau_i}$,
and we have an isomorphism $L(D_H)^{\tau_H}\isoto T_H(\R)_2$.
We have a coloring $\ttt_H$ of $D_H^{\tau_H}$: a vertex $v\in D_i\subset D_H$
is black in $D_H$ if and only if it is black in $D_i$.
We write also $L(D_H,\tau_H,\ttt_H)$ for $L(D_H)$.
We define $\Cl(D_H,\tau_H,\ttt_H)$ to be $\prod_i\Cl(D_i,\tau_i,\ttt_i)$,
then we have  a bijection $\Cl(D_H,\tau_H,\ttt_H)\isoto H^1(\R,H)$.
Using results of Sections \ref{sec:An}--\ref{sec:G2}, for each $i$
we find a set of representatives $\Xi_i\subset L(D_i,\tau_i,\ttt_i)^{\tau_i}$
of all equivalence classes in $\Cl(D_i,\tau_i,\ttt_i)$.
We set $\Xi=\prod_i\Xi_i\subset L(D_H,\tau_H,\ttt_H)^{\tau_H}$, then $\Xi$
is a set of representatives of all equivalence classes in $\Cl(D_H,\tau_H,\ttt_H)$,
i.e., the composite map $\Xi\into L(D_H,\tau_H,\ttt_H)^{\tau_H}\to \Cl(D_H,\tau_H,\ttt_H)$ is bijective.

Let $T_G$ be a fundamental torus of $G$.
We may and shall assume that $T_H\subset T_G$.
We present $G$ as a twisted form of a compact $\R$-group,
then we have a triple $(D_G,\tau_G,\ttt_G)$.
Using results of Sections  \ref{sec:An}--\ref{sec:G2},
we compute {\em the class of zero} $[\nl]_G\subset L(D_G,\tau_G,\ttt_G)^{\tau_G}$.

The embedding $T_H(\R)_2\into T_G(\R)_2$ induces an injective homomorphism
\[\iota\colon L(D_H)^{\tau_H}\to L(D_G)^{\tau_G},\]
which can be computed explicitly.
Let $\Xi_0$ denote the preimage in $\Xi$ of $[\nl]_G\subset L(D_G,\tau_G,\ttt_G)^{\tau_G}$
under the map $\Xi\into L(D_H,\tau_H,\ttt_H)^{\tau_H}\to L(D_G,\tau_G,\ttt_G)^{\tau_G}$, see the commutative diagram:
\begin{equation*}
\xymatrix{
\Xi\ar[r]   &L(D_H,\tau_H,\ttt_H)^{\tau_H}\ar[r] \ar[d]^\iota   &\Cl(D_H,\tau_H,\ttt_H)\ar[r]^-\sim\ar[d]    &H^1(\R,H) \ar[d]  \\
            &L(D_G, \tau_G,\ttt_G)^{\tau_G}\ar[r]                &\Cl(D_G,\tau_G,\ttt_G)\ar[r]^-\sim\         &H^1(\R,G)
}
\end{equation*}
We see that $\Xi_0$ is in a bijection with $\ker\left[ H^1(\R,H)\to H^1(\R, G)\right]$, and therefore,
the cardinality  of $\Xi_0$ answers Questions \ref{q:2} and  \ref{q:1}.

\subsection{Generalities on reductive groups and Galois cohomology}

Let $G$ be a simply connected semisimple algebraic  $\R$-group, $H\subset G$ be an $\R$-subgroup.
The group $G(\R)$ of $\R$-points acts on the left on $(G/H)(\R)$.

\begin{lemma}\label{lem:orbits-components}
Any orbit of $G(\R)$ in $(G/H)(\R)$
is a connected component of $(G/H)(\R)$.
\end{lemma}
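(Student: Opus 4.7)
The plan is to exhibit each $G(\R)$-orbit in $(G/H)(\R)$ as a nonempty, open, closed, connected subset.

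First I would invoke the classical theorem that for a simply connected semisimple algebraic $\R$-group $G$, the Lie group $G(\R)$ is connected (see e.g.\ Platonov--Rapinchuk; it goes back to E.~Cartan). Since the orbit map $G(\R)\to (G/H)(\R)$, $g\mapsto g\cdot x$, is continuous, every orbit $G(\R)\cdot x$ is connected.

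Next I would show that every orbit is open in $(G/H)(\R)$. For this I compare dimensions. The smooth $\R$-variety $X=G/H$ has $\dim_\R X(\R)=\dim_\C X_\C=\dim_\C G-\dim_\C H$ at every real point. The orbit $G(\R)\cdot x\cong G(\R)/\Stab_{G(\R)}(x)$ is a smooth submanifold of $X(\R)$ of real dimension $\dim_\R G(\R)-\dim_\R\Stab_{G(\R)}(x)=\dim_\C G-\dim_\C \Stab_G(x)_\C=\dim_\C G-\dim_\C H$, using that $\Stab_G(x)$ is an $\R$-form of a conjugate of $H$. Since the orbit is a smooth submanifold of $X(\R)$ of the same dimension as $X(\R)$, it is open.

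Finally, since $(G/H)(\R)$ is the disjoint union of the $G(\R)$-orbits and each orbit is open, each orbit is also closed (its complement is a union of open orbits). A nonempty subset that is open, closed, and connected in $(G/H)(\R)$ is exactly one connected component, which yields the claim. The only step requiring any input beyond standard manifold theory is the connectedness of $G(\R)$ for simply connected semisimple $G$, and no step here looks to present an obstacle.
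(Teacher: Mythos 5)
Your proof is correct and follows essentially the same route as the paper: openness of each orbit (the paper gets this from surjectivity of the differential of the orbit map plus the implicit function theorem, you get it from the equivalent dimension count showing the orbit is a full-dimensional submanifold), plus connectedness of $G(\R)$ for simply connected semisimple $G$, plus the observation that an open, closed, connected, nonempty set is a connected component. No gaps.
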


\begin{proof}
Write $X=G/H$.
Let $x\in X(\R)$, then we have a map
$$
\phi_x\colon G(\R)\to X(\R),\quad g\mapsto g\cdot x.
$$
The differential of $\phi_x$ at any point $g\in G(\R)$ is surjective,
hence by the implicit function theorem the map $\phi_x$ is open, hence
the orbits of $G(\R)$ in $X(\R)$ are open, hence they are open and closed.
Since $G$ is semisimple and simply connected,
by \cite[Corollary 4.7]{Borel-Tits}
or \cite[Proposition 7.6]{PR}
the group $G(\R)$ is connected,
hence  the orbits of $G(\R)$ in $X(\R)$ are connected,
hence they are the connected components of $X(\R)$.
\end{proof}

\begin{lemma}\label{lem:pi1}
Let $\varphi\colon S\to T$ be a homomorphism of $k$-tori
over an algebraically closed field $k$ (of arbitrary characteristic),
and let $\varphi_*\colon \X_*(S)\to \X_*(T)$ denote the induced homomorphism of the cocharacter groups.
Then
\begin{enumerate}
\item[(i)] There is a canonical isomorphism
$\Hom(\,(\X^*(\ker\varphi))_\tors,\,\Q/\Z)\isoto(\coker\varphi_*)_\tors$\,,
where by $A_\tors$ we denote the torsion subgroup of an abelian group  $A$.
\item[(ii)]$\#\,(\coker\varphi_*)_\tors=\#\, (\X^*(\ker\varphi))_\tors$.
\end{enumerate}
\end{lemma}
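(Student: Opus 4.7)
The plan is to reduce part (i) to a general fact in homological algebra about homomorphisms of finitely generated free abelian groups. First I would invoke the anti-equivalence between diagonalizable $k$-groups of finite type and finitely generated abelian groups, valid in any characteristic since $k$ is algebraically closed. The kernel $\ker\varphi$ is of multiplicative type as a closed subgroup scheme of the torus $S$, and the short exact sequence $1\to\ker\varphi\to S\to\varphi(S)\to 1$ together with the embedding $\varphi(S)\hookrightarrow T$ dualizes to the identification $\X^*(\ker\varphi)=\coker\varphi^*$, where $\varphi^*\colon\X^*(T)\to\X^*(S)$ denotes pullback of characters. Writing $A=\X^*(T)$ and $B=\X^*(S)$, both finitely generated free abelian groups, the map $\varphi_*\colon\X_*(S)\to\X_*(T)$ is precisely the $\Z$-dual $(\varphi^*)^\vee\colon B^\vee\to A^\vee$ of $\varphi^*$. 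Hence it suffices to show that for every homomorphism $f\colon A\to B$ of finitely generated free abelian groups, there is a canonical isomorphism $(\coker f^\vee)_\tors\isoto \Hom((\coker f)_\tors,\Q/\Z)$.

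Second, I would construct the crucial short exact sequence by splicing two applications of $\Hom(-,\Z)$. Factor $f$ as $A\twoheadrightarrow A/\ker f\hookrightarrow B$. Applying $\Hom(-,\Z)$ to $0\to\ker f\to A\to A/\ker f\to 0$ and using that $A/\ker f$ is free (being a subgroup of the free group $B$) gives
\begin{equation*}
0\to(A/\ker f)^\vee\to A^\vee\to(\ker f)^\vee\to 0.
\end{equation*}
Applying $\Hom(-,\Z)$ to $0\to A/\ker f\to B\to\coker f\to 0$, together with $\Ext^1(B,\Z)=0$, gives
\begin{equation*}
0\to(\coker f)^\vee\to B^\vee\to (A/\ker f)^\vee\to\Ext^1(\coker f,\Z)\to 0.
\end{equation*}
Identifying $(A/\ker f)^\vee$ with its image in $A^\vee$ via the first sequence, the image of $f^\vee$ in $A^\vee$ coincides with the image of the map $B^\vee\to(A/\ker f)^\vee$ from the second sequence. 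The third isomorphism theorem then yields the canonical short exact sequence
\begin{equation*}
0\to\Ext^1(\coker f,\Z)\to\coker f^\vee\to(\ker f)^\vee\to 0.
\end{equation*}

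Finally, since $(\ker f)^\vee$ is free while $\Ext^1(\coker f,\Z)=\Ext^1((\coker f)_\tors,\Z)$ is finite, the torsion subgroup of $\coker f^\vee$ coincides with $\Ext^1(\coker f,\Z)$. Combining this with the standard natural identification $\Ext^1(F,\Z)\cong\Hom(F,\Q/\Z)$ for a finite abelian group $F$, obtained from the short exact sequence $0\to\Z\to\Q\to\Q/\Z\to 0$, gives the canonical isomorphism asserted in (i). Part (ii) follows at once, since $|\Hom(F,\Q/\Z)|=|F|$ for every finite abelian group $F$ by Pontryagin duality. I do not anticipate a serious obstacle here: the whole argument is routine homological algebra once the passage from tori to finitely generated free abelian groups has been made.
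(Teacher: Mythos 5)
Your proof is correct and follows essentially the same route as the paper: dualize the character-group exact sequence with $\Hom(-,\Z)$ and use the identification $\Ext^1_\Z(F,\Z)\cong\Hom(F,\Q/\Z)$ on the finite torsion part. The only difference is that the paper first reduces to the case where $\varphi$ is surjective (leaving that reduction implicit), whereas you treat the general case directly by splicing the two dualized sequences coming from the factorization of $\varphi^*$ --- an arguably more complete execution of the same idea.
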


\begin{proof}
Set $T_1=\im\varphi$, then $T_1$ is a subtorus of $T$, and there exists a subtorus $T_2\subset T$
such that $T=T_1\times_k T_2$.
Let $\varphi_1\colon S\to T_1$ be the canonical surjective  homomorphism,
then $\coker\varphi_*=\coker\varphi_{1,*}\oplus\X_*(T_2)$, whence
\[(\coker\varphi_*)_\tors\cong(\coker\varphi_{1,*})_\tors\,.\]
Therefore, we may assume that $\varphi$ is surjective. Write $K=\ker\varphi$.
From the short exact sequence
$$
1\to K\to S\labelto{\varphi} T\to 1
$$
we obtain a short exact sequence
$$
 0\to\X^*(T)\labelto{\varphi^*}\X^*(S)\to\X^*(K)\to 0,
$$
whence, by taking $\Hom(\cdot , \Z)$, we obtain an exact sequence for the functor $\Ext_\Z$
(see e.g.  \cite[Theorem III.3.2]{ML})
$$
\Hom(\X^*(S),\Z)\labelto{\varphi_*} \Hom(\X^*(T),\Z)\to\Ext^1_\Z(\X^*(K),\Z)\to\Ext^1_\Z(\X^*(S),\Z)=0,
$$
where the last equality follows from the fact that $X^*(S)$ is a free abelian group.
We have $\Hom(\X^*(S),\Z)=X_*(S)$ and $\Hom(\X^*(T),\Z)=X_*(T)$.
For a finitely generated abelian group $A$ we have $\Ext^1_\Z(A,\Z)=\Hom(A_\tors,\Q/\Z)$
\cite[Exercise 4 in Section III.9]{ML},
whence
$$
(\coker\varphi_*)_\tors=\coker\varphi_*=\Ext^1_\Z(X^*(K),\Z)=\Hom(\X^*(K)_\tors,\Q/\Z),
$$
which proves (i), and (ii) follows immediately.
\end{proof}

\begin{corollary}\label{cor:pi1}
Let $\varphi\colon H\to G$
be a homomorphism  of  reductive $\C$-groups with finite kernel.
Let $T_H\subset H$ and $T\subset G$ be maximal tori such that $\varphi(T_H)\subset T$.
Let $\varphi_*\colon \X_*(T_H)\to \X_*(T)$ denote the induced homomorphism of the cocharacter groups.
Then $\#\ker[\varphi\colon H\to G]=\#\,(\coker\varphi_*)_\tors$\,.
\end{corollary}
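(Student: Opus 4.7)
The plan is to reduce the statement to Lemma \ref{lem:pi1}(ii) applied to the restriction $\varphi|_{T_H}\colon T_H\to T$. For this I need two preliminary observations. First, since $\ker\varphi$ is finite and normal in the connected $\C$-group $H$, it is contained in the center $Z(H)$ (a finite normal subgroup of a connected group is central, because the continuous conjugation action of the connected group $H$ on the finite discrete set $\ker\varphi$ must be trivial). Since every maximal torus of a connected reductive group contains the center, this gives $\ker\varphi\subset T_H$. Consequently
\[
\ker[\,\varphi|_{T_H}\colon T_H\to T\,]\;=\;\ker\varphi\cap T_H\;=\;\ker\varphi.
\]

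Next, I would apply Lemma \ref{lem:pi1}(ii) to the torus homomorphism $\varphi|_{T_H}\colon T_H\to T$. Its induced map on cocharacter lattices is exactly $\varphi_*\colon \X_*(T_H)\to\X_*(T)$, so the lemma yields
\[
\#\,(\coker\varphi_*)_\tors\;=\;\#\,\bigl(\X^*(\ker\varphi|_{T_H})\bigr)_\tors\;=\;\#\,\bigl(\X^*(\ker\varphi)\bigr)_\tors.
\]

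Finally, since $\ker\varphi$ is a finite abelian group scheme over an algebraically closed field of characteristic zero (or more generally, what we need is that $\X^*$ of a finite abelian group $K$ recovers its Pontryagin dual), the character group $\X^*(\ker\varphi)=\Hom(\ker\varphi,\G_{m})$ is the Pontryagin dual of the finite abelian group $\ker\varphi(\C)$; in particular it is finite and its torsion subgroup equals the whole group, with $\#\,\X^*(\ker\varphi)=\#\ker\varphi$. Combining gives $\#\ker\varphi=\#(\coker\varphi_*)_\tors$, as claimed.

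The only nontrivial step is the first one, namely verifying that $\ker\varphi$ sits inside $T_H$; the rest is an immediate invocation of Lemma \ref{lem:pi1} together with Pontryagin duality for finite abelian groups. Everything else is essentially bookkeeping.
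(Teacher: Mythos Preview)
Your proof is correct and follows essentially the same approach as the paper's: reduce to the torus homomorphism $\varphi|_{T_H}$ and apply Lemma~\ref{lem:pi1}(ii). The paper's proof is terser, simply asserting $K=\ker\varphi=\ker[T_H\to T]$ and the chain $\#K=\#\X^*(K)=\#(\coker\varphi_*)_\tors$ without justification; you have supplied the two details it leaves implicit (why $\ker\varphi\subset T_H$, and why $\#\X^*(K)=\#K$ for finite $K$).
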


\begin{proof}
Write $K=\ker\varphi=\ker[T_H\to T]$, then
 $$
\#\ker[\varphi\colon H\to G]=\#K=\#\X^*(K)=\#\,(\coker\varphi_*)_\tors\,,
$$
where the last equality follows from  Lemma \ref{lem:pi1}(ii).
\end{proof}

\begin{construction}\label{lem:sc-subgroup}
Let $G$ be a simply connected absolutely simple $\R$-group.
Let $T\subset G$ be a fundamental torus, $R=R(G_\C,T_\C)$ be the root system,
$\Pi\subset R$ be a basis, $D=D(R,\Pi)$ be the Dynkin diagram.
We may and shall assume that $G=\hs_{t\tau}G_0$, where $G_0$ is a compact group,
$\tau\in(\Aut\,D)_2$, and $t\in T^\ad(\R)_2$, see Section \ref{sec:outer}.
Let $\Pi_H\subset \Pi$ be a $\tau$-invariant subset, and let $R_H\subset R$ denote the subset
consisting of integer linear combinations of simple roots $\alpha\in\Pi_H$
(then $R_H$ is a root system with basis $\Pi_H$).
Let $H_1$ denote the algebraic subgroup of $G_\C$
generated by $T_\C$ and the unipotent ``root'' subgroups $U_\beta$ for all roots $\beta\in R_H$.
Let $H$ denote the derived subgroup of $H_1$.
Then by \cite[Proposition 12.6]{MT} $H$ is a semisimple group with root system $R_H$.
Since $G$ is simply connected, by  \cite[Proposition 12.14]{MT}  $H$ is simply connected as well.
Since the complex conjugation $\rho$ acts on $R$ by $\rho(\beta)=-\tau(\beta)$ for $\beta\in R$,
the subset $R_H$ of $R$ is $\rho$-invariant,
and hence, the subgroups $H_1$ and $H$ are defined over $\R$.
\end{construction}

\begin{lemma}\label{lem:odd}
Let
$$
1\to A\to B\labelto{\psi} C\to 1
$$
be a short exact sequence of algebraic $\R$-groups,
where $A$ is finite and central in $B$.
If the order $\# A(\C)$ of $A(\C)$ is odd, then
the induced map
$$
\psi_*\colon H^1(\R,B)\to H^1(\R,C)
$$
is bijective.
\end{lemma}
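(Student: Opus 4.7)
The plan is to combine the long exact Galois cohomology sequence of the central extension with the vanishing of cohomology of odd-order modules under $\Gal(\C/\R)$, then use a twisting argument to upgrade ``kernel'' information into ``fiber'' information.

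First I would note that, because $A$ is central in $B$, the group $A$ is abelian, so $A(\C)$ is a finite abelian group of odd order carrying a continuous action of $\Gal(\C/\R)\cong\Z/2\Z$. Every $H^i(\Gal(\C/\R),A(\C))$ with $i\ge 1$ is annihilated both by $\#A(\C)$ and by $|\Gal(\C/\R)|=2$; since these two integers are coprime, we obtain
$$
H^1(\R,A)=0 \qquad\text{and}\qquad H^2(\R,A)=0.
$$

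Next, the centrality of $A$ in $B$ provides a long exact sequence of pointed sets
$$
\cdots\to H^1(\R,A)\to H^1(\R,B)\xrightarrow{\psi_*} H^1(\R,C)\xrightarrow{\delta} H^2(\R,A).
$$
The vanishing $H^2(\R,A)=0$ forces $\delta$ to be trivial, which immediately gives surjectivity of $\psi_*$.

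For injectivity one cannot just read off the answer from exactness at $H^1(\R,B)$, since pointed-set exactness only controls the fiber over the neutral class. The remedy is the standard twisting trick. Given $[b_1],[b_2]\in H^1(\R,B)$ with $\psi_*[b_1]=\psi_*[b_2]$, choose a cocycle $b_1$ and twist by it. Because $A$ is central in $B$, the inner automorphism $\Inn(b_1)$ acts trivially on $A(\C)$, so the twisted Galois action on $A(\C)$ coincides with the old one, and hence ${}_{b_1}A=A$ as $\R$-groups; in particular $H^1(\R,{}_{b_1}A)=H^1(\R,A)=0$. The twisting bijection $H^1(\R,B)\xrightarrow{\sim} H^1(\R,{}_{b_1}B)$ sends $[b_1]$ to the neutral class and sends $[b_2]$ to some class lying in the fiber of $H^1(\R,{}_{b_1}B)\to H^1(\R,{}_{\psi(b_1)}C)$ over the neutral element. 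Exactness of the twisted sequence at $H^1(\R,{}_{b_1}B)$ identifies this fiber with the image of $H^1(\R,{}_{b_1}A)=0$, so it contains only the neutral class. Undoing the twist gives $[b_1]=[b_2]$.

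The only non-routine step is the twisting argument for injectivity; everything else is direct coprimality vanishing applied to the standard six-term sequence of a central extension.
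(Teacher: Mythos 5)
Your proposal is correct and follows essentially the same route as the paper: coprimality of $\#A(\C)$ and $\#\Gal(\C/\R)=2$ kills $H^1(\R,A)$ and $H^2(\R,A)$, giving surjectivity and triviality of the fibers of $\psi_*$. The only cosmetic difference is that where you carry out the twisting argument by hand to show each fiber is a singleton, the paper simply cites Serre's description of the fibers as quotients of $H^1(\R,A)$ (\emph{Cohomologie galoisienne}, I.5.5, Cor.\ 2 of Prop.\ 39), which is proved by exactly the twisting trick you describe.
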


\begin{proof}
Since $A$ is central, we have a cohomology exact sequence
$$
C(\R)\to H^1(\R,A)\to H^1(\R,B)\labelto{\psi_*} H^1(\R,C)\to H^2(\R,A);
$$
see \cite[I.5.7, Proposition 43]{Serre}.
Since $\#\Gal(\C/\R)=2$ and $\# A(\C)$ is odd, by \cite[Section 6, Corollary 1 of Proposition 8]{AW}
we have $H^1(\R,A)=1$ and $H^2(\R,A)=1$.
It follows that the map $\psi_*$ is surjective and that $\ker\psi_*=1$.
We show that any fiber of $\psi_*$ contains only one element.
Indeed, let $\beta\in H^1(\R,B)$ and let $b\in Z^1(\R,B)$ be a cocycle representing $\beta$.
By \cite[I.5.5, Corollary 2 of Proposition 39]{Serre}, the fiber $\psi_*^{-1}(\psi_*(\beta))$
is in a bijection with the quotient of $H^1(\R, A)$ by an action of the group $_b C(\R)$.
Since $H^1(\R,A)=1$,  our fiber $\psi_*^{-1}(\psi_*(\beta))$ indeed contains only one element.
Thus $\psi_*$ is bijective.
\end{proof}

In Subsections \ref{ex:E7}\,--\,\ref{ex-E8}
we give  examples of calculations of $\#\pi_0(\, (G/H)(\R)\,)$ using results of Sections \ref{sec:An}--\ref{sec:G2}.

\subsection{Example with $\EE_7$}
\label{ex:E7}
Let $G=EV$, the split simply connected simple $\R$-group with compact maximal torus $T$,
of type $\EE_7^{(7)}$ with twisting diagram and augmented diagram
\begin{equation*}
\mxymatrix
{  \bc{1} \rline &  \bc{2} \rline &  \bc{3} \rline &  \bc{4}
\rline \dline &  \bc{5} \rline &  \bc{6} \\
& & & \bcbu{7} & & }
\qquad\qquad
\xymatrix
@1@R=9pt@C=9pt
{  \ccc \rline &  \ccc \rline &  \ccc \rline &  \ccc
\rline \dline &  \ccc \rline &  \ccc \\
& & & \ccc \dline & & \\
& & & *+[F]{\text{\small1}} & & }
\end{equation*}
see Subsection \ref{sec:E7(7)}.
Let $\Pi_G=\{ \alpha_1, \dots, \alpha_7\}$ be the simple roots (numbered as on the twisting diagram above).
We remove vertex $3$.
Set $\Pi_H=\Pi_G\smallsetminus \{\alpha_3\}$, and let $H$ be the corresponding semisimple $\R$-subgroup,
see Construction \ref{lem:sc-subgroup},
with maximal torus $T_H$ (contained in $T$)
and with twisting diagram of type $\AA_2^{(0)}\sqcup \AA_4^{(1)}$
\begin{equation*}
\xymatrix@1@R=9pt@C=9pt
{  \ccc \rline &  \ccc
&\ \
 &  \ccc
\rline \dline &  \ccc \rline &  \ccc \\
& & &\Lbul & & }
\end{equation*}
and  augmented diagram:
\begin{equation*}
\xymatrix@1@R=9pt@C=9pt
{  \ccc \rline &  \ccc
&\ \
&  \ccc
\rline \dline &  \ccc \rline &  \ccc \\
& & & \ccc \dline & & \\
& & & *+[F]{\text{\small1}} & & }
\end{equation*}

Then the semisimple group $H$ is simply connected, see Construction \ref{lem:sc-subgroup}.
 We have $H=H_1\times H_2$, where $H_1$ is a compact groups of type
$\AA_2^{(0)}$ and $H_2$ is a twisted (noncompact) group of type $\AA_4^{(1)}$. By Subsection \ref{sect:An},
for $H_1$ we have $\# \Or(\AA_2^{(0)})=2$ with a set of representatives
$$
\Xi_1=\ \{\ 0 \ll 0,\quad 1 \ll 0\ \}.
$$
By Subsection \ref{subsec:An^m}, for $H_2$  we have $\# \Or(\AA_4^{(1)})=3$ with a set of representatives
$$
\Xi_2=\ \{\ \boe \ll 0 \ll 0 \ll 0 \ll 0,\quad \boe\ll 0 \ll 1 \ll 0 \ll 0, \quad \boe\ll 0 \ll 1 \ll 0 \ll 1\ \}.
$$
We set $\Xi=\Xi_1\times\Xi_2\subset L(\AA_2^{(0)}\sqcup\AA_4^{(1)})=L(\AA_2^{(0)})\times L(\AA_4^{(1)})$,
hence $\#\Xi=2\cdot 3=6$.
We write down $\Xi$:
\begin{align*}
& 0 \ll 0\qquad \boe\ll 0 \ll 0 \ll 0 \ll 0\\
& 0 \ll 0\qquad \boe\ll 0 \ll 1 \ll 0 \ll 0\\
& 0 \ll 0\qquad \boe\ll 0 \ll 1 \ll 0 \ll 1\\
& 1 \ll 0\qquad \boe\ll 0 \ll 0 \ll 0 \ll 0\\
& 1 \ll 0\qquad \boe\ll 0 \ll 1 \ll 0 \ll 0\\
& 1 \ll 0\qquad \boe\ll 0 \ll 1 \ll 0 \ll 1\\
\end{align*}
We must compute the subset $\Xi_0$ of $\Xi$ consisting  of the labelings
whose images in $L(\EE_7^{(7)})$ are contained in the orbit of zero $[0]$.
The homomorphism $L(\AA_2^{(0)}\sqcup\AA_4^{(1)})\to L(\EE_7^{(7)})$ is induced
by the embedding $\AA_2^{(0)}\sqcup\AA_4^{(1)}\into \EE_7^{(7)}$.
By Subsection \ref{sec:E7(7)} the labelings of $\EE_7^{(7)}$ in the orbit of zero are those
with 1 or 3 components (including the boxed 1).
Thus  $\Xi_0$ consists of the following labelings of $\AA_2^{(0)}\times \AA_4^{(1)}$:
\begin{align*}
& 0 \ll 0\qquad \boe\ll 0 \ll 0 \ll 0 \ll 0\\
& 0 \ll 0\qquad \boe\ll 0 \ll 1 \ll 0 \ll 1\\
& 1 \ll 0\qquad \boe\ll 0 \ll 1 \ll 0 \ll 0
\end{align*}
We conclude that
$$\# \pi_0(\,(G/H)(\R)\,)\ =\ \#\ker \left[ H^1(\R,H)\to H^1(\R,G) \right] =\#\Xi_0=3.$$

Similar calculations show that if we remove vertex 2 instead of vertex 3, then $\# \pi_0(\,(G/H)(\R)\,)=2$,
and if we remove vertex 1 instead of vertex 3, then $\# \pi_0(\,(G/H)(\R)\,)=1$,
i.e. $(G/H)(\R)$ will be connected.

\subsection{Examples with $\Spin^*(2n)$}

Let $G=\Spin^*(2n)\ (n\ge 4)$, the simply connected ``quaternionic'' $\R$-group of type $\DD_n^{(n)}$
with twisting diagram and augmented diagram
\begin{equation*}
\xymatrix@1@R=0pt@C=9pt
{ \bc{1} \rline & \cdots \rline & \bc{n-3} \rline & \bc{n-2} \dline
\rline & \bc{n-1} \\
& & & \bcbu{n} & }
\qquad\qquad
\sxymatrix{ \ccc \rline & \cdots \rline & \ccc \rline & \ccc \dline
\rline & \ccc \\
& & & \ccc \dline & \\
& & & \boxone & }
\end{equation*}
see Subsection \ref{subsec:Dn^(n)}.
Let $\Pi_G=\{ \alpha_1, \dots, \alpha_n\}$ be the  simple roots (numbered as on the twisting diagram above).
We remove  vertex $n-1$.
Set $\Pi_H=\Pi_G\smallsetminus \{\alpha_{n-1}\}$,
and let $H$ be the corresponding semisimple $\R$-subgroup, see Construction \ref{lem:sc-subgroup},
with twisting diagram of type $\AA_{n-1}^{(1)}$
\begin{equation*}
\sxymatrix{ \bc{1}  \rline & \cdots  \rline & \bc{n-2} \rline &\bcb{n}}
\end{equation*}
and augmented diagram
\begin{equation*}
\sxymatrix{ \bc{1}  \rline & \cdots  \rline & \bc{n-2} \rline & \bc{n} \rline &\boxone}\ .
\end{equation*}
Then the semisimple $\R$-subgroup $H$ is simply connected, see Construction \ref{lem:sc-subgroup}.
By Subsection \ref{subsec:An^m}
we can take for representatives of orbits in $L(\AA_{n-1}^{(1)})$ the set
\[ \Xi=\left\{\eta_i\ |\ 1\le i\le\left\lceil n/2\right\rceil\right\},\]
 where $\eta_i$ denotes the labeling with  $i$ components (including the boxed 1) maximally packed to the right.
By  Subsection \ref{sec:Dn(n)},
the orbit of zero in $L(\DD_n^{(n)})$ is the set of labelings
with {\em odd} number of components (including the boxed 1).
Thus
\[ \Xi_0=\left\{\eta_i\ |\ 1\le i\le\left\lceil n/2\right\rceil,\ i \text{ is odd}\right\}.\]
We see that $\#\Xi_0$ is the number of odd numbers $i$ between 1 and $\lceil n/2\rceil$, i.e., $\#\Xi_0=\lceil n/4\rceil$.
  We conclude that
$$\# \pi_0(\,(G/H)(\R)\,)=\#\ker[H^1(\R,H)\to H^1(\R,G)]=\#\Xi_0= \left\lceil n/4\right\rceil.$$

Now, instead of removing vertex $n-1$, let us remove vertex $m$ with $1\le m\le n-2$:
\begin{equation*}
\xymatrix@1@R=0pt@C=9pt
{ \bc{1} \rline & \cdots\rline &\bc{m-1} & &\bc{m+1} \rline &\cdots\rline & \bc{n-3} \rline & \bc{n-2} \dline
\rline & \bc{n-1} \\
& & &&&&& \bcbu{n} &  &}
\end{equation*}
We obtain a subgroup $H=H_1\times H_2$, where $H_1$ is of type $\AA_{m-1}^{(0)}$ (where $m-1=0$ is possible)
and $H_2$ is of type $\DD_{n-m}^{(n-m)}$ (where $n-m=2$ is possible).
On the left of the removed vertex we can take
\[\Xi_1=\{\xi_k\ |\ 0\le k\le \lceil (m-1)/2\rceil\} \]
for representatives of orbits in $L(\AA_{m-1}^{(0)})$, see Subsection \ref{subsec:An}.
On the right of the removed vertex we can take
\[\Xi_2=\{\ell_1,\ell_2\} \]
for representatives of orbits in $L(\DD_{n-m}^{(n-m)})$
(where the labeling $\ell_1$ has one component and $\ell_2$ has two components, including the boxed 1),
see Subsection \ref{subsec:Dn^(n)}.
By Subsection \ref{subsec:Dn^(n)} applied to $G$, the orbit of zero in $L(\DD_n^{(n)})$ is the set of labelings
with {\em odd} number of components (including the boxed 1).
Now with any $\xi_k\in\Xi_1$ we associate the pair $(\xi_k,\ell)\in \Xi_1\times\Xi_2$,
where $\ell$ is either $\ell_1$ or $\ell_2$
such that the total number of components in $\xi_k$ and $\ell$ is odd.
We obtain a bijection $\Xi_1\isoto\Xi_0$.
Thus in this case
\begin{align*}
\# \pi_0(\,(G/H)(\R)\,)=\ &\#\ker[H^1(\R,H)\to H^1(\R,G)]\\
=\ &\#\Xi_0=\, \#\Xi_1=\lceil (m-1)/2\rceil+1=\lceil (m+1)/2\rceil.
\end{align*}
In particular, if $m=n-2$, we obtain $\# \pi_0(\,(G/H)(\R)\,)=\lceil (n-1)/2\rceil$.

\subsection{Example with $\Spin(2m+1,2n+1)$}

Let $G=\Spin(2m+1,2n+1)\ (m\ge 2,\ n\ge 3)$, which is an outer form of a compact group.
The Kac diagram of $G$ is
\[
\sxymatrix{\bc{0} &\bc{1}\ar@{=>}[l] \rline & \cdots &\lline\bcb{m}\rline&\cdots & \lline \bc{\ell-1} \ar@{=>}[r] & \bc{\ell} }\, ,
\]
see Subsection \ref{ssec:Dn-outer}
where $\ell=m+n$, see \cite[Table 7]{OV}.
We write $G=\hs_{t\tau}G_0$ as in Construction \ref{lem:sc-subgroup}.
We remove the $\tau$-stable vertex $m+n-k$ $(2\le k<n)$ of the Dynkin diagram
and denote the obtained semisimple $\C$-subgroup by $H$, then
by Construction \ref{lem:sc-subgroup} the subgroup $H$ is simply connected and defined over $\R$,
and we have $H=\SU(m,n-k)\times\Spin(2k+2)$.
We are interested in $\pi_0(\, (G/H)(\R)\,)$.
By Theorem \ref{cor:Theorem-3-Bo} applied to $G$ and $H$ we have a bijection
$\pi_0(\, (G'/H')(\R)\,)\isoto\pi_0(\, (G/H)(\R)\,)$,
where $G'=\SU(m,n)$ of type $\AA_{m+n-1}^{(m)}$ and $H'=H_1\times H_2$
 with $H_1=\SU(m,n-k)$ of type $\AA_{m+n-k-1}^{(m)}$ and  $H_2=\SU(k)$ of type $\AA_{k-1}^{(0)}$.
Although probably one can compute $\#\pi_0(\, (G'/H')(\R)\,)$ using real algebraic geometry,
we compute this number using Galois cohomology.
Namely, for $H_1$ of type $\AA_{m+n-k-1}^{(m)}$ we can take
\[ \Xi_1=\{(p|0)\ |\ 0\le p\le \lceil (m-1)/2\rceil\}\ \cup\
         \{(0|q)\ |\  1\le q\le \lceil (n-k-1)/2\rceil\} \]
for representatives of orbits in  $L(\AA_{m+n-k-1}^{(m)})$, see Subsection \ref{subsec:An^m}.
For $H_2$ of type $\AA_{k-1}^{(0)}$ we can take
\[\Xi_2=\{\xi_i\ |\ 0\le i\le \lceil (k-1)/2\rceil\}\]
for representatives of orbits in $L(\AA_{k-1}^{(0)})$, see Subsection \ref{subsec:An}.
For $G'$, the orbit of zero in $L(\AA_{m+n-1}^{(m)})$
is the set of labelings with the same number of components on the left and on the right of $m$,
see Subsection \ref{subsec:An^m}.
Thus
\[ \Xi_0=\{(\hs(p|0),\xi_p\hs)\in\Xi_1\times\Xi_2\}.\]
Here $0\le p\le \lceil (m-1)/2\rceil$, $0\le p\le \lceil (k-1)/2\rceil$, hence
\[\#\Xi_0=1+\min(\lceil (m-1)/2\rceil,\lceil (k-1)/2\rceil)=\min(\lceil (m+1)/2\rceil,\lceil (k+1)/2\rceil).\]
We conclude that
\[\#\pi_0(\, (G/H)(\R)\,)=\#\pi_0(\, (G'/H')(\R)\,)=\#\Xi_0= \min(\lceil (m+1)/2\rceil,\lceil (k+1)/2\rceil).\]

\subsection{Example with $\EE_8$}
\label{ex-E8}
Let $G=EVIII$,  the split form $\EE_8^{(7)}$ of $\EE_8$ with compact maximal torus $T$,
with  Kac diagram  and augmented diagram
\begin{equation*}
\xymatrix@1@R=0pt@C=9pt
{\bc{0} \rline &\bc{1} \rline & \bc{2} \rline & \bc{3} \rline & \bc{4} \rline &
\bc{5} \rline \dline & \bc{6} \rline & \bcb{7}  \\ & & & & & \bcu{8} & & }
\qquad\qquad
\xymatrix@1@R=9pt@C=9pt
{ \ccc \rline & \ccc \rline & \ccc \rline & \ccc \rline &
\ccc \rline \dline & \ccc \rline & \ccc \rline & \boxone  \\  & & & &
\ccc & & & }
\end{equation*}
see Subsection \ref{subsec:EE8(7)}.
In this example, in contrast to the two previous examples, we construct an $\R$-subgroup $H$ of $G$
of the same rank, and not of smaller rank.
We remove  vertex 4 from the Kac diagram (the extended Dynkin diagram), and we do not erase vertex 0.
This means that we consider the semisimple $\C$-subgroup $H$ of $G$,
generated by $T_\C$ and the unipotent ``root" subgroups $U_\beta$ with $\beta\in R_H$,
where $R_H$ is the set of $\beta\in R$ that are integer linear combinations of the roots
$\alpha_i$, $0\le i\le 8,\ i\neq 4$, where $\alpha_1,\dots,\alpha_8$
are the simple roots and $\alpha_0$ is the lowest root.
Since $-R_H=R_H$, the $\C$-subgroup $H$ is defined over $\R$.
We obtain a maximal connected algebraic subgroup $H$ of $G$ \cite[Table 5]{OV2}
with twisting diagram
\begin{equation*}
\xymatrix@1@R=0pt@C=9pt
{\bc{0}\rline &\bc{1} \rline & \bc{2} \rline & \bc{3}  &   &
\bc{5} \rline \dline & \bc{6} \rline & \bcb{7}  \\ & & & & & \bcu{8} & & }
\end{equation*}
and augmented diagram
\begin{equation} \label{eq:Htil}
\xymatrix@1@R=9pt@C=9pt
{ \ccc \rline & \ccc \rline & \ccc \rline & \ccc  &   &
\ccc \rline \dline & \ccc \rline & \ccc \rline & \boxone
\\ & & & & & \ccc & & & }
\end{equation}

We compute the fundamental group $\pi_1(H_\C)$ of the semisimple group $H$.
Let $\Htil$ denote the universal covering of $H$.
Consider the composite morphism
$$
\varphi\colon \Htil\to H\to G,
$$
and let $\Ttil_H$ denote the maximal torus of $\Htil$ such that $\varphi(\Ttil_H)=T$.
We denote by $\varphi_*\colon\X_*(\Ttil_H)\to\X_*(T)$ the induced homomorphism of the cocharacter groups.
The cocharacter group $\X_*(\Ttil_H)$ has a basis
\begin{equation}\label{eq:basis}
\alpha_0^\vee,\ \alpha_1^\vee,\ \alpha_2^\vee,\ \alpha_3^\vee,\ \widehat{\alpha_4^\vee},
\ \alpha_5^\vee,\ \alpha_6^\vee,\ \alpha_7^\vee,\ \alpha_8^\vee,
\end{equation}
where $\widehat{\alpha_4^\vee}$ means that $\alpha_4^\vee$ is removed from the list.
The cocharacter group $\X_*(T)$ has a basis $\alpha_1^\vee, \dots, \alpha_8^\vee$,
while the subgroup $\im\varphi_*\subset\X_*(T)$
is generated by the cocharacters \eqref{eq:basis}.
There is a linear relation between $\alpha^\vee_0,\ \alpha^\vee_1,\dots,\alpha^\vee_8$,
in which the removed simple coroot $\alpha^\vee_4$ appears with coefficient 5,
while $\alpha^\vee_0$ appears with coefficient 1;
see \cite[Table 6]{OV} or \cite[Planche VII, (IV)]{Bourbaki}.
We see that $\im\varphi_*\subset \X_*(T)$ contains $\alpha_i^\vee$ for $i\neq 4$,
and it contains $5\alpha_4^\vee$, but not $\alpha_4^\vee$.
Thus $\im\varphi_*$ is a subgroup of index 5 in $\X_*(T)$.
By Corollary \ref{cor:pi1} the kernel of the canonical epimorphism $\Htil\to H$ is of order 5,
hence $\pi_1(H_\C)$ is of order 5.

Since the order 5 of $\ker\varphi$ is odd, by Lemma \ref{lem:odd}
the induced map $H^1(\R,\Htil)\to H^1(\R,H)$ is bijective,
whence
$$
\#\ker[H^1(\R,H)\to H^1(\R,G)]\ =\ \#\ker[H^1(\R,\Htil)\to H^1(\R,G)].
$$
We compute $\#\ker[H^1(\R,\Htil)\to H^1(\R,G)]$.
We have $\Htil=\Htil_1\times \Htil_2$, where $\Htil_1$ is compact of type $\AA_4^{(0)}$
and $\Htil_2$ is of type $\AA_4^{(4)}$.
By Subsection \ref{sect:An} we can take
$$
\Xi_1=\ \{\ 0 \ll 0 \ll 0 \ll 0,  \quad 1 \ll 0 \ll 0 \ll 0, \quad 1 \ll 0 \ll 1 \ll 0\ \}
$$
as a set of representatives of orbits  in $L(\AA_4^{(0)})$.
By Subsection \ref{subsec:An^m} we can take
$$
\Xi_2=\ \{\  0 \ll 0 \ll 0 \ll 0 \ll \boe\,, \quad 0 \ll 0 \ll 1 \ll 0 \ll \boe\,, \quad 1 \ll 0 \ll 1 \ll 0 \ll \boe\ \}
$$
as a set of representatives of orbits in $L(\AA_4^{(4)})$.
Set $\Xi=\Xi_1\times\Xi_2$.
We denote $\Xi_0$ the preimage in $\Xi$ of the orbit of zero in $L(\EE_8^{(7)})$.
 By Subsection \ref{subsec:EE8(7)} the orbit of zero  $[0]\subset L(\EE_8^{(7)})$
consists of the labelings with odd number of components (including the boxed 1),
excluding the fixed labeling $\ell'_2$.
The subset of $\Xi$ consisting of labelings with odd number of components has the following 5 labelings:
\begin{align*}
& 0 \ll 0 \ll 0 \ll 0  \qquad  0 \ll 0 \ll 0 \ll 0 \ll \boe\\
& 0 \ll 0 \ll 0 \ll 0  \qquad  1 \ll 0 \ll 1 \ll 0 \ll \boe\\
& 0 \ll 0 \ll 0 \ll 1  \qquad  0 \ll 0 \ll 1 \ll 0 \ll \boe\\
& 0 \ll 1 \ll 0 \ll 1  \qquad  0 \ll 0 \ll 0 \ll 0 \ll \boe\\
& 0 \ll 1 \ll 0 \ll 1  \qquad  1 \ll 0 \ll 1 \ll 0 \ll \boe
\end{align*}
and one of them  ( $0 \ll 0 \ll 0 \ll 0  \quad  1 \ll 0 \ll 1 \ll 0 \ll \boe$ ) is the preimage of $\ell'_2$.
We see that
$\Xi_0 =\ 4$.
We conclude that
\begin{align*}
\# \pi_0(\,(G/H)(\R)\,)\ =\ &\#\ker[H^1(\R,H)\to H^1(\R,G)]\\
=\ &\#\ker[H^1(\R,\Htil)\to H^1(\R,G)]=\#\Xi_0=4.
\end{align*}

\bigskip


\noindent{\sc Acknowledgements.}
The authors are very grateful to Dmitry A.~Timashev for his help in  proving Lemma \ref{prop:twisted}
and also for reading Subsection \ref{ss:Kac-outer} and correcting an inaccuracy.
We thank the anonymous referee for careful reading the paper and for his/her comments, which helped to improve the exposition.
We note that Erwann Rozier  computed in 2009 the cardinalities $\#\Or(\hs_\ttt D)$ for some colored graphs $\hs_\ttt D$
(in particular, for all Dynkin diagrams and twisting diagrams) under the guidance of the first-named author.



\begin{thebibliography}{GOV94}

\bibitem[A13]{A} J. Adams, {\em Galois and $\theta$ cohomology of real groups,} \url{arXiv:1310.7917 [math.GR]}.

\bibitem[AW67]{AW} M.\,F. Atiyah and C.\,T.\,C. Wall,
{\em Cohomology of groups,} in:
Algebraic Number Theory, edited by J. W. S. Cassels and A. Fr\"ohlich,
 Academic Press, London; Thompson Book Co., Inc., Washington, D.C., 1967.

\bibitem[BS64]{Borel-Serre}
A. Borel et J.-P. Serre,
{\em Th\'eor\`emes de finitude en cohomologie galoisienne,}
Comment. Math. Helv. {\bf 39} (1964), 111--164.

\bibitem[BT72]{Borel-Tits}
A. Borel et J. Tits,
{\em Compl\'ements \`a l'article: ``Groupes r\'eductifs'',}
Inst. Hautes \'Etudes Sci. Publ. Math. {\bf 41} (1972), 253--276.

\bibitem[Bo88]{Bo} M.\,V. Borovoi, {\em Galois cohomologies of real reductive groups,
and real forms of simple Lie algebras}, Functional Anal. Appl.
{\bf 22}:2 (1988), 135--136.

\bibitem[Bo14]{Borovoi-arXiv}
M. Borovoi,
{\em Galois cohomology of reductive algebraic groups over the field of real numbers,}
\url{arXiv:1401.5913 [math.GR]}.

\bibitem[BKLR]{BKLR}
M. Borovoi, B. Kunyavski\u\i, N. Lemire, and Z. Reichstein,
{\em Stably Cayley groups in characteristic zero,} Int. Math. Res. Not. IMRN {\bf 2014}, 5340--5397.

\bibitem[BT15]{BoT}
M. Borovoi and D.\,A. Timashev,
{\em Galois cohomology of real semisimple groups,}
\url{arXiv:1506.06252 [math.GR]}.

\bibitem[Bou68]{Bourbaki}
N. Bourbaki, {\em Groupes et alg\`ebres de Lie, Chap. IV--VI,}  Hermann, Paris, 1968.

\bibitem[Ca72]{Ca}
R.\,W. Carter,
{\em Simple Groups of Lie Type,}
 Pure and Applied Mathematics, Vol. 28, John Wiley \& Sons, London, 1972.

\bibitem[Co14]{Conrad-RGS}
B. Conrad, {\em Reductive group schemes,} Autour des sch\'emas en groupes. Vol. I, 93--444, Panor. Synth\`eses, 42/43, Soc. Math. France, Paris, 2014.

\bibitem[Co16]{Conrad}
B. Conrad, {\em Non-split reductive groups over $\mathbf{Z}$},
 Autour des sch\'emas en groupes. Vol. II,193--253, Panor. Synth\`eses, 46, Soc. Math. France, Paris, 2016.

\bibitem[DG70]{SGA3}
M.~Demazure et A. Grothendieck (Eds.),
{\em S\'eminaire de g\'eom\'etrie alg\'ebrique du Bois Marie
$1962$--$64$, Sch\'emas en groupes  (SGA3),} Lecture Notes Math., vol.~151, 152, 153, Springer-Verlag,
Berlin--New York, 1970. Re-edition: Documents Math\'ematiques 7,8,
Soc. Math. France, Paris, 2011.

\bibitem[GS10]{GS}
 S. Garibaldi and N. Semenov,
 {\em Degree $5$ invariant of $E_8$,} Int. Math. Res. Not. IMRN {\bf 2010}, 3746--3762.

\bibitem[GOV94]{OV2}  V.\,V.~Gorbatsevich, A.\,L.~Onishchik, and  E.\,B. Vinberg, {\em Structure of Lie groups and Lie algebras,}
{Lie Groups and Lie Algebras III,}
Encyclopaedia of Mathematical Sciences, Vol. 41,
Springer-Verlag,  Berlin, 1994.

\bibitem[He78]{Helgason}
S. Helgason,
{\em Differential Geometry, Lie Groups, and Symmetric Spaces},
Pure and Applied Mathematics, 80, Academic Press, New York-London, 1978.

\bibitem[Kac69]{Kac}
V.\,G. Kac,
{\em Automorphisms of finite order of semisimple Lie algebras,}
Functional Anal.  Appl., {\bf 3}:3 (1969), 252--254.

\bibitem[Ko86]{Kottwitz}
 R.\,E. Kottwitz, {\em Stable trace formula: elliptic singular terms,} Math. Ann. {\bf 275} (1986), 365--399.

\bibitem[ML63]{ML}
S. Mac Lane, {\em Homology,}
Grundlehren der mathematischen Wissenschaften, Bd. 114, Academic Press,  New York; Springer-Verlag, Berlin-G\"ottingen-Heidelberg, 1963.

\bibitem[MT]{MT}
G. Malle and D. Testerman,
{\em Linear Algebraic Groups and Finite Groups of Lie Type,}
Cambridge Studies in Advanced Mathematics, 133, Cambridge University Press, Cambridge, 2011.

\bibitem[OV90]{OV}
A.\,L. Onishchik and E.\,B. Vinberg, {\em Lie Groups and Algebraic Groups,}
Springer Series in Soviet Mathematics, Springer-Verlag, Berlin, 1990.

\bibitem[PR94]{PR}
 V. Platonov and A. Rapinchuk,
 {\em Algebraic Groups and Number Theory,}  Pure and Applied Mathematics, 139, Academic Press, Boston, MA, 1994.

\bibitem[R05]{Reeder}
M. Reeder,
{\em Level-two structure of simply-laced Coxeter groups,}
J. Algebra {\bf 285} (2005), 29--57.

\bibitem[Se94]{Serre}
J.-P. Serre, {\em Cohomologie galoisienne}, LNM 5, 5-\`eme ed.,
Springer-Verlag, Berlin, 1994.

\bibitem[Sp79]{SpringerAMS}
 T.\,A. Springer,  {\em Reductive groups,} in: Automorphic Forms, Representations and $L$-functions
(Proc. Sympos. Pure Math., Oregon State Univ., Corvallis, Ore., 1977), Part 1, pp. 3–-27,
Proc. Sympos. Pure Math. 33, Amer. Math. Soc., Providence, RI, 1979.

\bibitem[Sp98]{Springer}
T.\,A. Springer,
{\em Linear Algebraic Groups,}
Second ed., Progress in Math., 9, Birkh\"auser, Boston, MA, 1998.

\bibitem[Ti66]{Tits}
J. Tits, {\em Classification of algebraic semisimple groups,} in: Algebraic Groups and Discontinuous Subgroups
(Proc. Sympos. Pure Math., Boulder, Colo., 1965) pp. 33--62 Amer. Math. Soc., Providence, R.I., 1966.

\bibitem[W11]{Weng}
C.-w. Weng, {\em Reeder's puzzle on a tree,}
\url{jupiter.math.nctu.edu.tw/~weng/powerpoint/DMEC_2011.pdf}.

\end{thebibliography}
\end{document}